\newcommand{\glossarylabel}[1]{\phantomsection\label{#1}}
\colorlet{clou}{orange!80!yellow}
\newenvironment{poc}{\begin{proof}}{\end{proof}}
\newcommand{\p}[1]{\mathbf{P}\left(#1\right)}
\newcommand{\cat}{C}
\DeclareMathOperator{\intr}{int}
\newcommand{\roote}[2]{r_{#1}(#2)}
\newcommand{\inroote}[1]{r_{#1}}
\newcommand{\head}[2]{h_{#1}(#2)}
\newcommand{\area}[2]{A_{#1}(#2)}
\newcommand{\sparea}[3]{A_{#1}^{#3}(#2)}
\newcommand{\evarea}[1]{A_{#1}}
\newcommand{\spevarea}[2]{A_{#1}^{#2}}
\newcommand{\boundary}[2]{B_{#1}(#2)}
\newcommand{\peelv}[2]{p_{#1}(#2)}
\newcommand{\chord}[2]{c_{#1}(#2)}
\newcommand{\ball}[2]{\cB(#2,#1)}
\newcommand{\ballt}[3]{\cB_{#3}(#2,#1)}
\newcommand{\fs}[2]{i_{#1}(#2)}
\newcommand{\segment}[3]{B_{#1}^{#3}(#2)}
\newcommand{\infpath}[1]{P^{#1}}
\newcommand{\vpath}[2]{P^{#1}(#2)}
\newcommand{\qpath}[1]{Q^{#1}}
\newcommand{\nt}{\varphi} % number of triangles
\newcommand{\ntbase}{\alpha} % base of the exponential term in the asymptotic behaviour of the number of triangles
\newcommand{\ntfact}{A} % multiplicative factor of the asymptotic behaviour of the number of triangles
\newcommand{\fd}{\mu} % name for the free distribution
\newcommand{\fdgen}{Z} % name for the generating function of the free distribution
\title{\vspace{-1cm}Infinite Schnyder Woods}
\author{Louigi Addario-Berry\affiliation{Department of Mathematics and Statistics, McGill University, Montr\'{e}al, Qu\'{e}bec, Canada (\textsf{\href{mailto:louigi.addario@mcgill.ca}{louigi.addario@mcgill.ca}}).} \and Emma Hogan\affiliation{Mathematical Institute, University of Oxford, United Kingdom (\textsf{\{\href{mailto:emma.hogan@maths.ox.ac.uk}{emma.hogan},\href{mailto:lukas.michel@maths.ox.ac.uk}{lukas.michel},\allowbreak\href{mailto:alexander.scott@maths.ox.ac.uk}{alexander.scott}\}@maths.ox.ac.uk}).} 
\and Lukas Michel\affiliationmark \and Alex Scott\affiliationmark}
\begin{document}
\maketitle

\begin{abstract}
    It is well-known that any finite triangulation possesses a unique maximal Schnyder wood. We introduce Schnyder woods of infinite triangulations, and prove there exists a unique maximal Schnyder wood of any infinite triangulation with finite boundary, and of the uniform infinite half-planar triangulation. Furthermore, the maximal Schnyder wood of the uniform infinite planar triangulation is the limit of maximal Schnyder woods of large finite random triangulations. Several structural properties of infinite Schnyder woods are also described.
\end{abstract}

\begin{figure}[h!]
\begin{center}
\includegraphics[width=0.8\textwidth]{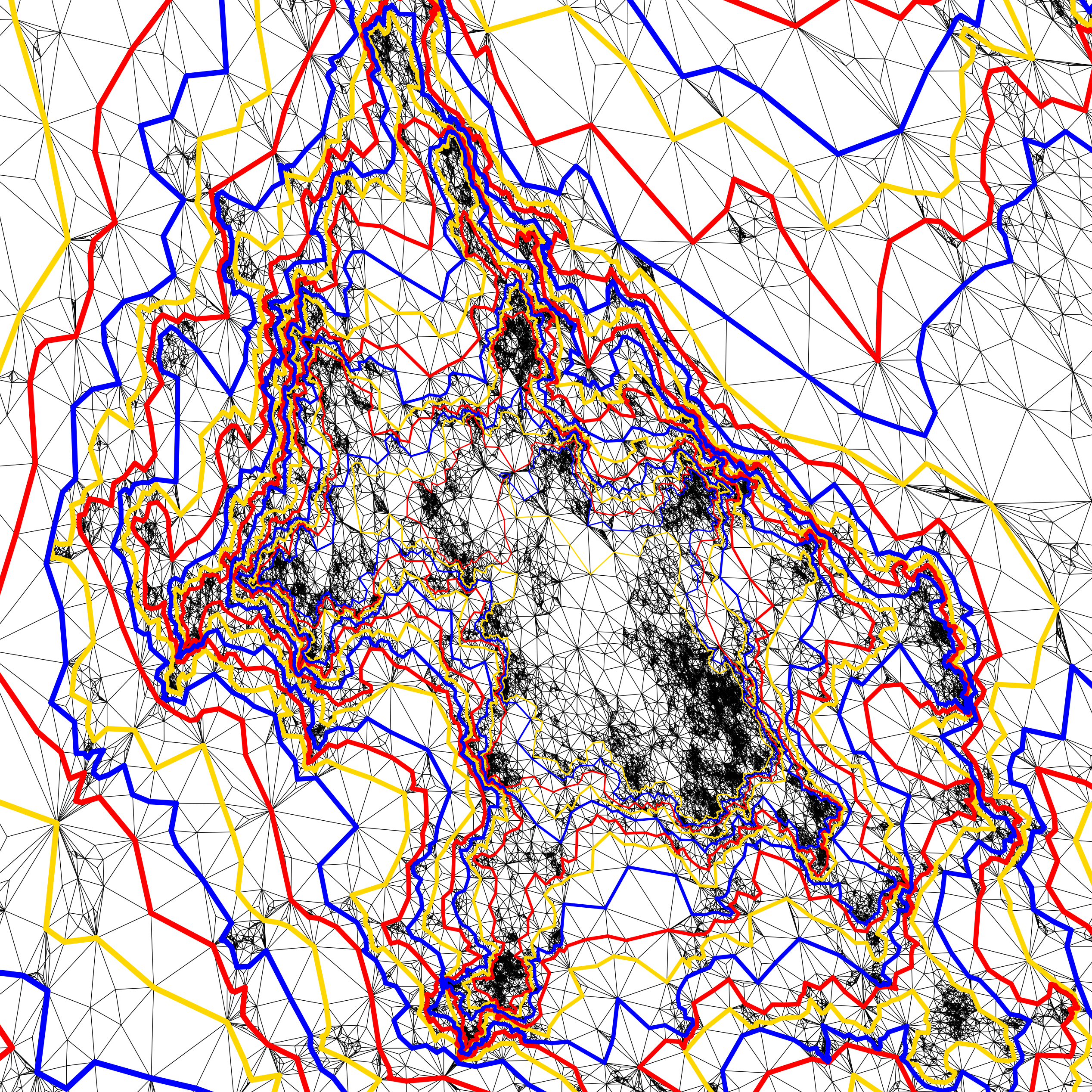}
\end{center}
\caption{The structure of the unique monochromatic paths starting from a vertex chosen uniformly at random from the maximal Schnyder wood of a large random triangulation. See also  \cref{fig:tutte_embedding_big,fig:most_zoom} on \cpageref{fig:tutte_embedding_big,fig:most_zoom}, respectively.}
\label{fig:some_zoom}
\end{figure}

\newpage
% \small
\tableofcontents
\normalsize

\newpage
        
\section{Introduction}

Our aim in this work is to describe the structure of Schnyder woods---defined shortly---of (large) finite and infinite random triangulations of the plane and of the half-plane. The two goals are related, since one approach to determining the properties of a typical large finite triangulation is to analyse the limit of a sequence of uniformly random triangulations as its number of vertices tends to infinity. One important limit object of this kind is the \defn{uniform infinite planar triangulation} (UIPT). Let $\cT_n^m$\glossarylabel{gl:ctnm} denote the set of simple, $2$-connected triangulations of an $(m+2)$-gon with $n$ interior vertices that are rooted at an edge on the boundary. Then the UIPT is a sample from the probability measure that is the weak $n \to \infty$ limit of the uniform measure on $\cT_n^1$. The existence of the UIPT was first conjectured by Benjamini and Schramm \cite{benjamini2001recurrence}, and later established by Angel and Schramm \cite{angel2003uipt}. Since then, there has been substantial study of the properties of the UIPT and of similarly defined infinite random maps \cite{angel2003growth,angel2005scaling,angel2015percolations}. One random infinite map with many related properties is the \defn{uniform infinite half-planar triangulation} (UIHPT). The UIHPT can be obtained similarly to the UIPT, as a limit of uniformly random samples from $\cT_n^m$ where $m = m(n) \to \infty$ and $m/n \to 0$ as $n \to \infty$ \cite[Theorem 1.4]{MR3342664}. The UIHPT was shown to exist by Angel \cite{angel2003growth}, and has been studied in more detail in several subsequent works, including \cite{MR3342664,angel2015percolations}.

\defn{Schnyder woods} are a well-known class of structures on finite triangulations. A Schnyder wood is an orientation and a colouring of the edges of a triangulation in red, yellow, and blue that satisfies the three conditions indicated in \cref{fig:schnydercondition1}. Schnyder woods were first introduced by Schnyder \cite{schnyder1989planar} to prove his well-known theorem characterising planar graphs as the set of graphs with order-dimension at most three. Schnyder \cite{schnyder1990} also showed that these woods provide a way to efficiently construct straight-line embeddings of triangulations in the integer lattice. Since then, Schnyder woods have been studied in a variety of combinatorial and computational contexts, where they are sometimes also called realisers. In \cite{aleardi2009schnyder}, Aleardi, Fusy, and Lewiner defined a generalisation of Schnyder woods to higher-genus surfaces and showed that these Schnyder woods can be used to efficiently encode certain higher-genus surfaces. Schnyder woods have also found application in the study of Catalan lattices \cite{bernardi2009intervals} and orthogonal surfaces \cite{felsner2008schnyder}.

\begin{figure}[h]
    \begin{subfigure}[t]{0.3\textwidth}
        \centering
        \begin{tikzpicture}[baseline=(current bounding box.south)]
    \useasboundingbox (-1.9,-1.7) rectangle (1.9,1.9);
    \node[mycircle] (v) at (0,0){};

    \begin{scope}[rotate=210]
        \foreach \x/\y/\r in {0/0/1.9,37/1/1.6,60/15/1.6, 83/2/1.6, 120/3/1.9, 157/4/1.6, 180/45/1.6, 203/5/1.6, 240/6/1.9, 277/7/1.6, 300/75/1.6, 323/8/1.6}{ 
            \node (\y) at (canvas polar cs: radius=\r cm,angle=\x){};
        }
    \end{scope}

    \path[every node/.style={font=\sffamily\small}]
        (v) edge [->-, redge] (0)
        (v) edge [->-, yedge] (3)
        (v) edge [->-, bedge] (6)
        (1) edge [->-, bedge] (v)
        (15) edge [->-, bedge] (v)
        (2) edge [->-, bedge] (v)
        (4) edge [->-, redge] (v)
        (45) edge [->-, redge] (v)
        (5) edge [->-, redge] (v)
        (7) edge [->-, yedge] (v)
        (75) edge [->-, yedge] (v)
        (8) edge [->-, yedge] (v);
    
\end{tikzpicture}
        \caption{The Schnyder condition.}
    \end{subfigure}
    \hspace{0.1cm}
    \begin{subfigure}[t]{0.34\textwidth}
        \centering
        \begin{tikzpicture}[baseline=(current bounding box.south)]
    \useasboundingbox (-1.6,-1.2) rectangle (3.5,1.6);
    \node[mycircle] (vy) at (1.9,0){};
    \node[mycircle] (vr) at (0,0){};

    \begin{scope}[rotate=60]
        \foreach \x/\y/\r in {0/0/1.6,60/1/1.6, 120/2/1.6}{ 
            \node (\y) at (\x:\r){};
        }
    \end{scope}

    \begin{scope}[xshift=1.9cm]
        \foreach \x/\y/\r in {0/3/1.6,60/4/1.6, 120/5/1.6}{ 
            \node (\y) at (\x:\r){};
        }
    \end{scope}

    \path[every node/.style={font=\sffamily\small}]
        (vr) edge [->-, yedge] (vy)
        (0) edge [->-, redge] (vr)
        (1) edge [->-, redge] (vr)
        (2) edge [->-, redge] (vr)
        (3) edge [->-, yedge] (vy)
        (4) edge [->-, yedge] (vy)
        (5) edge [->-, yedge] (vy);

    \draw (0.95,-0.3) node {$r$};
\end{tikzpicture}
        \caption{The Schnyder root condition. The root edge is labelled $r$.}
    \end{subfigure}
    \hspace{0.1cm}
    \begin{subfigure}[t]{0.32\textwidth}
        \centering
        \begin{tikzpicture}[baseline=(current bounding box.south)]
    \useasboundingbox (-1.9,-1.2) rectangle (1.9,1.9);
    \node[mycircle] (v) at (0,0){};

    \begin{scope}[rotate=30]
        \foreach \x/\y/\r in {20/0/1.9,50/1/1.6, 70/2/1.6, 100/3/1.9, 130/4/1.6, 150/45/1.6, 350/7/1.6, 330/75/1.6}{ 
            \node (\y) at (canvas polar cs: radius=\r cm,angle=\x){};
        }
    \end{scope}

    \path[every node/.style={font=\sffamily\small}]
        (v) edge [->-, redge] (0)
        (v) edge [->-, yedge] (3)
        (1) edge [->-, bedge] (v)
        (2) edge [->-, bedge] (v)
        (4) edge [->-, redge] (v)
        (45) edge [->-, redge] (v)
        (7) edge [->-, yedge] (v)
        (75) edge [->-, yedge] (v);
    
\end{tikzpicture}
        \caption{The Schnyder boundary condition. The boundary lies vertically below all edges.}
    \end{subfigure}
    \caption{Edges incident with internal vertices satisfy the condition indicated in (a), edges incident with the root edge satisfy the condition indicated in (b), and edges incident with non-root boundary vertices satisfy the condition in (c).}
    \label{fig:schnydercondition1}
\end{figure}
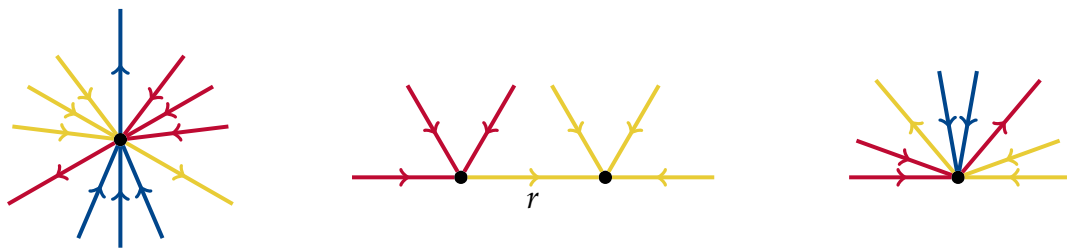

For any given finite triangulation $T$, it is well-known that the set of Schnyder woods of $T$ is naturally endowed with a partial order possessing a unique maximal and minimal element \cite{brehm2000orientations,felsner2008schnyder}, where a Schnyder wood is maximal (minimal) if it contains no anticlockwise (clockwise) directed cycles. To study infinite Schnyder woods, we generalise this definition of maximality to forbid infinite anticlockwise cycles, that is, two-ended infinite directed paths for which the triangulation boundary lies on the right. We justify in \cref{sec:infiniteschnyderwoods} that this slight modification is necessary.

The structure of Schnyder woods of infinite and large finite triangulations is a natural area of study, particularly due to the link between Schnyder woods and canonical straight-line embeddings of the underlying graphs. Schnyder woods associated to large random triangulations have previously been studied in the context of results on the scaling limits of large random triangulations. In particular, \cite{addario2017scaling} used an algorithm for constructing the maximal Schnyder wood of a finite triangulation as an essential part of its proof that the scaling limit of uniformly random simple triangulations is the Brownian sphere. The work \cite{li2017schnyder} describes a conjectural scaling limit for the embeddings associated with uniformly random wooded triangulations (triangulations endowed with a Schnyder wood), which is described in terms of three coupled $\mathrm{SLE}_{16}$ curves, and verifies the conjecture in a weak form. 

A standard strategy for constructing Schnyder woods of finite triangulations is to explore the triangulation iteratively via a peeling process, colouring and directing newly explored edges at each step. Peeling processes are also commonly used for studying properties of random infinite triangulations such as the UIPT and UIHPT. We describe a new peeling process, based on previous work on peeling processes for finite Schnyder woods, and use this process to explore infinite triangulations. For any infinite triangulation with finite boundary, this process can be used to show that a unique maximal Schnyder wood exists.

\begin{theorem}
    Every infinite triangulation with a finite boundary has a unique maximal Schnyder wood.
\end{theorem}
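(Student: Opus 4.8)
The plan is to prove existence and uniqueness separately, with two-ended infinite directed paths---the \emph{infinite anticlockwise cycles} forbidden by the strengthened notion of maximality---being the one genuinely new phenomenon beyond the finite theory in both halves of the argument.

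For existence I would run the peeling process started from the (finite) boundary and explored inwards. Fixing any fair peeling schedule, so that every edge of $T$ is revealed at some finite time, finiteness of the boundary guarantees that the frontier is finite at every step and that the process is well defined and never stalls. At each step the newly revealed edges are coloured and oriented by the local rule that realises the maximal choice consistent with the Schnyder conditions of \cref{fig:schnydercondition1}. Since the closed star of any vertex $v$ is revealed after finitely many steps, and the relevant condition involves only the finitely many edges at $v$, the resulting colouring and orientation $W$ of all of $E(T)$ satisfies the internal, root, or boundary condition at every vertex, and is therefore a Schnyder wood of $T$.

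Next I would verify that $W$ is maximal. Suppose $W$ contained a finite anticlockwise cycle $C$. Then $C$ lies inside the region explored after finitely many steps, on which the peeling coincides with the maximal-wood algorithm for a finite triangulation; since that algorithm produces no anticlockwise cycle, this is a contradiction. The main obstacle is ruling out an \emph{infinite} anticlockwise cycle---a two-ended directed path $P$ escaping every finite explored region with the boundary of $T$ on its right. Here I expect to use finiteness of the boundary decisively: such a $P$ would have to enclose the entire finite boundary, and I would maintain through the peeling an invariant (a consistent ``drift'' of explored edges away from the enclosed boundary region) that is incompatible with closing up a two-ended path in this configuration. Making this invariant precise and checking that it is stable under passing to the limit is the part I expect to require the most care.

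For uniqueness, let $W_1$ and $W_2$ be two maximal Schnyder woods and consider the set $D$ of edges on which they differ, each oriented as in $W_1$. The Schnyder conditions force, at every vertex, that $W_1$ and $W_2$ differ only by a local rotation preserving the in/out structure, so $D$ has balanced in- and out-degree at every vertex and decomposes into vertex-disjoint directed cycles, now possibly two-ended and infinite. Every such cycle is oriented one way in $W_1$ and the opposite way in $W_2$, hence is anticlockwise in exactly one of the two woods. A finite such cycle therefore contradicts the (finite) maximality of one of $W_1, W_2$, and a two-ended infinite such cycle contradicts their maximality under the strengthened definition. Thus $D = \emptyset$ and $W_1 = W_2$. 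The delicate points are justifying that $D$ really decomposes into clean directed cycles---that is, analysing precisely how two Schnyder woods can disagree at a single vertex under the conditions of \cref{fig:schnydercondition1}---and checking that an infinite component of $D$ is genuinely a two-ended path with the boundary on one side, matching the definition of an infinite anticlockwise cycle.
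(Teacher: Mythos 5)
Your overall architecture matches the paper's: existence via a peeling process whose every finite stage agrees with the finite maximal-wood algorithm (so the Schnyder conditions and the absence of finite anticlockwise cycles are inherited by locality), and uniqueness via the symmetric-difference argument, which is exactly \cref{thm:max_schnyder_unique_infinite} --- the disagreement set has balanced in/out degree at every vertex by the outdegree constraints, decomposes into directed cycles and two-ended directed paths, and each such component is anticlockwise or right-directed in one of the two woods. Those parts are sound and essentially identical to the paper's treatment (the paper additionally verifies, via \cref{obs:infiniterightsteps}, that its particular peeling schedule really is ``fair'', i.e.\ covers the whole triangulation layer by layer; you assume fairness, which is a reasonable framing but should be checked for whatever concrete schedule you adopt).

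The genuine gap is the step you yourself flag: ruling out right-directed paths. A ``drift invariant'' maintained through the peeling is not an argument, and it is not clear what quantity you would track --- the peeling process winds around the finite boundary and revisits every region, so there is no monotone direction of exploration to exploit directly. The paper closes this gap with a concrete mechanism: the \emph{leftmost walk} $L(e)$ from any edge $e$. One first shows $L(e)$ is acyclic (anticlockwise cycles are already excluded; a clockwise cycle traced by a leftmost walk would push too many edges into its interior, contradicting the count of \cref{lem:directed_edges_from_cycle}), and then that $L(e)$ can never escape the region revealed by finitely many peeling steps, because upper-boundary vertices are only entered via incoming yellow edges and exited via their already-revealed blue out-edge. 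Hence $L(e)$ is finite and terminates at the unique sink, the head of the root edge (\cref{lem:leftmostwalk_finite_bdry}). Given a hypothetical right-directed path $R$, the leftmost walk from an edge of $R$ must therefore cross from $R$ to the side containing the sink; at the first such crossing the outgoing edge of $R$ lies strictly further left than the edge the walk actually took, contradicting leftmost-ness (\cref{lem:maximal_wood_finite_bdry}). Without some argument of this kind --- or an equally explicit substitute --- your existence proof only yields a Schnyder wood with no finite anticlockwise cycles, which, as \cref{fig:distinct_max_schnyder_halfplane} shows in the half-plane setting, is strictly weaker than maximality.
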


We further show that the maximal Schnyder wood of the UIPT is the weak $n \to \infty$ limit of the maximal Schnyder woods of uniformly random triangulations of $\cT_n^1$. In particular, the maximal Schnyder wood of the UIPT can be used to investigate the structure of Schnyder woods of typical large finite triangulations near the root edge.

\begin{theorem}\label{thm:intro-uipt}
    For all $m \in \N$, the maximal Schnyder wood of the UIPT with boundary length $m$ is the weak limit of the maximal Schnyder wood of a uniformly random triangulation from $\cT_n^m$ as $n \to \infty$.
\end{theorem}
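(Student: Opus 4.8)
The plan is to combine the peeling construction of the maximal Schnyder wood with the local convergence of the underlying maps. By the work of Angel and Schramm, a uniformly random triangulation $T_n \in \cT_n^m$ converges in the local topology to the UIPT $T_\infty$ of boundary length $m$; and since $T_\infty$ has finite boundary, the first theorem of this paper supplies a unique maximal Schnyder wood $S_\infty$ of $T_\infty$. Writing $S_n$ for the maximal Schnyder wood of $T_n$, the goal is to promote the convergence $T_n \Rightarrow T_\infty$ to the convergence $(T_n, S_n) \Rightarrow (T_\infty, S_\infty)$ of wooded rooted triangulations in the local topology. As each edge carries one of finitely many colour--orientation labels, it suffices to prove that for every radius $r$ the restriction of the wood to the ball of radius $r$ about the root converges in distribution.

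The key structural input is the \emph{locality} of the peeling construction. The peeling process described above colours and orients each edge at the moment it is first revealed, using only the portion of the triangulation explored so far. Hence the wood on the ball of radius $r$ about the root is a deterministic function of the first $\tau_r$ peeling steps, where $\tau_r$ is the first time the explored region engulfs that ball. On the UIPT the exploration almost surely exhausts every bounded region, so $\tau_r < \infty$ almost surely and the wood on the ball is fixed after finitely many steps; on each finite $T_n$ the analogous statement is immediate, since the peeling exhausts $T_n$.

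I would then couple the two explorations. The one-step peeling laws on $T_n$ converge to those on $T_\infty$ --- this is precisely the local convergence $T_n \Rightarrow T_\infty$ read off along the exploration --- so for each fixed $k$ the first $k$ peeling steps of $T_n$ and of $T_\infty$ may be coupled to agree with probability $1 - o_n(1)$. Choosing $k$ so large that $\tau_r \le k$ outside an event of $T_\infty$-probability at most $\varepsilon$, the woods on the ball of radius $r$ about the root then coincide with probability $1 - \varepsilon - o_n(1)$; sending $n \to \infty$ and afterwards $\varepsilon \to 0$ yields the desired convergence. The uniqueness clause of the first theorem identifies the limiting wood as $S_\infty$, so the limit does not depend on any choices made during the exploration.

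I expect the crux to be the locality step, and in particular the claim that the colour and orientation assigned to an edge upon revelation are never overturned by later exploration. The maximality condition --- the absence of (possibly two-ended, infinite) anticlockwise cycles --- is global and does not localise in any obvious way, so the real work lies in verifying that the peeling resolves each edge definitively and that the finite wood revealed near the root is already consistent with global maximality. This is exactly where the earlier structural results on infinite Schnyder woods and the precise mechanics of the peeling process must be brought to bear; granting these, the coupling and the passage to the limit are comparatively routine consequences of local convergence.
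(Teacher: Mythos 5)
Your proposal is correct and follows essentially the same route as the paper: both arguments rest on the fact that the peeling process constructs the maximal Schnyder wood of the UIPT (the paper's Theorem~\ref{thm:unique_maximal_schnyder_wood_finite_bdry}), colours any ball of radius $\rho$ about the root after finitely many steps while staying inside a larger ball of radius $\rho'$ with probability at least $1-\eps$, and is a deterministic local function of the explored region, so that coupling the radius-$\rho'$ balls of $T_n$ and the UIPT via local convergence forces the two woods to agree on the radius-$\rho$ ball. The "locality" worry you flag at the end is exactly what the paper discharges before this point (the revealed colouring is never overturned and coincides with the finite peeling output on any finite truncation), so your argument matches the paper's proof of Theorem~\ref{thm:uipt_schnyder_wood_convergence}.
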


Constructing Schnyder woods of infinite triangulations of the half-plane such as the UIHPT is more challenging. On an infinite triangulation with a finite boundary, our peeling process begins exploring from the root edge of the triangulation and progressively explores along the boundary of the unexplored region in an anticlockwise direction. This constructs the Schnyder wood layer by layer as the process ``winds around the boundary''. However, when the boundary length tends to infinity, this peeling process will explore parts of the triangulation at increasingly large distance from the root edge before returning to edges close to the root edge. In particular, on the UIHPT, a peeling process initiated from a fixed root edge almost surely never explores most of the triangulation. This non-locality poses a substantial challenge, and while we believe that the maximal Schnyder wood of a uniformly random triangulation from $\cT_n^m$ converges as $n \to \infty$ and then $m \to \infty$, we have been unable to prove this. Instead, the majority of this paper is dedicated to establishing the existence of a unique maximal Schnyder wood of the UIHPT, which we conjecture to be this limit object.

To establish the existence of a Schnyder wood of the UIHPT, we show that any edge in the UIHPT can be coloured by a sequence of peeling processes initiated at different root edges. By choosing this sequence carefully, we show that this colouring is well-defined and satisfies the standard properties of a Schnyder wood. We are further able to show that the Schnyder wood constructed in this manner is the unique maximal Schnyder wood of the UIHPT.

\begin{theorem}\label{thm:main1}
    The UIHPT almost surely has a unique maximal Schnyder wood.
\end{theorem}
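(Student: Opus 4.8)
The plan is to build the Schnyder wood of the UIHPT $H$ edge-by-edge, using the maximal-Schnyder-wood peeling process as the basic tool, and then to check in turn that the resulting colouring is well-defined, that it is a Schnyder wood, and that it is the unique maximal one. Fix a target oriented edge $e$ of $H$. For a boundary root edge $\rho$, run the Schnyder peeling process from $\rho$; at each finite time it has revealed a finite sub-triangulation and has coloured and oriented the edges inside it according to the maximal Schnyder wood of that finite piece. Because the boundary of $H$ is two-ended and the process winds anticlockwise, a single process started from $\rho$ need not ever reveal $e$. The idea is therefore to take a sequence $(\rho_k)$ of root edges receding along the boundary, chosen so as to push the anticlockwise exploration from $\rho_k$ toward $e$, and to read off the colour that $e$ receives from each process.

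The crux---and the step I expect to be by far the hardest, exactly the non-locality flagged above---is to prove that this colouring of $e$ is well-defined. This splits into two claims. \emph{Covering:} almost surely there is some $k$ for which the process from $\rho_k$ reveals $e$ and assigns it a colour. I would prove this by an ergodicity / Borel--Cantelli argument, exploiting the invariance of the law of the UIHPT under shifting the root along the boundary together with peeling estimates controlling how far the revealed region reaches; the point is that although each individual process escapes to infinity and misses $e$ with positive probability, letting the root recede makes the probability of eventually capturing $e$ equal to one. \emph{Consistency:} whenever two processes from roots $\rho,\rho'$ both reveal $e$, they assign it the same colour and orientation. Here I would couple the two explorations using the spatial Markov property of the UIHPT, and use that the Schnyder peeling reveals regions that are ``downward closed'' for the Schnyder lattice order, so that the colour of $e$ is already determined once any such region containing $e$ has been explored. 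Combining covering and consistency pins down the colour of every edge almost surely, and hence yields a colouring $W$ of all of $H$.

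That $W$ is a Schnyder wood is then comparatively routine: the three conditions of \cref{fig:schnydercondition1} are local, constraining only the edges incident with a given internal, root, or boundary vertex. For each vertex $v$, all edges incident with $v$ are eventually coloured by a single peeling process (once its root has receded far enough), and at that stage they satisfy the relevant Schnyder condition because the process produces a genuine maximal Schnyder wood of the revealed finite triangulation. Passing to the limit, $W$ satisfies the Schnyder conditions at every vertex, so $W$ is a Schnyder wood of $H$.

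Finally I would establish maximality and uniqueness. For maximality, any anticlockwise cycle of $W$ that is finite lies inside some revealed region and would contradict the maximality of that region's finite Schnyder wood; the remaining possibility---a two-ended infinite anticlockwise cycle, which the definition of maximality in \cref{sec:infiniteschnyderwoods} forbids---must be excluded directly, and I expect this to follow from the monotone way in which the revealing regions are built up along the receding roots. For uniqueness, suppose $W'$ were another maximal Schnyder wood; as in the finite theory the symmetric difference of $W$ and $W'$ decomposes into directed cycles, and an essential such cycle would be anticlockwise for exactly one of $W,W'$, contradicting its maximality. I would adapt the finite cycle-pushing argument to this setting, taking care that the forbidden two-ended infinite cycles are handled by the extended notion of maximality, to conclude that $W$ is the unique maximal Schnyder wood of the UIHPT.
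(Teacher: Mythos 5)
Your uniqueness step matches the paper's argument (\cref{thm:max_schnyder_unique_infinite}), and you are right that consistency is a genuinely hard point. But there is a fatal gap at the very start, in the \emph{covering} claim. A single Schnyder peeling process started from any boundary root edge of the UIHPT --- no matter how far left --- only ever explores within distance three of the initial boundary: at every step the head of the current root edge lies on the initial boundary, so the peeling vertex is within distance two of it and every vertex of the updated boundary within distance three. Consequently, letting the root recede along the boundary does \emph{not} make the probability of capturing a fixed edge $e$ tend to one; for an edge $e$ at distance more than three from the boundary (and not enclosed in a finite pocket of that narrow region) this probability is zero for \emph{every} root, and no Borel--Cantelli or ergodicity argument can rescue it. The limit of these explorations is only a two-ended strip of bounded width along the boundary (the paper's \emph{Schnyder wood strip}). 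To colour the rest of the half-plane the paper must iterate: it proves that deleting the interior of an explored segment or strip leaves a triangulation again distributed as a UIHPT (\cref{lem:remove_one_ended_strip_still_uihpt,lem:plane_minus_stripUIHPT}) and then peels off strips layer by layer (the ``chiselling algorithm''). Your proposal is missing this entire mechanism. Relatedly, if every root recedes to $-\infty$ then no vertex of the limiting colouring satisfies the Schnyder root condition; the paper keeps one genuine one-ended segment rooted at the actual root edge and uses rootless strips only for the remaining layers.

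Two further points. First, your consistency claim is false as stated: two processes from different roots that both reveal $e$ can genuinely disagree on its colour and orientation. What is true --- and what the paper proves in \cref{thm:consistent_strip_colouring}, via estimates showing that no short path spans a long boundary segment and that the updated boundaries of two far-left processes eventually coincide to the left of a common root edge, after which they perform identical steps --- is that the colour of $e$ \emph{stabilises} once the root is sufficiently far left. There is no ``downward-closed for the Schnyder lattice order'' shortcut in the paper, and it is unclear how to make that notion precise for these explorations. Second, excluding right-directed (two-ended anticlockwise) paths does not follow from monotonicity of the exploration: the paper needs the full structure theorem for the monochromatic trees of the constructed wood (\cref{thm:monochromatic_schnyder_wood_structure}) together with an analysis of leftmost walks (\cref{lem:leftmost_path,lem:no_right_directed_paths}). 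That is substantial work, not a routine verification.
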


We also use properties of our peeling process to describe the structure of the monochromatic subgraphs of the unique maximal Schnyder wood of the UIHPT. As in the case of finite Schnyder woods, the monochromatic connected subgraphs of a Schnyder wood of the half-plane are trees. Furthermore, in the maximal Schnyder wood of the UIHPT, these trees have a natural order. There is an infinite yellow tree that lies on the initial boundary, and as we move away from the initial boundary, we see, in order, an infinite yellow tree, an infinite red tree, and then an infinite blue tree, before this cycle repeats indefinitely. The following theorem makes this structure more precise.

\begin{theorem}\label{thm:intro_monochromatic_schnyder_wood_structure}
    The monochromatic subgraphs of the unique maximal Schnyder wood of the UIHPT are as follows.
    \begin{itemize}
        \item A set of finite blue trees rooted at vertices on the boundary.
        \item Three one-ended yellow, red, and blue trees respectively rooted at vertices $a$, $b$, and $c$ along the boundary, with $a$ the head of the root edge of the UIHPT; with $b$ the tail of the root edge; and with $c$ somewhere clockwise from $b$ along the boundary. Each of these trees contains a unique infinite directed path ending at $a$, $b$, and $c$ respectively.
        \item An infinite sequence of two-ended yellow, red, and blue trees, each containing a unique infinite left-directed path, so that the order of the paths moving away from the boundary cycles yellow, red, blue, yellow, red, blue, and so on.
    \end{itemize}
    Furthermore, every vertex on the one-ended infinite yellow path is on the initial boundary, and every vertex on the first two-ended infinite yellow path is either on the initial boundary or has an outgoing blue edge to the one-ended infinite blue path. Every vertex on the remaining infinite paths has an outgoing edge to the path immediately preceding it (also of the same colour as the path immediately preceding it).
\end{theorem}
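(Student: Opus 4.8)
The plan is to read the monochromatic decomposition directly off the peeling process that constructs the maximal Schnyder wood of the UIHPT, using the local Schnyder conditions to control the geometry and maximality to pin down orientations. I would first record the elementary consequences of the local conditions. By the internal condition, every internal vertex has exactly one outgoing edge of each colour, so following outgoing edges of a fixed colour traces a unique forward ray from every vertex; combined with the absence of monochromatic directed cycles in a Schnyder wood, this shows each monochromatic component is a tree oriented towards a root. The non-root boundary condition shows that a boundary vertex has outgoing red and yellow edges but no outgoing blue edge, so every boundary vertex is a root of a blue tree, while the root condition fixes the local picture at the head $a$ and tail $b$ of the root edge, isolating the colours of the distinguished trees.

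Next I would classify each monochromatic tree as finite, one-ended, or two-ended, with maximality as the key input. A bi-infinite monochromatic directed path must have the boundary on its left, since otherwise it would be an anticlockwise cycle, which maximality forbids; hence every infinite monochromatic path is left-directed. The forward ray from any vertex therefore either terminates at a boundary sink, giving a finite tree, or escapes to infinity along a left-directed spine, giving a one- or two-ended tree. To see that the blue trees rooted on the boundary are finite (apart from the distinguished one at $c$), I would use the layered structure of the peeling exploration: outside the distinguished infinite blue structure, blue out-edges progress monotonically towards the initial boundary and reach it in finitely many steps. The distinguished one-ended yellow, red, and blue trees at $a$, $b$, $c$ are then the unique trees carrying an infinite path ending at a boundary root, identified from the root-edge data and the initial sweeps of the peeling frontier.

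The heart of the argument, and the main obstacle, is to produce the infinite sequence of two-ended trees and to prove the precise $Y,R,B$ cyclic ordering. Here I would track the frontier of the peeling process as it winds around the unexplored region: each complete sweep of the frontier deposits one new left-directed spine, and the counterclockwise order of colours around an internal vertex --- outgoing red, a fan of incoming blue, outgoing yellow, a fan of incoming red, outgoing blue, a fan of incoming yellow --- forces the colour of successive spines to advance cyclically through yellow, red, and blue. The genuine difficulty is the non-locality stressed in the introduction: no single peeling process from a fixed root explores most of the half-plane, so the global layering cannot be read off from one exploration. I would instead patch together the colourings produced by the sequence of peeling processes used to prove \cref{thm:main1} and show that the ``layer index'' of a spine, and hence its colour, is well-defined and increases by exactly one across each spine, independently of the exploration that reveals it. Showing that each spine is the \emph{unique} infinite left-directed path in its tree amounts to checking that all other vertices of that colour send their forward rays onto the spine in finitely many steps.

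Finally, the ``furthermore'' statements follow from a local analysis of the spines once the layering is in place. A vertex on an infinite path of a given colour has a unique outgoing edge of the colour of the preceding layer, and I would show this edge lands on the preceding spine (or on the initial boundary, in the two yellow boundary cases) using that consecutive layers are nested and adjacent in the peeling order. The assertions that the one-ended yellow path lies entirely on the initial boundary, and that each vertex of the first two-ended yellow path is either on the boundary or sends a blue edge to the one-ended blue path, are the base cases of this analysis and are read off directly from the root condition and the first sweep of the peeling frontier.
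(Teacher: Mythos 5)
Your overall plan --- read the decomposition off the peeling/chiselling construction, use the local Schnyder conditions for the edge relations between consecutive paths, and use maximality to force left-directedness of bi-infinite paths --- has the right shape, and the last of these is a legitimate shortcut once \cref{thm:main1} is granted. However, the central step is asserted rather than proved, and the mechanism you describe is not the one that actually produces the layering. In the paper, one exploration (one Schnyder wood strip or segment) contributes \emph{three} spines at once, identified explicitly as combinatorial features of that exploration: the yellow spine is the set of heads of the chords chosen by right steps (which is why it lies on the lower boundary), the red spine is the set of tails of those chords, and the blue spine is the sequence of peeling vertices $\peelv{x}{i_\ell}$ at the steps completing each boundary layer; the $Y,R,B$ cycle then comes from stacking strips, whose upper boundaries become the next strip's lower boundary. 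Your picture of ``each complete sweep of the frontier deposits one new left-directed spine, and the anticlockwise order of colours around a vertex forces the colour to advance cyclically'' cannot be turned into a proof as stated: the local conditions together with the absence of finite anticlockwise cycles do \emph{not} determine the layer structure (the paper's \cref{fig:distinct_max_schnyder_halfplane} exhibits two such colourings of the half-plane lattice with different global structure), so either you invoke the full construction explicitly or you need a genuinely new argument from maximality alone, which you do not supply.

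Two further concrete gaps. First, your argument that the non-distinguished blue trees are finite --- ``blue out-edges progress monotonically towards the initial boundary and reach it in finitely many steps'' --- only shows that each blue forward ray is finite; it does not show the trees are finite, nor does it identify which boundary vertex a given ray terminates at. The paper instead shows that after the step $i_1$, every upper-boundary vertex already has a blue edge into the tree rooted at $\peelv{x}{i_1}=b_z$, so the other boundary-rooted blue trees can never grow again. Second, your uniqueness argument (``all other vertices of that colour send their forward rays onto the spine in finitely many steps'') addresses uniqueness of the infinite path \emph{within} a tree but not the possibility of additional infinite monochromatic trees in a layer; the paper rules these out by observing that deleting the upper boundary, the three spines, and the connecting edges disconnects the strip into finite components, so no further infinite path can exist. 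Without these three pieces the proof is incomplete.
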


In fact, the structure of the outgoing edges between the infinite monochromatic paths described in \cref{thm:intro_monochromatic_schnyder_wood_structure} mirrors the structure of the monochromatic paths starting from any fixed vertex. For instance, if we consider the three outgoing monochromatic paths starting from a fixed vertex in a maximal Schnyder wood of a finite triangulation, then every vertex on the blue path has an outgoing red edge to the red path, every vertex on the red path has an outgoing yellow edge to the yellow path, and every vertex on the yellow path is either on the boundary of the triangulation or has an outgoing blue edge to the blue path. As far as we are aware, this structure has not been previously described even for finite triangulations. Using our peeling process, we show in \cref{thm:finite_paths_from_v_wind,thm:finite_boundary_paths_from_v_wind,thm:uihpt_paths_from_v_wind} that the monochromatic paths have this structure in maximal Schnyder woods of finite triangulations, infinite triangulations with finite boundary, and, with appropriate modifications, also the UIHPT.

We leave establishing the UIHPT analogue of \cref{thm:intro-uipt} as a conjecture. We will discuss the difficulties of proving this conjecture as well as some of its potential consequences for the structure of geodesic paths and the winding behaviour of the monochromatic trees in a typical large finite triangulation in \cref{ssec:structure:uihptlimitoffinite}.

\begin{conjecture}\label{conj:uihpt_as_limit}
    The unique maximal Schnyder wood of the UIHPT is the weak limit of the maximal Schnyder wood of a uniformly random triangulation from $\cT_n^m$ as $n \to \infty$ and then $m \to \infty$.
\end{conjecture}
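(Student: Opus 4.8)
The plan is to split the iterated limit and treat its two stages separately. The inner limit is already supplied by \cref{thm:intro-uipt}: for each fixed $m$, the maximal Schnyder wood of a uniformly random element of $\cT_n^m$ converges weakly as $n \to \infty$ to the maximal Schnyder wood of the UIPT with boundary length $m$. It therefore suffices to prove that, as $m \to \infty$, the maximal Schnyder wood of the UIPT with boundary length $m$ converges weakly, in the local topology on edge-coloured and edge-oriented rooted triangulations, to the maximal Schnyder wood of the UIHPT. One first records that the underlying undirected, uncoloured triangulations already converge: the UIPT with boundary length $m$ tends locally to the UIHPT as $m \to \infty$, which is known for these maps. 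The content of the conjecture is thus that the Schnyder decoration converges as well.

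Next I would establish tightness and identify the possible subsequential limits. Since each edge carries a decoration from the finite alphabet of three colours and two orientations, and the underlying triangulations converge, the sequence of decorated triangulations is automatically tight, and every subsequential limit is a colouring and orientation of the UIHPT. The Schnyder vertex, boundary, and root conditions of \cref{fig:schnydercondition1} involve only the edges incident with a single vertex, so they define closed local events that pass to the limit; hence every subsequential limit is a genuine Schnyder wood of the UIHPT. Likewise, the absence of a finite anticlockwise directed cycle, which as in the finite theory reduces to ruling out anticlockwise directed triangles, is a local and closed condition and so is inherited by every subsequential limit.

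The main obstacle is to upgrade this to full maximality, that is, to show that no subsequential limit contains a two-ended infinite directed monochromatic path with the triangulation boundary on its right. Unlike the conditions above, this is a genuinely global constraint, and it is precisely here that the non-locality of the winding peeling process intervenes: on the UIPT with boundary length $m$ the maximal wood is built by a peeling process that winds once around the length-$m$ boundary, so determining the colour of an edge near the root requires exploring a region whose diameter grows with $m$, and a priori an anticlockwise infinite path could emerge in the limit from paths that, in each finite approximation, only close up far outside any fixed ball. To rule this out I would transfer the structural description of the monochromatic trees---\cref{thm:finite_boundary_paths_from_v_wind} in the finite-boundary case and \cref{thm:intro_monochromatic_schnyder_wood_structure,thm:uihpt_paths_from_v_wind} for the UIHPT---to the limit, showing that every two-ended infinite path that survives is of the permitted left-directed type, with the boundary on its left, rather than anticlockwise with the boundary on its right. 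The crux is a locality, or stability, estimate for the colour of a fixed edge: one must bound, uniformly in $m$ and with high probability, how much of the winding exploration actually influences the colours in a fixed ball around the root, so that these colours match those produced by the multi-root construction of the UIHPT wood used in the proof of \cref{thm:main1}. I expect controlling this winding---quantifying how far the peeling must travel before the colours near the root stabilise---to be the decisive and most difficult step.

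Finally, the uniqueness half of \cref{thm:main1} closes the argument. Once every subsequential limit has been shown to be a maximal Schnyder wood of the UIHPT, it must coincide with the unique such wood, so the full sequence converges to it. Combined with the reduction via \cref{thm:intro-uipt}, this yields the iterated weak limit asserted in the conjecture.
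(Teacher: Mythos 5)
This statement is a \emph{conjecture} in the paper: the authors explicitly state that they have been unable to prove it, and \cref{ssec:structure:uihptlimitoffinite} is devoted to explaining why. Your proposal is not a proof either; it is an outline that arrives at precisely the impasse the authors describe. The reductions you carry out are sound but easy: the inner $n \to \infty$ limit is \cref{thm:intro-uipt}; tightness of the decorated maps is automatic from the finite decoration alphabet; and the Schnyder vertex/boundary/root conditions and the absence of anticlockwise triangles are local closed events, so they pass to any subsequential limit. None of this is the content of the conjecture. The content is the step you yourself flag as ``the decisive and most difficult step'' and then do not carry out: showing that no subsequential limit contains a right-directed infinite path, equivalently that the colours in a fixed ball around the root stabilise before the winding peeling exploration of the length-$m$ boundary travels too far. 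Without that estimate, a subsequential limit could be a Schnyder wood of the UIHPT with no finite anticlockwise cycles but with right-directed paths --- and \cref{fig:distinct_max_schnyder_halfplane} shows such non-maximal woods genuinely exist on half-plane triangulations, so the uniqueness statement \cref{thm:max_schnyder_unique_infinite} cannot be invoked to close the argument until maximality (including the no-right-directed-path clause) is established for the limit.

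Concretely, the gap is this: on the UIPT of boundary length $m$, the peeling process that constructs the maximal wood winds once around the entire boundary before returning near the root, so the colour of an edge at bounded distance from the root is a priori a function of an unbounded region as $m \to \infty$. You need a quantitative locality statement --- uniform in $m$ --- asserting that with high probability the colouring of $\ball{\rho}{r}$ produced by this single winding exploration agrees with the colouring produced by the multi-root chiselling construction of \cref{sec:structure}. Nothing in your proposal, and nothing in the paper, supplies such a statement; the paper offers only computational evidence (the winding-number heuristics of \cref{ssec:structure:uihptlimitoffinite}) that it should hold. So the proposal correctly maps the terrain but leaves the conjecture exactly as open as the authors do.
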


While our results establish that unique maximal Schnyder woods exist for all infinite triangulations with finite boundary, for triangulations of the half-plane we only establish existence and uniqueness on the UIHPT. It follows from our construction that Schnyder woods exist on other triangulations of the half-plane with certain similar properties, but it would be interesting to know whether Schnyder woods exist on all half-plane triangulations, and if not, to characterise the ones for which they do exist. It follows from our results in \cref{sec:infiniteschnyderwoods} that whenever a maximal Schnyder wood of the half-plane exists, it is also unique.

Finally, we note that related research has studied Schnyder woods associated to uniformly random wooded triangulations with $n$ faces, which are quite different from maximal Schnyder woods of uniformly random triangulations with $n$ faces. In particular, it follows from the results of \cite{bernardi2009intervals} that the heights of the former trees are $\Theta(\sqrt{n})$ in probability and in expectation, whereas the height of a uniformly random vertex in one of the latter trees has expected value of order $n^{3/4}$ \cite[Theorem 1.2]{chapuy2024scaling}. It would be interesting to understand the local limit of uniformly random wooded triangulations as the boundary length tends to infinity, and whether such wooded triangulations have a local limit, which would be a random wooded triangulation of the half-plane different from the one constructed in this work.

\subsection{Outline}
In \cref{sec:prelim} we provide necessary definitions and preliminary results on finite and infinite triangulations and the theory of Schnyder woods. In particular, we describe a version of the `peeling process' often used for constructing Schnyder woods of finite triangulations. In \cref{sec:infiniteschnyderwoods} we generalise the definition of a maximal Schnyder wood to infinite triangulations and show that maximal Schnyder woods of infinite triangulations are unique, provided they exist. We also discuss a modified peeling process for exploring infinite triangulations. The remainder of the paper focuses on using this peeling process to prove the existence of Schnyder woods of infinite triangulations.

In \cref{sec:infinite_triang_finite_bdry} we show that the peeling process will construct a maximal Schnyder wood of any infinite triangulation with finite boundary. In particular, on the UIPT, we show that the maximal Schnyder wood constructed by the peeling process is the weak limit of the maximal Schnyder woods of finite triangulations as the number of internal vertices tends to infinity. We further use the process to establish some structural features of maximal Schnyder woods of triangulations with finite boundary.

In \cref{sec:uihpt} we show that for triangulations with infinite boundary, this peeling process is \emph{not} guaranteed to explore the entire triangulation. In particular, the peeling process almost surely does not explore the entire UIHPT, and instead explores a narrow `segment' along the initial boundary right of the root edge. Afterwards, in \cref{ssec:uihpt:segment} and \cref{ssec:uihpt:strip}, we prove a sequence of results about the behaviour of the peeling process to establish that repeated initiations of the process can be used to produce a well-defined colouring and orientation of the UIHPT.

In \cref{sec:structure} we are finally able to prove that there is a Schnyder wood on the UIHPT which can be constructed through repeated use of the peeling process. Furthermore, we show that the Schnyder wood constructed in this manner is a maximal Schnyder wood, and therefore unique. We prove this using a sequence of results that provide a detailed description of the structure of the maximal Schnyder wood of the UIHPT.

\subsection{Acknowledgements}
The authors would like to thank Nicolas Curien and Jean-Fran\c{c}ois Le Gall for an instructive email exchange, and Christina Goldschmidt for many useful conversations. The second and third authors were supported in visiting the first author by the RandNET project, MSCA-RISE -- Marie Sk\l{}odowska-Curie Research and Innovation Staff Exchange Programme (RISE), Grant agreement 101007705. We thank McGill University for hosting us. Research of Louigi Addario-Berry was supported by NSERC and by the Canada Research Chairs program. Research of Emma Hogan was supported by EPSRC grant EP/W524311/1. Research of Alex Scott was supported by EPSRC grant EP/X013642/1.

\section{Preliminaries}\label{sec:prelim}

All graphs in this paper are assumed to be simple, locally finite, and \defn{one-ended}, meaning that after the deletion of any finite set of vertices, at most one infinite connected component remains. For an integer $m \geq 3$, we say that a \defn{triangulation $T$ of an $m$-gon} is a planar embedding of a finite or infinite $2$-connected graph $G$ such that one distinguished face of $G$ is bounded exactly $m$ edges, and every other face is bounded by exactly $3$ edges. When $m=3$, so $T$ is a triangulation of a triangle, we will simply refer to $T$ as a \defn{triangulation} without specifying $m$. Similarly, a \defn{triangulation of the half-plane} is a planar embedding of an infinite $2$-connected graph $G$ such that one distinguished face is incident with an infinite number of edges, and all other faces are bounded by exactly $3$ edges.

Given a triangulation $T$ of an $m$-gon or of the half-plane, we refer to the unique distinguished face as the \defn{exterior} face. We call the boundary of the exterior face the \defn{boundary} of $T$, the vertices on the boundary of $T$ \defn{boundary vertices}, and all vertices of $T$ not on the boundary \defn{interior vertices}. All triangulations $T$ considered in this paper are \defn{rooted} which means that there is a unique distinguished directed edge $(x,y)$ on the boundary of $T$, called the \defn{root edge} of $T$, so that the exterior face lies to the right of this root edge. We call the tail of the root edge the \defn{root vertex} of $T$.

Given a triangulation $T$, we will often refer to traversing some substructure of $T$ in clockwise or anticlockwise direction. Since such triangulations are often defined on the sphere, we specify for clarity that this refers to the clockwise or anticlockwise direction in the plane.

We now provide some background on finite and infinite triangulations which we will need to define Schnyder woods of infinite triangulations such as the UIPT and UIHPT later in the paper. First, in \cref{ssec:prelim:fintriangulations} we provide some preliminary combinatorial and probabilistic facts about finite rooted triangulations. We mainly use this section to recall some counting results on the number of triangulations and to define the free distribution on rooted triangulations. This will be useful for arguing about the distribution of finite sub-triangulations of the UIHPT.

In \cref{ssec:prelim:inftriangulations}, we give the relevant preliminary results relating to infinite triangulations. In particular, we define the UIPT and UIHPT and list some of their properties.

\cref{ssec:prelim:schnyderwoods} focuses on Schnyder woods of finite triangulations. We define Schnyder woods and recall some of their properties. We also define the Schnyder peeling process which finds the unique maximal Schnyder wood of any finite triangulation. We will generalise this process to infinite triangulations in \cref{sec:infiniteschnyderwoods}, which will allow us to establish the existence of the Schnyder wood of arbitrary infinite triangulations with finite boundary in \cref{sec:infinite_triang_finite_bdry}, and on the UIHPT in \cref{sec:structure}. 

Finally, we record a small typographical point. Some of the proofs in the paper have proofs of sub-claims embedded within them. We use {\small $\square$} to indicate the end of top-level proofs, and {\small $\blacksquare$} to indicate the end of the proofs of such sub-claims.

\subsection{Finite triangulations}\label{ssec:prelim:fintriangulations}

Recall that $\cT_n^m$ denotes the set of rooted triangulations of an $(m+2)$-gon that have $n$ interior vertices, and let $\nt_{m,n} \coloneqq \abs{\cT_n^m}$\glossarylabel{gl:ntnm} denote the number of such rooted triangulations. In order to later construct a Schnyder wood of the UIHPT, we will rely on a number of distributional properties of the UIHPT. In particular, we will use several of the enumerative results employed by Angel and Schramm~\cite{angel2003uipt} to establish the existence of the UIPT. The results in this section are based largely on techniques originally pioneered by Tutte~\cite{tutte1962census}. The formula for $\nt_{m,n}$ is derived in~\cite{brown1964}, and a full outline of the subsequent asymptotic results given in this section can be found in~\cite{goulden2004}. The value of $\nt_{m,n}$ is
\[
    \nt_{m,n} = \frac{2(2m+1)!(4n+2m-1)!}{(m-1)!(m+1)!n!(3n+2m+1)!}.
\]
To calculate distributions for the UIHPT, the asymptotic behaviour of $\nt_{m,n}$ will be relevant. As $n \to \infty$, this asymptotic behaviour is
\[
    \nt_{m,n} \sim \ntfact_m \ntbase^n n^{-5/2}
\]
where $\ntbase = 256/27$ and
\[
    \ntfact_m \coloneqq \frac{2(2m+1)!}{6\sqrt{6\pi}(m-1)!(m+1)!} \left(\frac{16}{9}\right)^m \sim \frac{1}{3\pi \sqrt{6}} \left(\frac{64}{9}\right)^m m^{1/2}
\]
as $m \to \infty$.

As a consequence of the definitions of the UIPT and UIHPT (given in the following subsection), we will generally work with balls $\ball{\rho}{v}$\glossarylabel{gl:ball} of radius $\rho$ around the root edge of such a triangulation. For a UIPT or UIHPT $T$, the graph $T \setminus \ball{\rho}{v}$ consists of one infinite triangulation, and a (possibly empty) collection of finite triangulations. It turns out that these finite triangulations are distributed according to the so-called free distribution, which we now define. Let
\[
    \fdgen_m(t) \coloneqq \sum_n \nt_{m,n} t^n.
\]
Then, the \defn{free distribution} on rooted triangulations of an $(m+2)$-gon, denoted by $\fd_m$, is the probability measure on $\cT^m \coloneqq \bigcup_{n \ge 0} \cT_n^m$ that assigns weight
\[
    \fd_m(T) \coloneqq \frac{\ntbase^{-n}}{\fdgen_m(\ntbase^{-1})}
\]
to each triangulation $T \in \cT_n^m$. Note that this is a Boltzmann distribution, but not the only one: in the definition of $\fd_m$ we could replace $\ntbase^{-1}$ by any value $t \in (0, \ntbase^{-1})$ to get a different Boltzmann distribution. For any $t \in (0, \ntbase^{-1}]$, it holds that 
\[
    \fdgen_m(t) = \frac{(2m)!((1-4\theta)m+6\theta)}{m!(m+2)!} (1-\theta)^{-(2m+1)},
\]
where $\theta$ and $t$ are related by the formula $t = \theta (1-\theta)^3$. When $t = \ntbase^{-1}$ this yields that 
\[
    \fdgen_m \coloneqq \fdgen_m(\ntbase^{-1}) = \fdgen_m\left(\frac{27}{256}\right) = \frac{2(2m)!}{m!(m+2)!} \left(\frac{16}{9}\right)^m.
\] 

With the standard convention that $0! = 1$, this formula gives $\fdgen_0 = 1$.

\subsection{Infinite triangulations}\label{ssec:prelim:inftriangulations}

In this paper, we are concerned with two types of infinite triangulations: triangulations of polygons (infinite triangulations with finite boundary), and triangulations of the half-plane (infinite triangulations with infinite boundary). In particular, we will work with certain random infinite triangulations of these two types, which we now define.

Let $\tau_n^m$ denote the uniform distribution on $\cT_n^m$. It is shown in \cite{angel2003uipt} and \cite{angel2003growth} that as $n \to \infty$, the measures $\tau_n^m$ converge to a probability measure $\tau^m$ on infinite planar triangulations of the $(m+2)$-gon. In other words, if $T_n^m$ is $\tau_n^m$-distributed and $T^m$ is $\tau^m$-distributed and $v$ denotes the root vertex of these triangulations, then for any radius $\rho$ and finite triangulation $T$ of an $(m+2)$-gon, we have 
\[
\lim_{n \to \infty} \tau_n^m (\ballt{\rho}{v}{T_n^m} = T) = \tau^m(\ballt{\rho}{v}{T^m} = T)
\]
where $\ballt{\rho}{v}{T}$\glossarylabel{gl:ballt} is the ball of radius $\rho$ around $v$ in $T$ under the graph metric. A planar triangulation sampled from $\tau^m$ is known as a \defn{uniform infinite planar triangulation (UIPT)} of the $(m+2)$-gon. In \cite{angel2005scaling}, it is further shown that as $m \to \infty$, the measures $\tau^m$ tend to a weak limit $\tau$, which is a probability measure supported by infinite planar triangulations of the half-plane. In other words, if $T^\infty$ is $\tau$-distributed and has root vertex $v$, then for any any radius $\rho$ and any finite triangulation $T$, we have
\[
\lim_{m \to \infty} \tau^m (\ballt{\rho}{v}{T^m} = T) = \tau(\ballt{\rho}{v}{T^\infty} = T).
\]
A triangulation sampled according to $\tau$ is known as a \defn{uniform infinite half-planar triangulation (UIHPT)}.

Random infinite triangulations can be sampled via a step-by-step exploration process. This method was formalised in \cite{angel2003growth}, and in particular applied to the UIPT. A formal approach to sampling the UIHPT in this manner is described in \cite{angel2015percolations}. Broadly, the strategy is to start with a boundary of the required length and some selected edge, and sample the unique triangle $F$ that this edge lies on. With some probability, the third vertex $v$ of this triangle is an interior vertex, in which case the remaining unexplored region has the same UIPT or UIHPT distribution which we may continue exploring in the same manner. Otherwise, $v$ lies somewhere on the boundary, and removing $F$ partitions the region into a finite region with boundary length $\ell+2$ and an infinite region. The distribution on the finite region is the free distribution $\mu_\ell$ while the distribution on the infinite region is once again the UIPT or UIHPT distribution, and the triangulations of these regions are distributed independently. We may thus sample the triangulation on the finite region according to $\mu_\ell$, and continue exploring the remainder of the UIPT or UIHPT in the same manner as before. For further details on defining and sampling infinite planar triangulations, see \cite{angel2003uipt, angel2003growth} and for more details on UIHPTs see \cite{angel2005scaling, angel2015percolations}. In \cref{sec:infiniteschnyderwoods} we will explain how to explore infinite triangulations using a modified version of this exploration algorithm based on the Schnyder peeling process, which we define in \cref{ssec:prelim:schnyderwoods}. 

We will use two well-known properties of the UIPT and UIHPT. First, as already noted, we only consider one-ended triangulations in this paper, so we remark that both these objects are indeed one-ended. Secondly, we will sometimes consider the distribution of an infinite triangulation after it has been rerooted at a different edge on the boundary, and rely on the fact that for both the UIPT and UIHPT, the distribution is unaffected by this. For the UIPT, these results are proved in \cite{angel2003uipt}. For the UIHPT, they are shown in \cite{angel2005scaling} and formally stated in \cite[Section 2.1]{angel2015percolations}.

Let $T$ be either a UIPT or UIHPT with boundary denoted by $(b_i, i \in \mathbb{Z})$ (where we take $b_i = b_j$ whenever $i \cong j \mod m$ if $T$ is a UIPT with boundary length $m$).

\begin{lemma}[one-endedness]
    $T$ is almost surely one-ended.
\end{lemma}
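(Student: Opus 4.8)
The plan is to prove one-endedness of the UIPT and UIHPT by exploiting the peeling/exploration description recalled just above the lemma, together with the fact that these measures are weak limits of uniform measures on finite triangulations. The key structural input is the step-by-step exploration: at each peeling step we reveal a triangle, and whenever the third vertex lies on the current boundary, removing the triangle splits the unexplored region into a \emph{finite} piece (distributed according to some free distribution $\fd_\ell$) and an \emph{infinite} piece that again has the UIPT or UIHPT law. This dichotomy—finite piece plus one infinite piece—is precisely the combinatorial manifestation of one-endedness, so the proof amounts to promoting this local statement into a global one.

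First I would set up the standard definition: $T$ is one-ended if for every finite set $S$ of vertices, $T \setminus S$ has at most one infinite connected component. It suffices to check this for the balls $S = \ball{\rho}{v}$, since any finite set is contained in some such ball and deleting a larger set can only merge, not create, infinite components. As recorded in the excerpt, for a UIPT or UIHPT $T$ the complement $T \setminus \ball{\rho}{v}$ consists of exactly one infinite triangulation together with a (possibly empty) collection of \emph{finite} triangulations distributed according to free distributions. I would take this decomposition as the heart of the argument: if every $\ball{\rho}{v}$-complement has exactly one infinite component almost surely, then $T$ is one-ended almost surely, and a countable intersection over $\rho \in \N$ preserves the almost-sure statement.

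The remaining task is therefore to justify that $T \setminus \ball{\rho}{v}$ has a single infinite component. Here I would argue via the weak convergence $\tau_n^m \to \tau^m$ (and $\tau^m \to \tau$). The ball $\ball{\rho}{v}$ is a local, radius-$\rho$ observable, so its law under $\tau$ is the limit of its law under the finite measures $\tau_n^m$. Crucially, the number of ``holes''—the finite boundary cycles separating $\ball{\rho}{v}$ from the rest—and their boundary lengths are determined by $\ball{\rho+1}{v}$, again a local observable. In a finite triangulation each hole is filled by a finite triangulation, and passing to the limit exactly one designated hole becomes infinite (the one containing the growing bulk of vertices), while the rest remain finite almost surely; this is what the free-distribution description encodes. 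I would phrase this using the exploration interpretation: one may peel $\ball{\rho}{v}$ from $T$, and the ``complementary region'' of the exploration carries a single UIPT/UIHPT law on one infinite component, with the other explored holes being finite free-distributed triangulations.

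The step I expect to be the main obstacle is rigorously certifying that at most \emph{one} hole is infinite, rather than appealing to it as folklore. The cleanest route is to invoke the cited results of Angel and Schramm \cite{angel2003uipt} and Angel \cite{angel2003growth} directly, since the stated decomposition of $T \setminus \ball{\rho}{v}$ into one infinite plus finitely many finite triangulations is established there and is essentially equivalent to one-endedness; the lemma is then a restatement for the record rather than a genuinely new result. If instead a self-contained argument is wanted, the obstacle is controlling the limit of the hole structure: one must show that the measure $\fd_\ell(\abs{T} = \infty) = 0$, i.e.\ that a free-distributed triangulation of an $(m+2)$-gon is almost surely finite, which follows because $\fdgen_m(\ntbase^{-1})$ is finite (the generating function converges at the radius of convergence), so $\fd_m$ is a genuine probability measure supported on finite triangulations. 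Combining finiteness of every free piece with the fact that the exploration leaves exactly one region of UIPT/UIHPT type then yields a unique infinite component, completing the proof.
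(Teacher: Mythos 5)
The paper does not actually prove this lemma: it is stated as a known property of the UIPT and UIHPT, with one-endedness attributed to \cite{angel2003uipt} for the UIPT and to \cite{angel2005scaling,angel2015percolations} for the UIHPT. Your primary recommendation---invoking those references directly---is therefore exactly the paper's treatment, and is fine. Regarding your supplementary self-contained sketch, one caution: you propose to take as ``the heart of the argument'' the paper's statement that $T \setminus \ball{\rho}{v}$ decomposes into one infinite triangulation plus finitely many free-distributed finite ones, but that statement is essentially a restatement of one-endedness, so using it as input is circular. The non-circular version of your sketch is the one you give at the end: use the spatial Markov property of the peeling process (each step leaves one region with the UIPT/UIHPT law and, when a chord is created, one region with law $\fd_\ell$), observe that $\fd_\ell$ is a genuine probability measure supported on \emph{finite} triangulations because $\fdgen_\ell(\ntbase^{-1}) < \infty$, and note that a targeted peeling covers any fixed ball in almost surely finitely many steps by local finiteness. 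That is essentially the Angel--Schramm argument, and it is sound, but the spatial Markov property itself is the content of the cited works, so in either formulation the lemma ultimately rests on the literature rather than on anything proved in this paper.
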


\begin{lemma}[translation invariance] \label{lem:translation_invariance}
    For all $k \in \mathbb{Z}$, the distribution of $T$ rerooted from edge $(b_{k-1},b_{k})$ to edge $(b_{k},b_{k+1})$ is the same as the original distribution of $T$.
\end{lemma}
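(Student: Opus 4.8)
The plan is to exploit the definitions of the UIPT and UIHPT as local weak limits of the uniform measures $\tau_n^m$ on $\cT_n^m$, and to reduce the claim to an elementary combinatorial symmetry at the finite level. Write $\sigma$ for the operation of advancing the root one step along the boundary, which sends a triangulation rooted at $(b_{k-1},b_k)$ to the same triangulation rooted at $(b_k,b_{k+1})$; the assertion is precisely that $\sigma$ preserves the law of $T$. I would prove this first for the UIPT $\tau^m$ and then for the UIHPT $\tau$, using the same mechanism twice, since $\tau^m=\lim_{n\to\infty}\tau_n^m$ and $\tau=\lim_{m\to\infty}\tau^m$.

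The combinatorial core is that $\sigma$ is a measure-preserving bijection at the finite level. On $\cT_n^m$ the boundary of a triangulation of the $(m+2)$-gon is a distinguished cycle carrying a canonical traversal direction, namely the one keeping the exterior face on the right, so ``the next boundary edge'' is determined by the rooted structure alone and is respected by every isomorphism of rooted triangulations. Hence $\sigma$ is a well-defined self-map of the finite set $\cT_n^m$, and its inverse is the backward shift, so it is a bijection. As $\tau_n^m$ is the uniform measure on this finite set, it is automatically $\sigma$-invariant: $\sigma_\ast\tau_n^m=\tau_n^m$ for all $n$ and $m$.

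It then remains to pass to the limit, and the key point is that $\sigma$ is continuous in the local topology. Indeed, for any radius $\rho$ the ball $\ballt{\rho}{b_k}{T}$ around the new root is a deterministic function of the ball of radius $\rho+1$ around the old root $b_{k-1}$, since $b_k$ is a neighbour of $b_{k-1}$ and its incident boundary edges are already visible within radius $\rho+1$. Pushing forward under a continuous map commutes with weak convergence, so combining this with the finite-level identity gives $\sigma_\ast\tau^m=\lim_n\sigma_\ast\tau_n^m=\lim_n\tau_n^m=\tau^m$ by uniqueness of weak limits, which settles the UIPT case. Repeating the identical continuity-plus-uniqueness argument along the sequence $\tau^m\to\tau$ then yields $\sigma_\ast\tau=\tau$, establishing the claim for the UIHPT.

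The main obstacle is making the continuity of $\sigma$ rigorous by confirming that the single-edge reroot is genuinely a \emph{local} operation on the limit object. For the UIPT this is immediate, as the entire finite boundary sits within a bounded radius of the root. For the UIHPT one must check that the position of the root along the two-ended boundary indexed by $\mathbb{Z}$, and in particular which edge counts as ``next,'' is determined by a bounded neighbourhood of the root; this holds because the new root vertex is a neighbour of the old one and its incident boundary edge is seen within radius $\rho+1$. Once this locality is established, the remainder is the routine fact that the pushforward of a weakly convergent sequence of probability measures under a continuous map converges to the pushforward of the limit.
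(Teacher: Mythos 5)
Your argument is correct: rerooting one step along the boundary is a bijection of the finite set $\cT_n^m$, hence preserves the uniform measure $\tau_n^m$, and since the ball of radius $\rho$ about the new root is determined by the ball of radius $\rho+1$ about the old root, the shift is continuous in the local topology, so invariance passes to the weak limits $\tau^m$ and $\tau$. The paper does not prove this lemma itself but cites \cite{angel2003uipt}, \cite{angel2005scaling}, and \cite{angel2015percolations}, where essentially this same finite-bijection-plus-locality argument is used, so your proposal fills in the standard proof rather than diverging from it.
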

 
Note that we will always represent triangulations of the half-plane with their infinite boundary at the bottom of the triangulation. Because of this, we will often describe vertices anticlockwise along the boundary of any infinite triangulation (that is, following the boundary in the direction determined by the root edge) as being to the right, and vertices clockwise along the boundary of a triangulation as being to the left. We remark for clarity that we will use this description even in cases where the boundary itself is finite.

\subsection{Schnyder woods}\label{ssec:prelim:schnyderwoods}

Let $T \in \cT_n^1$ be a rooted triangulation with boundary vertices $v_b,v_r,v_y$ and root edge $(v_r,v_y)$. A \defn{$3$-orientation} of $T$ is an orientation of the edges of $T$ such that every interior vertex of $T$ has exactly $3$ outgoing edges, and $v_b$, $v_r$, and $v_y$, have $2$, $1$, and $0$ outgoing edges respectively. 

A \defn{Schnyder wood} of $T$ is a $3$-colouring of the edges of a $3$-orientation of $T$ in red, yellow, and blue such that every incoming edge to $v_b$, $v_r$, and $v_y$ is coloured blue, red, and yellow respectively, and such that every interior vertex satisfies the following properties, called the \defn{Schnyder condition}.
\begin{itemize}
    \item The $3$ outgoing edges at each interior vertex are coloured blue, red, and yellow in anticlockwise cyclic order, and
    \item Every incoming edge at each interior vertex enters between the outgoing edges of the two other colours.
\end{itemize}
An example of a vertex satisfying the Schnyder condition is given in \cref{fig:schnydercondition1}.

Well-known work of Schnyder \cite{schnyder1990} established that every triangulation $T$ has a Schnyder wood. Since every Schnyder wood is a $3$-orientation by definition, it follows that every triangulation has a $3$-orientation. Conversely, it can also be shown that every $3$-orientation of $T$ induces a unique Schnyder wood of $T$ \cite{brehm2000orientations}. Schnyder's work further established the following classic theorem about the structure of Schnyder woods. We say a directed tree is \defn{rooted at} $v$ if for every vertex $u$ in the tree, the unique path in the tree from $u$ to $v$ is directed towards $v$.

\begin{theorem}[Schnyder, \cite{schnyder1990}] \label{thm:schnyderwood_structure}
    Let $T_b$, $T_r$, and $T_y$ be respectively, the subgraphs of $T$ induced by the blue, red, and yellow edges in a Schnyder wood of $T$. Then for each $c \in \{b,r,y\}$, $T_c$ is a tree rooted at vertex $v_c$ which contains every interior vertex of $T$.
\end{theorem}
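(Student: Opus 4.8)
The plan is to prove each $T_c$ is a tree in three stages: a local degree count, the acyclicity of each colour class, and a short combinatorial deduction that a finite out-degree-one digraph with a unique sink and no directed cycle is a spanning tree directed toward that sink. The degree count and the final deduction are routine; the real content is the acyclicity.

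First I would record the out-degrees. The Schnyder condition forces every interior vertex to have exactly one outgoing edge of each colour, so in each $T_c$ every interior vertex has out-degree exactly $1$. Reading off the root and boundary conditions in \cref{fig:schnydercondition1} pins down the three special vertices in the same way: $v_c$ has no outgoing $c$-edge (its incident $c$-edges are all incoming), while each of the other two special vertices either emits a single $c$-edge or carries no $c$-edge at all. In particular every interior vertex is incident with a $c$-edge and so lies in $T_c$, which gives the ``contains every interior vertex'' clause; and within $T_c$ the vertex $v_c$ is the unique vertex of out-degree $0$, every other vertex of $T_c$ having out-degree exactly $1$.

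Next comes the crux: no colour class contains a directed cycle. I would prove this by minimality. Suppose some colour class does, and among all directed monochromatic cycles in all three colours choose one, $C$, bounding the closed disk $D$ not containing the exterior face, with $D$ containing as few faces as possible. Say $C$ is blue. Since $v_y,v_r,v_b$ have no outgoing blue edge, every vertex of $C$ is an interior vertex of $T$, and so also lies strictly off the exterior boundary, as does every vertex strictly inside $D$; hence every such vertex has an outgoing red and an outgoing yellow edge. Applying the Schnyder condition at a vertex $v$ of $C$---where the incoming blue cycle edge lies between the outgoing red and outgoing yellow edges and the outgoing blue cycle edge continues $C$---one checks that the outgoing red and outgoing yellow edges at $v$ leave into the two opposite sides of $C$; and since the local picture is identical at every vertex of $C$, the side into which red points is the same all around $C$. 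Thus, after possibly swapping the roles of red and yellow, every vertex of $C$ sends its outgoing red edge into $D$. Now start at any vertex of $C$ and repeatedly follow outgoing red edges: by planarity no edge crosses the Jordan curve $C$, the red edges leaving $C$ point inward, and every vertex strictly inside $D$ sends its red edge again to a vertex of $\bar D$, so the red walk is trapped in $\bar D$. As every vertex it meets has red out-degree exactly $1$, the walk is infinite and hence closes into a directed red cycle $C'$. Since $C'$ is a directed cycle contained in $\bar D$ and, being red, distinct from $C$, the boundary of $D$ is not $C'$, so $C'$ bounds a region strictly contained in $D$ with strictly fewer faces, contradicting the minimality of $C$.

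Finally I would assemble the tree statement. By the first two stages $T_c$ is a finite digraph with no directed cycle in which $v_c$ has out-degree $0$ and every other vertex has out-degree exactly $1$. Following the unique outgoing edge from any vertex produces a directed walk that cannot repeat a vertex, for that would create a directed cycle, and so terminates at the unique out-degree-$0$ vertex $v_c$. Hence every vertex of $T_c$ is joined to $v_c$ by a directed path, so $T_c$ is connected, and it has exactly one fewer edge than it has vertices, so it is a tree; the constructed paths are directed toward $v_c$, so $T_c$ is rooted at $v_c$. I expect the acyclicity step to be the main obstacle, since that is where the planar Schnyder condition does genuine work---specifically the observation that the two ``off-colour'' outgoing edges sit on opposite sides of a monochromatic cycle, which converts a blue cycle into a smaller red one and drives the minimality argument.
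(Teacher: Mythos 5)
Your proof is correct, but note that the paper does not actually prove this statement: it is quoted as a classical theorem of Schnyder with a citation, so there is no in-paper proof to match it against. The closest argument the paper does give is \cref{lem:infiniteschnyderwood_monochromatic} (monochromatic subgraphs of infinite Schnyder woods are forests), and there acyclicity is obtained by a different, one-shot route: a monochromatic cycle must be directed (each colour has out-degree at most one), each of its $b$ vertices then sends an off-colour outgoing edge into the enclosed disk, and this contradicts the Euler-formula count of \cref{lem:directed_edges_from_cycle}, which says a cycle of length $b$ in a $3$-orientation has exactly $b-3$ edges directed into its interior. Your Stage 2 instead uses the other classical route --- a minimal counterexample, the observation that the two off-colour outgoing edges leave a monochromatic cycle on opposite sides and do so consistently around it, and the trapped red walk that closes into a strictly smaller monochromatic cycle --- and it is carried out correctly, including the planarity and face-counting details that make the descent work. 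The paper's counting argument is shorter and needs no induction; yours avoids the Euler computation entirely. Your Stages 1 and 3 (out-degree bookkeeping, then connectedness plus $\abs{E}=\abs{V}-1$) are fine. The only loose end, which the statement itself also glosses over, is the degenerate case $n=0$, where $T_b$ is empty rather than a tree containing $v_b$.
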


A $3$-orientation or Schnyder wood of $T \in \cT_n^1$ is called \defn{maximal} if it contains no anticlockwise cycle. An example of a maximal Schnyder wood demonstrating the Schnyder condition and the structure in \cref{thm:schnyderwood_structure} is given in \cref{fig:example_schnyder_wood}. Throughout this paper we will largely be concerned with maximal Schnyder woods. The following well-known results about cycles in $3$-orientations of triangulations will be useful. 
\begin{lemma}\label{lem:directed_edges_from_cycle}
    Let $T$ be a $3$-orientation of a triangulation, and let $C$ be a cycle of length $b$ in $T$. Then $C$ has exactly $b-3$ edges directed from the cycle to its interior.
\end{lemma}
\begin{proof}
    Let $n_i$ and $m_i$ denote, respectively, the number of vertices and edges in the interior of $C$, and let $f$ be the number of faces of the subtriangulation of $T$ contained within $C$, including its exterior face. Then Euler's theorem gives that
    $$(b + n_i) - (b+m_i) + f = 2.$$
    Furthermore, since every interior face of $C$ is bordered by exactly $3$ edges, we have $$3(f-1) = 2m_i + b.$$
    Finally, since $T$ is a $3$-orientation, we have that the number of edges directed from $C$ to its interior is precisely $m_i - 3n_i$. Solving for this term using the previous equalities gives the desired result. 
\end{proof}
The next corollary follows immediately from this result.
\begin{corollary}\label{cor:most_clockwise_path}
    Let $T$ be a $3$-orientation of a triangulation, and let $C$ be a directed cycle in $T$ formed by following a path that at each vertex follows the first available outgoing edge as seen clockwise from the incoming edge. Then $C$ is a triangle.
\end{corollary}
The corollary also holds with ``clockwise'' replaced with ``anticlockwise'', and for the same reason. The following useful result of \cite{brehm2000orientations} now shows that it suffices to consider oriented triangles when looking for directed cycles in $T$.
\begin{lemma}[Lemma 1.5.1, \cite{brehm2000orientations}] \label{lem:oriented_triangles}
    Let $T$ be a $3$-orientation of a triangulation. If $T$ contains a directed cycle, then it contains a triangle with the same direction. 
\end{lemma}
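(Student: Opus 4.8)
The plan is to run a minimal-area reduction: starting from any directed cycle, I repeatedly produce a strictly smaller directed cycle of the same orientation until I am forced down to a triangle. By the symmetry noted after \cref{cor:most_clockwise_path} it suffices to treat one orientation, so assume $T$ contains an anticlockwise directed cycle, and among all such cycles let $C$ be one enclosing the fewest faces; write $b$ for its length. I claim $C$ is a triangle. Suppose not, so $b > 3$. By \cref{lem:directed_edges_from_cycle} the cycle $C$ has exactly $b-3 \ge 1$ edges directed from the cycle into its interior, so the closed region bounded by $C$ contains either a chord of $C$ or an interior vertex.

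First I would dispose of chords. If some directed edge $e = (u,v)$ has both endpoints on $C$ and lies in the interior, then, writing $A$ for the arc of $C$ directed from $v$ to $u$, the concatenation of $e$ followed by $A$ is a directed cycle. A short planarity check (the cut-off sub-region lies on the left of this directed cycle, as it does for $C$) shows it inherits the anticlockwise orientation of $C$, and it is strictly shorter than $C$, hence encloses strictly fewer faces. This contradicts the minimality of $C$, so $C$ has no chord.

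With no chord and $b > 3$, the $b-3 \ge 1$ inward edges must all terminate at interior vertices, so an interior vertex $w$ exists. Starting from $w$ and following at each step the first available outgoing edge as seen anticlockwise from the incoming edge, \cref{cor:most_clockwise_path} (in its anticlockwise form) guarantees that this path closes up into a directed triangle $\Delta$. Since edges of the embedding do not cross, the path can never leave the closed region bounded by $C$, so $\Delta$ lies inside that region; and since $\Delta$ has length $3 < b$ it cannot coincide with $C$, so it encloses strictly fewer faces. Provided $\Delta$ inherits the anticlockwise orientation, this once more contradicts the minimality of $C$, forcing $b = 3$ and completing the proof.

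The main obstacle is the bookkeeping of orientations: I must verify both that the chord-plus-arc cycle and that the triangle produced by the greedy turning rule are genuinely anticlockwise (rather than clockwise), so that each reduction remains within the class of anticlockwise cycles over which $C$ was chosen minimal. The first follows directly from the planar position of the chord relative to the anticlockwise boundary $C$, as in the signed-area check for a small polygon. The second amounts to confirming that the rule ``turn as far anticlockwise as possible'' traces the boundary of its enclosed region in the anticlockwise sense, which is exactly the orientation-matching refinement implicit in the remark following \cref{cor:most_clockwise_path}. Once these two orientation facts are pinned down, termination of the greedy path is automatic, since the region it explores is finite.
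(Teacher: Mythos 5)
Your extremal setup and the chord case are sound: the directed chord together with the arc of $C$ from its head back to its tail is indeed a directed anticlockwise cycle enclosing strictly fewer faces. The gap is in the no-chord case, where you run the walk with the wrong chirality. Inside an anticlockwise cycle $C$, the cyclic order of the edges at a vertex $v$ of $C$, read \emph{clockwise} starting from the incoming cycle edge, is: edges into the interior of $C$, then the outgoing cycle edge, then edges into the exterior. Consequently a most-clockwise walk entering $v$ (whether along the cycle or from the interior) always meets an interior edge or the outgoing cycle edge before any exterior edge, so it is trapped in the closed region bounded by $C$. A most-\emph{anticlockwise} walk entering $v$ from the interior, however, scans past the incoming cycle edge (which, being incoming, does not stop the scan) and reaches the exterior edges \emph{before} the outgoing cycle edge; if the interior edges it scans are all incoming and some exterior edge at $v$ is outgoing, the walk escapes. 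Planarity ("edges do not cross") does not confine it, because the escape happens through a vertex of $C$, not across an edge of $C$.

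There is a second, independent problem: the orientation of the triangle comes out backwards. When a most-anticlockwise walk closes into a cycle $C'$, the sectors that are forced to contain only incoming edges are the \emph{interior} sectors of $C'$ precisely when $C'$ is \emph{clockwise}; that is the case in which \cref{lem:directed_edges_from_cycle} gives $0 = |C'|-3$ and hence a triangle. (If $C'$ were anticlockwise, every interior vertex of $T$ on $C'$ would send its two non-cycle outgoing edges into the interior of $C'$, giving $2|C'|$ inward edges and contradicting \cref{lem:directed_edges_from_cycle}.) So the anticlockwise turning rule produces \emph{clockwise} triangles, and vice versa; the remark after \cref{cor:most_clockwise_path} guarantees only that the cycle formed is a triangle, not the orientation-matching you assume. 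Your $\Delta$, if it exists at all, is clockwise, and a clockwise triangle inside $C$ does not contradict the minimality of $C$ among anticlockwise cycles. Both defects vanish if you use the most-\emph{clockwise} rule inside the anticlockwise $C$: that walk is trapped, must close up by finiteness, and yields an anticlockwise triangle. Indeed, once the chirality is fixed the extremal machinery is unnecessary — this is exactly the paper's proof, which simply runs the most-clockwise walk from any edge of $C$ and invokes \cref{cor:most_clockwise_path}.
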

\begin{proof}
    Consider a directed cycle $C$ in $T$, and suppose without loss of generality that $C$ is directed anticlockwise. If $C$ is formed by a path that always chooses the first clockwise outgoing edge, then by \cref{cor:most_clockwise_path} we are done. Otherwise, if we instead follow the most clockwise path starting at any edge on the cycle, then since we can not follow an infinite path inside of $C$, we must construct a cycle, which must be a triangle inside of $C$.
\end{proof}
We remark that each of \cref{lem:directed_edges_from_cycle}, \cref{cor:most_clockwise_path} and \cref{lem:oriented_triangles} relies only on the the interior of a given cycle being finite, and as such we will be able to reuse these results for infinite triangulations later.

In \cite{brehm2000orientations} and \cite{de1994orientations} it is shown that the set of $3$-orientations of a triangulation forms a distributive lattice. In particular, every triangulation has a unique maximal $3$-orientation, implying that this is also true for Schnyder woods. 
\begin{theorem} \label{thm:unique_maximal_wood}
    For all $n \geq 0$, every triangulation $T \in \cT_n^1$ has a unique maximal Schnyder wood.
\end{theorem}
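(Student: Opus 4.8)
The plan is to move the statement to the level of $3$-orientations, where the order structure lives, and then transfer back using the correspondence between $3$-orientations and Schnyder woods recorded above. Recall that every Schnyder wood of $T$ is by definition a $3$-orientation, and conversely every $3$-orientation of $T$ induces a unique Schnyder wood \cite{brehm2000orientations}; moreover the defining condition of maximality---containing no anticlockwise cycle---refers only to the orientation of the edges and not to their colours. Hence maximal Schnyder woods of $T$ correspond bijectively to maximal $3$-orientations of $T$, and it suffices to prove that $T$ has a unique maximal $3$-orientation.

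For existence I would first invoke Schnyder's theorem \cite{schnyder1990}, which gives at least one Schnyder wood and hence at least one $3$-orientation, so the (finite) set of $3$-orientations of $T$ is nonempty. Starting from any $3$-orientation, I would repeatedly reverse an anticlockwise directed triangle whenever one is present; reversing a directed triangle preserves every vertex out-degree and so keeps us within the set of $3$-orientations. Leaning on the cited distributive lattice structure on $3$-orientations (each such flip being a move in one fixed direction of the order, so that no sequence of flips can repeat), this process terminates at a $3$-orientation with no anticlockwise triangle, which by \cref{lem:oriented_triangles} has no anticlockwise cycle at all, i.e.\ is maximal.

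The substance of the argument is uniqueness, where I would give a direct planar argument rather than quoting the lattice. Suppose $O_1$ and $O_2$ are two maximal $3$-orientations and let $D$ be the set of edges on which they disagree, oriented as in $O_1$. Since $O_1$ and $O_2$ assign the same out-degree to every vertex, at each vertex the number of $D$-edges leaving equals the number entering, so $D$ is a balanced directed subgraph and decomposes into edge-disjoint simple directed cycles, each consistently oriented according to $O_1$. Any such cycle is a directed cycle of $O_1$, while its reversal is a directed cycle of $O_2$; being a simple closed curve in the plane, it is traversed anticlockwise in exactly one of the two orientations. That orientation would then contain an anticlockwise cycle, contradicting maximality---unless $D$ is empty, i.e.\ $O_1 = O_2$.

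I expect the main obstacle to be the bookkeeping in this uniqueness step: justifying that the disagreement set is balanced and therefore decomposes into simple directed cycles each of which is consistently oriented (so that it genuinely is a directed cycle of $O_1$ and, reversed, of $O_2$), and that a simple planar directed cycle is anticlockwise in precisely one of the two orientations. Once uniqueness of the maximal $3$-orientation is established, the orientation--colouring correspondence of the first paragraph immediately yields a unique maximal Schnyder wood, completing the proof.
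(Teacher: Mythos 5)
Your proposal is correct, and it is more self-contained than the paper's treatment, which proves this statement purely by citation: the paper invokes the result of \cite{brehm2000orientations} and \cite{de1994orientations} that the $3$-orientations of a triangulation form a distributive lattice, and reads off the unique maximal element. Your reduction to $3$-orientations is the same first move. Where you differ is that you only lean on the lattice for termination of the triangle-flip process in the existence half (reasonable, and consistent with what the paper cites; note also that existence follows independently from the peeling construction in \cref{thm:peeling_process_maximal_wood}, whose maximality claim does not use uniqueness), while for uniqueness you give the direct planar argument: the disagreement set of two $3$-orientations is balanced, hence decomposes into edge-disjoint simple directed cycles, each of which is anticlockwise in exactly one of the two orientations. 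This is precisely the ``standard proof'' that the paper attributes to \cite{brehm2000orientations} and itself reproduces later, in the proof of \cref{thm:max_schnyder_unique_infinite}, where the same parity/decomposition argument is extended to allow two-ended infinite directed paths. The bookkeeping you flag as the main obstacle is routine: equality of out-degrees at every vertex forces in-degree to equal out-degree within the disagreement digraph, a balanced digraph peels into simple directed cycles by repeatedly following outgoing edges until a vertex repeats, and the Jordan curve theorem settles that a simple planar cycle is anticlockwise under exactly one of its two traversals. So the argument goes through; the only thing you gain by instead quoting the lattice wholesale, as the paper does, is brevity.
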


Since we aim to define Schnyder woods of infinite triangulations which may have large boundaries, it is useful to consider Schnyder woods of finite triangulations with more than $3$ boundary vertices. Suppose that $T \in \cT_n^m$, and let $T$ have root edge $(v_r, v_y)$. Let $T' \in \cT_{n+m}^1$ be the rooted triangulation obtained by introducing a boundary vertex $v_b$ that is adjacent to every boundary vertex of $T$, and still rooted at $(v_r, v_y)$. A \defn{Schnyder wood of $T$} is an orientation and colouring of the edges of $T$ that is consistent with some Schnyder wood of $T'$. A Schnyder wood of $T$ is maximal if it contains no anticlockwise cycle. 

Note that in any Schnyder wood of $T'$, the edges $(v_b, v_r)$ and $(v_b, v_y)$ are coloured red and yellow respectively, and every other edge incident to $v_b$ is coloured blue and directed to $v_b$, so there is a bijection between Schnyder woods of $T$ and Schnyder woods of $T'$. Further, a maximal Schnyder wood of $T$ is unique.

Next, note that in every Schnyder wood of any triangulation $T \in \cT_n^m$, every vertex $v$ on the boundary of $T$ other than $v_r$ and $v_y$ satisfies the following \defn{Schnyder boundary condition}:
\begin{itemize}
    \item There are exactly $2$ outgoing edges at $v$, which are coloured red and yellow, and
    \item In anticlockwise order starting from the boundary, edges incident with $v$ consist of incoming yellow edges, the outgoing red edge, incoming blue edges, the outgoing yellow edge, and incoming red edges.
\end{itemize}
Furthermore, the vertices $v_r$ and $v_y$ satisfy the same conditions as for $3$-orientations of triangulations in $\cT_n^1$, which we hereafter call the \defn{Schnyder root condition}:
\begin{itemize}
    \item $v_r$ has exactly $1$ outgoing edge and $v_y$ has no outgoing edges, and
    \item All incoming edges to $v_r$ are coloured red and all incoming edges to $v_y$ are coloured yellow.
\end{itemize}
These further conditions are also indicated in \cref{fig:schnydercondition1}, and an example construction of $T'$ from $T$ is given in \cref{fig:example_schnyder_wood}. Observe that the neighbours of $v_b$ other than $v_r$ and $v_y$ satisfy the Schnyder boundary condition in $T$. We will refer to the Schnyder, Schnyder boundary and Schnyder root conditions collectively as the \defn{Schnyder conditions}. The following result about the monochromatic subgraphs of $T$ follows from \cref{thm:schnyderwood_structure}. 

\begin{theorem} \label{thm:schnyder_subgraphs_on_finite_triangulation}
    Let $T_b$, $T_r$ and $T_y$ be respectively, the subgraphs induced by the blue, red and yellow edges in a Schnyder wood of $T \in T_n^m$. Then $T_r$ and $T_y$ are directed trees, rooted at $v_r$ and $v_y$, and $T_b$ is a forest consisting of trees rooted at boundary vertices other than $v_r$ and $v_y$. Furthermore, for each $c \in \{b,r,y\}$, the graph $T_c$ contains every interior vertex of $T$.
\end{theorem}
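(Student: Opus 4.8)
The plan is to reduce the statement to the single-triangle case via the bijection with Schnyder woods of $T'$ already recorded above, and then apply \cref{thm:schnyderwood_structure}. Recall that $T' \in \cT_{n+m}^1$ is obtained from $T$ by adding a vertex $v_b$ adjacent to every boundary vertex of $T$, and that a Schnyder wood of $T$ lifts to a unique Schnyder wood of $T'$ in which $(v_b,v_r)$ is red and directed to $v_r$, $(v_b,v_y)$ is yellow and directed to $v_y$, and every remaining edge at $v_b$ is blue and directed into $v_b$. The key preliminary observation is that the interior vertices of $T'$ are exactly the interior vertices of $T$ together with the boundary vertices of $T$ other than $v_r$ and $v_y$; in particular, no interior vertex of $T$ is lost when we pass between $T$ and $T'$ by adding or deleting $v_b$.

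First I would apply \cref{thm:schnyderwood_structure} to the lifted Schnyder wood of $T'$, obtaining that for each $c \in \{b,r,y\}$ the monochromatic subgraph $T'_c$ is a directed tree rooted at $v_c$ containing every interior vertex of $T'$. I would then read off the local structure at $v_b$ from the colouring of its incident edges: $v_b$ has exactly one red edge (the outgoing edge to $v_r$) and exactly one yellow edge (the outgoing edge to $v_y$), so $v_b$ is a leaf of both $T'_r$ and $T'_y$. On the other hand, all remaining edges at $v_b$ are blue and incoming, and each boundary vertex $w \neq v_r,v_y$ of $T$ has its unique outgoing blue edge directed into $v_b$; hence $v_b$ is the root of $T'_b$ and its children in $T'_b$ are precisely these boundary vertices.

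Since $T_c$ is obtained from $T'_c$ simply by deleting $v_b$ and its incident edges, the conclusion now follows by treating each colour separately. Deleting the leaf $v_b$ from $T'_r$ (respectively $T'_y$) leaves a directed tree still rooted at $v_r$ (respectively $v_y$); deleting the root $v_b$ from $T'_b$ splits it into the forest of subtrees hanging from its children, each rooted at a boundary vertex of $T$ other than $v_r$ or $v_y$, and these subtrees are pairwise disjoint since in a tree the only path between two children of the root passes through the root. In every case the only vertex removed is $v_b$, which is not an interior vertex of $T$, so each $T_c$ still contains all interior vertices of $T$, as required.

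I expect the only real care to be needed in the bookkeeping of the second step---verifying from the recorded colouring of the edges at $v_b$ that $v_b$ is genuinely a leaf of the red and yellow trees but the root of the blue tree, with the non-root boundary vertices of $T$ as its direct children---since this is exactly what makes $T_r$ and $T_y$ remain trees while $T_b$ becomes a forest rooted on the boundary. Everything else is a direct transcription of \cref{thm:schnyderwood_structure} through the bijection.
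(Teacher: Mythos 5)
Your proposal is correct and is exactly the argument the paper intends: the paper gives no explicit proof, stating only that the result ``follows from \cref{thm:schnyderwood_structure}'' via the passage to $T'$, and your write-up fills in precisely that reduction (lift to $T'$, apply \cref{thm:schnyderwood_structure}, observe that $v_b$ is a leaf of $T'_r$ and $T'_y$ but the root of $T'_b$ with the non-root boundary vertices as its children, then delete $v_b$). The bookkeeping at $v_b$ is handled correctly, so there is nothing to add.
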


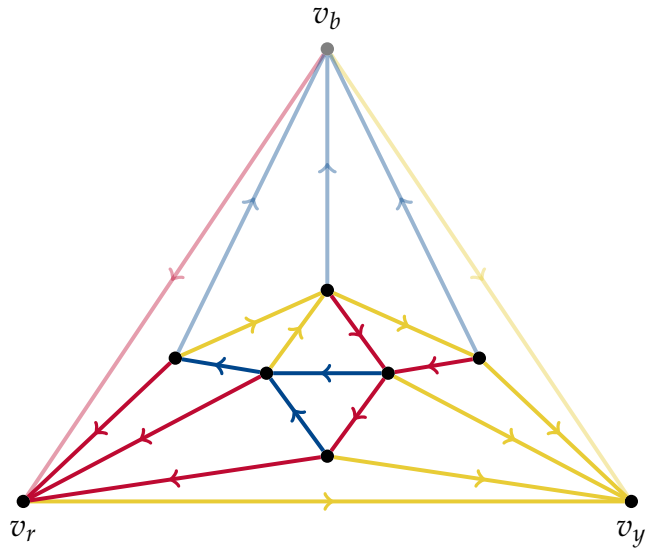
\begin{figure}[h]
    \centering
    \begin{tikzpicture}
    \node[mycircle, label=below:{$v_r$}] (vr) at (-4,0){};
    \node[mycircle, label=below:{$v_y$}] (vy) at (4,0){};
    \node[mycircle, label=above:{$v_b$}, opacity=0.5] (vb) at (0,6){};
    \node[mycircle] (v1) at (-2,1.9){};
    \node[mycircle] (v2) at (0,2.8){};
    \node[mycircle] (v3) at (2,1.9){};
    \node[mycircle] (v4) at (-0.8,1.7){};
    \node[mycircle] (v5) at (0.8,1.7){};
    \node[mycircle] (v6) at (0,0.6){};

    \path[every node/.style={font=\sffamily\small}]
        (vr) edge [->-, yedge] (vy)
        (vb) edge [->-, redge, opacity=0.4] (vr)
        (vb) edge [->-, yedge, opacity=0.4] (vy)
        (v1) edge [->-, bedge, opacity=0.4] (vb)
        (v2) edge [->-, bedge, opacity=0.4] (vb)
        (v3) edge [->-, bedge, opacity=0.4] (vb)
        (v1) edge [->-, redge] (vr)
        (v4) edge [->-, redge] (vr)
        (v6) edge [->-, redge] (vr)
        (v3) edge [->-, yedge] (vy)
        (v5) edge [->-, yedge] (vy)
        (v6) edge [->-, yedge] (vy)
        (v1) edge [->-, yedge] (v2)
        (v2) edge [->-, yedge] (v3)
        (v4) edge [->-, yedge] (v2)
        (v2) edge [->-, redge] (v5)
        (v5) edge [->-, bedge] (v4)
        (v6) edge [->-, bedge] (v4)
        (v5) edge [->-, redge] (v6)
        (v4) edge [->-, bedge] (v1)
        (v3) edge [->-, redge] (v5);
    
\end{tikzpicture}
    \caption{A maximal Schnyder wood of a triangulation $T' \in \cT_6^1$ constructed by adding $v_b$ as a neighbour to every boundary vertex of a triangulation $T \in \cT_3^3$.}
    \label{fig:example_schnyder_wood}
\end{figure}

Next, we describe an algorithm that constructs the maximal Schnyder wood of $T \in \cT_n^m$. Our algorithm is based on an algorithm described in \cite{brehm2000orientations} for finding maximal Schnyder woods, which is in turn based on the original algorithm described by Schnyder \cite{schnyder1990}. We have made minor modifications to these more well-known algorithms to design a process that maintains necessary distributional properties of the UIHPT. 

Let $T$ have boundary vertices $v_0, \dots, v_{m+1}$, listed in anticlockwise order of appearance around the exterior face, and root edge $(v_1, v_2)$ coloured yellow. The \defn{Schnyder peeling process} on $T$ is defined recursively as follows. We call $v_0$ the \defn{peeling vertex} used by this step of the Schnyder peeling process, and for any edge $e = v_0 u$ incident with $v_0$, we call $e$ a \defn{chord} if $u = v_c$ for some boundary vertex $v_c$ with $c \neq 1$. Note that we do consider the edge $e = v_0 v_{m+1}$ to be a chord. 
\begin{itemize}
    \item Consider the first chord $v_0 v_c$ anticlockwise from $v_0v_1$, so $c \geq 2$ is the smallest index such that $v_0 v_c$ is an edge. This splits $T$ into a triangulation $T_\ell$ rooted at $(v_0, v_c)$ with $m-c+3$ boundary vertices to the left of $(v_0, v_c)$, and a triangulation $T_r'$ rooted at $(v_1, v_2)$ with $c+1$ boundary vertices to the right of $(v_0, v_c)$. (Note that in the trivial case $c=m+1$, $T_\ell$ may be empty.)
    \item Suppose that $v_0$ is connected to $k \geq 0$ interior vertices of $T_r'$. Let $T_r$ be the triangulation obtained by deleting $v_0$ from $T_r'$. Then, $T_r$ is rooted at $(v_1, v_2)$ and has $c+k$ boundary vertices.
    \item Assign the following colours and directions. Let $(v_0, v_c)$ be yellow, $(v_0, v_1)$ be red, and for each neighbour $u_1,\dots,u_k$ of $v_0$ in $T_r$, let $(u_i, v_0)$ be blue.
    \item Apply the Schnyder peeling process to $T_\ell$ and $T_r$.
\end{itemize}
Note that in the base case where $T$ is a triangle with no interior vertices, then we choose the chord $(v_0, v_2)$, and the algorithm colours the edge $(v_0, v_2)$ yellow and $(v_0, v_1)$ red as desired. An example step of the Schnyder peeling process is given in \cref{fig:finite_peeling_step}. Prior to the Schnyder peeling process being recursively applied to any rooted sub-triangulations that arise in the course of the Schnyder peeling process, we refer to these sub-triangulations as \defn{unexplored regions} of $T$; see for example, the grey regions indicated in \cref{fig:finite_peeling_step}. Note that at each stage, we may choose to first explore $T_\ell$ and afterwards continue exploring in $T_r$, or vice-versa. 

\begin{figure}[h]
    \centering
    \begin{tikzpicture}[line width=1]
    \begin{scope}[rotate=180]
        \node[mycircle, opacity=0.4] (0) at (canvas polar cs: radius=3cm,angle=0){};
        \node[mycircle, label=below:$v_1$] (1) at (canvas polar cs: radius=3cm,angle=60){};
        \node[mycircle, label=below:$v_2$] (2) at (canvas polar cs: radius=3cm,angle=120){};
        \foreach \x/\y in {180/3, 240/4, 300/5}{ 
            \node[mycircle] (\y) at (canvas polar cs: radius=3cm,angle=\x){};
        }
    \end{scope}

    \node[mycircle, label={[]$v_c$}] (4repeat) at (canvas polar cs: radius=3cm,angle=60){};

    \node[mycircle, opacity=0.4] (6) at (-1.8,0.3){};
    \node[mycircle] (7) at (-0.4,1.4){};
    \node[mycircle, label={[xshift=-0.25cm, yshift=-0.05cm]$v_0$}] (8) at (-0.4,0){};
    \node[mycircle] (9) at (0.4,-1.2){};
    \node[mycircle] (10) at (1.2,0){};

    \path[every node/.style={font=\sffamily\small}]
        (1) edge [->-, yedge, line width=2] (2)
        (0) edge [->-, redge, opacity=0.4] (1)
        (6) edge [->-, redge, opacity=0.4] (1)
        (6) edge [->-, bedge, opacity=0.4] (0)
        (6) edge [->-, yedge, opacity=0.4] (5)
        (0) edge [->-, yedge, opacity=0.4] (5)
        (7) edge [->-, bedge, opacity=0.4] (6)
        (8) edge [->-, bedge, opacity=0.4] (6)
        (2) edge [-, line width=2] (3)
        (3) edge [-, line width=2] (4)
        (4) edge [-, line width=2] (5)
        (4) edge [-, line width=0.8] (7)
        (5) edge [-, line width=2] (7)
        (8) edge [-, line width=2] (7)
        (1) edge [-, line width=2] (8)
        (4) edge [-, line width=0.8] (8)
        (4) edge [-, line width=0.8] (10)
        (3) edge [-, line width=0.8] (10)
        (8) edge [-, line width=0.8] (10)
        (1) edge [-, line width=0.8] (9)
        (8) edge [-, line width=0.8] (9)
        (10) edge [-, line width=0.8] (9)
        (2) edge [-, line width=0.8] (9)
        (2) edge [-, line width=0.8] (10);

        \fill[fill=gray, opacity=0.1]  (8.center) to (1.center) to (2.center) to (3.center) to (4.center) to (5.center) to (7.center) to cycle;

    \begin{scope}[xshift=8cm]
        \begin{scope}[rotate=180]
            \node[mycircle, opacity=0.4] (0) at (canvas polar cs: radius=3cm,angle=0){};
            \node[mycircle, label=below:$v_1$] (1) at (canvas polar cs: radius=3cm,angle=60){};
            \node[mycircle, label=below:$v_2$] (2) at (canvas polar cs: radius=3cm,angle=120){};
            \foreach \x/\y in {180/3, 240/4, 300/5}{ 
                \node[mycircle] (\y) at (canvas polar cs: radius=3cm,angle=\x){};
            }
        \end{scope}

    \node[mycircle, label={[]$v_c$}] (4repeat) at (canvas polar cs: radius=3cm,angle=60){};

    \node[mycircle, opacity=0.4] (6) at (-1.8,0.3){};
    \node[mycircle] (7) at (-0.4,1.4){};
    \node[mycircle, label={[xshift=-0.25cm, yshift=-0.05cm]$v_0$}] (8) at (-0.4,0){};
    \node[mycircle] (9) at (0.4,-1.2){};
    \node[mycircle] (10) at (1.2,0){};

    \path[every node/.style={font=\sffamily\small}]
        (1) edge [->-, yedge, line width=2] (2)
        (0) edge [->-, redge, opacity=0.4] (1)
        (6) edge [->-, redge, opacity=0.4] (1)
        (6) edge [->-, bedge, opacity=0.4] (0)
        (6) edge [->-, yedge, opacity=0.4] (5)
        (0) edge [->-, yedge, opacity=0.4] (5)
        (7) edge [->-, bedge, opacity=0.4] (6)
        (8) edge [->-, bedge, opacity=0.4] (6)
        (2) edge [-, line width=2] (3)
        (3) edge [-, line width=2] (4)
        (4) edge [-, line width=2] (5)
        (4) edge [-, line width=0.8] (7)
        (5) edge [-, line width=2] (7)
        (8) edge [-, line width=2] (7)
        (8) edge [->-, redge] (1)
        (8) edge [->-, yedge, line width=2] (4)
        (4) edge [-, line width=2] (10)
        (3) edge [-, line width=0.8] (10)
        (10) edge [->-, bedge] (8)
        (1) edge [-, line width=2] (9)
        (9) edge [->-, bedge] (8)
        (10) edge [-, line width=2] (9)
        (2) edge [-, line width=0.8] (9)
        (2) edge [-, line width=0.8] (10);

    \fill[fill=gray, opacity=0.1]  (1.center) to (2.center) to (3.center) to (4.center) to (10.center) to (9.center) to cycle;

    \fill[fill=gray, opacity=0.1]  (8.center) to (4.center) to (5.center) to (7.center) to cycle;

    \node[] (Tr) at (1.8,-1){$T_r$};
    \node[] (Tl) at (0,2){$T_\ell$};
    
    \end{scope}
    
\end{tikzpicture}
    \caption{The first non-trivial Schnyder peeling step on a triangulation $T$. Initially the root edge is $(v_1, v_2)$; the boundary of the unexplored region is indicated in bold. The first chord $(v_0, v_c) = (v_0, v_4)$ anticlockwise from $(v_0, v_1)$ in the unexplored region is selected and coloured yellow, and all remaining neighbours of $v_0$ are also explored and coloured, splitting the unexplored region in two. The left unexplored region $T_\ell$ is rooted at the new root $(v_0, v_c)$ while the right unexplored region $T_r$ retains the original root $(v_1,v_2)$. }
    \label{fig:finite_peeling_step}
\end{figure}
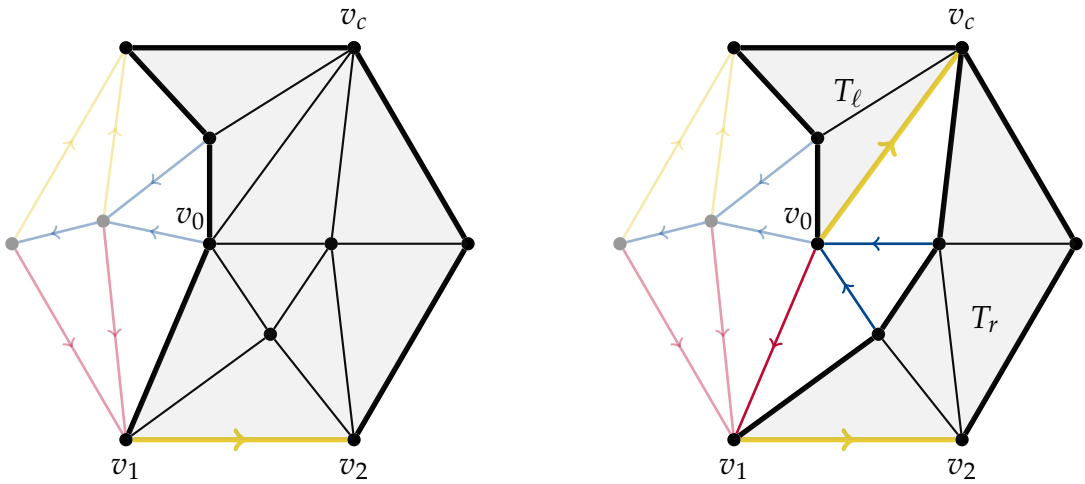

We include a proof that the Schnyder peeling process constructs the unique maximal Schnyder wood of a triangulation for completeness. However, the result is well-known for the very similar algorithms described in \cite{brehm2000orientations} and \cite{schnyder1990}.

\begin{theorem}\label{thm:peeling_process_maximal_wood}
    The Schnyder peeling process constructs the unique maximal Schnyder wood.
\end{theorem}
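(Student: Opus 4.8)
The plan is to establish, by induction on the number of vertices, that the Schnyder peeling process applied to any $S \in \cT_n^m$ produces a configuration that is simultaneously a valid Schnyder wood of $S$ and maximal; uniqueness then follows immediately from \cref{thm:unique_maximal_wood}. The base case is a single triangle, where the process colours $(v_1,v_2)$ and $(v_0,v_2)$ yellow and $(v_0,v_1)$ red, which is a Schnyder wood containing no directed cycle at all. For the inductive step I would first record the decomposition that drives everything: a peeling step colours the fresh edges $(v_0,v_1)$ (red) and $(u_i,v_0)$ (blue), designates the chord $(v_0,v_c)$ (yellow) as the pre-coloured root of $T_\ell$, and hands the remaining edges to $T_\ell$ and $T_r$. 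Since the fresh edges together with the chord are exactly the edges at $v_0$ on the right, every other edge lies in precisely one of $T_\ell, T_r$, and these two share only the vertex $v_c$ and hence no edge, the edge set of $S$ partitions as required; as $|V(T_\ell)|, |V(T_r)| < |V(S)|$, the recursion terminates and colours each edge exactly once.

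Next I would verify the Schnyder conditions, observing that vertices in the interior of $T_\ell$ or $T_r$, or on their boundaries away from $v_0$ and $v_c$, inherit their condition verbatim from the inductive hypothesis, so that only $v_0$, the absorbed neighbours $u_i$, and the shared vertex $v_c$ need genuine attention. At $v_0$ the step creates, in anticlockwise order, the outgoing red edge, a block of incoming blue edges, and the outgoing yellow chord; there are no incoming yellow edges because $(v_0,v_1)$ is a boundary edge, and the incoming red edges are exactly those contributed by $T_\ell$, in which $v_0$ is the root vertex, so $v_0$ satisfies the Schnyder boundary condition. Each $u_i$ is a boundary vertex of $T_r$ meeting the boundary condition there, and inserting its single outgoing blue edge $(u_i,v_0)$ in the correct (upper) slot upgrades this to the interior Schnyder condition in $S$; similarly $v_c$, which is $v_y(T_\ell)$ on one side and an ordinary boundary vertex of $T_r$ on the other, combines into the correct condition in $S$. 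These are routine local checks using only the planar cyclic order of edges.

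The hard part will be maximality, i.e.\ the absence of anticlockwise directed cycles, which by \cref{lem:oriented_triangles} reduces to forbidding anticlockwise directed triangles. Because $T_\ell$ and $T_r$ meet only at $v_c$ and every fresh edge is incident with $v_0$, any triangle not already lying inside $T_\ell$ or $T_r$ must pass through $v_0$, using one edge into and one edge out of $v_0$. The out-edges are $(v_0,v_1)$ and $(v_0,v_c)$; pairing either with one of $v_0$'s left-hand incoming red edges (those from $T_\ell$) either requires an edge forbidden by planarity or produces a triangle already contained in $T_\ell$, so the only genuinely new triangles are $v_0,u_i,v_c$ and $v_0,u_i,v_1$. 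By the cyclic order of the neighbours around $v_0$ the former, carrying $u_i \to v_0 \to v_c$, is oriented clockwise (or is not a cycle at all), hence harmless. The latter, carrying $u_i \to v_0 \to v_1$, would close into an anticlockwise cycle exactly when its third edge points $v_1 \to u_i$; but $v_1$ is the root vertex $v_r(T_r)$, so by the Schnyder root condition (available from the inductive hypothesis for $T_r$) every edge at $v_1$ other than its own root edge is an incoming red edge, forcing the orientation $u_i \to v_1$ and breaking the cycle. Hence the constructed wood has no anticlockwise triangle, is maximal, and by \cref{thm:unique_maximal_wood} coincides with the unique maximal Schnyder wood.

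I expect maximality to be the principal obstacle, as it is the only place where the specific ``first chord anticlockwise'' rule is genuinely exploited, through the orientation it forces on the edges at $v_r(T_r)$; the completeness and validity steps, though demanding some care in bookkeeping the edges contributed to the shared vertices $v_0$ and $v_c$ by the two sub-regions, are otherwise routine.
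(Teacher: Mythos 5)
Your proposal is correct and follows essentially the same route as the paper: induction on the vertex count with the same $T_\ell$/$T_r$ decomposition and local checks for the Schnyder conditions, reduction of maximality to anticlockwise triangles via \cref{lem:oriented_triangles}, the same key use of the Schnyder root condition at $v_1$ to kill the triangle $u_i, v_0, v_1$ (the paper phrases this as a minimal-counterexample argument, but that is interchangeable with your direct induction), and the same appeal to \cref{thm:unique_maximal_wood} for uniqueness. The only differences are cosmetic omissions of routine checks the paper spells out (the root vertices $v_1, v_2$, including the degenerate case $c=2$, and the bijection with $\cT_{n+m}^1$ needed to quote uniqueness for general $m$).
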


\begin{proof}
    Let $T \in \cT_n^m$, and let the boundary of $T$ be $v_0, v_1, \dots, v_{m+1}$ with root edge $(v_1, v_2)$.
    
    \begin{claim} \label{claim:peeling_schnyder_wood}
        The Schnyder peeling process constructs a Schnyder wood of $T$.
    \end{claim}
    
    \begin{poc}
        We proceed by induction on $n+m$. If $n+m=1$, it is trivial to check that the Schnyder peeling process constructs the unique colouring where $v_1$ and $v_2$ satisfy the Schnyder root condition and $v_0$ satisfies the Schnyder boundary condition. Suppose this statement holds for $n+m<j$, and consider $T \in \cT_n^{m}$ with $n+m = j$.

        The first step of the Schnyder peeling process on $T$ selects a chord $(v_0, v_c)$, and reveals $k$ interior neighbours of $v_0$, say $u_1, \dots, u_k$. This splits $T$ into two triangulations $T_\ell$ and $T_r$ in $\cT_{n_\ell}^{m-c+1}$ and $\cT_{n-n_\ell-k}^{c+k-2}$ respectively for some $n_\ell \in \{0,\dots,n-k\}$, with $T_\ell$ rooted at $(v_0, v_c)$ and $T_r$ rooted at $(v_1, v_2)$. At this stage, we have that $(v_1, v_2)$ and $(v_0, v_c)$ are coloured yellow, $(v_0, v_1)$ is coloured red, and $(u_i, v_0)$ is coloured blue for each $i \in \set{1,\dots,k}$. By the induction hypothesis, the Schnyder peeling process will colour both $T_\ell$ and $T_r$ so that they satisfy the Schnyder root, Schnyder boundary, and Schnyder conditions. 

        It remains to check the colouring of the edges incident with $v_0, v_1, v_2, v_c, u_1, \dots, u_k$, since all other vertices are handled entirely by the induction hypothesis.
        \begin{itemize}
            \item If $c > 2$, the vertices $v_1$ and $v_2$ satisfy the Schnyder root condition in $T_r$. Every other edge incident with them is directed towards $v_1$ or $v_2$ and coloured red and yellow respectively. Hence, $v_1$ and $v_2$ satisfy the Schnyder root condition in $T$. Otherwise, if $c=2$, then $v_1$ still satisfies the Schnyder root condition for the same reason, and since $v_2$ is the head of the root edge in both $T_\ell$ and $T_r$, every edge incident with $v_2$ is coloured yellow and directed into $v_2$, so $v_2$ again satisfies the root condition.
            \item The vertex $v_c$ (if $c \neq 2$) satisfies the Schnyder boundary condition in $T_r$. Since $v_c$ satisfies the Schnyder root condition in $T_\ell$, every other edge incident to $v_c$ is directed towards $v_c$ and coloured yellow. Hence, $v_c$ satisfies the Schnyder boundary condition in $T$.
            \item The vertex $v_0$ satisfies the Schnyder root condition in $T_\ell$, so every edge incident to $v_0$ in $T_\ell$, apart from $v_0 v_c$, is directed towards $v_0$ and coloured red. Hence, together with the other edges incident with $v_0$, it follows that $v_0$ satisfies the Schnyder boundary condition in $T$.
            \item Finally, each of the vertices $u_1, \dots, u_k$ satisfies the Schnyder boundary condition in $T_r$. Since each of these vertices is connected with an outgoing blue edge to $v_0$, it follows that $u_1, \dots, u_k$ satisfy the Schnyder condition in $T$. \qedhere
        \end{itemize}
    \end{poc}

    \begin{claim} \label{clm:maximal_peeling_process}
        The Schnyder wood produced by the Schnyder peeling process on $T$ is maximal.
    \end{claim}
    
    \begin{poc}
        By \cref{lem:oriented_triangles}, it suffices to rule out the existence of anticlockwise triangles in $T$. Suppose for a contradiction that $T$ is minimal with the property that the Schnyder peeling process creates an anticlockwise triangle $\Delta$ on $T$. Then the first step of the peeling process must colour at least one edge of $\Delta$, otherwise $\Delta$ is contained entirely within the subtriangulation $T_\ell$ or $T_r$, contradicting the minimality of $T$. Hence, one of the edges of $\Delta$ is either $(v_0, v_c)$, $(v_0, v_1)$, or $(u_i, v_0)$ for some neighbour $u_i$ of $v_0$ in $T_r$.
        
        Now, since $v_c$ satisfies the Schnyder root condition in $T_\ell$, it is not in any directed cycle in $T_\ell$. Furthermore, by the orientation of $(v_0, v_c)$, this edge cannot be in anticlockwise cycles outside of $T_\ell$, and hence cannot lie on $\Delta$. The only remaining possibility for the anticlockwise triangle $\Delta$ is that two of its edges are $(u_i, v_0)$ and $(v_0, v_1)$ for some neighbour $u_i$ of $v_0$ in $T_r$. However, by the Schnyder root condition, an edge between $v_1$ and $u_i$ would be directed towards $v_1$ and so would have the wrong direction. Hence, $\Delta$ does not exist.
    \end{poc}
    
    Hence, the colouring produced on $T$ by the Schnyder peeling process is a maximal Schnyder wood. Uniqueness follows from \cref{thm:unique_maximal_wood} and the bijection between $\cT_n^m$ and $\cT_{n+m}^1$.
\end{proof}

We conclude this section with some results on the structure of monochromatic paths in the maximal Schnyder wood of a triangulation $T \in \cT_n^m$. For each vertex $v$ of $T$ and each colour $c \in \{y, r, b\}$, let $\vpath{c}{v}$\glossarylabel{gl:vpath} be the unique directed path of colour $c$ that starts at $v$ in the maximal Schnyder wood of $T$. Note that if the root edge of $T$ is $(v_r,v_y)$, then $\vpath{y}{v}$ terminates at $v_y$, $\vpath{r}{v}$ terminates at $v_r$, and $\vpath{b}{v}$ terminates at a boundary vertex of $T$ other than $v_r$ or $v_v$. The next lemma will be useful in later sections.

\begin{lemma}\label{lem:finite_yellow_path}
    Let $T \in \cT_n^m$ have root edge $(v_r,v_y)$, and let $v$ be a vertex on the boundary of $T$. Then, $\vpath{y}{v}$ consists entirely of boundary vertices clockwise between $v$ and $v_y$ on $T$.
\end{lemma}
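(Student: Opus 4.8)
The plan is to induct on the size $n+m$ of $T$, peeling off a single Schnyder peeling step to split $T$ into two strictly smaller triangulations and invoking \cref{thm:peeling_process_maximal_wood}. Write the boundary of $T$ as $v_0,v_1,\dots,v_{m+1}$ in anticlockwise order with root edge $(v_1,v_2)=(v_r,v_y)$, so that $v_0$ is the peeling vertex and ``clockwise between $v_j$ and $v_y$'' is the arc $v_j,v_{j-1},\dots,v_2$ of decreasing index. The first peeling step selects the chord $(v_0,v_c)$ with $c\ge 2$ minimal, colours $v_0\to v_c$ yellow and $v_0\to v_1$ red, and splits $T$ into $T_\ell$ (rooted at $(v_0,v_c)$) and $T_r$ (rooted at $(v_1,v_2)$); a direct check of the cyclic order shows that $v_c$ is clockwise from $v_0$ and lies in the arc from $v_0$ to $v_2$. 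Since the peeling process recurses into $T_\ell$ and $T_r$, \cref{thm:peeling_process_maximal_wood} tells us that the maximal wood of $T$ restricts to the maximal wood of each, so the inductive hypothesis applies to both.

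The key step is to follow each boundary vertex of $T$ into the correct sub-region. Every boundary vertex of $T$ lies among $\{v_1,\dots,v_c\}$ (the boundary vertices of $T_r$ inherited from $T$) or among $\{v_0,v_c,v_{c+1},\dots,v_{m+1}\}$ (the boundary of $T_\ell$), with $v_c$ shared. For $v\in\{v_2,\dots,v_c\}$ the outgoing yellow edge of $v$ lies inside $T_r$ (for $v=v_c$ because $v_c$ is the yellow root of $T_\ell$ and so has no outgoing yellow edge there), hence $\vpath{y}{v}$ equals its $T_r$-yellow path; the inductive hypothesis places this path on the boundary arc of $T_r$ clockwise from $v$ to $v_2$, which is exactly $\{v_2,\dots,v\}$. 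For $v\in\{v_0,v_{c+1},\dots,v_{m+1}\}$ the yellow tree of $T_\ell$ is rooted at $v_c$, so the global path first runs along the $T_\ell$-yellow path from $v$ to $v_c$ and then continues as the $T_r$-yellow path from $v_c$; the inductive hypothesis puts the first part on $\{v_c,\dots,v\}$ and the second on $\{v_2,\dots,v_c\}$, whose union $\{v_2,\dots,v\}$ is the clockwise arc from $v$ to $v_y$ in $T$. The base case $n+m=1$ (a single triangle), the degenerate root cases $v\in\{v_r,v_y\}$, and the possibility that $T_\ell$ is empty are all checked by hand.

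The main obstacle is the bookkeeping required to align the local cyclic orders of $T_\ell$ and $T_r$ with the global one, and in particular to exclude the $k$ interior neighbours $u_1,\dots,u_k$ of $v_0$ from ever appearing on a yellow path: these become boundary vertices of $T_r$ but are interior to $T$, so the conclusion would fail outright if some yellow path visited one. This is exactly where the choice of $c$ as \emph{minimal}---the feature responsible for the peeling wood being maximal---does the work: in the boundary of $T_r$ the vertices $u_1,\dots,u_k$ sit anticlockwise of $v_c$, hence strictly outside the clockwise arc from any $v_j$ with $j\le c$ to $v_2$, so the inductive hypothesis on $T_r$ automatically keeps them off the path. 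Carrying out this arc comparison carefully, together with justifying the decomposition of the global yellow path through the junction vertex $v_c$, is the crux of the argument.
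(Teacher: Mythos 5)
Your proposal is correct and follows essentially the same route as the paper's proof: induct on the size of $T$, peel off one Schnyder peeling step to split $T$ into $T_\ell$ (rooted at the chord $(v_0,v_c)$) and $T_r$ (rooted at $(v_1,v_2)$), apply the inductive hypothesis to each piece, and concatenate the $T_\ell$-path to $v_c$ with the $T_r$-path from $v_c$ to $v_2$, noting that the relevant clockwise arcs avoid the revealed interior vertices $u_1,\dots,u_k$. The paper treats the exclusion of the $u_i$ more implicitly (the clockwise arc from any original boundary vertex to $v_2$ in $T_r$ simply consists of original boundary vertices), but this is the same argument with different emphasis.
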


\begin{proof}
    It follows from \cref{thm:peeling_process_maximal_wood} that we can assume that the edges of $T$ are coloured by the Schnyder peeling process. We induct on the total number of vertices of $T$, that is, on $n + m + 2$. Clearly, the statement holds when $n=0$, as all vertices of $T$ are boundary vertices. Suppose that the statement holds for all triangulations with fewer than $n + m + 2$ vertices, and consider $T \in \cT_n^m$. Suppose $T$ has boundary $v_0,\dots,v_{m+1}$ in anticlockwise order, and root edge $(v_1,v_2)$. Applying the Schnyder peeling process to $T$, we choose a chord $(v_0,v_c)$, reveal some $k \geq 0$ interior vertices, and obtain two unexplored regions of $T$ that we call $T_r$ and $T_\ell$. Note that $T_r$ is rooted at $(v_1,v_2)$, $T_\ell$ is rooted at $(v_0,v_c)$, and $T_r$ does not contain $v_0$, while $T_\ell$ does not contain $v_1$. It follows from the induction hypothesis that for any vertex $v$ on the boundary of $T_\ell$, the directed yellow path from $v$ to $v_c$ consists of boundary vertices on $T_\ell$ clockwise between $v$ and $v_c$. Similarly, for any vertex $v$ on the boundary of $T_r$, the directed yellow path from $v$ to $v_2$ consists of boundary vertices of $T_r$ clockwise between $v$ and $v_2$. In particular, since $v_c$ is on the boundary of $T_r$, the directed yellow path from $v_c$ to $v_2$ consists of boundary vertices of $T_r$.

    Suppose $v$ is on the boundary of $T$. If $v$ is on the boundary of $T_\ell$, then every vertex clockwise between $v$ and $v_c$ on the boundary of $T_\ell$ is also on the boundary of $T$. Furthermore, every vertex on the boundary of $T_r$ clockwise between $v_c$ and $v_2$ is on the boundary of $T$, and so $\vpath{y}{v}$ runs clockwise along the boundary of $T$ from $v$ to $v_2$, as desired. Otherwise, $v$ is on the boundary of $T_r$. Then, every vertex clockwise between $v$ and $v_2$ on the boundary of $T_r$ is on the boundary of $T$, and so $\vpath{y}{v}$ again satisfies the claim.
\end{proof}

We also deduce that the monochromatic paths in the maximal Schnyder wood satisfy the following structure. As far as we are aware, this structure has not previously been described elsewhere.

\begin{theorem}\label{thm:finite_paths_from_v_wind}
    Let $T \in \cT_n^m$, and let $v$ be a vertex of $T$. Then every vertex on $\vpath{b}{v}$ has an outgoing red edge to $\vpath{r}{v}$, every vertex on $\vpath{r}{v}$ has an outgoing yellow edge to $\vpath{y}{v}$, and every vertex on $\vpath{y}{v}$ is either a boundary vertex of $T$ or has an outgoing blue edge to $\vpath{b}{v}$.
\end{theorem}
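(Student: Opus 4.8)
The plan is to prove all three incidences simultaneously by induction on $n+m$, using the Schnyder peeling process exactly as in the proof of \cref{lem:finite_yellow_path}; by \cref{thm:peeling_process_maximal_wood} we may assume the maximal Schnyder wood of $T$ is the one produced by this process. Write the boundary as $v_0,\dots,v_{m+1}$ with root edge $(v_1,v_2)$ and consider the first peeling step: it selects a chord $(v_0,v_c)$, reveals interior neighbours $u_1,\dots,u_k$ of $v_0$, and splits $T$ into $T_\ell$ rooted at $(v_0,v_c)$ and $T_r$ rooted at $(v_1,v_2)$, colouring $(v_0,v_c)$ yellow, $(v_0,v_1)$ red, and each $(u_i,v_0)$ blue. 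Since the peeling process colours $T_\ell$ and $T_r$ recursively and independently, the restriction of the wood of $T$ to each piece is its own maximal wood, so the induction hypothesis applies to both.

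First I would record how the monochromatic paths from a vertex $v$ decompose across this split. For $v\in T_r$ the paths $\vpath{y}{v}$ and $\vpath{r}{v}$ lie entirely in $T_r$ and terminate at $v_2$ and $v_1$ respectively, while $\vpath{b}{v}$ lies in $T_r$ and ends at a boundary vertex of $T_r$; if that vertex is a revealed vertex $u_i$ it continues along the blue edge $u_i\to v_0$ and stops at $v_0$, and otherwise it is already a boundary vertex of $T$. For $v\in T_\ell$ the path $\vpath{r}{v}$ runs to $v_0$ inside $T_\ell$ and then continues $v_0\to v_1$, the path $\vpath{y}{v}$ runs to $v_c$ inside $T_\ell$ and then continues clockwise along the boundary to $v_2$ (this tail is exactly $\vpath{y}{v_c}$, a boundary path by \cref{lem:finite_yellow_path}), and $\vpath{b}{v}$ stays inside $T_\ell$ and ends at a boundary vertex of $T$. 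With this dictionary in hand, almost every required incidence follows by combining the two hypotheses with the observation that the junction vertices $v_0,v_1,v_2,v_c$ sit precisely at the endpoints of the relevant paths. For example, for $v\in T_\ell$ the red-path vertex $v_0$ has outgoing yellow edge $v_0\to v_c$ with $v_c\in\vpath{y}{v}$, and $v_1$ has outgoing yellow edge $v_1\to v_2$ with $v_2\in\vpath{y}{v}$, so the red-to-yellow statement for $T_\ell$ (whose yellow targets all lie on the $T_\ell$-part of $\vpath{y}{v}$) extends to all of $\vpath{r}{v}$; the blue-to-red incidence for $v\in T_r$ is checked the same way, using that a blue path ending at $u_i$ reaches $v_0$, whose outgoing red edge $v_0\to v_1$ lands on $\vpath{r}{v}$; and the yellow-to-blue statement for $v\in T_\ell$ lifts because every boundary vertex of $T_\ell$ is a boundary vertex of $T$.

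The hard part will be the single genuinely non-local case: a vertex $u_i$ that is a boundary vertex of $T_r$ but an interior vertex of $T$, when it lies on $\vpath{y}{v}$ for some $v\in T_r$. Here the hypothesis for $T_r$ only classifies $u_i$ as a boundary vertex and so says nothing about its blue edge, whereas the statement for $T$ demands that the blue edge $u_i\to v_0$ land on $\vpath{b}{v}$, that is, that $v_0\in\vpath{b}{v}$. Equivalently, once the yellow path of $v$ winds up into the fan of neighbours of $v_0$, the blue path of $v$ must be forced to terminate at some $u_j$ and hence continue to $v_0$; in tree language, the yellow subtree hanging below $u_1,\dots,u_k$ must be contained in the blue subtree hanging below $v_0$. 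I expect this containment to be the crux, and I would handle it by strengthening the inductive hypothesis to track, for each $v\in T_r$, the endpoint of $\vpath{b}{v}$ relative to the position where $\vpath{y}{v}$ meets the boundary of $T_r$, so that the winding of the yellow path past the $u_i$ is matched step for step by the blue path; alternatively, a direct planarity argument, ruling out via \cref{lem:oriented_triangles} and \cref{cor:most_clockwise_path} a blue path that escapes the region bounded by $\vpath{y}{v}$ and the chord $(v_0,v_c)$ without passing through $v_0$, should close the gap. The remaining bookkeeping (the special vertices $v_0,v_c,u_i$ under the blue-to-red and red-to-yellow statements, and the base case in which $T$ is a triangle) is routine.
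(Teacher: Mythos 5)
Your setup coincides with the paper's: the same induction driven by the Schnyder peeling process, the same decomposition of the three paths across $T_\ell$ and $T_r$ (including the use of \cref{lem:finite_yellow_path} for the boundary tail of the yellow path), and you have correctly isolated the one non-trivial point, namely that when $\vpath{y}{v}$ for $v \in T_r$ passes through a revealed vertex $u_i$, the blue path $\qpath{b}$ must terminate at some $u_j$, so that $v_0 \in \vpath{b}{v}$.

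However, that point is exactly where your proposal stops being a proof: neither of your two suggested strategies is carried out, and the tools you cite would not close it. \Cref{lem:oriented_triangles} and \cref{cor:most_clockwise_path} concern directed cycles containing directed triangles and are of no direct use here; moreover, a region bounded by $\vpath{y}{v}$ and the chord $(v_0,v_c)$ is aimed at the wrong object, since the chord does not even lie in $T_r$ and plays no role in the obstruction. The two ingredients you are missing are these. First, one needs that $\vpath{y}{v}$, $\vpath{r}{v}$, and $\vpath{b}{v}$ are pairwise vertex-disjoint except at $v$; this is not free, and the paper proves it as a preliminary claim: if two of the paths first meet at a vertex $u$, their initial segments form a cycle $C$ in which, by the Schnyder conditions, every vertex other than $v$ and $u$ has an outgoing edge into the interior of $C$, giving at least $b-2$ such edges for a cycle of length $b$ and contradicting the exact count $b-3$ of \cref{lem:directed_edges_from_cycle}. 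Second, with disjointness in hand, the paper traps the \emph{red} path rather than the blue one: if $\qpath{b}$ ended at a vertex $v_j$ with $3 \le j \le c$, the Schnyder condition at $v$ (the red outgoing edge lies angularly between the blue and yellow ones) forces the first edge of $\qpath{r}$ into the region bounded by $\qpath{b}$, the segment of $\qpath{y}$ from $v$ to $u_i$, and the boundary arc of $T_r$ clockwise between $u_i$ and $v_j$; since $\qpath{r}$ must reach $v_1$, which lies outside this region, planarity forces $\qpath{r}$ to share a vertex with $\qpath{b}$ or $\qpath{y}$, a contradiction. A trapping argument of the sort you sketch (confining the blue path using $\vpath{y}{v}$ together with \cref{lem:finite_yellow_path}) can also be made to work, but only once disjointness is available: in a planar graph, paths ``cross'' precisely by sharing vertices, so without the disjointness claim there is nothing for such a crossing to contradict.
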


\begin{proof}
    First, we show that $\vpath{y}{v}$, $\vpath{r}{v}$, and $\vpath{b}{v}$ are vertex-disjoint except at $v$. Indeed, suppose that two of these paths intersect, say $\vpath{y}{v}$ and $\vpath{r}{v}$, and let $u$ be the first vertex at which they intersect. Then, the path segments of $\vpath{y}{v}$ and $\vpath{r}{v}$ from $v$ to $u$ form a cycle $C$. By the Schnyder conditions, for every vertex of $C$ apart from $v$ and $u$, exactly one edge is directed from that vertex to the interior of $C$. So, if $C$ has length $b$, there are at least $b - 2$ edges directed from $C$ to its interior. This contradicts \cref{lem:directed_edges_from_cycle}.

    We now prove the lemma by an induction argument similar to the one used in the proof of \cref{lem:finite_yellow_path}. By \cref{thm:peeling_process_maximal_wood}, we can assume that the edges of $T$ are coloured by the Schnyder peeling process. We induct on $n + m + 2$. The statement clearly holds when $n = 0$ and $m = 1$. Suppose that the statement holds for all triangulations with fewer than $n + m + 2$ vertices. Let $T \in \cT_n^m$ have boundary $v_0, \dots, v_{m+1}$ in anticlockwise order and root edge $(v_1,v_2)$, and suppose that when applying the Schnyder peeling process to $T$, we choose a chord $(v_0,v_c)$, reveal $k \ge 0$ interior vertices $u_1, \dots, u_k$, and obtain two unexplored regions $T_r$ and $T_\ell$ of $T$, where $T_r$ is rooted at $(v_1,v_2)$ and $T_\ell$ is rooted at $(v_0,v_c)$. By the induction hypothesis, we know that the statement holds for $T_r$ and $T_\ell$.

    First suppose that $v$ is in $T_\ell$. For $c \in \{y, r, b\}$, let $Q_c$ be the unique directed path of colour $c$ in $T_\ell$ that starts at $v$. In particular, $\qpath{y}$ terminates at $v_c$, $\qpath{r}$ terminates at $v_0$, and $\qpath{b}$ terminates at the boundary of $T_\ell$. Note that $\vpath{b}{v} = \qpath{b}$, $\vpath{r}{v} = \qpath{r} v_1$, and $\vpath{y}{v} = \qpath{y} Y$ where $Y$ is the directed yellow path from $v_c$ to $v_2$ in $T_r$. In particular, every vertex on $\vpath{b}{v} = \qpath{b}$ has an outgoing red edge to $\qpath{r} \subseteq \vpath{r}{v}$. Every vertex on $\vpath{r}{v}$ is either on $\qpath{r}$ and so has an outgoing yellow edge to $\qpath{y} \subseteq \vpath{y}{v}$, or is $v_1$ and so has an outgoing yellow edge to $v_2 \in \vpath{y}{v}$. Finally, by \cref{lem:finite_yellow_path}, we know that $Y$ consists entirely of boundary vertices of $T$. So, every vertex on $\vpath{y}{v}$ that is not a boundary vertex of $T$ is on $\qpath{y}$, and therefore has an outgoing blue edge to $\qpath{b} = \vpath{b}{v}$.

    Otherwise, $v$ is in $T_r$. For $c \in \{y, r, b\}$, let $Q_c$ be the unique directed path of colour $c$ in $T_r$ that starts at $v$. In particular, $\qpath{y}$ terminates at $v_2$, $\qpath{r}$ terminates at $v_1$, and $\qpath{b}$ terminates at the boundary of $T_r$. Note that $\vpath{r}{v} = \qpath{r}$ and $\vpath{y}{v} = \qpath{y}$. Moreover, $\vpath{b}{v} = \qpath{b} v_0$ if $\qpath{b}$ terminates at one of the vertices $u_1, \dots, u_k$, and $\vpath{b}{v} = \qpath{b}$ otherwise. In particular, every vertex on $\vpath{b}{v}$ is either on $\qpath{b}$ and so has an outgoing red edge to $\qpath{r} = \vpath{r}{v}$, or is $v_0$ and so has an outgoing red edge to $v_1 \in \vpath{r}{v}$. Also, every vertex on $\vpath{r}{v} = \qpath{r}$ has an outgoing yellow edge to $\qpath{y} = \vpath{y}{v}$.
    
    Finally, consider a vertex $w$ on $\vpath{y}{v} = \qpath{y}$ that is not a boundary vertex of $T$. If $w$ is also not a boundary vertex of $T_r$, then $w$ has an outgoing blue edge to $\qpath{b} \subseteq \vpath{b}{v}$. Otherwise, $w = u_i$ for some $i \in [k]$, and so $w$ has an outgoing blue edge to $v_0$. It remains to show that $v_0 \in \vpath{b}{v}$, or, equivalently, that $\qpath{b}$ terminates at one of the vertices $u_1, \dots, u_k$. Suppose for a contradiction that this is not the case, and so $\qpath{b}$ terminates at a boundary vertex $v_j$ for some $3 \le j \le c$. Then, by the Schnyder conditions at vertex $v$, the first edge of $\qpath{r}$ is directed to the interior of the region enclosed by the path $\qpath{b}$ from $v$ to $v_j$, the path segment of $\qpath{y}$ from $v$ to $u_i$, and the segment of the boundary vertices of $T_r$ clockwise between $u_i$ and $v_j$. However, $\qpath{r}$ terminates at $v_1$, and so this would imply by planarity that $\qpath{r}$ intersects $\qpath{b}$ or $\qpath{y}$. This contradicts the fact that $\vpath{y}{v}$, $\vpath{r}{v}$, and $\vpath{b}{v}$ are vertex-disjoint as shown at the beginning of the proof.
\end{proof}

\section{Schnyder woods of infinite triangulations}\label{sec:infiniteschnyderwoods}

In this section we introduce Schnyder woods of infinite triangulations. For now, these triangulations may have either finite boundary (triangulations of polygons), or infinite boundary (triangulations of the half-plane). We will establish differing results on these two classes of triangulation in later sections. We begin in \cref{ssec:infiniteschnyderwoods:definitions} by defining Schnyder woods of infinite triangulations, and generalising the notion of maximality introduced in \cref{ssec:prelim:schnyderwoods}. We show that our definitions imply that a maximal Schnyder wood of an infinite triangulation is unique, provided it exists. In \cref{ssec:infiniteschnyderwoods:peelingproc}, we generalise the Schnyder peeling process introduced for finite triangulations in \cref{ssec:prelim:schnyderwoods}, and describe some of the coloured substructures it produces as it explores an infinite triangulation.

\subsection{Defining infinite Schnyder woods} \label{ssec:infiniteschnyderwoods:definitions}

Recall from \cref{ssec:prelim:schnyderwoods} that a Schnyder wood of a finite triangulation with boundary, $T$, rooted at $(v_r, v_y)$, is a $3$-colouring of the edges of a $3$-orientation of $T$ in red, yellow, and blue satisfying the following Schnyder conditions.

\textbf{The Schnyder condition:}
\begin{itemize}
    \item The $3$ outgoing edges at each interior vertex are coloured blue, red, and yellow in anticlockwise cyclic order, and
    \item Every incoming edge at each interior vertex enters between the outgoing edges of the two other colours.
\end{itemize}

\textbf{The Schnyder root condition:}
\begin{itemize}
    \item $v_r$ has exactly $1$ outgoing edge and $v_y$ has no outgoing edges, and
    \item All incoming edges to $v_r$ are coloured red and all incoming edges to $v_y$ are coloured yellow.
\end{itemize}

\textbf{The Schnyder boundary condition:} 
For any boundary vertex $v$ apart from $v_r$ and $v_y$,
\begin{itemize}
    \item There are exactly $2$ outgoing edges at $v$ which are coloured red and yellow, and
    \item In anticlockwise order starting from the boundary, edges incident with $v$ consist of incoming yellow edges, the outgoing red edge, incoming blue edges, the outgoing yellow edge, and incoming red edges.
\end{itemize}

We now define Schnyder woods of infinite triangulations analogously. A \defn{Schnyder wood of an infinite triangulation $T$ rooted at $(v_r, v_y)$} is a colouring and orientation of the edges of $T$ that satisfies the Schnyder conditions. Recall from \cref{thm:schnyder_subgraphs_on_finite_triangulation} that for finite triangulations, the monochromatic subgraphs of a Schnyder wood consist of rooted forests with a certain structure. We will prove analogous structural results for the monochromatic subgraphs on various infinite triangulations in later sections. However, for now, we observe that the Schnyder wood definition immediately implies that these subgraphs are still forests in the infinite case.

\begin{lemma}\label{lem:infiniteschnyderwood_monochromatic}
    Every monochromatic subgraph of a Schnyder wood of an infinite triangulation is a forest. 
\end{lemma}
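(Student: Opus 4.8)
The plan is to show that each monochromatic subgraph contains no cycle, since a forest is precisely a graph with no cycles. I will argue by contradiction: suppose some monochromatic subgraph, say the blue one, contains a cycle $C$. The key observation is that any such cycle is a finite object, so I can apply the finitary results from the preliminaries, namely \cref{lem:directed_edges_from_cycle}, to its interior, which (by the one-endedness standing assumption) encloses only finitely many vertices and edges.

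\medskip

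\noindent First I would establish that a monochromatic cycle must in fact be a \emph{directed} cycle. By the Schnyder conditions, every interior vertex has exactly one outgoing edge of each colour, and the boundary and root conditions likewise constrain outgoing edges by colour; in particular no vertex has two outgoing edges of the same colour. Hence along a monochromatic cycle, each vertex contributes at most one outgoing edge of that colour to the cycle, which forces the cycle to be consistently oriented, i.e.\ a directed cycle. Next I would derive the contradiction by an edge-counting argument on $C$ identical in spirit to the one used in the proof of \cref{thm:finite_paths_from_v_wind}. If $C$ is a directed cycle of length $b$, then at every vertex of $C$ the Schnyder condition (or the boundary condition) places the two outgoing edges of the other two colours so that at least one of them points into the interior of $C$; more carefully, because each vertex of $C$ has exactly one incoming and one outgoing edge \emph{on} $C$ (of the cycle's colour), its remaining outgoing edges of the other colours are split by the cycle, and planarity together with the cyclic ordering forces at least $b$ edges (one per vertex, via the outgoing edge lying inside $C$) to be directed from $C$ into its interior. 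This contradicts \cref{lem:directed_edges_from_cycle}, which asserts there are exactly $b-3$ such edges.

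\medskip

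\noindent I would therefore conclude that no monochromatic cycle exists, so every monochromatic subgraph is acyclic, hence a forest. The main obstacle, and the point requiring the most care, is making the interior-edge count rigorous for a cycle that may pass through boundary vertices as well as interior vertices: at interior vertices the two ``other-colour'' outgoing edges straddle the cycle, while at boundary vertices only two outgoing edges (red and yellow) exist and the third colour is absent, so the local picture differs. The clean way to handle this uniformly is to note that \cref{lem:directed_edges_from_cycle} and its proof rely only on the cycle enclosing a finite triangulated region (a fact the authors explicitly flag as reusable for infinite triangulations), and then to verify that the Schnyder conditions guarantee \emph{strictly more} than $b-3$ outgoing-to-interior edges regardless of whether a given cycle vertex is interior or on the boundary. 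Establishing that strict inequality carefully in all cases is the crux; once it is in place the contradiction with \cref{lem:directed_edges_from_cycle} is immediate.
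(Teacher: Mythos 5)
Your proposal is correct and follows essentially the same route as the paper: observe that the Schnyder conditions give each vertex at most one outgoing edge per colour, so a monochromatic cycle must be directed, and then note that at every cycle vertex one outgoing edge of another colour points into the interior, yielding $b$ interior-directed edges and contradicting the count of exactly $b-3$ from \cref{lem:directed_edges_from_cycle}. The paper is slightly more specific (e.g.\ for a blue cycle it identifies the yellow outgoing edge as the unique one pointing inward), but the boundary-vertex case you flag as the crux causes no real difficulty, since the exterior face forces the interior of the cycle onto the side containing the relevant outgoing edge.
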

\begin{proof}
    Suppose that a Schnyder wood contains a monochromatic blue cycle. Since the Schnyder conditions imply that each vertex has at most one blue outgoing edge, it follows that this cycle must be directed. The Schnyder conditions therefore imply that at each vertex on the cycle, the yellow outgoing edge is the unique outgoing edge directed into the interior of the cycle, contradicting \cref{lem:directed_edges_from_cycle}. A similar contradiction is obtained by assuming the presence of a monochromatic red or yellow cycle. It follows that the monochromatic subgraphs of any Schnyder wood are forests.
\end{proof}

Recall also that a Schnyder wood of a finite triangulation is called maximal if every directed cycle in the Schnyder wood is oriented clockwise.  We similarly consider an analogous definition for Schnyder woods of infinite triangulations. We say that a two-ended infinite directed path in an infinite triangulation $T$ is \defn{right-directed} (resp. \defn{left-directed}) if it divides $T$ into two regions so that the region containing the boundary of $T$ lies to the right (resp. left) of the path. Observe that in an infinite triangulation, a left-directed path can be thought of as an infinite clockwise cycle, and a right-directed path can be thought of as an infinite anticlockwise cycle. We therefore say a Schnyder wood of an infinite triangulation is \defn{maximal} if it contains no anticlockwise cycles or right-directed paths. \cref{fig:distinct_max_schnyder_halfplane} gives an example of a triangulation of the half-plane with two distinct Schnyder woods containing no finite anticlockwise cycles; however, this is only possible due to the Schnyder wood on the right containing right-directed paths. 

\begin{figure}[h]
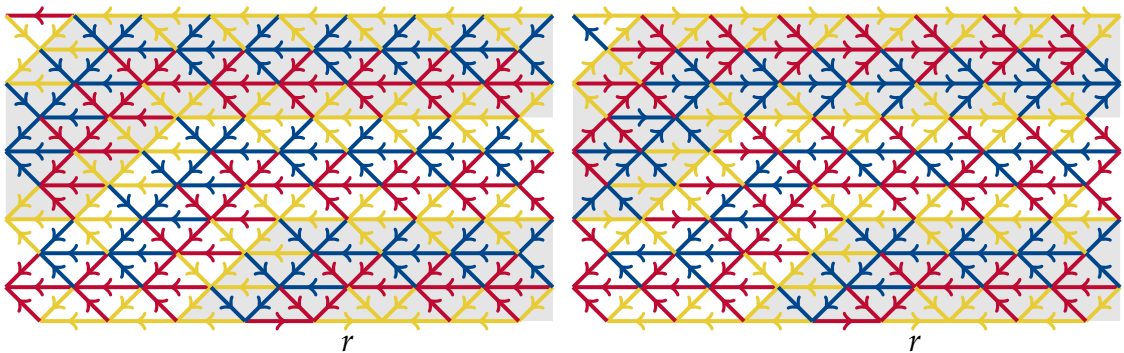

    \centering
    \input{Figures/maximal_no_rightpath_example.tex}
    \input{Figures/maximal_rightpath_example.tex}
    \caption{Two distinct Schnyder woods of the half-plane triangular lattice in which every finite directed cycle is oriented clockwise. The left Schnyder wood is the one produced by the Schnyder chiselling algorithm that we later define in \cref{sec:structure}, and contains no right-directed paths. The right Schnyder wood contains right-directed paths.}
    \label{fig:distinct_max_schnyder_halfplane}
\end{figure}

We now establish that if a maximal Schnyder wood exists on some infinite triangulation $T$, then it is unique. The argument is the same as the standard proof that maximal Schnyder woods of finite triangulations are unique (see \cite{brehm2000orientations}), but applied using our definition of maximality for infinite triangulations.
\begin{theorem}\label{thm:max_schnyder_unique_infinite}
    If an infinite triangulation $T$ has a maximal Schnyder wood, then $T$ has a unique maximal Schnyder wood.
\end{theorem}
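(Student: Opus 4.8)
The plan is to mimic the classical proof for finite triangulations: two distinct maximal woods differ by reversing a family of directed cycles, and the maximal one is singled out by the absence of anticlockwise cycles. In the infinite setting the difference may additionally contain two-ended directed paths, which is precisely why our notion of maximality also forbids right-directed paths.

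Concretely, suppose $T$ carries two maximal Schnyder woods $W_1$ and $W_2$, and regard each simply as an orientation of the edges of $T$ (recall the colours are then determined by the orientation). Let $D$ be the set of edges on which $W_1$ and $W_2$ disagree, oriented as in $W_1$. The first step is to observe that the Schnyder conditions fix the out-degree of every vertex: $3$ at each interior vertex, $2$ at each non-root boundary vertex, $1$ at $v_r$, and $0$ at $v_y$. Hence $W_1$ and $W_2$ have the same out-degree at every vertex $v$. Comparing the two orientations at $v$, it follows that the number of $D$-edges leaving $v$ in $W_1$ equals the number entering $v$ in $W_1$; that is, $D$ is \emph{balanced}, with equal in- and out-degree at every vertex. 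Since $T$ is locally finite, a balanced locally finite digraph decomposes into edge-disjoint directed trails, each either a finite closed trail or a two-ended infinite trail, and a nonempty such $D$ therefore contains a simple directed cycle or a simple two-ended directed path.

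Now assume for contradiction that $W_1 \neq W_2$, so $D \neq \emptyset$ and we obtain either a simple directed cycle $C$ or a simple two-ended directed path $P$ inside $D$. Because $W_2$ is obtained from $W_1$ by reversing exactly the edges of $D$, reversing $C$ (resp.\ $P$) turns a directed structure of $W_1$ into a directed structure of $W_2$. If a cycle $C$ occurs, then $C$ is anticlockwise in exactly one of $W_1, W_2$ and clockwise in the other, so one of the two woods contains an anticlockwise cycle, contradicting maximality. If instead a two-ended path $P$ occurs, I would use that $T$ is one-ended and planar to argue that the simple path $P$ separates $T$ into two regions with the boundary lying entirely on one side; then $P$ is left-directed in one of $W_1, W_2$ and right-directed in the other, so one of the woods contains a right-directed path, again contradicting maximality. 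Either way we reach a contradiction, so $W_1 = W_2$ and the maximal wood is unique.

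The routine parts are the degree bookkeeping and the finite-cycle case, which are identical to the argument in \cite{brehm2000orientations}. The main obstacle is the two-ended path case: one must justify the edge decomposition of $D$ in the infinite, locally finite setting (so that a nonempty $D$ genuinely yields a simple cycle or a simple two-ended directed path rather than some wilder configuration), and, crucially, confirm that any simple two-ended directed path in $T$ is either left- or right-directed. This last point is exactly where one-endedness enters: it guarantees that the complement of such a path confines the boundary to a single side, so that the left/right dichotomy---and hence the contradiction with our extended definition of maximality---applies.
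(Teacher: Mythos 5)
Your proposal is correct and follows essentially the same argument as the paper: both compare the out-degrees forced by the Schnyder conditions, conclude that the disagreement set is balanced and therefore decomposes into directed cycles and two-ended infinite directed paths, and observe that any such structure must be anticlockwise or right-directed in one of the two woods, contradicting maximality. The two points you flag as needing extra care (justifying the decomposition in the infinite locally finite setting, and the left/right dichotomy for two-ended paths via one-endedness) are treated just as briefly in the paper's own proof, so your write-up matches it in both substance and level of detail.
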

\begin{proof}
    Suppose that there exist two distinct maximal Schnyder woods $S_1$ and $S_2$ on $T$, and consider the subgraph $H$ of $T$ consisting of edges that are oriented differently in $S_1$ and $S_2$. Since both $S_1$ and $S_2$ are Schnyder woods, every vertex $v$ in $T$ has the same number of outgoing edges in both $S_1$ and $S_2$ (with the exact number depending on whether $v$ is an interior, boundary, or root vertex of $T$). To preserve these outdegrees, for every edge incident with $v$ that is oriented into $v$ in $S_1$ and out of $v$ in $S_2$, there must be another edge that is oriented out of $v$ in $S_1$ and into $v$ in $S_2$, and vice-versa. It follows that every vertex in $H$ must have even degree, and that $H$ can be decomposed into subgraphs that correspond to directed cycles or two-ended infinite directed paths in $S_1$ and $S_2$. Since the edges of these cycles and paths in $S_1$ and $S_2$ have opposite orientations, there must be an anticlockwise cycle or a right-directed path in $S_1$ or $S_2$, and so it follows that $S_1$ and $S_2$ cannot both be maximal. It follows that a maximal Schnyder wood of any infinite triangulation $T$ is unique.
\end{proof}

\subsection{The Schnyder peeling process on infinite triangulations}\label{ssec:infiniteschnyderwoods:peelingproc}

We now describe a modification of the Schnyder peeling process which allows it to explore infinite one-ended triangulations. We first describe the details of this modified peeling process, and provide terminology and notation that will be used to discuss the regions explored by this process in detail in future sections. We also provide an overview of the structures explored by this process after a finite number of steps have elapsed. We will make use of these structures in later sections when we attempt to explore infinite triangulations in their entirety.

\subsubsection{One step of the peeling process}\label{ssec:peelproc:singlestep}

Suppose that we are given an infinite one-ended triangulation $T$ of a polygon with finite boundary or of the half-plane, rooted at an edge on the boundary. In the following, we describe a single step of the Schnyder peeling process on $T$. Such a step will reveal a part of $T$ near the root edge and assign colours and directions to the revealed edges. This will split $T$ into a finite region, which is explored immediately using the Schnyder peeling process for finite triangulations, and an infinite region which remains unexplored. At the end of the step, the boundary of $T$ is updated to the boundary of the unexplored infinite region, and a certain edge on that new boundary is designated as the new root edge, so that the process explores within this (unexplored) infinite region in future steps.

To define a peeling step, let the boundary of $T$ be $\dots, v_{-2}, v_{-1}, v_0, v_1, v_2, \dots$ with root edge $(v_1, v_2)$ coloured yellow. We again call $v_0$ the \defn{peeling vertex} used by this peeling step, and define chords incident with $v_0$ similarly to the finite case; that is, an edge $v_0 u$ is a \defn{chord} if $u = v_c$ for some boundary vertex $v_c$ with $c \neq 1$. Consider the first chord $v_0v_c$ anticlockwise from $v_0v_1$ that connects $v_0$ to the boundary, and let $u_1, \dots, u_k$ be the neighbours of $v_0$ between the edges $v_0v_1$ and $v_0v_c$ in anticlockwise order. We assign the following colours and directions. Let $(v_0, v_c)$ be yellow, $(v_0, v_1)$ be red, and for each $i \in [k]$ let $(u_i, v_0)$ be blue. Then, depending on which side of the chord $v_0 v_c$ encloses a finite triangulation, we perform the following steps. 
\begin{itemize}
    \item If the boundary of $T$ is infinite and $c > 0$, or the boundary is finite and $v_0, v_1, \dots, v_c, v_0$ encloses a finite triangulation, we perform a \defn{right step}.

    In this case, let $T_r$ be the finite triangulation enclosed by $v_1, \dots, v_c, u_k, \dots, u_1, v_1$, rooted at $(v_1, v_2)$, and assign the unique finite maximal Schnyder wood to $T_r$. Moreover, let the updated boundary be $\dots, v_{-2}, v_{-1}, v_0, v_c, v_{c+1}, \dots$ and let the new root edge be $(v_0,v_c)$. This is the boundary of the unexplored infinite region of $T$ to the left of $(v_0, v_c)$.
    
    \item If the boundary of $T$ is infinite and $c < 0$, or the boundary is finite and $v_0, v_c, v_{c+1}, \dots, v_0$ encloses a finite triangulation, we perform a \defn{left step}.

    In this case, let $T_\ell$ be the finite triangulation enclosed by $v_0, v_c, v_{c+1}, \dots, v_0$, rooted at $(v_0, v_c)$, and assign the unique finite maximal Schnyder wood to $T_\ell$. Moreover, let the updated boundary be $\dots, v_{c-1}, v_c, u_k, \dots, u_1, v_1, v_2, \dots$ and keep the root edge $(v_1, v_2)$. This is the boundary of the unexplored infinite region of $T$ to the right of $(v_0, v_c)$.
\end{itemize}
See \cref{fig:example_schnyder_steps} for an example of a left and right step. The \defn{partial triangulation $T'$ revealed by the peeling step} is the part of $T$ enclosed between the old boundary and the updated boundary, including the parts on the boundaries. Its \defn{interior}, $\intr T'$\glossarylabel{gl:intr}, is the part of $T'$ that is not on the updated boundary. After the peeling step, the infinite region of $T$ that remains unexplored is $T \sm \intr T'$. We call this the \defn{unexplored region} of $T$. The process continues exploring the unexplored region of $T$ in the next step.

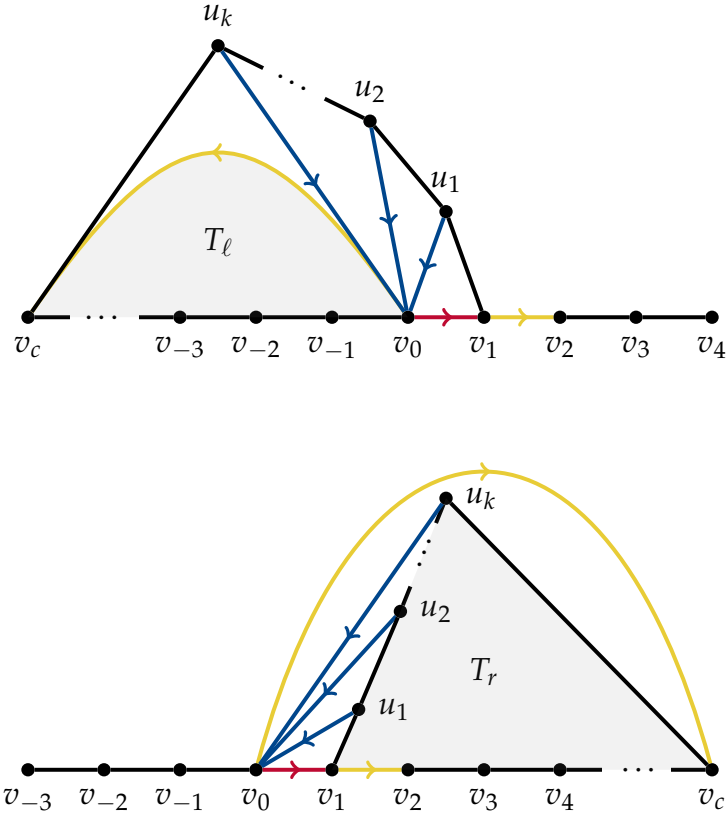
\begin{figure}[h]
    \centering
    \begin{tikzpicture}

    \foreach \x/\nm/\lbl in {0/bk/$v_c$,2/bn3/$v_{-3}$,3/bn2/$v_{-2}$,4/bn1/$v_{-1}$,5/b0/$v_0$, 6/b1/$v_1$, 7/b2/$v_2$, 8/b3/$v_3$, 9/b4/$v_4$}{
        \node[mycircle, label=below:{\lbl}] (\nm) at (\x,0){};
    }

    \path[every node/.style={font=\sffamily\small}]
        (bn3) edge (bn2)
        (bn2) edge (bn1)
        (bn1) edge (b0)
        (b0) edge [->-, redge] (b1)
        (b1) edge [->-, yedge] (b2)
        (b2) edge (b3)
        (b3) edge (b4);

    \node[mycircle, label=above:{$u_k$}] (uj) at (2.5,3.6){};
    \node[mycircle, label=above:{$u_2$}] (u2) at (4.5,2.6){};
    \node[mycircle, label=above:{$u_1$}] (u1) at (5.5,1.4){};

    \draw[dots] (bk) to (bn3);
    \draw[dots] (uj) to (u2);

    \node[] (hole) at (2.5,1){$T_\ell$};
    \fill[fill=gray, opacity=0.1]  (b0) to [bend right=55, looseness=1.8] (bk) to (bk.center) to (b0.center) to cycle;

    \draw[] (b0) edge [->-, yedge, bend right=55, looseness=1.8] node[font=\small,above, yshift=-0.08cm] {} (bk);

    \draw[] (uj) edge [->-, bedge] node[font=\small,above, yshift=-0.08cm] {} (b0);
    \draw[] (u2) edge [->-, bedge] node[font=\small,above, yshift=-0.08cm] {} (b0);
    \draw[] (u1) edge [->-, bedge] node[font=\small,above, yshift=-0.08cm] {} (b0);

    \draw[every node/.style={font=\sffamily\small}]
        (bk) to (uj)
        (u2) to (u1)
        (u1) to (b1);

    \begin{scope}[yshift=-6cm]
        \foreach \x/\nm/\lbl in {0/vn3/$v_{-3}$,1/vn2/$v_{-2}$,2/vn1/$v_{-1}$,3/v0/$v_{0}$,4/v1/$v_1$, 5/v2/$v_2$, 6/v3/$v_3$, 7/v4/$v_4$, 9/vc/$v_c$}{
        \node[mycircle, label=below:{\lbl}] (\nm) at (\x,0){};
    }

    \node[mycircle, label=right:{$u_k$}] (uj) at (5.5,3.6){};
    \node[mycircle, label=right:{$u_2$}] (u2) at (4.9,2.1){};
    \node[mycircle, label=right:{$u_1$}] (u1) at (4.35,0.8){};

    \draw[dots] (vc) to (v4);
    \draw[dots] (uj) to (u2);

    \node[] (hole) at (6,1.3){$T_r$};
    \fill[fill=gray, opacity=0.1]  (vc.center) to (v1.center) to (u1.center) to (u2.center) to (uj.center) to (vc.center);

    \path[every node/.style={font=\sffamily\small}]
        (vn3) edge (vn2)
        (vn2) edge (vn1)
        (vn1) edge (v0)
        (v0) edge [->-, redge] (v1)
        (v1) edge [->-, yedge] (v2)
        (v2) edge (v3)
        (v3) edge (v4);

    \draw[] (v0) edge [->-, yedge, bend left=75, looseness=2.3] node[font=\small,above, yshift=-0.08cm] {} (vc);

    \draw[] (uj) edge [->-, bedge] node[font=\small,above, yshift=-0.08cm] {} (v0);
    \draw[] (u2) edge [->-, bedge] node[font=\small,above, yshift=-0.08cm] {} (v0);
    \draw[] (u1) edge [->-, bedge] node[font=\small,above, yshift=-0.08cm] {} (v0);

    \draw[every node/.style={font=\sffamily\small}]
        (vc) to (uj)
        (u2) to (u1)
        (u1) to (v1);
    \end{scope}
    
\end{tikzpicture}
    \caption{A left step (top) and right step (bottom) of the Schnyder peeling process.}
    \label{fig:example_schnyder_steps}
\end{figure}

Note that during each step, the enclosed finite triangulation is assigned its unique maximal Schnyder wood, which is the same colouring that would be assigned to it by recursively applying the finite peeling process to this region. Since the only other distinction to the finite peeling process is that the remaining unexplored region is infinite, it follows that the region of $T$ explored by any finite number of steps of the Schnyder peeling process is assigned the same colouring as it would be if it were contained in a large finite triangulation\footnote{More precisely, in any finite triangulation obtained by replacing the infinite unexplored region of $T$ with an arbitrary finite triangulation.} and explored by the finite peeling process.

\subsubsection{Multiple steps of the peeling process}\label{sssec:peelproc:multiplesteps}

We now consider the behaviour of the Schnyder peeling process over multiple steps. Let $T$ be an infinite triangulation with initial boundary $B = \dots,b_{-2},b_{-1},b_0,b_1,b_2,\dots$ and initial root edge $(b_1,b_2)$. Then step $i$ of the Schnyder peeling process performs a peeling step based on the boundary and root edge before step $i$. This step updates the boundary and the process chooses a new root edge on the updated boundary for the next peeling step. In the following, we deduce some basic properties about how the Schnyder peeling process behaves over multiple steps.

We say that a vertex $v$ is \defn{covered} by some step of the Schnyder peeling process if $v$ is contained in the interior of the finite partial triangulation that has been explored after that step, and $v$ is \defn{eventually covered} if there exists some step after which it is covered. By the definition of a peeling step, a simple induction argument shows that the set of covered initial boundary vertices always forms a segment of the initial boundary.

\begin{observation}\label{obs:finitestepscoversegmentofboundary}
    After every step of the Schnyder peeling process, the set of covered initial boundary vertices forms a segment $b_\ell, \dots, b_r \subseteq B$. Moreover, if some vertices of $B$ have not been covered, then the head of the current root edge is $b_{r+1}$ (or $b_{r+2}$ if the process has not yet taken a right step).
\end{observation}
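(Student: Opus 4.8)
The plan is to prove, by induction on the number of elapsed steps, a statement stronger than \cref{obs:finitestepscoversegmentofboundary} that also records the shape of the current boundary. Writing $b_\ell,\dots,b_r$ for the covered segment (empty, with $\ell=1$ and $r=0$, before the first step), the invariant I would maintain after each step is: (i) the covered initial boundary vertices are exactly $b_\ell,\dots,b_r$; (ii) reading the current boundary anticlockwise, it has the form
\[
\dots,b_{\ell-2},b_{\ell-1},w_1,\dots,w_p,b_{r+1},b_{r+2},\dots,
\]
where $w_1,\dots,w_p$ are revealed (non-initial) vertices and $p\ge 0$; and (iii) the current root edge is $(b_{r+1},b_{r+2})$ if no right step has yet occurred, and otherwise is $(t,b_{r+1})$, where $t$ is the vertex immediately clockwise of $b_{r+1}$ on the current boundary (so $t=w_p$ if $p\ge 1$, and $t=b_{\ell-1}$ if $p=0$). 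Part (i) together with the description of the head in (iii) is exactly the observation; parts (ii) and (iii) are the extra bookkeeping that makes the induction close.

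The base case holds since before any step the boundary is $\dots,b_{-1},b_0,b_1,b_2,\dots$, nothing is covered, and the root edge is $(b_1,b_2)=(b_{r+1},b_{r+2})$, matching (i)--(iii) with $\ell=1$, $r=0$, $p=0$. For the induction, recall that a peeling step uses peeling vertex $v_0$ (the vertex immediately clockwise of the root tail) and selects the first chord $v_0v_c$ anticlockwise from $v_0v_1$; it is a \emph{right step} when $c>0$ and a \emph{left step} when $c<0$. A right step covers the tail and a run of vertices anticlockwise of it and resets the root edge, whereas a left step covers the peeling vertex and a run of vertices clockwise of it and leaves the root edge unchanged.

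Suppose the invariant holds and consider the next step with peeling vertex $v_0$. By (ii), the vertices anticlockwise of the root head are exactly the uncovered right-initial vertices $b_{r+1},b_{r+2},\dots$, while those clockwise of the root tail are the revealed block followed by the uncovered left-initial vertices $b_{\ell-1},b_{\ell-2},\dots$. If the step is a right step, then by (ii) its chord lands on a right-initial vertex $b_{r+s+1}$ for some $s\ge 0$, and the step covers everything strictly anticlockwise of $v_0$ up to (but not including) this endpoint; the initial vertices among these are the run $b_{r+1},\dots,b_{r+s}$ on the right, together with the single vertex $b_{\ell-1}$ on the left in the one case where the old tail equals $b_{\ell-1}$. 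Since each newly covered initial vertex is adjacent to the old segment, the covered set remains a segment, the new head is $b_{r+s+1}$, and reading off the new boundary confirms (ii) and (iii) with $v_0$ as the new tail. If instead the step is a left step, it covers $v_0$ and a clockwise run of revealed vertices followed by a possibly empty stretch $b_{\ell-1},\dots,b_{\ell'}$ of left-initial vertices, so the covered set becomes the segment $b_{\ell'},\dots,b_r$ while $r$, the root edge, and hence the head are unchanged; the neighbours of $v_0$ freshly revealed by the step, together with any surviving part of the old revealed block, reassemble into one contiguous block flanking the (unchanged) tail, restoring (ii) and (iii).

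The main obstacle is the verification of the auxiliary boundary invariant (ii), especially across left steps, since a single left step may cover a mixture of revealed and initial vertices and may itself reveal fresh vertices. The careful points are that the initial vertices it covers are exactly the immediate leftward extension of the current segment---so that no uncovered initial vertex is skipped and no non-adjacent one is covered---and that the revealed vertices on the updated boundary again form a single contiguous block flanking the root rather than becoming interleaved with initial vertices. A subtler point on the right is that a right step may, when its tail is the left-initial vertex $b_{\ell-1}$, extend the segment simultaneously on both sides; this is harmless precisely because $b_{\ell-1}$ is adjacent to the segment, but it shows why one cannot simply argue that right steps act only on the right. Once (ii) is re-established in each case, the segment property (i) and the location (iii) of the head follow immediately.
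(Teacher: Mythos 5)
Your overall approach---proving a strengthened invariant by induction on the number of steps---is the natural one, and it is essentially all the paper itself does: \cref{obs:finitestepscoversegmentofboundary} is stated there without a written proof, on the grounds that ``a simple induction argument'' suffices. Your right-step analysis is correct, including the two-sided extension when the old tail equals $b_{\ell-1}$. The genuine problem is your left-step case, and it occurs in exactly the configuration that your own invariant (iii) permits: $p=0$ after at least one right step, so that the current root edge is $(b_{\ell-1},b_{r+1})$ and its tail is an \emph{uncovered initial} vertex. This configuration arises immediately after any right step whose peeling vertex lies on $B$ (for instance, a first step that is a right step). The next peeling vertex is then $b_{\ell-2}$, and a left step covers $b_{\ell-2}$ together with a run of vertices clockwise of it, but it does \emph{not} cover $b_{\ell-1}$: by the definition of a left step, the root tail stays on the updated boundary. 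So the newly covered initial vertices begin at $b_{\ell-2}$, not at $b_{\ell-1}$ as your proof asserts, the covered set acquires a hole at $b_{\ell-1}$, and both (i) and (ii) of your invariant break (the uncovered initial vertex $b_{\ell-1}$ is now wedged between the revealed block and $b_{r+1}$). As written, the induction does not close.

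This is not something you can repair by more careful bookkeeping, because under the literal definition of ``covered'' (lying in $\intr \area{x}{i}$, i.e.\ revealed but not on the current boundary) the covered set genuinely fails to be a segment in this situation. Concretely, consider a half-plane triangulation containing the triangles $(b_0,b_1,u)$, $(b_1,b_2,u)$, $(b_0,u,b_2)$, $(b_{-1},b_0,w)$, $(b_{-2},b_{-1},w)$, rooted at $(b_1,b_2)$. Step $1$ is a right step with chord $b_0b_2$, covering $b_1$ and $u$ and making $(b_0,b_2)$ the root; step $2$ peels at $b_{-1}$, whose first anticlockwise chord is the boundary edge $b_{-1}b_{-2}$, a left step covering only $b_{-1}$. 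After step $2$ the covered initial vertices are $\{b_{-1},b_1\}$, while $b_0$ remains uncovered on the current boundary. The invariant the induction can actually maintain---and which is what the later arguments rely on (\cref{obs:infiniterightsteps} only needs the behaviour at the right end, and the proof of \cref{thm:uihpt_paths_from_v_wind} invokes the observation for ``the explored region'', which does meet $B$ in a segment)---is the weaker statement that the covered initial vertices form a segment with at most one vertex missing, namely the current root tail; this hole is present only while the tail is an initial vertex with covered vertices on both sides, and it is filled by the next right step, since a right step always covers the tail. If you weaken (i) accordingly, and relax (ii) so that an uncovered initial vertex (the root tail) may sit between the revealed block $w_1,\dots,w_p$ and $b_{r+1}$, then your case analysis goes through, and the ``moreover'' clause (the head is $b_{r+1}$, or $b_{r+2}$ before any right step, with $b_r$ the rightmost covered vertex) is unaffected.
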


Using the fact that $T$ is locally finite, we now show that the Schnyder peeling process performs an infinite number of right steps. 

\begin{observation}\label{obs:infiniterightsteps}
    The Schnyder peeling process performs an infinite number of right steps which update the head of the current root edge to a vertex further right along the current boundary. In particular, the process will eventually cover all initial boundary vertices to the right of the initial root edge.
\end{observation}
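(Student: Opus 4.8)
The plan is to establish the two assertions in turn, each via the local finiteness of $T$. First I would show that infinitely many right steps occur; then I would deduce that the root head advances arbitrarily far to the right, covering every initial boundary vertex on that side.

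For the first assertion, I would argue by contradiction: suppose only finitely many right steps are ever performed. A left step leaves the root edge unchanged, so after the final right step the root edge equals some fixed $(v_1,v_2)$ throughout all the remaining steps, each of which is a left step. I would then track the peeling vertex through this tail of left steps. The initial one is the boundary neighbour of $v_1$ lying immediately to its left. I claim that each subsequent left step produces a new peeling vertex that is again a neighbour of $v_1$, namely the next one encountered rotating about $v_1$ towards $v_2$: a left step with peeling vertex $w$ reveals the fan $v_1, u_1, \dots, u_k, v_c$ about $w$, covers $w$, and leaves $u_1$ as the new peeling vertex immediately left of $v_1$; since $w, v_1, u_1$ bound a face, $u_1$ is adjacent to $v_1$. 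As every peeling vertex is covered during its own left step, these vertices are pairwise distinct, so the left steps advance monotonically through the neighbours of $v_1$. Because $T$ is locally finite, $v_1$ has finite degree, so after finitely many left steps the peeling vertex becomes the last neighbour $w$ of $v_1$ before $v_2$; then the face $w v_1 v_2$ forces the first chord anticlockwise from $w v_1$ to be $w v_2$, which is a right step. This contradicts the choice of $(v_1,v_2)$ as the root edge following the final right step.

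For the second assertion, recall from \cref{obs:finitestepscoversegmentofboundary} that while uncovered initial boundary vertices remain, the covered ones form a segment $b_\ell, \dots, b_r$ and the current root head is $b_{r+1}$; as the covered set only grows, $r$ is non-decreasing, and it suffices to prove $r \to \infty$. A right step sends the head to the far endpoint $v_c$ of its chord, so it reveals a fresh edge incident to the \emph{new} head. Suppose $r$ remained bounded; being monotone it would stabilise, so from some step onward the head is a fixed vertex $h$, and every later right step has new head $h$ and hence reveals a chord incident to $h$. Since each edge of $T$ is revealed at most once (the explored regions only grow), these chords are distinct edges at $h$; as $h$ has finite degree, only finitely many right steps can follow stabilisation, contradicting the first assertion. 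Hence $r \to \infty$, and every initial boundary vertex to the right of the root edge is eventually covered, the advancing right steps being exactly those that move the head strictly rightward.

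The main obstacle is the bookkeeping in the first part: one must verify carefully that consecutive left steps genuinely march the peeling vertex through the finite neighbourhood of the fixed tail $v_1$ — in particular that the new peeling vertex is always the adjacent one towards $v_2$, and that the process is thereby forced into a right step rather than escaping to infinity through an endless succession of left steps. Once this is in place, the head-advancement argument is comparatively routine, resting only on the observation that a right step consumes a previously unrevealed edge at its head together with local finiteness.
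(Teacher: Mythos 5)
Your proof is correct and takes essentially the same approach as the paper's: both halves rest on local finiteness, first at the fixed root tail (bounding runs of consecutive left steps, since each left step consumes a fresh neighbour/edge at $v_1$) and then at the root head (right steps with an unchanging head each reveal a distinct new chord at that head), combined with the covered-segment observation. The only difference is cosmetic — the paper counts newly revealed edges at the tail directly, whereas you track the peeling vertex marching through the rotation at $v_1$, which yields the slightly stronger conclusion that a right step is eventually \emph{forced}.
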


\begin{proof}
    We claim that the Schnyder peeling process only performs a finite number of left steps before performing a right step. Indeed, every left step explores one additional edge incident to the tail of the current root edge without updating the current root edge. So, the number of consecutive left steps is at most the degree of the tail of the current root edge, which is finite. Also note that every right step updates the current root edge to a previously unexplored edge. Since the degree of the head of the current root edge is finite, this implies that after a finite number of right steps, the head of the current root edge must change. By the definition of a right step, this means that the head of the current root edge moves to a vertex further right along the current boundary.
    
    As long as some initial boundary vertices are not covered, such a right step extends the segment of covered initial boundary vertices by at least one vertex to the right. This implies that all initial boundary vertices to the right of the initial root edge will eventually be covered.
\end{proof}
We remark that this implies that if the initial boundary is finite, the process must eventually cover the entire initial boundary.

\subsubsection{Some additional notation}

In the remainder of this paper, we will prove the existence of Schnyder woods of several classes of infinite triangulation. This will require arguments that make use of various structures produced by the Schnyder peeling process defined above. To assist with this, we now provide an overview of all notation we will use henceforth in descriptions of the Schnyder peeling process.

Let $T$ be a triangulation with boundary $B = (b_k, k \in \Z)$\glossarylabel{gl:initialboundary}, where we identify $b_x$ and $b_y$ if $x \cong y \mod m$ whenever $T$ has $m$ boundary vertices. Let $P_x$\glossarylabel{gl:px} be the Schnyder peeling process on $T$ initiated with root edge $(b_x,b_{x+1})$. We write $\boundary{x}{0} \coloneqq B$ and let $\inroote{x} = \roote{x}{0} \coloneqq (b_x, b_{x+1})$ denote the initial root edge of $P_x$. For each step $i$ of $P_x$:
\begin{itemize}
    \item Let $\roote{x}{i}$\glossarylabel{gl:roote} and $\boundary{x}{i}$\glossarylabel{gl:boundary} be the updated root edge and boundary after step $i$.
    \item Let $\peelv{x}{i}$\glossarylabel{gl:peelv} and $\chord{x}{i}$\glossarylabel{gl:chord} denote the peeling vertex and chord used during step $i$.
    \item Let the partial triangulation revealed by step $i$ be the part of $T$ that is enclosed between $\boundary{x}{i-1}$ and $\boundary{x}{i}$, including the parts on the boundaries.
    \item Let $\area{x}{i}$\glossarylabel{gl:area} be the revealed area up to and including step $i$, that is, the union of the partial triangulations revealed over all steps $j$ with $j \leq i$. Let $\evarea{x} \coloneqq \bigcup_i \area{x}{i}$\glossarylabel{gl:evarea}. Also, write $\intr \area{x}{i} \coloneqq \area{x}{i} \sm \boundary{x}{i}$ for the interior of the revealed area, and let $\intr \evarea{x} \coloneqq \bigcup_i \intr \area{x}{i}$.

    The \defn{upper boundary} of the revealed area $\area{x}{i}$ is $\boundary{x}{i} \sm (B \cup \roote{x}{i})$ and its \defn{lower boundary} is $B \cap \area{x}{i}$. Moreover, if the lower boundary is not the entire initial boundary and its leftmost vertex is not incident to $\roote{x}{i}$, then the leftmost vertex on the lower boundary is called the \defn{corner vertex} of $\area{x}{i}$.
\end{itemize}

So in our notation, step $i$ of $P_x$ performs a peeling step based on boundary $\boundary{x}{i-1}$ and root edge $\roote{x}{i-1}$ by choosing a chord $\chord{x}{i}$ incident with peeling vertex $\peelv{x}{i}$. This updates the boundary to some new boundary $\boundary{x}{i}$, and $P_x$ chooses a new root edge $\roote{x}{i}$ on $\boundary{x}{i}$. The area revealed after this step is $\area{x}{i}$.

\subsubsection{Structure after a finite number of steps}\label{sssec:structurepeelprocfinitesteps}

We now deduce some properties about the structure of the colouring and the directions assigned by the Schnyder peeling process to the edges of the revealed area $\area{x}{i}$ after some finite number of steps. We begin by investigating the colouring assigned to vertices on the upper boundary and to the corner vertex of $\area{x}{i}$.

\begin{lemma}\label{obs:finitestepsupperboundary}
    After step $i$ of $P_x$, for each vertex $v$ on the upper boundary of the revealed area $\area{x}{i}$,
    \begin{itemize}
        \item There is exactly one outgoing edge at $v$ which is coloured blue, and
        \item In anticlockwise order starting from the upper boundary, edges incident with $v$ consist of the outgoing blue edge and incoming yellow edges.
    \end{itemize}
    Moreover, if $\area{x}{i}$ has a corner vertex, then every edge incident with the corner vertex of $\area{x}{i}$ is an incoming yellow edge.
\end{lemma}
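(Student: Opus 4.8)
The plan is to prove the statement by induction on the number $i$ of peeling steps, taking as the induction hypothesis exactly the two displayed bullet points for every upper-boundary vertex of $\area{x}{i}$ together with the corner-vertex claim. The base case is immediate: before any vertex leaves the initial boundary the upper boundary is empty, and one checks directly that the first left step creates the claimed structure while the first right step creates neither an upper-boundary vertex nor a corner vertex. Throughout I would use the fact, recorded after the definition of a peeling step, that the colouring of the revealed area agrees with the colouring the finite peeling process would produce; in particular every upper-boundary or covered vertex that is interior to $T$ satisfies the full Schnyder condition, and each finite region $T_\ell$ or $T_r$ revealed during a step carries its own finite maximal Schnyder wood, so the Schnyder root and boundary conditions hold at its root and boundary vertices.

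First I would record how vertices enter and leave the upper boundary and which incident edges get revealed. Using the local names of a peeling step, a vertex joins the upper boundary only as one of the interior neighbours $u_1,\dots,u_k$ of the peeling vertex $v_0$ in a left step; at that instant its only revealed edge is the blue edge $(u_i,v_0)$, which is its outgoing blue edge, and its two incident boundary edges are the two sides of the fan of triangles at $v_0$, so the blue edge lies rotationally between them. The crucial point is that while a vertex $v$ remains on the upper boundary, the only further incident edges that can be revealed are incoming yellow edges. Indeed, the outgoing red and outgoing yellow edges of $v$ are revealed only when $v$ is itself used as the peeling vertex, colouring $(v_0,v_1)$ red and the chord $(v_0,v_c)$ yellow, after which $v$ immediately leaves the upper boundary (it is covered in a left step, or becomes the root tail in a right step); and of the boundary vertices of a revealed region $T_\ell$, the only one that survives on the updated boundary is the yellow root $v_c$, at which the Schnyder root condition forces every incident edge of $T_\ell$ to be incoming yellow. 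A right step touches only covered vertices and the new root endpoints, so it reveals no edge at a surviving upper-boundary vertex. Finally, by the Schnyder condition the incoming blue and incoming red edges at $v$ lie strictly between its outgoing edges, hence on the unexplored side, so they are never revealed while $v$ is on the upper boundary.

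Next I would assemble these facts into the rotational statement. By the Schnyder condition the edges at an interior vertex appear in anticlockwise cyclic order as outgoing blue, incoming yellows, outgoing red, incoming blues, outgoing yellow, incoming reds. Since the revealed faces at $v$ form a fan between its two boundary edges, the revealed edges at $v$ form a contiguous arc; this arc contains the outgoing blue edge and, by the previous paragraph, contains no outgoing red, outgoing yellow, incoming blue, or incoming red edge, so it lies within the sub-arc from the outgoing blue edge up to but not including the outgoing red edge. Hence it consists of the outgoing blue edge followed anticlockwise by the revealed incoming yellow edges, which is exactly the claimed order read from the upper boundary. Whenever $v$ recurs as the yellow root $v_c$ of a later left step, the edges then revealed at $v$ are incoming yellow and are glued contiguously onto the current arc, so the invariant persists. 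For the corner vertex I would argue that it is always an initial-boundary vertex, namely the yellow root $v_c$ of the left step that extended the exploration furthest to the left; while it remains the corner it is never used as a peeling vertex nor as a non-root boundary vertex of a revealed region, so the same edge-type analysis shows every incident revealed edge is incoming yellow, and it carries no outgoing blue edge since an initial-boundary vertex is never one of the $u_i$.

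I expect the main obstacle to be the rotational bookkeeping: verifying that the revealed edges at $v$ really form a single contiguous arc beginning at the outgoing blue edge, and that each newly revealed region is attached on the anticlockwise side of the edges already present at $v$, rather than somewhere that would disturb the blue-then-yellow order. This is where the planarity of $T$ and the precise geometry of a left step—the chord $(v_0,v_c)$ bulging into the unexplored region and the fan of blue edges at $v_0$—must be used with care; it is also where one must keep straight the three possibilities for the chord head $v_c$ (an upper-boundary vertex, the corner vertex, or an unexplored initial-boundary vertex) and check that in each case the updated upper boundary and corner vertex are exactly as described.
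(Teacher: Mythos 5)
Your proposal is correct and takes essentially the same route as the paper: induction on peeling steps, where newly created upper-boundary vertices $u_1,\dots,u_k$ carry only their outgoing blue edge, a surviving upper-boundary (or corner) vertex can only acquire new edges by serving as the head of a left-step chord, and then the Schnyder root condition of the revealed finite region (via \cref{thm:peeling_process_maximal_wood}) forces all such edges to be incoming yellow, glued anticlockwise next to the existing arc, while right steps only shrink the upper boundary. The rotational bookkeeping you flag as the main obstacle is exactly the point the paper dispatches in one sentence with this root-condition argument, so no genuinely new idea is missing.
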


\begin{proof}
    We first verify the claim for the vertices on the upper boundary of $\area{x}{i}$. Suppose that the claim holds at step $i-1$, and suppose that we take a left step at step $i$. This step chooses the chord $\chord{x}{i}$, and reveals the vertices $u_1,\dots,u_k$, which then lie on the upper boundary of $\area{x}{i}$. Each of the vertices $u_1,\dots,u_k$ satisfies the claim, as each of these vertices only has one outgoing blue edge that has been explored. If the head $h$ of $\chord{x}{i}$ lies on the initial boundary, we have nothing further to check. Otherwise, $h$ lies on the upper boundary of $\area{x}{i-1}$, so $h$ already has one outgoing blue edge, and possibly some incoming yellow edges in anticlockwise order between the outgoing blue edge and the upper boundary of $\area{x}{i-1}$. By \cref{thm:peeling_process_maximal_wood} the newly explored region under the chord $\chord{x}{i}$ satisfies the Schnyder root condition at $h$, and so all further edges incident with $h$ in anticlockwise order between the outgoing blue edge and the upper boundary of $\area{x}{i}$ are also yellow and directed towards $h$. Hence, $h$ also still satisfies the claim.

    Otherwise, if step $i$ is a right step, then the upper boundary of $\area{x}{i}$ is a strict subset of the upper boundary of $\area{x}{i-1}$, and so the claim is still satisfied. Hence, inductively, the claim always holds for vertices on the upper boundary.
    
    Finally, suppose that $\area{x}{i}$ has a corner vertex $v$. Observe that the only explored edges incident with $v$ are any chords selected by the peeling process that are incident with $v$, along with coloured edges within any of the finite regions explored after those chords were selected. These chords are always directed towards $v$ and coloured yellow by definition of the Schnyder peeling process. Furthermore, since $v$ is the head of the chord in each of the finite regions, it is the head of the root edge for each of the finite explorations, and so it follows from \cref{thm:peeling_process_maximal_wood} that all of the remaining explored edges at $v$ are also directed towards $v$ and coloured yellow. Hence, the claim also holds for the corner vertex.
\end{proof}

Next, we investigate the structure of the monochromatic subgraphs of $\area{x}{i}$. We begin with a description of the edges added to each monochromatic subgraph at a left or right step of $P_x$. Let $t_\ell$ and $h_\ell$ denote the tail and head of the chord used by the $\ell$-th right step of $P_x$ (taking $t_0$ and $h_0$ to be, respectively, the tail and head of the initial root edge, so $t_0 = b_x$ and $h_0 = b_{x+1}$). Suppose that $P_x$ has performed $j$ right steps before step $i$, so $t_j$ is the tail of the root edge $\roote{x}{i-1}$ and is on the boundary $\boundary{x}{i-1}$ one vertex right of $\peelv{x}{i}$. See \cref{fig:example_left_right_steps_1} for an example of such a scenario.

In every step, most of the newly revealed edges are coloured according to the peeling process applied to a finite region of the triangulation. Since this process constructs a Schnyder wood of that region, we know by \cref{thm:schnyder_subgraphs_on_finite_triangulation} that these finite regions will always be composed of: a directed red tree rooted at the tail of the region's root edge, a directed yellow tree rooted at the head of the region's root edge, and a forest of blue trees, each rooted at a vertex on the region's boundary (other than the head or tail of the region's root edge). We now describe the overall structure of the edges added to the revealed area at a given step $i$ of the process. It is straightforward to verify, using the definition of the Schnyder peeling process, that this description is valid.

If step $i$ is a left step, then the finite region explored during step $i$ is rooted at $\chord{x}{i}$, and after step $i$, the updated boundary between $t_j$ and the head of $\chord{x}{i}$ is now the path segment composed of the neighbours $u_1,\dots,u_k$ of $\peelv{x}{i}$. The overall structure added by this left step is a single red tree rooted at $t_j$, a single yellow tree rooted at the head of $\chord{x}{i}$, and a forest of blue trees rooted at vertices on the previous boundary $\boundary{x}{i-1}$ strictly between $t_j$ and the head of $\chord{x}{i}$. Note in particular that for each $1 \leq \ell \leq k$, the edge $(u_\ell, \peelv{x}{i})$ is always added to the blue subgraph.

If step $i$ is a right step, and there have been $j$ right steps performed before step $i$, then the tail of the chosen chord $\chord{x}{i}$ is $t_{j+1}$, and its head is $h_{j+1}$. In this case, we reveal the neighbours $u_1,\dots,u_k$ of $\peelv{x}{i}$ below the chord, and the explored finite region is rooted at $(t_j, h_j)$. The overall structure added by this right step consists of a red tree rooted at $t_j$, a yellow tree rooted at $h_j$, and a forest of blue trees rooted at vertices on the previous boundary $\boundary{x}{i-1}$ between $t_{j+1}$ and $h_{j+1}$ (excluding $t_j$ and $h_j$). Note in particular that the edge $(t_{j+1}, t_j)$ is always added to the red subgraph. Moreover, as a consequence of the structure of the finite region, some directed path from $h_{j+1}$ to $h_j$ is necessarily added to the yellow tree. Since the Schnyder wood assigned to the explored finite region during step $i$ is maximal, we know from \cref{lem:finite_yellow_path} that the yellow path from $h_{j+1}$ to $h_j$ consists entirely of vertices on the previous boundary $\boundary{x}{i-1}$ between $h_j$ and $h_{j+1}$.
\begin{figure}[h]
    \centering
    \begin{tikzpicture}[line width=1.2]

    % CORE : COMMON TO RIGHT AND LEFT STEPS ---------------------
    \foreach \x in {2,...,8,11,12}{
        \node[mycircle, opacity=0.5] (bn\x) at (\x,0) {};
    }
    \foreach \x in {15,16,17}{
        \node[mycircle] (bn\x) at (\x,0) {};
    }
    \node[mycircle, label={[red]below:{$t_1$}}, color=red, opacity=0.5] (bn9) at (9,0) {};
    \node[mycircle, label={[xshift=-0.2cm, red]below:{$t_0$}}, color=red, opacity=0.5] (bn10) at (10,0) {};
    \node[mycircle, opacity=0, label={[xshift=0.2cm]below:{$h_0$}}] (bn11) at (11,0) {};
    \node[mycircle, opacity=0.5, label=below:{$h_1$}] (bn13) at (13,0) {};
    \node[mycircle, label=below:{$h_2$}] (bn14) at (14,0) {};
    \node[mycircle, label=below:{$h_3$}] (bn18) at (18,0) {};
    \node[color=myyellow] (lbl0) at (10.5, -0.3) {\small{$c_x(0)$}};
    
    \foreach \x in {2,...,7,11,12,13}{
        \pgfmathtruncatemacro{\next}{\x + 1};
        \draw (bn\x) edge[opacity=0.5] (bn\next);
    }
    \foreach \x in {14,...,17}{
        \pgfmathtruncatemacro{\next}{\x + 1};
        \draw (bn\x) -- (bn\next);
    }
    
    \draw (bn8) edge[->-,redge, opacity=0.5] (bn9)
          (bn9) edge[->-,redge, opacity=0.5] (bn10)
          (bn10) edge[->-,yedge, opacity=0.5] (bn11);

    % step 1, right
    \draw (bn9) edge[->-, yedge, opacity=0.5, bend left=55, looseness=1.55] (bn13);
    \node[color=myyellow] (lbl1) at (11.1, 1.8) {\small{$c_x(1)$}};
    \node[mycircle, opacity=0.5] (u1r1) at (10.5,0.6) {};
    \node[mycircle, opacity=0.5] (u2r1) at (11,1.2) {};
    \draw (bn10) edge[opacity=0.5] (u1r1)
    (u1r1) edge[opacity=0.5] (u2r1)
    (u2r1) edge[opacity=0.5] (bn13);
    
    \draw (u1r1) edge[->-, bedge, opacity=0.5] (bn9);
    \draw (u2r1) edge[->-, bedge, opacity=0.5] (bn9);
    \draw (bn13) edge[->-, yedge, bend right=25, opacity=0.5] (bn11);

    % step 2, left
    \draw (bn8) edge[->-, yedge, opacity=0.5, bend right=30] (bn2);
    \node[color=myyellow] (lbl2) at (5, 0.6) {\small{$c_x(2)$}};
    \node[mycircle,color=red] (u1l1) at (8,1.3) {};
    \node[mycircle,color=red] (u2l1) at (7.2,1.5) {};
    \node[mycircle, opacity=0.5] (u3l1) at (6.4,1.7) {};
    \node[mycircle, opacity=0.5] (u4l1) at (5.4,1.8) {};
    \node[mycircle, opacity=0.5] (u5l1) at (4.3,1.6) {};
    
    \draw (u2l1) edge[opacity=0.5] (u3l1)
    (u3l1) edge[opacity=0.5] (u4l1)
    (u4l1) edge[opacity=0.5] (u5l1)
    (u5l1) edge[opacity=0.5, bend right=10] (bn2);
    
    \draw (u1l1) edge[->-, bedge, opacity=0.5, bend left=10] (bn8);
    \draw (u2l1) edge[->-, bedge, opacity=0.5, bend left=10] (bn8);
    \draw (u3l1) edge[->-, bedge, opacity=0.5, bend left=10] (bn8);
    \draw (u4l1) edge[->-, bedge, opacity=0.5, bend left=10] (bn8);
    \draw (u5l1) edge[->-, bedge, opacity=0.5, bend left=10] (bn8);
    
    % step 3, right
    \draw (u1l1) edge[->-, redge, opacity=0.5] (bn9);
    \draw (u1l1) edge[->-, yedge, opacity=0.5, bend left=60, looseness=1.2] (bn14);
    \node[color=myyellow] (lbl3) at (13, 2.2) {\small{$c_x(3)$}};
    \node[mycircle, opacity=0.5] (u1r2) at (8.8,0.8) {};
    \node[mycircle, opacity=0.5] (u2r2) at (9.5,1.8) {};
    \draw (bn9) edge[opacity=0.5] (u1r2)
    (u1r2) edge[opacity=0.5] (u2r2)
    (u2r2) edge[opacity=0.5, bend left=50, looseness=1] (bn14);
    
    \draw (u1r2) edge[->-, bedge, opacity=0.5] (u1l1);
    \draw (u2r2) edge[->-, bedge, opacity=0.5] (u1l1);
    \draw (bn14) edge[->-, yedge, bend right=60, opacity=0.5] (bn13);

    % ----------------------------------------------

    % actual right step
    \draw (u2l1) edge[->-, redge, line width=1.7] (u1l1);
    \draw (u2l1) edge[->-, yedge, bend left=80, looseness=1.5, line width=1.7] (bn18);
    \node[mycircle] (u1r3) at (8,2.1) {};
    \node[mycircle] (u2r3) at (10,4.3) {};
    \draw (u1l1) to (u1r3) to (u2r3) edge[bend left=55] (bn18);
    \draw (u1r3) edge[->-, bedge,line width=1.7] (u2l1);
    \draw (u2r3) edge[->-, bedge, line width=1.7, bend right=10, bend right=13] (u2l1);
    \draw (bn18) edge[->-, yedge, bend right=75, line width=1.7] (bn16);
    \draw (bn16) edge[->-, yedge, bend right=75, line width=1.7] (bn14);

    \node[color=red] (lbl) at (7, 1.8) {$t_3$};
    \node[color=red] (lbl3) at (7.8, 1.1) {$t_2$};
    \node[color=myyellow] (lbl2) at (13.5, 5.85) {\small{$c_x(4)$}};
    
    \draw (10,3) edge[redge,dotted,line width=1.7,bend left=10] (11.5,3.2)
                 (11.5,3.2) edge[redge,dotted,line width=1.7,bend left=5] (12.5,3.3)
                 (11.5,3.2) edge[redge,dotted,line width=1.7,bend right=5] (12.5,2.9)
          (10,3) edge[redge,dotted,line width=1.7,bend left=10] (12,2.5)
          (10,3) edge[->-, redge, dotted, bend right=20, line width=1.7] (u1l1);

    \draw (11, 3.8) edge[->-, bedge, dotted, bend right=15, line width=1.7] (u1r3);
    
    \draw (12, 4.2) edge[->-, bedge, dotted, bend right=15, line width=1.7] (u2r3);
    \draw (14.5, 3.5) edge[-, bedge, dotted, bend right=10, line width=1.7] (12, 4.2);
    \draw (13.5, 3.2) edge[-, bedge, dotted, bend right=10, line width=1.7] (12, 4.2);
    
    \draw (15.5, 2) edge[->-, bedge, dotted, bend left=25, line width=1.7] (bn18);

    \draw (14.5, 1.8) edge[->-, yedge, dotted, bend right=10, line width=1.7] (bn14);
    \draw (14, 2.5) edge[-, yedge, dotted, bend right=10, line width=1.7] (14.5, 1.8);
    \draw (15, 2.7) edge[-, yedge, dotted, bend right=10, line width=1.7] (14.5, 1.8);

    \coordinate (bluetree3) at (15, 0.5); 
    \coordinate (bluetree4) at (15.5, 1.5); 
    \draw (bluetree3) edge[->-, bedge, dotted, line width=1.7] (bn15);
    \draw (bluetree4) edge[->-, bedge, bend left=15, dotted, line width=1.7] (bn16);
\end{tikzpicture}
    \begin{tikzpicture}[line width=1.2]

    % CORE : COMMON TO RIGHT AND LEFT STEPS ---------------------
    \node[mycircle] (bn2) at (2,0) {};
    \foreach \x in {3,...,8,12,17,18}{
        \node[mycircle, opacity=0.5] (bn\x) at (\x,0) {};
    }
    \foreach \x in {15,16}{
        \node[mycircle, opacity=0.5] (bn\x) at (\x,0) {};
    }
    \node[mycircle, label={[red]below:{$t_1$}}, color=red, opacity=0.5] (bn9) at (9,0) {};
    \node[mycircle, label={[xshift=-0.2cm, red]below:{$t_0$}}, color=red, opacity=0.5] (bn10) at (10,0) {};
    \node[mycircle, opacity=0.5, label={[xshift=0.2cm]below:{$h_0$}}] (bn11) at (11,0) {};
    \node[mycircle, opacity=0.5, label=below:{$h_1$}] (bn13) at (13,0) {};
    \node[mycircle, opacity=0.5, label=below:{$h_2$}] (bn14) at (14,0) {};
    \node[color=myyellow] (lbl0) at (10.5, -0.3) {\small{$c_x(0)$}};
    
    \foreach \x in {2,...,7,11,12,13}{
        \pgfmathtruncatemacro{\next}{\x + 1};
        \draw (bn\x) edge[opacity=0.5] (bn\next);
    }
    \foreach \x in {14,...,17}{
        \pgfmathtruncatemacro{\next}{\x + 1};
        \draw (bn\x) -- (bn\next);
    }
    
    \draw (bn8) edge[->-,redge, opacity=0.5] (bn9)
          (bn9) edge[->-,redge, opacity=0.5] (bn10)
          (bn10) edge[->-,yedge, opacity=0.5] (bn11);

    % step 1, right
    \draw (bn9) edge[->-, yedge, opacity=0.5, bend left=55, looseness=1.55] (bn13);
    \node[color=myyellow] (lbl1) at (11.1, 1.8) {\small{$c_x(1)$}};
    \node[mycircle, opacity=0.5] (u1r1) at (10.5,0.6) {};
    \node[mycircle, opacity=0.5] (u2r1) at (11,1.2) {};
    \draw (bn10) edge[opacity=0.5] (u1r1)
    (u1r1) edge[opacity=0.5] (u2r1)
    (u2r1) edge[opacity=0.5] (bn13);
    
    \draw (u1r1) edge[->-, bedge, opacity=0.5] (bn9);
    \draw (u2r1) edge[->-, bedge, opacity=0.5] (bn9);
    \draw (bn13) edge[->-, yedge, bend right=25, opacity=0.5] (bn11);

    % step 2, left
    \draw (bn8) edge[->-, yedge, opacity=0.5, bend right=30] (bn2);
    \node[color=myyellow] (lbl2) at (5, 0.6) {\small{$c_x(2)$}};
    \node[mycircle,color=red] (u1l1) at (8,1.3) {};
    \node[mycircle,color=red] (u2l1) at (7.2,1.5) {};
    \node[mycircle] (u3l1) at (6.4,1.7) {};
    \node[mycircle] (u4l1) at (5.4,1.8) {};
    \node[mycircle] (u5l1) at (4.3,1.6) {};

    \draw (u2l1) edge[->-, redge, line width=1.7] (u1l1);
    \draw (u2l1) edge[] (u3l1)
    (u3l1) edge[] (u4l1)
    (u4l1) edge[] (u5l1)
    (u5l1) edge[bend right=10] (bn2);
    
    \draw (u1l1) edge[->-, bedge, opacity=0.5, bend left=10] (bn8);
    \draw (u2l1) edge[->-, bedge, opacity=0.5, bend left=10] (bn8);
    \draw (u3l1) edge[->-, bedge, opacity=0.5, bend left=10] (bn8);
    \draw (u4l1) edge[->-, bedge, opacity=0.5, bend left=10] (bn8);
    \draw (u5l1) edge[->-, bedge, opacity=0.5, bend left=10] (bn8);
    
    % step 3, right
    \draw (u1l1) edge[->-, redge, opacity=0.5] (bn9);
    \draw (u1l1) edge[->-, yedge, opacity=0.5, bend left=60, looseness=1.2] (bn14);
    \node[color=myyellow] (lbl3) at (13, 2.2) {\small{$c_x(3)$}};
    \node[mycircle, opacity=0.5] (u1r2) at (8.8,0.8) {};
    \node[mycircle, opacity=0.5] (u2r2) at (9.5,1.8) {};
    \draw (bn9) edge[opacity=0.5] (u1r2)
    (u1r2) edge[opacity=0.5] (u2r2)
    (u2r2) edge[opacity=0.5, bend left=50, looseness=1] (bn14);
    
    \draw (u1r2) edge[->-, bedge, opacity=0.5] (u1l1);
    \draw (u2r2) edge[->-, bedge, opacity=0.5] (u1l1);
    \draw (bn14) edge[->-, yedge, bend right=60, opacity=0.5] (bn13);

    % ----------------------------------------------

    % actual left step
    \draw (u2l1) edge[->-, yedge, bend right=70, looseness=1.7, line width=1.6] (bn2);
    \node[color=myyellow] (lbl4) at (5.2, 3.5) {\small{$c_x(4)$}};
    \node[mycircle] (u1l2) at (7.5,2.8) {};
    \node[mycircle] (u2l2) at (6,3.9) {};

    \draw (u1l1) to (u1l2) to (u2l2) edge[bend right=50, looseness=1.2] (bn2);
    \draw (u1l2) edge[->-, bedge,line width=1.7] (u2l1);
    \draw (u2l2) edge[->-, bedge,line width=1.7] (u2l1);

    \node[color=red] (lbl3) at (7.8, 1.1) {$t_2$};

    \draw (3, 1.8) edge[->-, yedge, dotted, bend right=10, line width=1.7] (bn2);
    \draw (3.7, 2.8) edge[-, yedge, dotted, bend right=10, line width=1.7] (3, 1.8);
    \draw (3.5, 2) edge[-, yedge, dotted, bend right=10, line width=1.7] (3, 1.8);
    
    \draw (5.6, 2.6) edge[->-, redge, dotted, bend left=10, line width=1.7] (u2l1);
    \draw (4.2, 2.9) edge[->-, redge, dotted, bend left=10, line width=1.7] (5.6, 2.6);
    \draw (4.6, 2.4) edge[->-, redge, dotted, bend left=10, line width=1.7] (5.6, 2.6);
    
    \draw (3.8, 2.2) edge[->-, bedge, dotted, line width=1.7] (u5l1);
    \draw (4.5, 2.2) edge[->-, bedge, dotted, line width=1.7] (u5l1);
    \draw (5, 2.2) edge[->-, bedge, dotted, line width=1.7] (u4l1);
    \draw (6, 2.1) edge[->-, bedge, dotted, line width=1.7] (u3l1);
    
\end{tikzpicture}
    \caption{The various monochromatic components added to the revealed Schnyder wood after $P_x$ takes either a right (top) or left (bottom) step.}
    \label{fig:example_left_right_steps_1}
\end{figure}
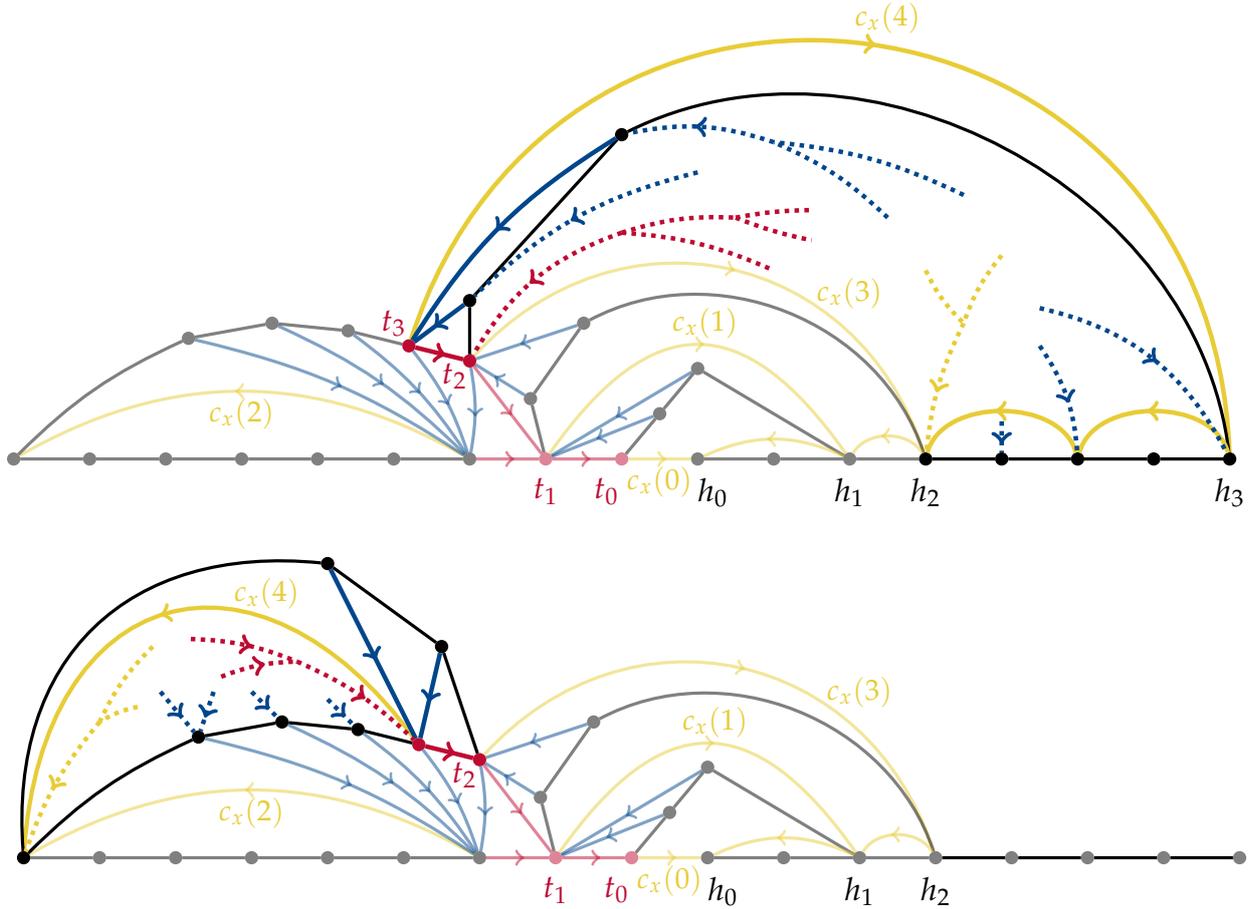

\begin{figure}[h!]
    \centering
    \begin{tikzpicture}[line width=1.2]
    \begin{scope}[line width=1.7]
        % initial boundary
        \draw (265:6) arc (265:-85:6);
        \draw[myyellow, ->-] (265:6) arc (265:275:6);
        \node[text=myyellow] at (275:6.4) {$b_{x+1}$};
        \node[text=red] at (265:6.4) {$b_{x}$};
        \node[text=blue] at (250:6.5) {$\peelv{x}{i_1}$};
        
        % explored boundary
        \draw (170:3.5) arc (170:-60:3.5);
        \draw (240:6) edge[out=60, in=210] (-60:3.5);
        \draw (170:3.5) edge[out=260, in=220, looseness=1.3] (-40:1.7);
        \draw (-40:1.7) edge[out=45, in=200] (-10:2.5);
        \draw (-10:2.5) edge[yedge, ->-, out=20, in=180] (-5:3.5);
    
        % yellow path
        \path[every node/.style={font=\sffamily\small}]
            (-5:3.5) edge[yedge, ->-, out=220, in=80] (-50:3.5)
            (-50:3.5) edge[yedge, ->-, out=190, in=40] (-115:4.6)
            (-115:4.6) edge[yedge, ->-, out=160, in=20] (220:6)
            (220:6) edge[yedge, ->-, bend right=15] (180:6)
            (180:6) edge[yedge, ->-, bend right=15] (140:6)
            (140:6) edge[yedge, ->-, bend right=15] (100:6)
            (100:6) edge[yedge, ->-, bend right=15] (60:6)
            (60:6) edge[yedge, ->-, out=280, in=135] (10:6)
            (10:6) edge[yedge, ->-, bend right=15] (-20:6)
            (-20:6) edge[yedge, ->-, bend right=15] (-55:6)
            (-55:6) edge[yedge, ->-, bend right=15] (-85:6);

        % red path
        \draw[red, ->-] (190:4.75) arc (190:-70:4.75);
        \path[every node/.style={font=\sffamily\small}]
            (-10:2.5) edge[redge, ->-, out=270, in=280, looseness=1.07] (190:4.75);
        \path[every node/.style={font=\sffamily\small}]
            (-70:4.75) edge[redge, ->-, out=200, in=80] (265:6);

        % blue path
        \draw[blue, ->-] (190:4.13) arc (190:-70:4.13);
        \path[every node/.style={font=\sffamily\small}]
            (-80:1.73) edge[bedge, ->-, out=245, in=280, looseness=0.90] (190:4.13);
        \path[every node/.style={font=\sffamily\small}]
            (-70:4.13) edge[bedge, ->-, out=200, in=80, looseness=0.90] (250:6);
    \end{scope}
    
    % yellow twigs
    \path[every node/.style={font=\sffamily\small}]
        (260:6) edge[yedge, ->-, bend right=15] (250:6)
        (250:6) edge[yedge, ->-, bend right=15] (240:6)
        (240:6) edge[yedge, ->-, bend right=15] (220:6);
    \path[every node/.style={font=\sffamily\small}]
        (255:4.3) edge[yedge, ->-, bend left=10] (-115:4.6); 
    \path[every node/.style={font=\sffamily\small}]
        (-35:4.5) edge[yedge, ->-, out=150, in=-50] (-50:3.5);
    \path[every node/.style={font=\sffamily\small}]
        (-20:3.9) edge[yedge, ->-, bend right=10] (-25:3.5)
        (-25:3.5) edge[yedge, ->-, bend right=10] (-50:3.5);
    \path[every node/.style={font=\sffamily\small}]
        (2:4.4) edge[yedge, ->-, bend right=10] (-5:3.5);
    \path[every node/.style={font=\sffamily\small}]
        (45:4.4) edge[yedge, ->-, bend right=10] (40:3.5)
        (50:3.8) edge[yedge, ->-, bend left=10] (40:3.5);

    % blue twigs
    \path[every node/.style={font=\sffamily\small}]
        (220:4.6) edge[bedge, ->-, bend left=10] (-115:4.6)
        (-115:4.6) edge[bedge, ->-, bend right=10] (-103:4.7);
    \path[every node/.style={font=\sffamily\small}]
        (270:3.3) edge[bedge, ->-, bend right=10] (260:3.8)
        (260:3.8) edge[bedge, ->-, bend left=10] (-103:4.7);
    \path[every node/.style={font=\sffamily\small}]
        (-50:2.9) edge[bedge, ->-, bend right=10] (-50:3.5)
        (-50:3.5) edge[bedge, ->-, bend left=10] (-55:4.13);
    \path[every node/.style={font=\sffamily\small}]
        (-5:3.5) edge[bedge, ->-] (-5:4.13);
    \path[every node/.style={font=\sffamily\small}]
        (40:3.5) edge[bedge, ->-] (40:4.13);
    \path[every node/.style={font=\sffamily\small}]
        (40:5.2) edge[bedge, ->-,out=60,in=-140] (35:6)
        (30:5.5) edge[bedge, ->-,bend right=10] (35:6);
    \path[every node/.style={font=\sffamily\small}]
        (-50:5.1) edge[bedge, ->-,out=-110,in=90] (-55:6)
        (-60:5.3) edge[bedge, ->-,bend left=10] (-55:6);
\end{tikzpicture}
    \caption{Structure of the monochromatic subgraphs in $\area{x}{i}$ after some step $i$ at which the head of the root edge $\roote{x}{i}$ is no longer on the initial boundary.}
    \label{fig:loopy_boy}
\end{figure}
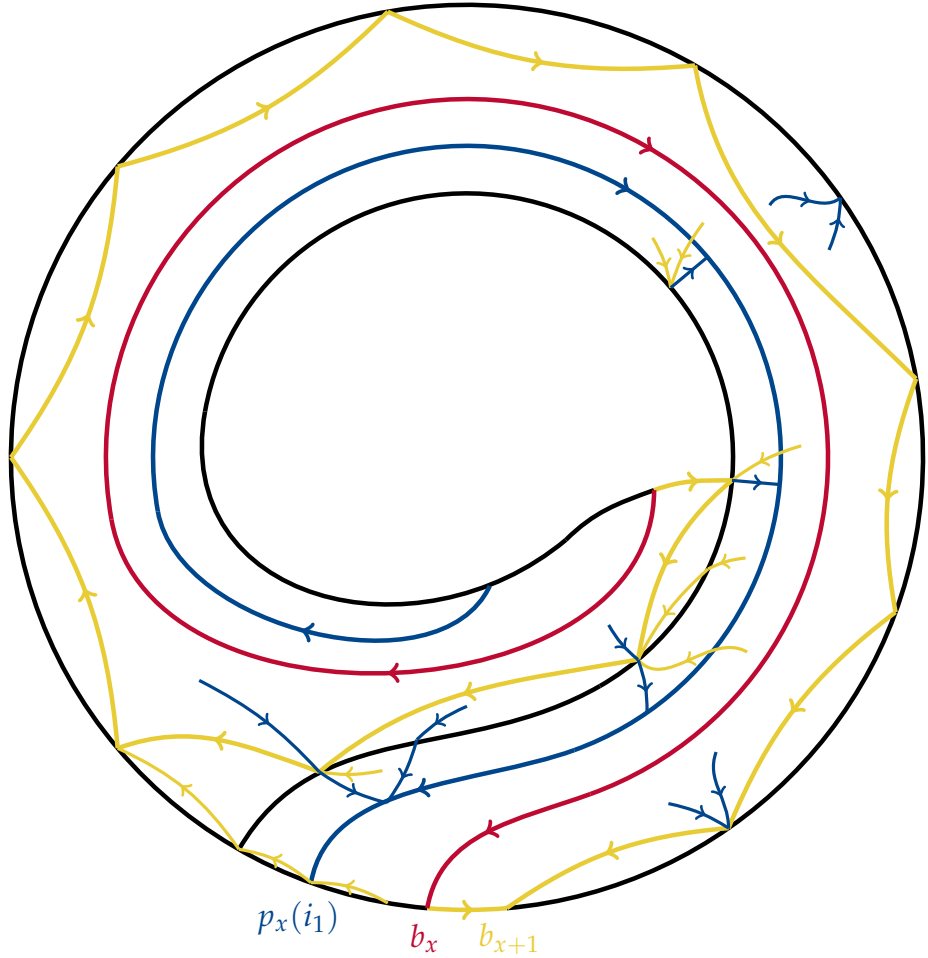

We now use the above facts to determine the structure of the colouring assigned to the revealed area after a finite number of steps, checking each monochromatic subgraph in turn. See \cref{fig:loopy_boy} for an demonstration of the structure of the monochromatic subgraphs after the head of the root edge no longer lies on the initial boundary.

\begin{lemma}\label{lem:schnyderareayellowforest}
    After step $i$ of $P_x$, the yellow subgraph of the revealed area $\area{x}{i}$ consists of
    \begin{itemize}
        \item A tree rooted at $b_{x+1}$, containing the current root edge $\roote{x}{i}$ and a path whose vertices include the heads $h_\ell$ of every chord chosen by a right step of $P_x$, and not containing any vertices on the upper boundary or the corner vertex of $\area{x}{i}$, and
        \item A forest of trees rooted at vertices on the upper boundary or at the corner vertex of $\area{x}{i}$.
    \end{itemize}
    Moreover, when step $i$ is the $j$-th right step, the yellow path from $h_{j+1}$ to $h_j$ consists entirely of vertices on $\boundary{x}{i-1}$ between $h_j$ and $h_{j+1}$.
\end{lemma}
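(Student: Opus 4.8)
The plan is to argue by induction on the step number $i$, carrying as invariant the full statement of the lemma together with the bookkeeping fact that the current root edge $\roote{x}{i}$ is the yellow chord of the most recent right step (or $(b_x,b_{x+1})$ if no right step has yet occurred). The base case, the initial configuration before any peeling step, is immediate: the only yellow edge is the root edge $(b_x,b_{x+1})$, a tree rooted at $b_{x+1}=h_0$ that contains no upper-boundary or corner vertex, and the ``path through the heads'' is the single vertex $h_0$. For the inductive step I would read off the yellow edges added at step $i$ from the description of left and right steps given above, and graft them onto the yellow subgraph of $\area{x}{i-1}$.

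Two structural observations keep the bookkeeping clean. First, the yellow subgraph of $\area{x}{i}$ is a forest in which every vertex has at most one outgoing yellow edge: each interior vertex, and each lower-boundary vertex other than $b_{x+1}$, has exactly one outgoing yellow edge by the Schnyder and Schnyder boundary conditions, while acyclicity follows from \cref{thm:schnyder_subgraphs_on_finite_triangulation} applied to the finite triangulation obtained by filling in the unexplored region (whose yellow subgraph is a tree, and whose induced colouring on $\area{x}{i}$ agrees with the one produced by the process, as noted after the single-step definition). Hence each tree of this forest is determined by its unique yellow sink. Second, by \cref{obs:finitestepsupperboundary} every upper-boundary vertex and the corner vertex of $\area{x}{i}$ is a yellow sink, and $b_{x+1}$ is a yellow sink, while every remaining vertex has an outgoing yellow edge. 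Consequently the trees are exactly one tree $\mathcal Y$ rooted at $b_{x+1}$ together with one tree rooted at each upper-boundary or corner vertex; since $b_{x+1}$ is the only sink of $\mathcal Y$, the tree $\mathcal Y$ contains no upper-boundary or corner vertex. This already yields the two displayed items, and leaves only the claim that $\mathcal Y$ contains $\roote{x}{i}$ and a path through all the heads $h_\ell$.

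For that last point I would use the right-step description directly. When step $i$ is a right step, its chord $\chord{x}{i}=(t_{j+1},h_{j+1})$ is coloured yellow and directed into $h_{j+1}$, and, because the finite region explored is given its maximal Schnyder wood, \cref{lem:finite_yellow_path} shows that a yellow path from $h_{j+1}$ to $h_j$ lying entirely on $\boundary{x}{i-1}$ between $h_j$ and $h_{j+1}$ is added; this is precisely the ``moreover'' statement. Chaining this with the inductive hypothesis that $h_j$ already lies on a yellow path down to $h_0=b_{x+1}$ in $\mathcal Y$ produces a yellow path $t_{j+1}\to h_{j+1}\to\cdots\to h_0$, so the new root edge $\roote{x}{i}=\chord{x}{i}$ and the new head $h_{j+1}$ join $\mathcal Y$ and the path through the $h_\ell$ is extended. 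A left step keeps the root edge and adds only the yellow tree of the finite region $T_\ell$, which is rooted at the chord head and therefore grafts onto whichever component already contains that head, so all invariants are preserved.

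The main obstacle, I expect, is pinning down the yellow sinks, and in particular verifying that $b_{x+1}$ remains a sink throughout. The point is that $b_{x+1}=h_0$ is the head of the first right step's root edge, so it plays the role of $v_y$ in that finite region and there acquires only incoming yellow edges; one must then check that the process never returns to it, which follows because after the first right step $b_{x+1}$ lies strictly below the chord of that step and hence on the boundary of no region explored later. A secondary subtlety in the left-step case is confirming that the newly revealed neighbours $u_1,\dots,u_k$ become roots of their own yellow trees rather than being absorbed into $\mathcal Y$; this is handled uniformly by the sink characterisation, since the $u_\ell$ form the new upper boundary and are therefore sinks by \cref{obs:finitestepsupperboundary}, which avoids any separate geometric analysis of where the chord head lands.
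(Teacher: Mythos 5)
Your proposal is correct, and it rests on the same pillars as the paper's proof — induction over peeling steps, the left/right-step component descriptions, and \cref{lem:finite_yellow_path} for the ``moreover'' clause — but the mechanism by which you obtain the two displayed items is genuinely different. The paper tracks the decomposition inductively: at a right step the newly added yellow tree is rooted at $h_j$, which lies in the $b_{x+1}$-tree by the induction hypothesis, and the upper-boundary/corner vertices of $\area{x}{i}$ are inherited from $\area{x}{i-1}$; at a left step the new tree is rooted at the chord head, which one checks is an upper-boundary or corner vertex of the new area and so the $b_{x+1}$-tree is untouched. You instead derive the decomposition for each fixed $i$ from global facts: acyclicity (via embedding the revealed colouring into a finite maximal Schnyder wood and \cref{thm:schnyder_subgraphs_on_finite_triangulation}), yellow outdegree at most one, and the identification of the yellow sinks as exactly $b_{x+1}$, the upper-boundary vertices, and the corner vertex (via \cref{obs:finitestepsupperboundary}), so that components are in bijection with sinks and the $b_{x+1}$-tree automatically avoids the other sinks; induction is then needed only for the heads-path, the root edge, and the ``moreover'' statement. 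This buys cleaner bookkeeping — no case analysis of where chord heads land or how covered trees are absorbed — at the cost of a sink verification that you state slightly too loosely in two places. First, the corner vertex is itself a lower-boundary vertex yet is a sink, so your first observation's ``exactly one outgoing yellow edge'' for lower-boundary vertices other than $b_{x+1}$ should be ``at most one'', with the sinks sorted out as in your second observation. Second, the head of the current root edge is not covered, so its outgoing yellow edge is not supplied by the Schnyder boundary condition within $\area{x}{i}$ directly, but by the boundary condition it satisfies inside the finite region explored at the right step that made it the head (the first edge of the \cref{lem:finite_yellow_path} path). Both facts are true and one-line fixes, so these are slips of phrasing rather than gaps.
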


\begin{proof}
    Suppose that the statement holds after $i-1$ steps, at which point the process $P_x$ has taken $j$ right steps. We show that the statement holds after $i$ steps. 

    If step $i$ is a right step, the component added to the yellow subgraph is a finite tree rooted at the head $h_j$ of the previous root edge. By the induction hypothesis, the yellow subgraph after $j$ right steps includes a directed path $h_j,\dots,h_{j-1},\dots,h_1,\dots,h_0 = b_{x+1}$, and so the yellow edges added at step $i$ are added to the tree rooted at $b_{x+1}$. In particular, $\roote{x}{i}$ is added to the tree rooted at $b_{x+1}$. Any edges of the yellow trees rooted at any vertex between $t_{j+1}$ and $h_{j+1}$ on $\boundary{x}{i-1}$ will also be added to the same tree. Furthermore, it follows from the structure of the right step that apart from vertices between $t_{j+1}$ and $h_{j+1}$, the vertices on the upper boundary and the corner vertex of $\area{x}{i-1}$ will form the upper boundary and the corner vertex of $\area{x}{i}$. So, the induction hypothesis implies that, after step $i$, the yellow trees not rooted at $b_{x+1}$ are rooted at vertices on the upper boundary or at the corner vertex of $\area{x}{i}$. Finally, as remarked in the paragraph preceding \cref{lem:schnyderareayellowforest}, the yellow path from $h_{j+1}$ to $h_j$ consists entirely of vertices on $\boundary{x}{i-1}$ between $h_j$ and $h_{j+1}$. This shows that the claim holds after step $i$.

    Otherwise, if step $i$ is a left step, the component added to the yellow subgraph is a finite tree rooted at the head of $\chord{x}{i}$, which is either a vertex on the upper boundary or the corner vertex of $\area{x}{i}$. Any finite yellow tree rooted at a vertex covered by the left step will also be added to this new finite tree rooted at the head of $\chord{x}{i}$. Note that the tree rooted at $b_{x+1}$ is unaffected and so satisfies the properties from the induction hypothesis also after step $i$. Again, this shows that the claim holds after step $i$.
\end{proof}

\begin{lemma}\label{lem:schnyderarearedforest}
    After step $i$ of $P_x$, the red subgraph of the revealed area $\area{x}{i}$ is a tree rooted at $b_x$, containing a path whose vertices are the tails $t_j$ of every chord chosen by a right step of $P_x$.
\end{lemma}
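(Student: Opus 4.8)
The plan is to mirror the induction used for \cref{lem:schnyderareayellowforest}, inducting on the step number $i$ and using the description of the red edges added at each left and right step given in the paragraphs preceding that lemma. The essential difference from the yellow case is that the red subgraph never breaks into a forest: every red edge added by the process is eventually directed towards $t_0 = b_x$, so all red components merge into a single tree. I would take as induction hypothesis that after $i-1$ steps, at which point $P_x$ has performed $j$ right steps, the red subgraph of $\area{x}{i-1}$ is a single tree rooted at $b_x$ that contains the directed path $t_j \to t_{j-1} \to \cdots \to t_0 = b_x$. The base case is trivial, as before the first step the red subgraph is empty and the path of tails is the single vertex $t_0 = b_x$.

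For the inductive step, first suppose step $i$ is a right step. Then $t_{j+1} = \peelv{x}{i}$ is the tail of the chosen chord, the edge $(t_{j+1}, t_j)$ is coloured red, and the finite region $T_r$ explored during the step is rooted at $(t_j, h_j)$; by \cref{thm:schnyder_subgraphs_on_finite_triangulation} its red edges form a tree rooted at its tail $t_j$. Hence every red edge added at step $i$ is directed towards $t_j$, which already lies on the existing tree by the induction hypothesis, and the new material consists of a tree attached at the single vertex $t_j$ together with the edge $(t_{j+1}, t_j)$. The red subgraph therefore remains a single tree rooted at $b_x$, and the edge $(t_{j+1}, t_j)$ extends the path of tails to $t_{j+1} \to t_j \to \cdots \to t_0$. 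If instead step $i$ is a left step, the finite region $T_\ell$ is rooted at $\chord{x}{i}$, so by \cref{thm:schnyder_subgraphs_on_finite_triangulation} its red edges form a tree rooted at $\peelv{x}{i}$, and in addition the edge $(\peelv{x}{i}, t_j)$ is coloured red. Thus all red edges added at this step are again directed towards $t_j$ and attach to the existing tree at $t_j$, while no new right-step tail is created, so the red subgraph stays a tree rooted at $b_x$ with the same path of tails.

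The one point requiring care---and the main obstacle---is verifying that the red edges of the newly revealed finite region attach to the existing tree at the single vertex $t_j$, rather than meeting it elsewhere and creating a cycle or a separate component. For this I would invoke \cref{obs:finitestepsupperboundary}: the vertices of $\boundary{x}{i-1}$ lying strictly between the tail and head of the newly chosen chord sit on the upper boundary of $\area{x}{i-1}$ and therefore carry no red edges before being covered, while all interior vertices of the finite region are revealed for the first time at step $i$. Consequently the red tree of the finite region shares only the vertex $t_j$ (respectively $\peelv{x}{i}$, itself joined to $t_j$ by the single red edge above) with the previously coloured red subgraph, which guarantees that connectivity is preserved and no cycle is introduced. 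This completes the induction.
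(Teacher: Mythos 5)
Your overall route is the same as the paper's: the paper proves this lemma in two sentences, relying on the structural description of left and right steps given just before \cref{lem:schnyderareayellowforest} to observe that every step attaches a finite red tree rooted at the tail $t_j$ of the current root edge, and that each right step additionally contributes the edge $(t_{j+1},t_j)$, producing the path $t_j, t_{j-1}, \dots, t_0 = b_x$. Your right-step and left-step cases are exactly this argument written out as an explicit induction.

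However, the justification you single out as ``the main obstacle'' is incorrect as stated for right steps. In a right step, the vertices of $\boundary{x}{i-1}$ strictly between the tail $t_{j+1} = \peelv{x}{i}$ and the head $h_{j+1}$ of the chord are $t_j$, $h_j$, and the vertices of $\boundary{x}{i-1}$ strictly right of $h_j$; none of these lies on the upper boundary of $\area{x}{i-1}$, which by definition excludes the root edge and the initial boundary. Worse, some of them genuinely do carry red edges of $\area{x}{i-1}$: $t_j$ itself lies on the existing red path (so your blanket ``no red edges'' claim contradicts your own induction hypothesis), and when $j \ge 1$ typically so does $h_j$, since $h_j$ received an outgoing red edge as a non-root boundary vertex of the finite region explored at the $j$-th right step. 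The attachment claim survives, but for different reasons: the red tree of the finite region explored at step $i$ is rooted at $t_j$ and does not contain $h_j$ at all, because inside that region $h_j$ is the head of the root edge and so, by the Schnyder root condition, has only incoming yellow edges there; and the vertices strictly right of $h_j$ on $\boundary{x}{i-1}$ carry no red edges, being either uncovered initial boundary vertices never yet incident to an explored edge (by \cref{obs:finitestepscoversegmentofboundary} and the fact that no peeling step explores right of the current root edge head) or upper boundary vertices, which \cref{obs:finitestepsupperboundary} handles. Your appeal to \cref{obs:finitestepsupperboundary} is the right tool essentially only for left steps, where the enclosed boundary vertices are indeed upper boundary vertices, the corner vertex (also covered by that lemma), or untouched initial boundary vertices. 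With this repair, your induction goes through and coincides with the paper's proof.
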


\begin{proof}
    In both left and right steps, the component added to the red subgraph is a finite tree rooted at the tail $t_j$ of the current root edge. Furthermore, the edge $(t_{j+1}, t_j)$ is added to the red subgraph at the $(j+1)$-th right step, and so the red subgraph contains the path $t_j,t_{j-1},\dots,t_1,t_0 = b_x$.
\end{proof}

\begin{lemma}\label{lem:schnyderareablueforest}
    For any step $i$, let $i_0 = 0$ and inductively define $i_\ell$ as the last step from $\{i_{\ell-1}~+~1, \dots, i\}$ for which $\peelv{x}{i_\ell}$ is on the boundary $\boundary{x}{i_{\ell-1}}$. After step $i$ of $P_x$, the blue subgraph of the revealed area $\area{x}{i}$ consists of
    \begin{itemize}
        \item A tree rooted at $\peelv{x}{i_1}$, containing a path whose vertices are the peeling vertices $\peelv{x}{i_\ell}$ with $\ell \ge 1$, and
        \item A forest of trees rooted at vertices on the lower boundary of $\area{x}{i}$.
    \end{itemize}
    Moreover, every vertex on the upper boundary of $\area{x}{i}$ has an outgoing blue edge to one of the peeling vertices $\peelv{x}{i_\ell}$ with $\ell \ge 1$.
\end{lemma}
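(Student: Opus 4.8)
The plan is to prove all three assertions together by induction on the step number $i$, in the same style as the proofs of \cref{lem:schnyderareayellowforest} and \cref{lem:schnyderarearedforest}, feeding off the explicit description of the blue edges added at a single left or right step recorded just before \cref{lem:schnyderareayellowforest}, together with \cref{obs:finitestepsupperboundary} (which guarantees that each upper boundary vertex has exactly one outgoing blue edge). I would carry a hypothesis strengthened beyond the bare statement: that after step $i$ the blue subgraph is one \emph{spiral} tree rooted at $\peelv{x}{i_1}$ plus a forest of trees rooted on the lower boundary of $\area{x}{i}$; that the landmarks $\peelv{x}{i_1}, \peelv{x}{i_2}, \dots$ occur in this order along a single directed path toward $\peelv{x}{i_1}$ in the spiral tree, with $\peelv{x}{i_\ell}$ lying on that path between $\peelv{x}{i_{\ell+1}}$ and $\peelv{x}{i_{\ell-1}}$; and that each upper boundary vertex sends its unique outgoing blue edge into the spiral tree, terminating at a landmark.

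Before the induction I would record some bookkeeping about how the landmark sequence behaves. A vertex leaves the boundary only by being covered and is never uncovered, so $\peelv{x}{i}$ lies on an earlier boundary $\boundary{x}{i_{\ell-1}}$ exactly when it was already present on the boundary at step $i_{\ell-1}$, that is, when it was not revealed as one of the vertices $u_1,\dots,u_k$ of a step taken after $i_{\ell-1}$. Two further facts drive the analysis: a left step covers its own peeling vertex (it becomes enclosed beneath the fan $u_1,\dots,u_k$), whereas a right step leaves its peeling vertex on the boundary as the new right-step tail; and, as in \cref{obs:finitestepscoversegmentofboundary,obs:infiniterightsteps}, the peeling vertex always sits immediately left of the current root tail while the covered region only grows. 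From these one reads off how the step-$i$ landmark sequence is obtained from the step-$(i-1)$ one: a suitable prefix is retained and step $i$ is appended as the new final landmark, at the position determined by the step at which $\peelv{x}{i}$ was first revealed onto the boundary.

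For the inductive step I would treat the two cases separately using the per-step description. At a right step, $\peelv{x}{i}$ is the new right-step tail $t_{j+1}$, the revealed neighbours $u_1,\dots,u_k$ are attached to it by incoming blue edges and then covered, and the remaining blue trees are rooted at the vertices of $\boundary{x}{i-1}$ strictly between $t_{j+1}$ and $h_{j+1}$; by \cref{thm:schnyder_subgraphs_on_finite_triangulation} applied to the explored finite region and the inductive hypothesis, each such root is either a lower boundary vertex, producing a new tree of the claimed forest, or an upper boundary vertex whose pre-existing outgoing blue edge already grafts its finite blue tree into the spiral. Since a right step only shrinks the upper boundary, the final assertion is inherited. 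At a left step the revealed $u_1,\dots,u_k$ join the upper boundary with blue edges into $\peelv{x}{i}$, and the crux is to check that $\peelv{x}{i}$ is itself the new landmark sitting at the correct place on the landmark path: being old, $\peelv{x}{i}$ was an upper boundary or corner vertex of $\area{x}{i-1}$, so by the inductive final assertion it already points into the spiral at a landmark, and I would argue that this landmark is the immediately preceding one $\peelv{x}{i_{\ell-1}}$.

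The main obstacle I anticipate is precisely this last point: controlling \emph{which} landmark each upper boundary vertex attaches to, so that the landmarks line up consecutively along a single root-directed path rather than branching off it. This is what forces the strengthening of the hypothesis; I expect to need that, reading the upper boundary of $\area{x}{i}$ from the root tail around to the corner vertex, the landmarks targeted by successive upper boundary vertices vary monotonically, changing exactly at the ``last exit'' vertices that become the landmarks of the next layer. Establishing this monotonicity, and hence that the last peeling vertex to lie on $\boundary{x}{i_{\ell-1}}$ points to $\peelv{x}{i_{\ell-1}}$ and not to an earlier landmark, is where the winding behaviour of the process (left steps covering their peeling vertex, right steps marching the peeling vertex leftward until it re-enters an older boundary) must be used carefully. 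The remaining verifications should be routine given the finite-region structure from \cref{thm:schnyder_subgraphs_on_finite_triangulation} and the upper boundary description from \cref{obs:finitestepsupperboundary}.
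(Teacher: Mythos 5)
Your overall plan---induction on $i$ with a strengthened hypothesis tracking a ``landmark path''---could in principle be made to work, and your bookkeeping claim (the landmark sequence at step $i$ is a prefix of the previous one with $i$ appended, at a position determined by when $\peelv{x}{i}$ was revealed) is correct. But the proposal has a genuine gap exactly where you flag it: you never prove that a new landmark's peeling vertex sends its outgoing blue edge to the \emph{immediately preceding} landmark rather than to some earlier one, nor the monotonicity of the targets of upper-boundary vertices. This is not a routine verification that can be deferred; it is the entire content of the lemma, and ``the winding behaviour of the process must be used carefully'' is a placeholder, not an argument. As written, the induction cannot close.

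The missing idea---which also makes the strengthened hypothesis and the landmark bookkeeping unnecessary---is the following. The peeling vertex is by definition the vertex immediately to the left of the current root tail, and since peeling steps only ever insert new boundary vertices immediately to the left of the root tail, the vertices revealed after step $i_{\ell-1}$ that are still on the current boundary form a contiguous arc just left of the root tail (together with possibly the root tail itself, which is excluded from the upper boundary and is never a later peeling vertex, since it is covered at the next right step). Hence $\peelv{x}{j}\in\boundary{x}{i_{\ell-1}}$ forces this arc to be empty at step $j$: every vertex revealed during steps $i_{\ell-1}+1,\dots,j-1$ has already been covered, except possibly the root tail. Applying this with $j=i_\ell$, the \emph{last} such step: after step $i_\ell$, the only surviving vertices revealed after step $i_{\ell-1}$ are the fan $u_1,\dots,u_k$ of step $i_\ell$ itself (none at all if $i_\ell$ is a right step), and each of these has its unique outgoing blue edge pointing to $\peelv{x}{i_\ell}$. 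In particular the only possible later peeling vertices revealed after step $i_{\ell-1}$ are these fan vertices, so $\peelv{x}{i_{\ell+1}}$ has a blue edge directly to $\peelv{x}{i_\ell}$---your crux---and every upper-boundary vertex points to some landmark. The paper packages this as a renewal argument rather than a step-by-step induction: it first shows by an easy induction that the blue subgraph is a forest rooted on the initial boundary (each step adds trees rooted on the previous boundary), then observes that after step $i_1$ the upper boundary is empty or exactly the fan around $\peelv{x}{i_1}$, and then repeats the identical argument with $\boundary{x}{i_1}$ playing the role of the initial boundary, iterating through $i_2, i_3, \dots$ until $i_\ell = i$.
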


\begin{proof}
    We first prove inductively that after step $i$ of $P_x$, the blue subgraph consists of a forest of trees rooted at vertices on the initial boundary. Indeed, in both left and right steps, the component added to the blue subgraph is a forest of trees rooted at vertices of the previous boundary. A vertex of the previous boundary is either on the initial boundary or, by the induction hypothesis, is in a blue tree rooted at a vertex of the initial boundary. In both cases, every blue edge added during step $i$ will be added to a blue tree rooted at a vertex on the initial boundary, as claimed.

    Since $\peelv{x}{i_1}$ is on the initial boundary, if $i_1$ is a right step, the upper boundary of $\area{x}{i_1}$ contains no vertices, and if $i_1$ is a left step, the upper boundary of $\area{x}{i_1}$ consists only of the neighbours $u_1, \dots, u_k$ of $\peelv{x}{i_1}$ revealed during step $i_1$. In particular, after step $i_1$, every vertex on the upper boundary of $\area{x}{i_1}$ has an outgoing blue edge to $\peelv{x}{i_1}$. Since $\peelv{x}{i_2}$ is on $\boundary{x}{i_1}$ but not on the initial boundary and not incident to $\roote{x}{i_1}$, it follows that $\peelv{x}{i_2}$ must be on the upper boundary of $\area{x}{i_1}$, and so there is a blue edge from $\peelv{x}{i_2}$ to $\peelv{x}{i_1}$.

    We now apply the same arguments for step $i_2$ instead of $i_1$ and with the updated boundary $\boundary{x}{i_1}$ instead of the initial boundary. This shows that after step $i_2$, every vertex that is on the upper boundary of $\area{x}{i_2}$ and not on $\boundary{x}{i_1}$ has an outgoing blue edge to $\peelv{x}{i_2}$, and there is a blue edge from $\peelv{v}{i_3}$ to $\peelv{x}{i_2}$. By iterating this argument, we obtain a blue path $\peelv{x}{i_\ell}, \peelv{x}{i_{\ell-1}}, \dots, \peelv{x}{i_1}$, and every vertex on the upper boundary of $\area{x}{i_\ell}$ has an outgoing blue edge to one of the vertices $\peelv{x}{i_1}, \dots, \peelv{x}{i_\ell}$. Since $i_\ell = i$ for some $\ell$, this proves the claim.
\end{proof}

\section{Infinite triangulations with finite boundary}\label{sec:infinite_triang_finite_bdry}

In this section, we study the behaviour of the Schnyder peeling process on infinite triangulations with finite boundary. We show that if $T$ is such a triangulation, then the Schnyder peeling process explores all of $T$, which we use to establish the existence of infinite Schnyder woods. We further show that the Schnyder wood constructed is maximal and therefore the unique maximal Schnyder wood of $T$. We also describe some of the structure of this Schnyder wood, and show that the Schnyder wood of the UIPT is a weak limit of Schnyder woods of random finite triangulations as their number of vertices tends to infinity. We begin by establishing the following theorem.

\begin{theorem}\label{thm:unique_maximal_schnyder_wood_finite_bdry}
    Every infinite triangulation with a finite boundary has a unique maximal Schnyder wood.
\end{theorem}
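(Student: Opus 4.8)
The plan is to run the Schnyder peeling process of \cref{sec:infiniteschnyderwoods} on $T$ from an arbitrary root edge, show that it eventually reveals all of $T$, check that the resulting colouring is a Schnyder wood, and finally check that it is maximal; uniqueness is then immediate from \cref{thm:max_schnyder_unique_infinite}. The first task is to show every vertex is \emph{eventually covered}. By \cref{obs:infiniterightsteps} and the remark following it, after finitely many steps the process covers the entire (finite) initial boundary; at that moment the current boundary is a finite simple cycle $C_1$ enclosing the initial boundary $C_0$, and the unexplored region is again an infinite triangulation with finite boundary $C_1$. Iterating, I obtain nested, pairwise vertex-disjoint finite cycles $C_0, C_1, C_2, \dots$, where $C_k$ is the current boundary at the first step by which all of $C_{k-1}$ has been covered, each reached after finitely many steps. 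If some vertex $v$ were never covered, then for every $k \ge 1$ the vertex $v$ and the root vertex would lie on opposite sides of $C_k$; by planarity any finite path from the root to $v$ must then contain a vertex of each $C_k$, and since the $C_k$ are pairwise disjoint this path would be infinite, a contradiction. Hence every vertex, and so every edge, is eventually coloured.

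Next I would verify that this colouring is a Schnyder wood of $T$ in the sense of \cref{sec:infiniteschnyderwoods}. Fix a vertex $v$; it is covered after some step $i$, so all edges incident with $v$ lie in the finite revealed area, which is separated from the still-unexplored region by the current boundary. Completing the unexplored region by an arbitrary finite triangulation yields some $\hat T \in \cT_{\hat n}^m$ with the same boundary $C_0$ and the same root edge, and by the consistency remark in \cref{ssec:peelproc:singlestep} together with \cref{thm:peeling_process_maximal_wood} the revealed area is coloured exactly as in the maximal Schnyder wood of $\hat T$. Since $v$ plays the same role (interior, boundary, or root) in $T$ and in $\hat T$ and has the same incident edges in both, $v$ satisfies the appropriate Schnyder condition. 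As this holds at every vertex, the colouring is a Schnyder wood.

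For maximality I must rule out both anticlockwise cycles and right-directed paths. A finite anticlockwise cycle would, by \cref{lem:oriented_triangles}, force an anticlockwise triangle; but the triangle and its finite interior are covered after finitely many steps, hence contained in a finite completion whose wood is maximal, a contradiction. The genuinely new difficulty is ruling out right-directed paths, which are infinite and so are invisible in any finite completion---this is exactly why the notion of maximality was strengthened in \cref{sec:infiniteschnyderwoods}. Here I would exploit the monochromatic structure: passing \cref{lem:schnyderarearedforest,lem:schnyderareayellowforest,lem:schnyderareablueforest} to the limit, the red and yellow subgraphs are trees rooted at $v_r$ and $v_y$ and the blue subgraph is a forest rooted on $C_0$, so that from any vertex the forward red, yellow, and blue paths are finite and terminate on the finite boundary $C_0$. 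Given a putative right-directed path $P$, the finite boundary lies entirely on its right, so the region $L$ to its left contains no boundary vertex and, since a finite $L$ would force $P$ to be finite, is infinite. I would then combine the fact that every forward monochromatic path from a vertex of $L$ must cross $P$ in order to reach $C_0$ with the one-endedness of $T$ to derive a contradiction.

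I expect this last step, excluding right-directed paths, to be the main obstacle: it is the one place where the finite-approximation argument genuinely fails and where the one-ended, planar structure of $T$ must be used in an essential way. Once it is in hand, the constructed wood is maximal, and uniqueness follows at once from \cref{thm:max_schnyder_unique_infinite}.
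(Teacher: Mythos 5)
Your first three steps match the paper's proof: coverage of all of $T$ (the paper's \cref{lem:infinite_triangulation_noanticlockwise} argues this by iterating \cref{obs:infiniterightsteps}, essentially your nested-cycles argument), verification of the Schnyder conditions via finite completions and \cref{thm:peeling_process_maximal_wood}, and exclusion of finite anticlockwise cycles. Uniqueness via \cref{thm:max_schnyder_unique_infinite} is also as in the paper. But the step you yourself flag as the main obstacle --- ruling out right-directed paths --- is where your proposal has a genuine gap, and your sketch for it does not close.

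Two specific problems. First, your topological claim that ``a finite $L$ would force $P$ to be finite'' is false: the region to the left of a two-ended infinite path can be triangulated entirely by chords of the path (a zig-zag triangulation, locally finite, with no interior vertices at all), so $L$ may be empty while $P$ is infinite; and one-endedness cannot rescue you here, because ends are defined by removing \emph{finite} vertex sets, so an infinite path may perfectly well split a one-ended triangulation into two infinite pieces. Second, even granting the limit structure you invoke (which is itself the nontrivial content of \cref{thm:monochromatic_subgraph_finite_bdry} --- ``passing \cref{lem:schnyderareayellowforest,lem:schnyderarearedforest,lem:schnyderareablueforest} to the limit'' requires showing, e.g., that the finite yellow trees rooted on upper boundaries eventually merge into the tree rooted at $v_y$, and that no infinite forward monochromatic path appears in the limit), the statement ``every forward monochromatic path from a vertex of $L$ must cross $P$'' does not visibly produce a contradiction: the per-vertex edge-counting of \cref{lem:directed_edges_from_cycle} is a global Euler-formula argument that needs a finite interior, and there is no pointwise obstruction to edges crossing $P$ --- indeed \cref{fig:distinct_max_schnyder_halfplane} shows a Schnyder wood with right-directed paths and no finite anticlockwise cycles, so any argument must use the peeling construction itself, not just the Schnyder conditions plus planarity.

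The paper's mechanism, which is the idea missing from your proposal, is the leftmost walk. \Cref{lem:leftmostwalk_finite_bdry} shows that every leftmost walk $L(e)$ in the constructed wood is acyclic and, crucially, can never leave the revealed area once $e$ and the whole initial boundary have been explored: by \cref{obs:finitestepsupperboundary}, a walk entering an upper boundary vertex via its incoming yellow edges must exit via the already-revealed outgoing blue edge, and the head and tail of the current root edge only ever acquire incoming edges. Hence $L(e)$ is finite and terminates at the unique sink, the head of the root edge. Then \cref{lem:maximal_wood_finite_bdry} kills a putative right-directed path $R$: the leftmost walk from an edge of $R$ must at some first edge $e'$ cross from $R$ to the side containing the root head, but at that vertex the outgoing edge of $R$ lies strictly further left than $e'$, contradicting the leftmost rule. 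If you want to complete your proof, you either need this lemma (or an equivalent confinement statement for some canonical walk), or the much more detailed ``ladder'' structure of \cref{thm:monochromatic_subgraph_finite_bdry,thm:finite_boundary_paths_from_v_wind}; the bare facts you list are not enough.
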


Showing that the vertices of $T$ explored by the Schnyder peeling process satisfy the conditions for Schnyder woods is straightforward -- the verification is the same as for the finite case. However, to show that the Schnyder peeling process constructs a Schnyder wood of $T$, we additionally must demonstrate that the Schnyder peeling process explores the entire triangulation $T$. The following lemma justifies that this is true, and that additionally, this infinite exploration constructs no finite anticlockwise cycles on $T$. Suppose $T$ is rooted at $(b_x,b_{x+1})$.

\begin{lemma}\label{lem:infinite_triangulation_noanticlockwise}
    The Schnyder peeling process $P_x$ constructs a Schnyder wood of $T$ without anticlockwise cycles.
\end{lemma}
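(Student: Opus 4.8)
The statement has two components: that $P_x$ eventually colours and orients every edge of $T$ in a way satisfying the Schnyder conditions (so that it genuinely produces a Schnyder wood of \emph{all} of $T$), and that the resulting orientation contains no finite anticlockwise cycle. The plan is to first establish the coverage statement $\evarea{x} = T$, and then read off both the Schnyder conditions and the absence of anticlockwise cycles from the fact recorded in the footnote of \cref{ssec:peelproc:singlestep}: the colouring on each revealed area $\area{x}{i}$ agrees with the colouring a large finite triangulation would receive under the finite Schnyder peeling process.

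The coverage statement is the crux, and I would prove by induction on $d$ that every vertex at graph distance $d$ from the initial boundary is eventually covered. For $d=0$ this is precisely the remark following \cref{obs:infiniterightsteps}: since the boundary is finite, the process covers the entire initial boundary. For the inductive step, a vertex $v$ at distance $d+1$ has a neighbour $v'$ at distance $d$, which is covered at some step $i$; at that moment $v'$ lies in $\intr \area{x}{i}$, so all of its incident faces, and hence all of its incident edges, have been revealed, placing $v$ on the boundary $\boundary{x}{i}$ unless it is already covered. It therefore suffices to show that any vertex lying on some boundary $\boundary{x}{i}$ is eventually covered. Here I would use that the continuation of $P_x$ from step $i$ is exactly the Schnyder peeling process run on the unexplored region, which is again an infinite one-ended triangulation with the \emph{finite} boundary $\boundary{x}{i}$; re-applying the remark following \cref{obs:infiniterightsteps} to this triangulation shows the process covers all of $\boundary{x}{i}$, and in particular $v$. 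This closes the induction, and since $T$ is connected every vertex lies at finite distance from the boundary, giving $\evarea{x} = T$.

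Granting coverage, the Schnyder conditions follow exactly as in the finite case. Every interior vertex $v$ of $T$ becomes covered, so it lies in $\intr \area{x}{i}$ for some $i$; by the footnote the colouring there coincides with that of a finite triangulation, obtained by capping off the unexplored region, explored by the finite peeling process, in which $v$ is an interior vertex satisfying the Schnyder condition by \cref{thm:peeling_process_maximal_wood}. Applying the same observation to the finitely many boundary vertices of $T$ yields the Schnyder root and boundary conditions, verbatim as in the proof of \cref{thm:peeling_process_maximal_wood}.

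Finally, I would rule out finite anticlockwise cycles by a containment argument. Any finite cycle $C$ in $T$ bounds, together with its interior, a finite region (as $T$ is one-ended), every vertex of which is at finite distance from the boundary and hence eventually covered; so $C \subseteq \area{x}{i}$ for some $i$. By the footnote the colouring of $\area{x}{i}$ is a restriction of the maximal Schnyder wood of a finite triangulation, which by \cref{thm:peeling_process_maximal_wood} has no anticlockwise cycle, so $\area{x}{i}$ contains none; thus $C$ is not anticlockwise. (Alternatively one may reduce to triangles via \cref{lem:oriented_triangles}, whose proof needs only that the interior of the cycle be finite.) The main obstacle is the coverage step, and its delicate point is showing that every vertex appearing on a boundary cycle is subsequently covered; I resolve this not by tracking one global exploration but by re-applying \cref{obs:infiniterightsteps} to the unexplored region treated as a fresh infinite triangulation with finite boundary.
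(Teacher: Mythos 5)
Your proposal is correct and follows essentially the same route as the paper: the paper also establishes coverage by iterating the remark after \cref{obs:infiniterightsteps} (your induction on distance from the boundary is just a careful formalisation of its ``iterating this argument''), and then deduces both the Schnyder conditions and the absence of anticlockwise cycles from the fact that any finitely many explored edges receive the same colouring as under the finite peeling process, via \cref{thm:peeling_process_maximal_wood}. Your extra care in the cycle step (one-endedness forcing the cycle's interior to be finite and covered) is a harmless elaboration of the same appeal.
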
 

\begin{proof}
    By \cref{obs:infiniterightsteps}, $P_x$ will have covered all initial boundary vertices after a finite number of steps. Since the updated boundary after these steps is still finite, iterating this argument shows that the process will explore all vertices at any fixed distance from the initial boundary after a finite number of steps, and so the process will explore the entire triangulation $T$.

    Recall that the region of $T$ explored by any finite number of steps of the Schnyder peeling process is assigned the same colouring as if it were contained in a finite triangulation and were explored by the finite peeling process. Since $P_x$ explores any finite collection of edges of $T$ after a finite number of steps, \cref{thm:peeling_process_maximal_wood} therefore implies that the colouring of $T$ constructed by $P_x$ will satisfy the Schnyder conditions, and it will not contain any anticlockwise cycles.
\end{proof}

To establish that the constructed Schnyder wood is maximal, it remains to show that no infinite right-directed paths exist. Define a \defn{leftmost walk} in an orientation of $T$ to be a directed walk that at each vertex always takes the first outgoing edge clockwise from the edge at which it entered, if an outgoing edge exists, and of maximum length subject to this. Let $L(e)$\glossarylabel{gl:le} denote the leftmost walk starting from a directed edge $e$. We will use the following property of leftmost walks in the Schnyder wood of $T$ constructed by the Schnyder peeling process.

\begin{lemma} \label{lem:leftmostwalk_finite_bdry}
    Every leftmost walk in the Schnyder wood of $T$ constructed by $P_x$ terminates at the head of the root edge.
\end{lemma}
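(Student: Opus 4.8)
The plan is to show that every leftmost walk is finite and that its only possible endpoint is $b_{x+1}$. First I would record that a leftmost walk stops exactly when it reaches a vertex with no outgoing edge: by the Schnyder conditions satisfied by the wood constructed in \cref{lem:infinite_triangulation_noanticlockwise}, every interior vertex has out-degree $3$, every boundary vertex other than $b_x, b_{x+1}$ has out-degree $2$, the tail $b_x$ of the root edge has out-degree $1$, and the head $b_{x+1}$ is the unique vertex of out-degree $0$. Hence any leftmost walk that terminates does so at $b_{x+1}$, and it remains only to rule out an infinite leftmost walk. Since the next edge of a leftmost walk is determined by its current directed edge, an infinite leftmost walk either repeats a directed edge (and is then eventually periodic) or visits infinitely many distinct directed edges.

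Second, I would rule out the periodic case. If $L(e)$ repeats a directed edge it contains a directed cycle $C$ traversed according to the leftmost rule, and since $T$ is planar and one-ended the interior of $C$ is finite, so \cref{cor:most_clockwise_path} applies and $C$ is a triangle. The key point is that a triangle traversed by a leftmost walk is oriented anticlockwise: at each interior vertex of $C$ the outgoing edge of $C$ is the first outgoing edge clockwise from the incoming edge of $C$, so that (reading the local anticlockwise colour order blue, red, yellow of the Schnyder condition) the colours of the three edges of $C$ advance anticlockwise in the direction of travel, forcing $C$ to be an anticlockwise triangle; equivalently, the clockwise wedge between consecutive edges of $C$ contains only incoming edges and lies on the exterior side. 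This contradicts the fact, from \cref{lem:infinite_triangulation_noanticlockwise}, that the constructed wood contains no anticlockwise cycle. The same conclusion follows from \cref{lem:oriented_triangles} together with the observation that a leftmost walk cannot cyclically trace a clockwise triangle, since the leftmost turning rule would force it to leave such a triangle rather than close it up. Determining the orientation of the cycle is the delicate step here, but it is essentially the standard correspondence between leftmost walks and anticlockwise cycles.

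Finally, and this is where I expect the real difficulty, I would rule out a leftmost walk that visits infinitely many distinct edges. By the previous paragraph such a walk escapes every finite subset of $T$. Here the finiteness of the boundary is essential: I would argue that an escaping leftmost walk yields a two-ended infinite directed trajectory that keeps the (finite) boundary of $T$ on one fixed side, so that the side of this trajectory containing the boundary is a finite region. Applying the Euler-characteristic count underlying \cref{lem:directed_edges_from_cycle} to this finite region---valid precisely because the enclosed region is finite---contradicts the out-degrees prescribed by the $3$-orientation, exactly as in the cyclic case. The main obstacle is to make this reduction rigorous: one must check that the escaping walk (which a priori may revisit vertices) contains, or can be replaced by, a genuine two-ended directed path, and that this path separates the finite boundary onto a single, necessarily finite, side. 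An alternative route, which I would pursue in parallel, is to use that $P_x$ covers all of $T$ after finitely many steps (\cref{obs:infiniterightsteps}) and that each revealed region $\area{x}{i}$ is coloured as the maximal wood of a finite triangulation: combined with the finite analogue of this lemma (no leftmost walk in a finite maximal wood can cycle, by the anticlockwise-triangle argument, nor escape, by finiteness, so it ends at the unique sink), one shows the walk cannot indefinitely chase the receding root edge and must instead descend to the true sink $b_{x+1}$.
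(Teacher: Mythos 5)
Your first two steps are fine and coincide with the paper's: the head of the root edge is the unique vertex of out-degree zero, and a leftmost walk cannot close a directed cycle. (The paper organises this slightly differently---anticlockwise cycles are excluded by \cref{lem:infinite_triangulation_noanticlockwise}, and a clockwise cycle traced by the leftmost rule is excluded by counting edges into its interior against \cref{lem:directed_edges_from_cycle}---but your route through \cref{cor:most_clockwise_path} is the same argument; your ``colours advance anticlockwise'' justification is hand-wavy, yet it can be made rigorous by exactly that interior-edge count.)

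The genuine gap is your third step, the escaping case, and it is not a technicality. Since the walk contains no directed cycle it never revisits a vertex, so an escaping leftmost walk is a \emph{one-ended} infinite path. Such a path does not bound any finite region: if it starts at an interior vertex its complement is connected (one can pass around the starting vertex), and if it starts on the boundary it splits $T$ into two regions both of which are infinite and both of which contain boundary segments. Either way there is no finite region to which the Euler-characteristic count underlying \cref{lem:directed_edges_from_cycle} can be applied---this is precisely why the paper's notion of a right-directed path is restricted to two-ended paths, and there is no canonical way to promote a one-ended walk to a two-ended directed path. Worse, the conclusion you want cannot follow from the Schnyder conditions plus absence of finite anticlockwise cycles alone: the right-hand wood in \cref{fig:distinct_max_schnyder_halfplane} satisfies both, yet its leftmost walks can escape to infinity along right-directed paths. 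Indeed, the paper deduces the absence of right-directed paths \emph{from} the present lemma (\cref{lem:maximal_wood_finite_bdry}); your plan effectively runs that implication backwards.

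What is missing is the paper's confinement argument, which uses the structure of the wood built by $P_x$ in an essential way. Fix a step $i$ by which $e$ \emph{and the entire (finite) initial boundary} lie in the revealed area $\area{x}{i}$ (possible by \cref{obs:infiniterightsteps}; note your claim that ``$P_x$ covers all of $T$ after finitely many steps'' is false---only every fixed finite part is covered). The boundary of the unexplored region then consists of the upper boundary of $\area{x}{i}$ and the current root edge. By \cref{obs:finitestepsupperboundary}, inside $\area{x}{i}$ a walk can reach an upper-boundary vertex only along an incoming yellow edge, and the leftmost rule then sends it out along that vertex's unique revealed outgoing (blue) edge, back into the revealed area; and every unexplored edge at the endpoints of the current root edge is incoming to them in the final wood, so the walk cannot exit there either. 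Hence $L(e)$ is trapped in the finite region $\area{x}{i}$, so by acyclicity it is finite and ends at the unique sink $b_{x+1}$. Your ``alternative route'' gestures at finite approximations but omits exactly this mechanism, and without it the finite analogue does not transfer: the leftmost rule at a vertex of the unexplored boundary depends on edges not yet revealed, which in the final wood may be outgoing there, so ``the walk cannot indefinitely chase the receding root edge'' is precisely the statement still requiring proof.
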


\begin{proof}
    Let $e$ be an edge of $T$, and consider the leftmost walk $L(e)$ in the Schnyder wood of $T$ constructed by $P_x$. We claim that $L(e)$ is a directed path. Indeed, it follows from \cref{lem:infinite_triangulation_noanticlockwise} that $L(e)$ contains no anticlockwise cycles. Suppose that $L(e)$ contains a clockwise cycle $C$. Since $L(e)$ always follows the leftmost outgoing edge at each vertex of $C$, at every vertex of $C$ that is an internal vertex of $T$, there are two edges directed to the interior of $C$. Similarly, at every vertex of $C$ that is a boundary vertex and not incident to the initial root edge, there is one edge directed to the interior of $C$. Since there are only two other vertices in $T$, it follows that all but at most two vertices of $C$ have at least one edge directed to the interior of $C$, contradicting \cref{lem:directed_edges_from_cycle}. Hence, $L(e)$ is acyclic, as claimed.

    Since $e$ is at some finite distance from the initial boundary of $T$, the edge $e$ will be explored by the Schnyder peeling process within a finite number of steps. Consider the revealed area of $T$ after $e$ and the entire initial boundary has been explored. Then, the boundary of the unexplored region consists of the upper boundary of the revealed area and the current root edge, which we call $r$. Note that by \cref{obs:finitestepsupperboundary}, the path $L(e)$ can only enter an upper boundary vertex via an incoming yellow edge, but then it leaves that vertex via the blue outgoing edge, which is part of the revealed area. Moreover, any unexplored edge incident to the head or tail of the $r$ will, in the final Schnyder wood, be coloured as an incoming red or yellow edge, and so the path $L(e)$ cannot leave via these vertices either. Thus, $L(e)$ can never leave the revealed area. It follows that $L(e)$ is finite and must therefore terminate at the unique vertex in $T$ with no outgoing edges, which is the head of the initial root edge.
\end{proof}

We can now prove that the constructed Schnyder wood is maximal.

\begin{lemma}\label{lem:maximal_wood_finite_bdry}
    The Schnyder wood constructed on $T$ by $P_x$ is maximal.
\end{lemma}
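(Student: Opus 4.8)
The plan is to use the characterisation of maximality for infinite triangulations: a Schnyder wood is maximal if it contains neither an anticlockwise cycle nor a right-directed path. Lemma~\ref{lem:infinite_triangulation_noanticlockwise} already guarantees that the wood constructed by $P_x$ has no anticlockwise cycles, so the whole task reduces to ruling out right-directed paths. I would do this by contradiction, extracting from a hypothetical right-directed path an infinite leftmost walk, which is impossible by Lemma~\ref{lem:leftmostwalk_finite_bdry}.

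So suppose the constructed wood contains a right-directed path $P = (\dots, p_{-1}, p_0, p_1, \dots)$. By definition $P$ splits $T$ into a left region $L$ and a right region $R$, with the boundary of $T$, and in particular the head $h$ of the root edge, lying in $R$. Note that $h \notin P$: the path $P$ is two-ended and so has no endpoints, whence every one of its vertices has an outgoing edge along $P$, whereas $h$ is the unique vertex of out-degree zero. Writing $\overline{L} = L \cup P$, we therefore have $h \notin \overline{L}$.

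The key step is to show that the leftmost walk $L(e)$ started from the edge $e = (p_0, p_1)$ never leaves $\overline{L}$. I would prove this edge by edge. Recall that a leftmost walk takes, at each vertex, the first outgoing edge clockwise from the incoming edge; geometrically, scanning clockwise from the incoming edge meets the outgoing edges on the left-hand side of the direction of travel first, so the walk always turns as far to the left as possible and hugs the region on its left. At a vertex $p_j$ of $P$, the walk can only have entered from $p_{j-1}$ or from a vertex of $L$ (it cannot traverse the edge $p_{j+1}p_j$, which is directed the other way along $P$), so the incoming edge lies on the $L$-side of $P$; scanning clockwise from it, one meets the outgoing edges into $L$, and then the path edge $(p_j, p_{j+1})$, strictly before any edge into $R$, so the walk continues into $L$ or along $P$ but never into $R$. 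At a vertex strictly inside $L$, every incident edge stays in $\overline{L}$ by planarity, since $P$ separates $L$ from $R$. Hence the walk remains confined to $\overline{L}$ for all time. Being confined to $\overline{L}$, it can never reach $h$; but $h$ is the only vertex of out-degree zero, so the walk never terminates and is therefore infinite, contradicting Lemma~\ref{lem:leftmostwalk_finite_bdry}. Thus no right-directed path exists, and together with Lemma~\ref{lem:infinite_triangulation_noanticlockwise} this shows the constructed Schnyder wood is maximal.

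The main obstacle is the confinement claim. The delicate point is to make precise that a leftmost walk cannot cross $P$ from $L$ into $R$: this rests on combining the orientation of the edges of $P$ (which forbids stepping backwards along $P$), the planarity of the separation $L \mid R$ (which forbids an edge from the interior of $L$ to $R$), and the interpretation of the leftmost rule as always turning maximally to the left (which prevents the walk from selecting an $R$-side outgoing edge at a vertex of $P$). Everything else in the argument is bookkeeping once this separation property is established.
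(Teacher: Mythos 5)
Your proposal is correct and takes essentially the same approach as the paper: both arguments rule out right-directed paths by launching a leftmost walk from an edge on the hypothetical path and exploiting the fact that the leftmost rule (the path's own forward edge is met in the clockwise scan before any edge on the boundary side) prevents crossing toward the side containing the root head, combined with \cref{lem:leftmostwalk_finite_bdry} and \cref{lem:infinite_triangulation_noanticlockwise}. The only difference is logical packaging—the paper derives the contradiction at the first crossing edge, while you prove confinement to the far side and contradict termination—which are dual formulations of the identical argument.
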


\begin{proof}
    By \cref{lem:infinite_triangulation_noanticlockwise}, it remains to show that the Schnyder wood constructed on $T$ by $P_x$ has no right-directed paths. Suppose for a contradiction that $T$ contains a right-directed path $R$, and let $e$ be any edge on $R$. By \cref{lem:leftmostwalk_finite_bdry}, the leftmost walk $L(e)$ ends at the head $h$ of the initial root edge (which cannot be on $R$ since $h$ has no outgoing edge), so there must be an edge $e'$ on $L(e)$ that is directed from a vertex $v$ on $R$ towards the side of $R$ containing $h$. Consider the first such edge. The previous edge on $L(e)$ lies either on $R$, or on the side of $R$ not containing $h$. In either case, the outgoing edge from $v$ that is on the path $R$ is further left than $e'$, contradicting the definition of $L(e)$. It follows that no right-directed path exists.
\end{proof}

Since we have now shown that the Schnyder peeling process constructs a maximal Schnyder wood of $T$, \cref{thm:unique_maximal_schnyder_wood_finite_bdry} now follows immediately from \cref{thm:max_schnyder_unique_infinite}.

We conclude our study of Schnyder woods of arbitrary infinite triangulations with finite boundary by establishing a result analogous to Schnyder's characterisation of the monochromatic subgraphs in finite Schnyder woods, as given in \cref{thm:schnyder_subgraphs_on_finite_triangulation}.

\begin{theorem}\label{thm:monochromatic_subgraph_finite_bdry}
    Let $T$ be an infinite triangulation with finite boundary $B$ and root $(b_x,b_{x+1})$, and consider the unique maximal Schnyder wood of $T$. Then, the monochromatic subgraphs of $T$ consist of
    \begin{itemize}
        \item An infinite yellow tree rooted at $b_{x+1}$,
        \item An infinite red tree rooted at $b_x$,
        \item An infinite blue tree rooted at some vertex $b_z$ with $z \notin \{x,x+1\}$, and a forest of finite blue trees rooted at vertices other than $b_x$ and $b_{x+1}$ on $B$.
    \end{itemize}
    Furthermore, each monochromatic infinite tree contains exactly one infinite path. Every vertex on the infinite blue path has an outgoing red edge to the infinite red path, every vertex on the infinite red path has an outgoing yellow edge to the infinite yellow path, and every vertex on the infinite yellow path is either a vertex on $B$, or has an outgoing blue edge to the infinite blue path.
\end{theorem}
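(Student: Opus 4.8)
The plan is to transfer the per-step structure of the revealed area, recorded in \cref{lem:schnyderarearedforest,lem:schnyderareayellowforest,lem:schnyderareablueforest}, to the limiting Schnyder wood that $P_x$ constructs on all of $T$ (which is a Schnyder wood without anticlockwise cycles by \cref{lem:infinite_triangulation_noanticlockwise}), and then to establish that each infinite monochromatic tree has exactly one end.

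First I would identify the trees and their roots. By \cref{lem:infiniteschnyderwood_monochromatic} each monochromatic subgraph is a forest, and its trees are rooted exactly at the vertices having no outgoing edge of that colour. Reading off the Schnyder conditions, the only vertex with no outgoing yellow edge is $b_{x+1}$, so the yellow subgraph is a single tree rooted at $b_{x+1}$ spanning $T$; the only vertices with no outgoing red edge are $b_x$ and the red-edge-free vertex $b_{x+1}$, so the red subgraph is a single tree rooted at $b_x$ together with the isolated vertex $b_{x+1}$; and the vertices with no outgoing blue edge are precisely the boundary vertices, so the blue subgraph is a forest of trees rooted on $B$. Since $T$ has infinitely many interior but only finitely many boundary vertices, at least one blue tree is infinite, while the red and yellow trees are infinite because they span all but finitely many vertices. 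The candidate infinite rays come from the peeling lemmas together with \cref{obs:infiniterightsteps}: the right-step chord tails $(t_j)_{j\ge 0}$ form the red spine, the chord heads $(h_j)_{j\ge 0}$ lie on the yellow spine, and the distinguished peeling vertices form the blue spine. As $B$ is finite, $P_x$ covers all of $B$ after finitely many steps, after which no peeling vertex lies on $B$; hence the index $i_1$ of \cref{lem:schnyderareablueforest} stabilises, the infinite blue tree is rooted at a fixed $b_z$, and $z\notin\{x,x+1\}$ because $b_x$ and $b_{x+1}$ are incident to no blue edge.

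Next I would read off the three winding relations directly from the peeling rules. The root edge at the $j$th right step is the yellow edge $(t_j,h_j)$, so each red-spine vertex $t_j$ has an outgoing yellow edge to the yellow spine. At every step the edge joining the peeling vertex to the current root-tail is coloured red, and the current root-tail is always some $t_j$, so every peeling vertex, in particular every blue-spine vertex, has an outgoing red edge to the red spine. Finally, each non-boundary vertex of the yellow spine is at some step a vertex of the upper boundary of $\area{x}{i}$ (either as an intermediate vertex of a yellow path from $h_{j+1}$ to $h_j$, which lies on $\boundary{x}{i-1}$ by \cref{lem:schnyderareayellowforest}, or as a former root-head once the next right step has occurred), and by \cref{obs:finitestepsupperboundary,lem:schnyderareablueforest} such a vertex has its unique outgoing edge coloured blue and directed to a blue-spine peeling vertex. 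These are exactly the asserted winding statements, so what remains is the uniqueness of the infinite path in each tree. For red this is a finite-bush argument: while the root-tail equals a fixed $t_j$ only finitely many steps occur (finitely many left steps then one right step, by \cref{obs:infiniterightsteps}), each contributing a finite red tree rooted at $t_j$, and later red trees attach along the spine edge $(t_{j'},t_{j'-1})$ with $j'>j$; hence every vertex off the spine has a finite red subtree and $(t_j)_{j\ge 0}$ is the unique red ray. For blue I would use the frontier: every interior vertex lies on the upper boundary at some step, where by \cref{obs:finitestepsupperboundary,lem:schnyderareablueforest} its outgoing blue edge lands on a blue-spine vertex; consequently, for any infinite blue ray $s_0,s_1,\dots$ the blue parent $s_{k-1}$ of each interior $s_k$ lies on the spine, forcing the whole ray onto the spine. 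Thus the blue spine is the only infinite blue ray, the tree rooted at $b_z$ is the only infinite blue tree, and every other blue tree, containing no ray, is finite.

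The crux, and the step I expect to be hardest, is the uniqueness of the infinite \emph{yellow} ray, where the blue frontier trick fails because upper-boundary vertices carry only incoming yellow edges. My plan is a \emph{winding-transport} argument using \cref{thm:finite_paths_from_v_wind}, which applies on $T$ since all three monochromatic paths from any vertex are finite and hence coloured as in a finite triangulation. For an interior vertex $v$, every yellow descendant $u$ satisfies $v\in\vpath{y}{u}$, so the yellow--blue winding at $u$ forces the blue path $\vpath{b}{u}$ to pass through the (fixed) blue parent of $v$; thus the yellow subtree of $v$ is contained in the blue subtree of that blue parent. Combined with blue one-endedness (a blue subtree is infinite only at blue-spine vertices), this shows that $v$ can have an infinite yellow subtree only if its blue parent lies on the blue spine, so any infinite yellow ray must consist of blue children of the blue spine. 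The final step is then to rule out a second such ray: two disjoint yellow rays, each sending the blue edges of its vertices onto the single blue spine, should be incompatible with planarity near the unique end of $T$ and with the anticlockwise Schnyder cyclic order at the blue-spine vertices (equivalently, with the absence of right-directed paths). I would also sanity-check the conclusion against \cref{lem:leftmostwalk_finite_bdry}, since every leftmost walk terminates at $b_{x+1}$ and should eventually track the yellow spine.
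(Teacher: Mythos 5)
Your proposal has a genuine gap, and it sits at the very first step. From ``each monochromatic subgraph is a forest'' and ``the only vertex with no outgoing yellow edge is $b_{x+1}$'' you conclude that the yellow subgraph is a single tree rooted at $b_{x+1}$. That inference is invalid for infinite forests: a component in which every vertex has out-degree one need not contain a root at all, since following outgoing edges can produce a forward-infinite ray that never terminates. This is not a technicality --- exactly the same local out-degree counts hold in the UIHPT, where by \cref{thm:monochromatic_schnyder_wood_structure} the yellow subgraph is \emph{not} a single rooted tree but contains infinitely many two-ended trees whose yellow paths never reach a root. Whether every yellow path terminates at $b_{x+1}$ is precisely what distinguishes the finite-boundary case, and it has to be proved. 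The paper does this with an absorption argument in the first claim of its proof: a finite yellow tree rooted at an upper-boundary or corner vertex is only re-rooted further along the current boundary by left steps, the finite boundary is eventually covered entirely (\cref{obs:infiniterightsteps}), and the right step that covers its root attaches it to the tree rooted at $b_{x+1}$. Nothing in your proposal plays this role. (For red and blue the repair is easy, since \cref{lem:schnyderarearedforest,lem:schnyderareablueforest} already give every revealed vertex a finite path to a root at each finite stage; for yellow it is not, because \cref{lem:schnyderareayellowforest} leaves a forest of trees rooted at upper-boundary vertices whose paths keep growing.)

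This gap then makes your ``winding-transport'' argument for yellow uniqueness close to circular: you invoke \cref{thm:finite_paths_from_v_wind} on $T$ ``since all three monochromatic paths from any vertex are finite'', but finiteness of $\vpath{y}{v}$ for every $v$ is exactly the single-tree claim you never established --- in the paper it is a \emph{consequence} of this theorem, deduced afterwards in the proof of \cref{thm:finite_boundary_paths_from_v_wind}. Two further problems: your blue argument rests on ``every interior vertex lies on the upper boundary at some step'', which is false --- vertices revealed in the interior of the finite region enclosed by a chord never appear on any boundary $\boundary{x}{i}$, and their outgoing blue edges land inside that region rather than on the spine; and your final step, ruling out a second infinite yellow ray by ``planarity near the unique end'' and the cyclic order, is a hope rather than an argument. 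The paper avoids all of this with a bottleneck observation you could adopt: after $\ell$ right steps, the yellow tree rooted at $b_{x+1}$ meets the current boundary $\boundary{x}{i}$ only in the root edge, whose head is $h_\ell$, so the yellow path of any vertex revealed later must pass through $h_\ell$; hence every infinite ray in that tree contains all but finitely many of the $h_\ell$ and coincides with the spine. The analogous bottlenecks ($t_\ell$ for red, $\peelv{x}{i_\ell}$ for blue) settle the other two colours.
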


\begin{proof}
    We first verify the structure of each of the monochromatic subgraphs.
    
    \begin{claim}
        The yellow subgraph of $T$ consists of an infinite tree rooted at $b_{x+1}$ that contains a unique infinite path whose vertices include the heads of every chord chosen by a right step of $P_x$.
    \end{claim}
    
    \begin{poc}
    We showed in \cref{lem:schnyderareayellowforest} that after $i$ steps of $P_x$, the yellow subgraph of the revealed area $\area{x}{i}$ consists of a tree rooted at $b_{x+1}$ that is disjoint from the current upper boundary and corner vertex, and that contains a path whose vertices include the heads $h_\ell$ of every chord chosen by a right step of $P_x$ up to step $i$, along with a forest of trees rooted at vertices on the upper boundary or at the corner vertex of $\area{x}{i}$. By \cref{obs:infiniterightsteps}, $P_x$ will take an infinite number of right steps while exploring $T$, so the yellow subgraph of $T$ will include an infinite yellow tree rooted at $b_{x+1}$ that contains an infinite path whose vertices include $(h_\ell,\ell \geq 0)$. Note that $h_0 = b_{x+1}$.
    
    Consider now a finite yellow tree rooted at a vertex $v$ on the upper boundary or at the corner vertex of $\area{x}{i}$. If $v$ is later covered by a right step $j$ of $P_x$, then the Schnyder conditions for the finite region explored during step $j$ imply that $v$ will have a yellow path to the head of $\roote{x}{j-1}$. Since this head is one of the vertices $h_\ell$, it follows that $v$ will be added to the tree rooted at $b_{x+1}$. Alternatively, if $v$ is later covered by a left step $j$, then the Schnyder conditions imply that after this step, $v$ will be contained in a yellow tree rooted at the head of $\chord{x}{j}$. Observe that no left step of $P_x$ that covers $v$ can choose a chord to a vertex on $\boundary{x}{i}$ anticlockwise between $v$ and the head of the root edge. It follows that the head of $\chord{x}{j}$ is a vertex on $\boundary{x}{i}$ clockwise between $v$ and the head of the current root, and is still not covered after step $j$. Since we know from \cref{obs:infiniterightsteps} that all vertices of $\boundary{x}{i}$ will eventually be covered, it follows that eventually there must be a right step that covers the root of the finite yellow tree containing $v$, and by the same argument as before, this implies that the finite yellow tree containing $v$ is added to the infinite tree rooted at $b_{x+1}$.
    
    Finally, recall that if $P_x$ has performed $\ell$ right steps after step $i$, then the tree rooted at $b_{x+1}$ contains a directed path from $h_\ell$ to $h_0 = b_{x+1}$, and this tree intersects the boundary $\boundary{x}{i}$ only in the root edge $\roote{x}{i}$, the head of which is $h_\ell$. In particular, a path from any vertex revealed after step $i$ to $b_{x+1}$ must include $h_\ell$. This shows that the infinite path whose vertices include $(h_\ell,\ell \ge 0)$ is the unique infinite path in the yellow tree rooted at $b_{x+1}$.
    \end{poc}

    \begin{claim}
        The red subgraph of $T$ consists of an infinite tree rooted at $b_{x}$ that contains a unique infinite path whose vertices are the tails of every chord chosen by a right step of $P_x$.
    \end{claim}
    
    \begin{poc}
        It follows immediately from \cref{lem:schnyderarearedforest} that the red subgraph of $T$ consists of a single infinite red tree rooted at $b_x$, and that this tree contains the infinite path whose vertices are the tails $t_\ell$ of every chord chosen by a right step of $P_x$.

        In particular, if $P_x$ has performed $\ell$ right steps after step $i$, then the tree rooted at $b_x$ contains the directed path $t_\ell, t_{\ell-1}, \dots, t_0$. Moreover, by \cref{obs:finitestepsupperboundary}, no upper boundary or corner vertex of $\area{x}{i}$ can be contained in this tree, and so the red tree can only intersect the boundary $\boundary{x}{i}$ at $t_\ell$ and $h_\ell$. The Schnyder root condition implies that no red edges can be added to $h_\ell$ during subsequent steps, so a path from any vertex revealed after step $i$ to $b_x$ must include $t_\ell$. This shows that the infinite path $\dots, t_2, t_1, t_0$ is the unique infinite path in the red tree rooted at $b_x$.
    \end{poc}

    Let $i_0 \coloneqq 0$, and inductively define $i_\ell$ as the last step of $P_x$ such that $\peelv{x}{i_\ell}$ is on the boundary $\boundary{x}{i_{\ell-1}}$. Each $i_\ell$ exists, since $P_x$ eventually covers the entire boundary $\boundary{x}{i_{\ell-1}}$ by \cref{obs:infiniterightsteps}. Note that whenever $i = i_\ell$, the steps $i_0, \dots, i_\ell$ coincide with the steps defined in \cref{lem:schnyderareablueforest}.
    
    \begin{claim}
        The blue subgraph of $T$ consists of an infinite blue tree rooted at some vertex $b_z$ with $z \notin \{x,x+1\}$, and a forest of finite blue trees rooted at vertices on the initial boundary. Moreover, the infinite blue tree contains a unique infinite path whose vertices are the peeling vertices $\peelv{x}{i_\ell}$ with $\ell \geq 1$.
    \end{claim}
    
    \begin{poc}
        It follows immediately from \cref{lem:schnyderareablueforest} that the blue subgraph of $T$ consists of blue trees rooted at vertices on the initial boundary, and that the blue tree rooted at $\peelv{x}{i_1}$ contains an infinite path whose vertices are the peeling vertices $\peelv{x}{i_\ell}$ with $\ell \geq 1$. Note that $\peelv{x}{i_1} = b_z$ for some $z \notin \{x,x+1\}$.
        
        By \cref{obs:infiniterightsteps}, the process $P_x$ will eventually cover the entire initial boundary of $T$. Let $k$ be sufficiently large such that $P_x$ has covered the entire initial boundary after $i_k$ steps, and denote the upper boundary of $\area{x}{i_k}$ by $U$. Then, \cref{lem:schnyderareablueforest} shows that every vertex on $U$ has an outgoing blue edge to one of the peeling vertices $\peelv{x}{i_\ell}$ for $\ell \le k$. In particular, every vertex on $U$ is contained in the tree rooted at $\peelv{x}{i_1}$, so the blue trees rooted at all other vertices of the initial boundary can no longer be extended after step $i_k$ and will therefore be finite.

        Finally, note that running $P_x$ after step $i_\ell$ for some $\ell \ge 1$ corresponds to running the Schnyder peeling process on the unexplored region of $T$ with initial boundary $\boundary{x}{i_\ell}$ and root edge $\roote{x}{i_\ell}$. So, by the same arguments as above, every blue (sub-)tree that is rooted at a vertex of $\boundary{x}{i_\ell} \sm \set{\peelv{x}{i_{\ell+1}}}$ will be finite. That is, for only a finite number of vertices $v$ in the blue tree of $T$ rooted at $\peelv{x}{i_1}$, the blue path from $v$ to $\peelv{x}{i_1}$ will not include $\peelv{x}{i_{\ell+1}}$. This shows that the infinite path $\dots, \peelv{x}{i_3}, \peelv{x}{i_2}, \peelv{x}{i_1}$ is the unique infinite path in the blue tree rooted at $\peelv{x}{i_1}$.
    \end{poc}
    
    It remains to establish the structure of the outgoing edges between the unique infinite paths. Since the infinite red path consists of the tails of every chord chosen by a right step of $P_x$ while the infinite yellow path contains the heads of each of these chords, it follows immediately that every vertex on the infinite red path has an outgoing yellow edge to a vertex on the infinite yellow path. Furthermore, since the infinite blue path consists entirely of peeling vertices and every peeling vertex has an outgoing red edge to the tail of the current root edge when it is peeled, it follows that every vertex on the infinite blue path has an outgoing red edge to a vertex on the infinite red path.
    
    Finally, consider a vertex $v$ on the infinite yellow path that is not on the initial boundary $B$ of $T$. By \cref{lem:schnyderareayellowforest}, $v$ was added to the infinite yellow path at some right step $i$, at which it lies on the boundary $\boundary{x}{i-1}$. Since $v$ is not on the initial boundary $B$, it follows that $v$ must be on the upper boundary of $\area{x}{i-1}$. Let $i_0' = 0$ and inductively define $i_\ell'$ as the last step from $\set{i_{\ell-1}'+1, \dots, i-1}$ such that $\peelv{x}{i_\ell'}$ is on the boundary $\boundary{x}{i_{\ell-1}'}$. Then, by \cref{lem:schnyderareablueforest}, $v$ has an outgoing blue edge to one of the peeling vertices $\peelv{v}{i_\ell'}$. Since $v$ is covered by the right step $i$ and has an edge to $\peelv{v}{i_\ell'}$, this implies that after step $i$, the entire boundary $\boundary{x}{i_{\ell-1}'}$ is covered. In particular, no later peeling vertex of $P_x$ can ever be on the boundary $\boundary{x}{i_{\ell-1}'}$. This shows that $i_\ell' = i_\ell$, and so $v$ has an outgoing blue edge to one of the peeling vertices $\peelv{x}{i_\ell}$ contained in the infinite blue path.
\end{proof}

We remark that the infinite monochromatic paths described in \cref{thm:monochromatic_subgraph_finite_bdry} have a structure resembling that of the monochromatic paths from any fixed vertex in a finite maximal Schnyder wood, as described in \cref{thm:finite_paths_from_v_wind}. In fact, we can show that the same structure holds for the monochromatic paths from any fixed vertex in the maximal Schnyder wood of an infinite triangulation $T$ with finite boundary. As in \cref{ssec:prelim:schnyderwoods}, for each vertex $v$ of $T$ and each colour $c \in \{y, r, b\}$, denote by $\vpath{c}{v}$\glossarylabel{gl:vpathuipt} the unique directed path of colour $c$ that starts at $v$ in the maximal Schnyder wood of $T$.

\begin{theorem}\label{thm:finite_boundary_paths_from_v_wind}
    Let $T$ be an infinite triangulation with finite boundary, and let $v$ be a vertex of $T$. Then every vertex on $\vpath{b}{v}$ has an outgoing red edge to $\vpath{r}{v}$, every vertex on $\vpath{r}{v}$ has an outgoing yellow edge to $\vpath{y}{v}$, and every vertex on $\vpath{y}{v}$ is either a boundary vertex of $T$ or has an outgoing blue edge to $\vpath{b}{v}$.
\end{theorem}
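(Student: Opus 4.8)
The plan is to reduce the statement to its finite counterpart \cref{thm:finite_paths_from_v_wind} by exploiting the locality of the Schnyder peeling process. Write $T$ for the infinite triangulation with finite boundary $B$, rooted at $(b_x,b_{x+1})$, and recall from \cref{lem:maximal_wood_finite_bdry} together with \cref{thm:max_schnyder_unique_infinite} that $P_x$ constructs the unique maximal Schnyder wood of $T$; this is the wood in which $\vpath{y}{v}$, $\vpath{r}{v}$, $\vpath{b}{v}$ are defined. The first observation is that these three paths are \emph{finite}. By \cref{thm:monochromatic_subgraph_finite_bdry} each monochromatic subgraph is a tree (the yellow tree rooted at $b_{x+1}$, the red tree rooted at $b_x$, and the blue tree containing $v$, rooted at a boundary vertex), and the directed monochromatic path out of $v$ is exactly the unique path from $v$ to the root of the relevant tree, which has finite length. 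In particular $\vpath{y}{v}$ terminates at $b_{x+1}$, $\vpath{r}{v}$ terminates at $b_x$, and $\vpath{b}{v}$ terminates at some boundary vertex $b_j$ with $j\notin\{x,x+1\}$.

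Next I would run $P_x$ far enough to expose all of the relevant structure. Let $W$ be the finite set of vertices lying on $\vpath{y}{v}$, $\vpath{r}{v}$, or $\vpath{b}{v}$. Since $P_x$ eventually explores every edge of $T$ (\cref{lem:infinite_triangulation_noanticlockwise}) and covers the whole initial boundary after finitely many steps (\cref{obs:infiniterightsteps}), there is a step $i$ after which $\area{x}{i}$ contains every vertex of $W$ and every edge of $T$ incident to a vertex of $W$; moreover $\boundary{x}{i}$ is finite. Form a finite triangulation $\hat T$ by replacing the unexplored region $T \sm \intr \area{x}{i}$ with an arbitrary finite triangulation of the polygon bounded by $\boundary{x}{i}$, keeping the root $(b_x,b_{x+1})$; then $\hat T$ has the same finite boundary $B$ as $T$. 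By the locality of the peeling process noted just after its definition, the colouring $P_x$ assigns to $\area{x}{i}$ coincides with the colouring the finite Schnyder peeling process assigns to $\area{x}{i}$ inside $\hat T$, which by \cref{thm:peeling_process_maximal_wood} is the unique maximal Schnyder wood of $\hat T$. Hence the maximal Schnyder woods of $T$ and $\hat T$ agree on $\area{x}{i}$.

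The key step is to check that the three monochromatic paths from $v$ are identical in $T$ and in $\hat T$. Since $v\in W\subseteq\area{x}{i}$ and the woods agree there, the outgoing yellow (resp.\ red, blue) edge out of $v$ is the same edge in both triangulations; iterating, the paths agree edge by edge as long as they stay inside $\area{x}{i}$, which they do by the choice of $i$. The yellow and red paths terminate at $b_{x+1}=v_y$ and $b_x=v_r$, the yellow and red roots of $\hat T$, while $\vpath{b}{v}$ terminates at the boundary vertex $b_j$, which has no outgoing blue edge in $\hat T$ by the Schnyder boundary condition; thus $\vpath{y}{v}$, $\vpath{r}{v}$, $\vpath{b}{v}$ are exactly the yellow, red, and blue paths from $v$ in $\hat T$. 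Applying \cref{thm:finite_paths_from_v_wind} to $\hat T$ now yields that every vertex on $\vpath{b}{v}$ has an outgoing red edge to $\vpath{r}{v}$, every vertex on $\vpath{r}{v}$ has an outgoing yellow edge to $\vpath{y}{v}$, and every vertex on $\vpath{y}{v}$ is a boundary vertex of $\hat T$ or has an outgoing blue edge to $\vpath{b}{v}$. Each witnessing edge is incident to a vertex of $W$, hence lies in $\area{x}{i}$ and carries the same colour and orientation in $T$; and a boundary vertex of $\hat T$ lying on $\vpath{y}{v}$ lies on $B$, so is a boundary vertex of $T$. Transferring the conclusion back to $T$ finishes the argument.

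The main obstacle is the bookkeeping in the reduction: one must choose the cutoff step $i$ so that not merely the three paths but also all of their incident edges lie in the agreeing region $\area{x}{i}$, and one must rule out the possibility that the arbitrary filling of the unexplored region reroutes a path from $v$. This is exactly what the finiteness of the paths (from \cref{thm:monochromatic_subgraph_finite_bdry}) and the fact that all three terminate at boundary vertices guarantee. Alternatively, one could reprove the statement by the same peeling induction used for \cref{thm:finite_paths_from_v_wind}, first establishing vertex-disjointness of the three paths via \cref{lem:directed_edges_from_cycle}; but the reduction above is shorter given the results already in hand.
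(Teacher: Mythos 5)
Your proposal is correct and takes essentially the same approach as the paper's proof: both arguments use \cref{thm:monochromatic_subgraph_finite_bdry} to conclude that the three paths are finite, run $P_x$ until every vertex on them and all incident edges are explored, invoke the locality of the peeling process to identify this colouring with the maximal Schnyder wood of a finite triangulation, and then apply \cref{thm:finite_paths_from_v_wind}. The only difference is presentational: you make explicit the construction of the finite triangulation $\hat{T}$ and the edge-by-edge agreement of the paths, which the paper compresses into its footnoted remark that any finitely explored region is coloured as it would be inside an arbitrary finite completion.
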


\begin{proof}
    It follows from the structure in \cref{thm:monochromatic_subgraph_finite_bdry} that each of $\vpath{y}{v}$, $\vpath{r}{v}$, and $\vpath{b}{v}$ is finite. Since the Schnyder peeling process $P_x$ eventually explores all of $T$, we have that after some finite number of steps, it has explored every vertex on $\vpath{y}{v}$, $\vpath{r}{v}$, and $\vpath{b}{v}$, and all of their incident edges. After this step, the region explored by the peeling process is finite. Since this finite region is assigned the same colouring by $P_x$ as if it were contained in a finite triangulation and explored by the finite peeling process, it follows that $\vpath{y}{v}$, $\vpath{r}{v}$, and $\vpath{b}{v}$ satisfy the conditions for maximal Schnyder woods of finite triangulations given in \cref{thm:finite_paths_from_v_wind}.
\end{proof}
 
We conclude this section by establishing that the unique maximal Schnyder wood of the UIPT is in fact the limit of the unique maximal Schnyder woods of uniform finite triangulations.

\begin{theorem} \label{thm:uipt_schnyder_wood_convergence}
    For all $m \in \N$, the maximal Schnyder wood of the UIPT with boundary length $m$ is the weak limit of the maximal Schnyder wood of a uniformly random triangulation from $\cT_n^m$ as $n \to \infty$.
\end{theorem}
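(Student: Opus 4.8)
The plan is to exploit the fact that the maximal Schnyder wood is produced by the Schnyder peeling process $P_x$ in a local manner, and to combine this with the convergence $\tau_n^m \Rightarrow \tau^m$ that defines the UIPT. Since the maximal Schnyder wood is unique (\cref{thm:peeling_process_maximal_wood} in the finite case, \cref{thm:unique_maximal_schnyder_wood_finite_bdry} for the UIPT), it is a deterministic function $\Phi$ of the underlying triangulation, and weak convergence of the wooded triangulations is exactly the statement that $\mathbf{P}(\text{wooded } \ball{\rho}{v} = W)$ converges for every radius $\rho$ and every finite wooded ball $W$. The heart of the argument is to show that $\Phi$ is \emph{essentially local}: with high probability, the wooded ball of radius $\rho$ is a deterministic function of the undecorated ball $\ball{R}{v}$ for $R$ large.

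First I would fix a deterministic exploration order for $P_x$ (for instance the winding order of \cref{sssec:peelproc:multiplesteps}) and, for $R \ge \rho+1$, define the event $E_R$ that running $P_x$ while consulting only the edges within distance $R$ of the root suffices to colour every edge within distance $\rho$ of the root, with no peeling vertex or revealed vertex ever lying at distance greater than $R$. By construction $E_R$ is a cylinder event determined by $\ball{R}{v}$: each peeling step consults only the edges incident to the current peeling vertex and then colours a finite enclosed region by the finite peeling process, so on $E_R$ the entire computation---and hence the resulting wooded ball $\ball{\rho}{v}$---depends only on $\ball{R}{v}$. Using the observation from \cref{ssec:peelproc:singlestep} that a finite number of peeling steps assigns the same colouring as if the revealed region sat inside a large finite triangulation, one checks that on $E_R$ these colours and orientations agree with the true maximal Schnyder wood, uniformly whether $T$ is the UIPT $T^m$ or a finite triangulation from $\cT_n^m$.

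Next I would argue that $\mathbf{P}_{\tau^m}(E_R) \to 1$ as $R \to \infty$. This uses only \cref{thm:unique_maximal_schnyder_wood_finite_bdry} and \cref{obs:infiniterightsteps}: almost surely $P_x$ covers the finite ball $\ball{\rho}{v}$ after some finite step $N$, and the revealed area $\area{x}{N}$ is then a finite sub-triangulation, hence contained in $\ball{R}{v}$ for all large $R$. Thus the almost-sure finiteness of $N$ gives $\mathbf{P}_{\tau^m}(E_R) \to 1$ with no quantitative estimate required. Because $E_R$ is a cylinder event and $\tau_n^m \Rightarrow \tau^m$, we obtain $\mathbf{P}_{\tau_n^m}(E_R) \to \mathbf{P}_{\tau^m}(E_R)$ as $n \to \infty$, so for fixed large $R$ the quantities $\mathbf{P}_{\tau_n^m}(E_R^c)$ are, for all large $n$, close to the small number $\mathbf{P}_{\tau^m}(E_R^c)$.

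To finish, fix a finite wooded ball $W$ and $\epsilon > 0$, and choose $R$ with $\mathbf{P}_{\tau^m}(E_R^c) < \epsilon$. On $E_R$ the indicator $\mathbf{1}[\text{wooded } \ball{\rho}{v} = W]$ equals a fixed function $g_{W,R}$ of the cylinder $\ball{R}{v}$, so under each of $\tau_n^m$ and $\tau^m$ the probability $\mathbf{P}(\text{wooded } \ball{\rho}{v} = W)$ differs from $\mathbf{E}[g_{W,R}(\ball{R}{v})]$ by at most $\mathbf{P}(E_R^c)$. Since $g_{W,R}$ is a cylinder functional, weak convergence gives $\mathbf{E}_{\tau_n^m}[g_{W,R}] \to \mathbf{E}_{\tau^m}[g_{W,R}]$; letting $n \to \infty$ and then $R \to \infty$ (so $\epsilon \to 0$) yields the claimed convergence. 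I expect the main obstacle to be the careful verification in the second step: confirming that $E_R$ really is measurable with respect to $\ball{R}{v}$ and that, on it, the possibly large and winding region explored in order to reach $\ball{\rho}{v}$ is coloured by \emph{exactly} the same local rule for the finite triangulations and for the UIPT. Everything else is a soft consequence of the weak convergence of the undecorated triangulations together with almost-sure coverage by $P_x$.
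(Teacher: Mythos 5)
Your proposal is correct and takes essentially the same approach as the paper's proof: both rest on the locality of the Schnyder peeling process (almost surely only a finite region, hence a ball of some radius $R$, must be consulted to determine the wooded ball of radius $\rho$) combined with local weak convergence of the undecorated triangulations. The paper's choice of a radius $\rho'$ such that the exploration stays inside $\ballt{\rho'}{r}{T}$ with probability at least $1-\epsilon$ is exactly your event $E_R$; your cylinder-event formulation simply makes the measurability step explicit.
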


\begin{proof}
    Let $T$ be the UIPT with boundary length $m$, and let $T_n$ be a uniformly random triangulation from $\cT_n^m$. Let $r$ denote the root edge of these triangulations. By \cref{thm:unique_maximal_schnyder_wood_finite_bdry}, the Schnyder peeling process constructs the maximal Schnyder wood of $T$. In particular, the peeling process will explore the entire ball $\ballt{\rho}{r}{T}$ of radius $\rho$ around $r$ in $T$ in a finite number of steps. Note that the peeling process will explore only a finite connected region of $T$ during those steps. So, for every $\eps > 0$, there exists a $\rho'$ such that with probability at least $1 - \eps$, the peeling process will explore the entire ball $\ballt{\rho}{r}{T}$ while only exploring edges within the ball $\ballt{\rho'}{r}{T}$. If additionally $\ballt{\rho'}{r}{T_n} = \ballt{\rho'}{r}{T}$, this implies that the maximal Schnyder wood constructed by the peeling process on $T_n$ agrees on the ball $\ballt{\rho}{r}{T_n}$ with the maximal Schnyder wood of $T$ on the ball $\ballt{\rho}{r}{T}$. Since the distribution of $\ballt{\rho'}{r}{T_n}$ converges to the distribution of $\ballt{\rho'}{r}{T}$ as $n \to \infty$ and we may choose $\eps$ arbitrarily small, it follows that the distribution induced on $\ballt{\rho}{r}{T_n}$ by the maximal Schnyder wood of $T_n$ converges to the distribution induced on $\ballt{\rho}{r}{T}$ by the maximal Schnyder wood of $T$. Since this holds for all $\rho$, the maximal Schnyder wood of $T$ is the weak limit of the maximal Schnyder wood of $T_n$ as $n \to \infty$.
\end{proof}

\section{The peeling process on the UIHPT} \label{sec:uihpt}

We now turn to the more challenging setting of infinite triangulations of the half-plane, and immediately restrict our attention to the random triangulation which is our focus: the UIHPT. In this section, we prove a number of preliminary results that will allow us to establish the existence of a Schnyder wood of the UIHPT in the following section. The section has two parts. In \cref{ssec:uihpt:segment}, we show that unlike for triangulations with finite boundary, the Schnyder peeling process is not guaranteed to explore the entire triangulation if it has an infinite boundary. In particular, on the UIHPT, the peeling process almost surely only explores edges within a one-ended narrow region along the initial boundary which extends only a finite distance to the left of the initial root edge. We name the region that is so explored the \defn{Schnyder wood segment}.

In \cref{ssec:uihpt:strip}, we show that we may colour a two-ended narrow strip along the entire initial boundary by rerunning the Schnyder peeling process from boundary vertices progressively further left. The bulk of the work consists of showing that this process provides a consistent definition of the colour and orientation assigned to a given edge. We will call the region coloured by this process the \defn{Schnyder wood strip}.

In \cref{sec:structure}, we will use the Schnyder wood segment and Schnyder wood strip to construct a unique maximal Schnyder wood of the UIHPT. Our strategy will be to show that by repeatedly removing a segment or strip from the UIHPT, we will eventually colour the entire half-plane and thereby construct a unique maximal Schnyder wood. To facilitate the iterated removal of these narrow regions along the boundary, we prove in \cref{sssec:uihpt:schnydersegmentremoved} and \cref{sssec:uihpt:schnyderstripremoved} that removing, respectively, a Schnyder wood segment or a Schnyder wood strip from the UIHPT leaves a triangulation of the half-plane that is also distributed like a UIHPT. This means that the Schnyder peeling process behaves in a distributionally identical way on the remaining unexplored region.

\subsection{The Schnyder wood segment} \label{ssec:uihpt:segment}

We begin by calculating the probability that the Schnyder peeling process takes a certain step when applied to the UIHPT. Equivalently, this is the probability that the partial triangulation revealed by the step is present in the UIHPT. By definition of the UIHPT as a local limit in \cref{ssec:prelim:inftriangulations}, we can obtain this probability by first calculating the probability that the partial triangulation is present in a uniformly random finite triangulation from $\cT_n^m$, and then taking the limit of this probability as first $n \to \infty$ and then $m \to \infty$.

Suppose that we want to calculate the probability that a step of the Schnyder peeling process in the UIHPT chooses the chord $(v_0, v_c)$ and also reveals $k$ neighbours of $v_0$ along with some given triangulation $T_s$ (where $s = r$ if $c > 0$ and $s = \ell$ if $c < 0$). For any sufficiently large $m \in \bN$, we associate the following partial triangulation $S'$ of an $(m+2)$-gon with this step.
\begin{itemize}
    \item Denote the boundary vertices of the $(m+2)$-gon by $v_0, \dots, v_{m+1}$ and the root edge by $(v_1, v_2)$. To obtain $S'$, add the chord $v_0 v_c$ where $v_c$ denotes the vertex $v_i$ with $i \equiv c \bmod{(m+2)}$, and add $k$ neighbours $u_1, \dots, u_k$ to $v_0$ in anticlockwise order between $v_0 v_1$ and $v_0 v_c$. Also, add the edges of the path $v_1, u_1, \dots, u_k, v_c$.
    \item If $c > 0$, fill the region of $S'$ enclosed by $v_1, \dots, v_c, u_k, \dots, u_1, v_1$ and rooted at $(v_1, v_2)$ with the triangulation $T_s$. Otherwise, if $c < 0$, fill the region enclosed by $v_0, v_c, v_{c+1}, \dots, v_0$ and rooted at $(v_0, v_c)$ with the triangulation $T_s$.
\end{itemize}

Now, if $S \in \cT_n^m$ is a uniformly random finite triangulation with $n$ sufficiently large, we need to calculate the probability that $S' \subs S$. This is equivalent to counting the number of ways to extend $S'$ to a triangulation with $n$ interior vertices.

Let $T_s$ have $m_s+2$ boundary vertices and $n_s$ interior vertices. Then $S'$ has $n_s+k$ interior vertices and a single unexplored region with $m-m_s+1+k$ boundary vertices. This implies that the number of ways to extend $S'$ to a triangulation with $n$ interior vertices is $\nt_{m_h,n_h}$ where $m_h = m-m_s-1+k$ and $n_h = n-n_s-k$. So, by the asymptotics for $\nt_{m,n}$ given in \cref{ssec:prelim:fintriangulations},
\[
    \pr(S' \subs S) = \frac{\nt_{m_h,n_h}}{\nt_{m,n}} \sim \frac{\ntfact_{m_h} \ntbase^{n_h} (n_h)^{-5/2}}{\ntfact_m \ntbase^n n^{-5/2}} \sim \frac{\ntfact_{m_h}}{\ntfact_m} \cdot \ntbase^{-n_s-k}
\]
as $n \to \infty$. Since $m_h = m-m_s-1+k$, we have that
\[
    \frac{\ntfact_{m_h}}{\ntfact_m} \cdot \ntbase^{-n_s-k} \sim \frac{(64/9)^{m_h} (m_h)^{1/2}}{(64/9)^m m^{1/2}} \cdot \ntbase^{-n_s-k} \sim \ntbase^{-n_s-k} \cdot \left(\frac{9}{64}\right)^{m_s+1-k} \eqqcolon p_{s,k,T_s}
\]
as $m \to \infty$. So, $p_{s,k,T_s}$ is the probability that the Schnyder peeling process takes this specific step in the UIHPT. Importantly, $p_{s,k,T_s}$ only depends on $T_s$ via its size $n_s$ and boundary length $m_s$.

Next, we calculate the probability that a step of the Schnyder peeling process chooses the chord $(v_0, v_c)$ and reveals $k$ additional neighbours of $v_0$, no matter what exact triangulation $T_s$ it reveals. This probability is
\begin{align*}
    p_{s,k,m_s} \coloneqq {} & \sum_{T_s \in \cT^{m_s}} p_{s,k,T_s} = \sum_{n_s} \sum_{T_s \in \cT_{n_s}^{m_s}} p_{s,k,T_s} = \sum_{n_s} \nt_{m_s,n_s}\cdot \ntbase^{-n_s-k}\left(\frac{9}{64}\right)^{m_s+1-k}\\
    = {} & \fdgen_{m_s}\cdot \ntbase^{-k}\left(\frac{9}{64}\right)^{m_s+1-k} = \frac{9}{32}\frac{(2m_s)!}{m_s!(m_s+2)!}4^{-m_s}\left(\frac{3}{4}\right)^k
\end{align*}
Using the easy-to-check identity
\[
    \frac{6}{(x+1)(x+2)} \binom{2x}{x} = 4 \cat_x - \cat_{x+1},
\]
where $C_x$ is the $x$-th Catalan number, we get that
\begin{equation}\label{eq:transition_prob}
    p_{s,k,m_s} = \frac{3}{64} 4^{-m_s} (4 \cat_{m_s} - \cat_{m_s+1}) \left(\frac{3}{4}\right)^k.
\end{equation}

We now calculate the total probability that the Schnyder peeling process takes a left step ($s = \ell$; $c < 0$). In this case, there is no restriction on $m_\ell$ and $k$, and so summing over $m_\ell$ and $k$ yields
\begin{align*}
    p_\ell \coloneqq {} & \sum_{m_\ell,k \geq 0} p_{\ell,k,m_\ell} = \frac{3}{64} \sum_{k \geq 0} \left(\frac{3}{4}\right)^k \sum_{m_\ell \geq 0} 4^{-m_\ell} (4 \cat_{m_\ell} - \cat_{m_\ell+1}) \\
    = {} & \frac{3}{16} \cdot\lim_{m_\ell \to \infty} \left(4 \cat_0 - \frac{\cat_{m_\ell+1}}{4^{m_\ell}}\right) = \frac{3}{16} 4 \cat_0 = \frac{3}{4}.
\end{align*}
It follows that the probability of taking a right step ($s = r$; $c > 0$) is $1/4$, but for consistency, we verify this by computation. Note that the only restriction on $T_r$ when taking a right step is that $m_r \geq k$. So, summing over $m_r$ and $k$ and using the fact that the generating function for the Catalan numbers is $c(x)=(1-\sqrt{1-4x})/(2x)$ yields
\begin{align*}
    p_r \coloneqq {} & \sum_{k \geq 0} \sum_{m_r \geq k} p_{c,k,m_r} = \frac{3}{64} \sum_{k \ge 0} \left(\frac{3}{4}\right)^k \sum_{m_r \geq k} 4^{-m_r} (4 \cat_{m_r} - \cat_{m_r+1}) \\
    = {} & \frac{3}{64} \sum_{k \geq 0} \left(\frac{3}{4}\right)^k \lim_{m_r \to \infty} \left(\frac{4 \cat_k}{4^k} - \frac{\cat_{m_r+1}}{4^{m_r}}\right) \\
    = {} & \frac{3}{16} \sum_{k \geq 0} \cat_k \left(\frac{3}{16}\right)^k = \frac{3}{16} \cdot c\left(\frac{3}{16}\right) = \frac{1}{4}.
\end{align*}
In particular, note that the probabilities $p_\ell$ and $p_r$ of taking left and right steps sum to 1, as expected.

In addition to just giving the probability of a step, these calculations also tell us the probability distributions of the finite triangulation $T_s$ and of the unexplored region of $T'$. For the distribution of $T_s$, note that the dependence of the probability $p_{s,k,T_s}$ on $n_s$ is $\alpha^{-n_s}$. So, for fixed $s$, $k$, and $m_s$, the distribution of $T_s$ must be the free distribution. We now show that the unexplored region is distributed like a UIHPT. This is important since it implies that the probability distributions of all future steps of the Schnyder peeling process are the same as the probability distribution of the first step.

\begin{lemma}\label{lem:hole_uihpt}
    Let $T'$ be a fixed partial triangulation revealed by a peeling step, and let $r$ be the new root edge after the peeling step. If $T$ is a UIHPT conditioned on $T' \subs T$, then $T \sm \intr T'$ has the same distribution as a UIHPT rooted at $r$.
\end{lemma}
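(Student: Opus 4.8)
The plan is to verify directly, for every finite partial triangulation $W$ that can be revealed by a peeling exploration started from $r$, that the conditional probability $\pr(W \subs T\sm\intr T' \mid T'\subs T)$ agrees with the probability that $W$ is revealed in a fresh UIHPT rooted at $r$. Since the events $\set{W\subs \cdot}$ over all such finite $W$ generate the Borel $\sigma$-algebra for the local topology, matching all of these probabilities shows that $T\sm\intr T'$ has the law of a UIHPT rooted at $r$; by \cref{lem:translation_invariance} this is the same as the law of the UIHPT with any boundary root. The key observation is that, writing $T'\cup W$ for the partial triangulation obtained by gluing $W$ along the new boundary of $T'$ so that its own exploration begins at $r$, we have
\[
    \set{T'\subs T}\cap\set{W\subs T\sm\intr T'} = \set{(T'\cup W)\subs T},
\]
so that the conditional probability we want is the ratio $\pr((T'\cup W)\subs T)/\pr(T'\subs T)$.

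To evaluate such ratios, I would first record the general form of the probability that a fixed finite partial triangulation $P$, whose complement in the half-plane again has infinite boundary, is present in the UIHPT. This is exactly the computation already carried out in this section for a single peeling step, run for a general $P$: approximating the UIHPT by a uniform $S\in\cT_n^m$, the number of completions of $P$ to $S$ is $\nt_{m+d_P,\,n-j_P}$, where $j_P$ is the number of vertices of $P$ that are interior to $T$ (equivalently, not on the initial boundary) and $d_P$ is the change in boundary length caused by revealing $P$. Dividing by $\nt_{m,n}$ and letting $n\to\infty$ and then $m\to\infty$, using the asymptotics $\nt_{m,n}\sim\ntfact_m\ntbase^n n^{-5/2}$ and $\ntfact_{m+d}/\ntfact_m\to(64/9)^d$ from \cref{ssec:prelim:fintriangulations}, yields
\[
    \pr(P\subs T) = \ntbase^{-j_P}\left(\frac{64}{9}\right)^{d_P}.
\]
For a single peeling step this reproduces the value $p_{s,k,T_s}$ computed above, with $j_{T'} = n_s+k$ and $d_{T'} = k-m_s-1$.

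The remaining point is that $j$ and $d$ are additive under gluing: $j_{T'\cup W} = j_{T'}+j_W$ and $d_{T'\cup W} = d_{T'}+d_W$, where $j_W$ and $d_W$ are computed relative to the boundary left exposed by $T'$, equivalently relative to a fresh UIHPT rooted at $r$. Additivity of $d$ is immediate, since the exploration of $W$ starts from the boundary produced by $T'$. For $j$, the vertices of $W$ that are interior to $T$ and not already vertices of $T'$ are precisely the vertices of $W$ lying off the exposed boundary of $T'$, and these are exactly the vertices counted by $j_W$. Substituting into the ratio, the factors coming from $T'$ cancel and we obtain
\[
    \pr(W\subs T\sm\intr T'\mid T'\subs T) = \ntbase^{-j_W}\left(\frac{64}{9}\right)^{d_W} = \pr(W\subs \widetilde T),
\]
where $\widetilde T$ is a UIHPT rooted at $r$, as required.

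The main technical obstacle is the bookkeeping behind the additivity of $j$: one must argue carefully that every vertex of $W$ lying off the exposed boundary of $T'$ is genuinely a new vertex, interior to $T$ and distinct from those already counted in $j_{T'}$, and that no vertex is double counted where $W$ meets the boundary of $T'$. A secondary point, needed to make the two displayed limits legitimate, is that the conditioning event has positive probability in the limit, so that $n\to\infty$ and then $m\to\infty$ may be taken through the conditional probabilities; this is guaranteed by the computation giving $\pr(T'\subs T) = \ntbase^{-j_{T'}}(64/9)^{d_{T'}}>0$.
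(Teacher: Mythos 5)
Your approach is genuinely different from the paper's, and its computational core is sound. The paper transfers everything to the finite model: conditionally on $S' \subs S$, the unexplored region of a uniform $S \in \cT_n^m$ is \emph{exactly} uniform on $\cT_{n_h}^{m_h}$, so its ball distributions converge (as $n \to \infty$, then $m \to \infty$) to those of a UIHPT rooted at $r$, and ball distributions trivially characterise a law in the local topology. You instead work intrinsically in the UIHPT via the Boltzmann-type formula $\pr(P \subs T) = \ntbase^{-j_P}\left(64/9\right)^{d_P}$ together with additivity of $j$ and $d$ under gluing; this is essentially the spatial Markov property characterisation of half-planar maps, it correctly reproduces the paper's $p_{s,k,T_s}$, your gluing identity $\set{T'\subs T}\cap\set{W\subs T\sm\intr T'} = \set{(T'\cup W)\subs T}$ is fine, and your bookkeeping for $j_{T'\cup W}$ and $d_{T'\cup W}$ is right.

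The genuine gap is the final deduction, which you dispatch in one sentence. First, two measures that agree on a class of events \emph{generating} the Borel $\sigma$-algebra need not coincide; one needs the class to be intersection-stable (a $\pi$-system), and containment events are not: if $W_1$ and $W_2$ are obtained by peeling two different boundary edges, each revealing a triangle with a new apex, then on $\set{W_1 \subs T}\cap\set{W_2\subs T}$ the two apexes may or may not be the same vertex of $T$, so the intersection is not itself an event of the form $\set{W\subs T}$. Second, and more seriously, if ``peeling exploration'' means the Schnyder peeling process of this paper, the class is provably too small to determine the law: by \cref{lem:peeling_moves_right}, $P_x$ almost surely leaves an infinite region of $T$ forever unexplored, so one can build a law on half-plane triangulations that matches the UIHPT on \emph{every} event $\set{W \subs \cdot}$ in your class (couple the explored segment to be identical, then fill the unexplored region differently) yet is not the UIHPT; also, ball events are not even in the generated $\sigma$-algebra, only in its completion. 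The repair is standard but is real work: fix a peeling algorithm that almost surely exhausts every ball $\ball{\rho}{r}$ (e.g.\ peeling by layers, not the Schnyder process); its cylinder events do form a $\pi$-system, your ratio computation gives agreement there, and since the exhaustion event is exploration-measurable and has probability $1$ under the UIHPT law, it has probability $1$ under the conditional law as well, whence the two ball distributions, and hence the laws, agree. This missing step is exactly what the paper's route through the finite model — where uniformity of the unexplored region is exact and one compares balls directly — is designed to avoid.
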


\begin{proof}
    Let $s$, $k$, and $T_s$ be the parameters of the peeling step that reveals $T'$, and let $S'$ be the associated partial triangulation of an $(m+2)$-gon defined above. Fix $m$ and $n$ sufficiently large, and let $S$ be a uniformly random element of $\cT_n^m$ conditioned on the event that $S' \subs S$.
    
    Consider the ball $\ballt{\rho}{r}{S \sm S'}$ of radius $\rho$ around $r$ in $S \sm S'$. By the above calculation, $\ballt{\rho}{r}{S \sm S'}$ has the same distribution as a ball of radius $\rho$ in a uniformly random finite triangulation of $\cT_{n_h}^{m_h}$ where $n_h = n - n_s - k$ and $m_h = m - m_s - 1 + k$. So, as $n \to \infty$ and then $m \to \infty$, the distribution of $\ballt{\rho}{r}{S \sm S'}$ converges to the distribution of a ball of radius $\rho$ in a UIHPT rooted at $r$.
    
    Since the distribution of $\ballt{\rho}{r}{T \sm T'}$ is the limit of the distribution of $\ballt{\rho}{r}{S \sm S'}$, we get that $\ballt{\rho}{r}{T \sm T'}$ is distributed like a ball of radius $\rho$ in a UIHPT rooted at $r$. As this holds for all $\rho$, it follows that $T \sm T'$ has the same distribution as a UIHPT rooted at $r$.
\end{proof}

We now consider the behaviour of the Schnyder peeling process on the UIHPT over multiple steps. For this purpose, let $T$ be a UIHPT with boundary $B = \dots,b_{-2},b_{-1},b_0,b_1,b_2,\dots$ and let $P_x$ be the Schnyder peeling process initiated with root edge $(b_x,b_{x+1})$. Step $i$ of $P_x$ performs a peeling step based on the boundary and root edge before step $i$. This step updates the boundary and $P_x$ chooses a new root edge on the updated boundary for the next peeling step.

Note that if we perform a single step of the Schnyder peeling process on the UIHPT, then by \cref{lem:hole_uihpt} we know that the unexplored region that remains after the step will have the same distribution as the UIHPT. Therefore, by induction it follows that the unexplored region after any number of steps will have the same distribution as the UIHPT. In particular, the transition probabilities of all steps of the Schnyder peeling process are the same ones as calculated above.

We now show that, unlike for triangulations with finite boundary, the Schnyder peeling process on the UIHPT will almost surely not explore the whole plane. In fact, the following lemma shows that the Schnyder peeling process will only explore a finite number of vertices on the initial boundary left of its root edge. Recall that a vertex $v$ is \defn{covered} by some step of $P_x$ if $v$ is contained in the interior of the partial triangulation that has been explored after that step, and that $P_x$ \defn{eventually covers} $v$ if there exists some step $i$ after which $v$ is covered.

\begin{lemma}\label{lem:peeling_moves_right}
    Almost surely for all $x \in \Z$ there exists some $y < x$ such that the Schnyder peeling process $P_x$ eventually covers $b_z$ if and only if $z \ge y$.
\end{lemma}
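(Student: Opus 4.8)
The plan is to fix $x$ and prove the statement for the single process $P_x$; since \cref{lem:translation_invariance} shows that $P_x$ and $P_{x'}$ have the same law for all $x,x'$, the ``for all $x$'' version follows by taking a countable intersection of probability-one events over $x \in \Z$. The up-closed part of the claim is essentially already available: by \cref{obs:finitestepscoversegmentofboundary} the covered initial-boundary vertices always form a segment $b_\ell,\dots,b_r$, and by \cref{obs:infiniterightsteps} the right endpoint satisfies $r\to\infty$, so the eventually covered set is exactly $\{b_z : z\ge y\}$, where $y$ is the limiting (non-increasing) position of the corner vertex. That $y<x$ is easy: whether the first step is a left step (which immediately covers $\peelv{x}{1}=b_{x-1}$) or a right step (after which $b_{x-1}$ becomes the tail of the current root edge and is covered by the next right step, one of which occurs by \cref{obs:infiniterightsteps}), the vertex $b_{x-1}$ is covered, so $y\le x-1$. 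The entire content is therefore to show $y>-\infty$ almost surely, i.e.\ that only finitely many vertices left of $b_x$ are ever covered.

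The starting point is \cref{lem:hole_uihpt}: after every peeling step the unexplored region is again a UIHPT rooted at the updated root edge. Consequently the sequence of step types is i.i.d.---each step is a left step with probability $p_\ell=3/4$ and a right step with probability $p_r=1/4$, independently of the past---and the triples $(k,m_s,T_s)$ revealed at successive steps are i.i.d.\ with the laws computed above. I would first use this to dispatch the \emph{initial phase}, during which $\peelv{x}{i}$ still lies on the initial boundary: right steps and left steps with $k=0$ both keep the peeling vertex on the initial boundary (moving it one vertex left) while covering finitely many vertices, and the phase terminates at the first left step with $k\ge1$, which pushes the peeling vertex off the boundary. Since each step independently ends the phase with probability at least $(3/4)\cdot\mathbf{P}(k\ge1\mid\text{left})>0$, the phase has a.s.\ finitely many steps, each covering an a.s.-finite number of initial-boundary vertices; hence the initial phase covers only finitely many vertices to the left of $b_x$.

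The main work is to show that after the corner is first established the process extends it further left only finitely often. I would set up a renewal decomposition: let $\tau_1<\tau_2<\cdots$ be the successive steps at which the corner vertex moves strictly left (equivalently, at which a chord reaches the initial boundary at or to the left of the current corner). At each $\tau_j$ a fresh corner is established with the peeling vertex just off the boundary and, by \cref{lem:hole_uihpt}, a fresh UIHPT remaining; by translation invariance this configuration is distributionally identical at every $\tau_j$ and to the one at the end of the initial phase. Writing $\rho$ for the configuration-independent probability that the corner ever moves left again from such a state, the number $M$ of corner-movements satisfies $\mathbf{P}(M\ge j)=\rho^{\,j}$, so $M<\infty$ a.s.\ precisely when $\rho<1$. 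Since each movement covers an a.s.-finite number of vertices (even though the chord-length law has infinite mean, a movement still covers finitely many vertices almost surely), $M<\infty$ a.s.\ yields that the total leftward coverage is an a.s.-finite sum of a.s.-finite terms, hence $y>-\infty$ a.s.

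Everything thus reduces to the single inequality $\rho<1$, which I expect to be the main obstacle. Here one must show that from a freshly established corner there is positive probability that no future chord ever reaches the initial boundary at or to the left of the corner. The natural device is to track the displacement $D_i$, the number of current-boundary edges separating the peeling vertex from the corner vertex, and to prove it is transient to $+\infty$: a right step decreases $D$ by one (it moves the peeling vertex one step toward the corner along the boundary), whereas a left step typically reveals new upper-boundary vertices and increases $D$, and the corner can move left only when $D$ is driven to $0$ and a chord then passes it. Using the computed transition probabilities---the preponderance $p_\ell=3/4$ of left steps together with the explicit law of $(k,m_s)$ from \eqref{eq:transition_prob}---I would estimate the drift $\mathbf{E}[\Delta D]$, show it is strictly positive, and deduce that with positive probability $D$ escapes to $+\infty$ without ever returning to $0$, giving $\rho<1$. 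Verifying that this drift has the correct sign, and handling the heavy tail of the chord lengths (so that the argument must be carried out almost surely rather than in expectation), is the delicate technical heart of the proof.
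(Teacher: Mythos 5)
Your outline identifies the right reduction (only the leftward coverage is at issue) and the right mechanism (a positive-drift process measuring how far the exploration sits from the leftmost covered vertex), but the proposal stops exactly where the lemma's actual content begins. Everything in your argument funnels into the single claim $\rho<1$, which you propose to obtain by showing $\mathbf{E}[\Delta D]>0$ --- and you explicitly defer that computation as ``the delicate technical heart.'' That computation \emph{is} the proof: whether the drift is positive cannot be guessed from structure alone, and verifying it requires the enumerative transition probabilities. The paper does precisely this: it models the position of the left end of the covered region as a random walk with i.i.d.\ increments $\xi_i$, where $\xi_i=-1$ with probability $1/4$ (right step) and $\xi_i=k-j$ with probability $p_{j,k}=\tfrac34 q_j r_k$ (left step), and then computes, via the Catalan generating function $c(x)=(1-\sqrt{1-4x})/(2x)$, that $\mathbf{E}(J)=\sum_{i\ge 0}4^{-i}\cat_i=c(1/4)=2$ and $\mathbf{E}(K)=3$, giving $\mathbf{E}(\xi_i)=\tfrac34(3-2)-\tfrac14=\tfrac12>0$ together with $\mathbf{E}\abs{\xi_i}<\infty$. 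The strong law then shows $S_n/n\to 1/2$, so the walk's minimum is almost surely finite, which is exactly the statement that only finitely many vertices left of $b_x$ are covered. Without this computation your renewal scheme proves nothing; with it, the renewal scheme is unnecessary, since the minimum of the walk already encodes the leftmost covered index (new vertices are inserted directly left of the root edge, so they are consumed before any older vertices further left, which is what makes the walk model faithful).

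A second, concrete error: you assert that ``the chord-length law has infinite mean'' and that the argument therefore ``must be carried out almost surely rather than in expectation.'' This is false. The chord-length law is $q_j=4^{-j}(4\cat_{j-1}-\cat_j)\sim c\,j^{-5/2}$, which has finite mean (equal to $2$, as above); it has infinite \emph{variance}, but the SLLN only needs a finite first moment, so the expectation argument goes through with no complications. The anticipated heavy-tail difficulty does not exist, and worrying about it led you away from the direct route. Separately, your renewal setup needs repair even as a skeleton: the configurations at successive corner-movement times $\tau_j$ are not identical (the displacement of the peeling vertex from the new corner depends on the number $k$ of vertices revealed at that step), so $\mathbf{P}(M\ge j)=\rho^{\,j}$ requires an argument that the relevant conditional law is the same at each renewal; this is fixable using \cref{lem:hole_uihpt} and the independence of $j$ and $k$ given a left step, but it is additional work that the paper's direct random-walk formulation avoids entirely.
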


\begin{proof}
    It follows from \cref{obs:infiniterightsteps} that $P_x$ will eventually cover all $b_z$ with $z \geq x$. It remains to determine how many initial boundary vertices $b_z$ with $z \leq x$ will eventually be covered. Consider a single step of $P_x$:
    \begin{itemize}
        \item If the step is a right step, it will cover one boundary vertex left of the root edge. The probability of a right step is $1/4$.
        \item If the step is a left step, it will cover some number $j$ of boundary vertices left of the root edge, but also add $k$ new vertices to the boundary directly to the left of the root edge. By applying \eqref{eq:transition_prob}, we see that the probability of this is
        \[
            p_{j,k} \coloneqq p_{\ell,k,j-1} = \frac{3}{4} q_j r_k \text{ where } q_j = 4^{-j} (4 C_{j-1} - C_j) \text{ and } r_k = \frac{1}{4} \left(\frac{3}{4}\right)^k,
        \]
        Note that $\sum_{j \ge 1} q_j = 1$ and $\sum_{k \ge 0} r_k = 1$. In particular, conditioned on the step being a left step, $j$ and $k$ are independent with probability mass functions $(q_j)_{j \geq 1}$ and $(r_k)_{k \geq 0}$, respectively.
    \end{itemize}
    Therefore, the distribution of the index of the leftmost boundary vertex that is eventually covered by $P_x$ is equal in distribution to the minimal value of a random walk $(S_n)_{n\geq0}$ with $S_0 = x$, and $S_n = x + \sum_{i=1}^n \xi_i$ where $\xi_i$ are i.i.d. random variables with the following distribution:
    \begin{itemize}
        \item $\xi_i = -1$ with probability $1/4$ (a right step), and
        \item $\xi_i = k-j$ with probability $p_{j,k}$ (a left step).
    \end{itemize}

    Note that if a random variable $J$ has probability mass function $(q_j)_{j \geq 1}$, then
    \[
        \ex(J) = \sum_{i \ge 1} \pr(J \ge i) = \sum_{i \ge 1} \sum_{j \ge i} q_j = \sum_{i \ge 0} 4^{-i} \cat_i = c\left(\frac{1}{4}\right) = 2
    \]
    where we recall that $c(x) = (1 - \sqrt{1-4x})/(2x)$ is the generating function for Catalan numbers. Also, if a random variable $K$ has probability mass function $(r_k)_{k \geq 0}$, then
    \[
        \ex(K) = \sum_{i \ge 1} \pr(K \ge i) = \sum_{i \ge 1} \sum_{k \ge i} r_k = \sum_{i \ge 1} \left(\frac{3}{4}\right)^i = 3.
    \]
    So, $\ex(\xi_i \st \text{left step}) = \ex(K) - \ex(J) = 1$ which implies that $\ex(\xi_i) = \frac{3}{4}\cdot 1 - \frac{1}{4}\cdot 1 = 1/2$. Furthermore, we have 
    \[
        \ex(\abs{\xi_i} \st \text{left step}) \leq \ex(\abs{K}) + \ex(\abs{J}) = 5
    \]
    and so $\ex(\abs{\xi_i}) \leq \frac{3}{4} \cdot 5 + \frac{1}{4} \cdot 1 < \infty$. 
    
    Hence, the strong law of large numbers gives that almost surely,
    \[
        \lim_{n\to\infty} \frac{S_n}{n} = \frac{1}{2},
    \]
    and so the Schnyder process $P_x$ almost surely eventually covers only a finite number of boundary vertices $b_z$ with $z \leq x$. 
\end{proof}

Observe that at each step of the Schnyder peeling process on an infinite triangulation of the half-plane, the head of the root edge is always on the initial boundary. As the peeling vertex is one vertex left on the current boundary from the tail of the root edge, the distance from the peeling vertex to the initial boundary is therefore at most two, and hence the distance from every vertex on the updated boundary to the initial boundary is at most three. This implies that the Schnyder peeling process can only explore within the region of those vertices of the UIHPT whose distance to the initial boundary is at most three (plus any finite component completely contained within that region). \cref{lem:peeling_moves_right} shows that in fact, on the UIHPT the Schnyder peeling process almost surely only explores a one-ended infinite segment along the initial boundary. We call this the \defn{Schnyder wood segment}. In particular, the peeling process almost surely does not assign a Schnyder wood to the entire UIHPT.

\subsubsection{Removing the Schnyder wood segment}\label{sssec:uihpt:schnydersegmentremoved}

In \cref{sec:structure} we will describe how we can use a strategy of repeatedly colouring and removing infinite strips along the boundary to define a Schnyder wood of the entire UIHPT. To guarantee that we may repeat this process, we now prove that the triangulation left behind after removing a Schnyder wood segment is distributed like a UIHPT. To state this result, recall that $\area{x}{i}$ denotes the revealed area up to and including step $i$, and $\evarea{x} \coloneqq \bigcup_i \area{x}{i}$ is the revealed area of the Schnyder wood segment rooted at $(b_x, b_{x+1})$. Also, $\intr \area{x}{i}$ is the interior of the revealed area, and $\intr \evarea{x} \coloneqq \bigcup_i \intr \area{x}{i}$. Recall that the interior $\intr \area{x}{i}$ is defined to include those vertices and edges of $\area{x}{i}$ that lie on the initial boundary of $T$, but not the updated boundary after step $i$. 

\begin{lemma}\label{lem:remove_one_ended_strip_still_uihpt}
    Let $f_i$ be the first edge on the initial boundary left of $b_x$ that is not contained in $\area{x}{i}$. Then almost surely $\lim_{i \to \infty} f_i = f$ for some edge $f$ on the initial boundary, and the triangulation $T \sm \intr \evarea{x}$ has the same distribution as a UIHPT rooted at $f$.
\end{lemma}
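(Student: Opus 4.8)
The plan is to handle the two assertions separately. The existence of the limiting edge $f$ is immediate from \cref{lem:peeling_moves_right}: almost surely there is some $y \le x$ such that $P_x$ eventually covers exactly the initial boundary vertices $b_z$ with $z \ge y$, and since the covered vertices always form a segment of $B$ (\cref{obs:finitestepscoversegmentofboundary}) the leftmost uncovered boundary edge $f_i$ is eventually constant and equal to $f = (b_{y-1},b_y)$. So the work lies entirely in identifying the law of the unexplored region.

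For the distributional claim, write $U_i \coloneqq T \sm \intr \area{x}{i}$ for the unexplored region after step $i$. By the discussion following \cref{lem:hole_uihpt}, for every fixed $i$ the region $U_i$ is distributed as a UIHPT rooted at $\roote{x}{i}$. The edge $f_i$ is a deterministic function of $\area{x}{i}$ and lies on the boundary of $U_i$, so conditionally on $\area{x}{i}$ it is a fixed boundary edge of a UIHPT; hence by translation invariance (\cref{lem:translation_invariance}) the region $U_i$ rooted at $f_i$ is again a UIHPT. Consequently, for each fixed radius $\rho$ and each fixed $i$, the ball $Y_i \coloneqq \ballt{\rho}{f_i}{U_i}$ has exactly the law of a ball of radius $\rho$ about the root of the UIHPT.

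The remaining step is to pass from the moving-root balls $Y_i$ to the fixed ball $X \coloneqq \ballt{\rho}{f}{T \sm \intr\evarea{x}}$. The idea is that the neighbourhood of $f$ is frozen after finitely many steps: combining \cref{lem:peeling_moves_right} with local finiteness and one-endedness, almost surely there is a finite (random) step $i^*$ after which the segment never again reveals an edge within distance $\rho$ of $f$, so that $U_i$ and $T \sm \intr\evarea{x}$ contain the same vertices and edges within distance $\rho+1$ of $f$, and $f_i = f$, for all $i \ge i^*$. Thus $Y_i = X$ for all $i \ge i^*$, i.e.\ $Y_i \to X$ almost surely with eventual equality. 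Since each $Y_i$ is supported on the same countable set with the fixed UIHPT-ball law, dominated convergence for the indicators $\mathbf 1\{Y_i = U\}$ gives $\pr(X = U) = \lim_i \pr(Y_i = U)$, which is the UIHPT-ball probability, for every admissible $U$. As $\rho$ is arbitrary, this shows that $T \sm \intr\evarea{x}$ rooted at $f$ has the same local limit as the UIHPT, which is the claim.

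I expect the main obstacle to be the freezing statement used to define $i^*$: one must argue that once the leftmost covered vertex has reached its final value $b_y$ and the finitely many triangles of $\evarea{x}$ near $f$ have been revealed, no subsequent peeling step --- all of which occur to the right and above along the frontier --- can alter the triangulation within distance $\rho$ of $f$. This rests on the structure of the revealed area from \cref{obs:finitestepsupperboundary} together with planarity (the already-revealed segment separates the active frontier from the initial boundary left of $b_y$) and on the fact, implicit in \cref{lem:peeling_moves_right}, that the leftmost covered index is the global minimum of the associated random walk and so is never revisited from the left. A secondary point worth stating carefully is the use of translation invariance with the reroot edge $f_i$: because $f_i$ is a function of $\area{x}{i}$, conditioning on $\area{x}{i}$ makes it a deterministic boundary edge, which is exactly what legitimately lets us reroot $U_i$ at $f_i$ while remaining in the UIHPT class.
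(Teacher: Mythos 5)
Your proposal is correct and follows essentially the same route as the paper: the limit edge $f$ comes from \cref{lem:peeling_moves_right}, the law of the unexplored region at each finite step comes from \cref{lem:hole_uihpt} (the paper leaves the rerooting at $f_i$ via \cref{lem:translation_invariance} implicit, which you make explicit and justify correctly by conditioning on $\area{x}{i}$), and almost-sure stabilization of the ball around $f$ transfers the fixed-$i$ law to the limit. One remark: the ``freezing'' step you flag as the main obstacle requires no planarity, separation, or random-walk-minimum argument --- it follows immediately from the nestedness $\area{x}{i} \subseteq \area{x}{i+1}$ with union $\evarea{x}$ together with the almost-sure finiteness of $\ballt{\rho}{f}{T}$, which is exactly how the paper handles it.
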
 

\begin{proof}
    By \cref{lem:peeling_moves_right} there almost surely exists some $y < x$ such that $P_x$ eventually covers $b_z$ if and only if $z \geq y$. So, $\lim_{i\to \infty} f_i = (b_{y-1}, b_y) \eqqcolon f$ almost surely.
    
    Let $T_i \coloneqq T \sm \intr \area{x}{i}$ be the unexplored area of $T$ after $i$ steps, and consider the ball $\ballt{\rho}{f}{T_i}$ of radius $\rho$ around $f$ in $T_i$. Since $\ballt{\rho}{f}{T}$ is almost surely finite and we have that $\area{x}{i} \subs \area{x}{i+1}$ for every step $i$, there almost surely exists some step $j$ such that $\ballt{\rho}{f}{T_i} = \ballt{\rho}{f}{T \sm \intr \evarea{x}}$ for all $i \ge j$. By choosing $j$ sufficiently large we may also assume that $f_i = f$ for all $i \ge j$. In particular, as $i$ increases, $\ballt{\rho}{f_i}{T_i}$ almost surely converges to $\ballt{\rho}{f}{T \sm \intr \evarea{x}}$.
    
    For every fixed $i$, \cref{lem:hole_uihpt} implies that $\ballt{\rho}{f_i}{T_i}$ is distributed like a ball of radius $\rho$ in a UIHPT rooted at $f_i$. Since $\ballt{\rho}{f_i}{T_i}$ almost surely converges to $\ballt{\rho}{f}{T \sm \intr \evarea{x}}$, it follows that $\ballt{\rho}{f}{T \sm \intr \evarea{x}}$ is almost surely distributed like a ball of radius $\rho$ in a UIHPT rooted at $f$. Since this holds for all $\rho$, it follows that $T \sm \intr \evarea{x}$ has the same distribution as a UIHPT rooted at $f$.
\end{proof}

\subsection{The Schnyder wood strip}\label{ssec:uihpt:strip}

As shown in \cref{ssec:uihpt:segment}, the Schnyder peeling process on the UIHPT will only colour a one-ended infinite Schnyder wood segment along the initial boundary, and will therefore never reveal edges far left of the initial root edge. To define a Schnyder wood of the entire UIHPT, we need to find a way of colouring the remainder of the half-plane.

One possible attempt to do this could be to colour a Schnyder wood segment, remove it from the UIHPT, choose a new root edge on the new boundary, and then repeat these steps on the remainder of the UIHPT. However, it is unclear how to choose the new root edge on the new boundary, and different choices of root edge yield different colourings of the UIHPT. Moreover, this approach would result in many vertices on the initial boundary which do not satisfy the Schnyder boundary condition, while in the finite case there are only two boundary vertices which do not satisfy this condition.

Instead, we will now define the \defn{Schnyder wood strip}, which is a colouring of a two-ended infinite strip along the initial boundary of the UIHPT. This strip will not depend on the choice of a root edge, and all of its boundary vertices will satisfy the Schnyder boundary condition. To construct the strip, we rerun the Schnyder peeling process from root edges progressively further left along the initial boundary. This presents an issue: the colour and direction assigned to an edge depend on the position of the initial root edge of the process. To use this approach to assign a well-defined colour and direction to a given edge, we will prove that if we run two Schnyder peeling processes from root edges sufficiently far left along the initial boundary, then either both processes will assign the same colour and direction to the given edge that we want to colour, or neither process will colour the edge. This will allow us to define the Schnyder wood strip to be the limit of the Schnyder wood segments coloured by the peeling processes $P_x$ as $x \to -\infty$. In this section, we show that this limit exists. In fact, we will prove that from some point onward, both of the Schnyder peeling processes will perform the exact same steps and therefore assign the exact same colouring and orientation to all subsequently explored edges. 

Our argument is inspired by the behaviour of the Schnyder peeling process determined in \cref{lem:peeling_moves_right}. Since we expect any individual Schnyder peeling process to explore only a finite number of edges to the left on its boundary, we may guess that any two Schnyder peeling processes will eventually begin taking the same steps, provided that there exist steps at which the two processes agree on their root edges and a sufficiently long segment of their boundaries left of the common root edge. To show that the processes never explore left of such a common boundary segment, we first prove that vertices that lie far apart on the UIHPT boundary are unlikely to be joined by short paths, and then use this to guarantee that no length-$3$ path has endpoints on both sides of the common boundary segment. We then argue that this suffices to show that the peeling processes will never explore far enough left to overshoot the common boundary segment. As a result, all subsequent peeling steps of the two processes will be the same.

Formally, our proof consists of the following steps.
\begin{enumerate}[(1)]
    \item \textbf{No short paths.} We show that there are unlikely to be short paths whose endpoints are vertices of the UIHPT that are far apart along its boundary, and on either side of a fixed boundary edge.
    \item \textbf{Roots eventually coincide.} We show that for any $x$ and $y$, there exist steps at which the processes $P_x$ and $P_y$ have the same root edge.
    \item \textbf{Segments of boundaries eventually coincide.} We show that for any fixed vertex $v$, there exist steps at which all Schnyder peeling processes rooted sufficiently far left share a common boundary segment left of their common root edge, and at that point none of the processes have explored $v$. Furthermore, at these steps, none of the respective processes have short paths connecting boundary vertices on either side of the common boundary segment. 
    \item \textbf{Existence of the Schnyder wood strip.} We show that the previous conditions suffice to prove that for any fixed $\rho$, all processes rooted sufficiently far left will produce consistent colourings on $\ball{\rho}{b_0}$, and we use this to formally define the Schnyder wood strip.
\end{enumerate}

\subsubsection{Step 1: No short paths}

We begin by showing that it is unlikely that the UIHPT has short paths connecting two vertices on its boundary which are far apart along the boundary. Recall that a vertex $v$ is \defn{eventually covered} by $P_x$ if there exists a step $i$ such that $v$ is contained in $\intr \area{x}{i}$. The following result is a straightforward corollary of \cref{lem:peeling_moves_right}.

\begin{corollary}\label{cor:boundedleftmvmt}
    For all $\eps > 0$, there exists $k \in \N$ such that for all $x \in \Z$
    \[
        \pr(P_x \text{ eventually covers } b_z \text{ for some } z < x-k) \le \eps.
    \]
\end{corollary}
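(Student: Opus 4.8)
The plan is to read the statement off directly from the random walk representation established in the proof of \cref{lem:peeling_moves_right}. Recall that the event in question says precisely that the leftmost initial boundary vertex eventually covered by $P_x$ has index strictly less than $x-k$. Writing $L_x$ for the (random) index of this leftmost covered vertex, the event is exactly $\{L_x < x-k\}$, equivalently $\{x - L_x > k\}$, so it suffices to control the upper tail of $x - L_x$ uniformly in $x$.

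First I would quote the coupling from the proof of \cref{lem:peeling_moves_right}, which shows that $L_x$ is equal in distribution to $\min_{n \ge 0} S_n$, where $S_0 = x$ and $S_n = x + \sum_{i=1}^n \xi_i$ for i.i.d. increments $\xi_i$ with $\ex(\xi_i) = 1/2$ and $\ex(\abs{\xi_i}) < \infty$. Consequently $x - L_x$ is equal in distribution to $-\min_{n \ge 0} W_n$, where $W_n \coloneqq \sum_{i=1}^n \xi_i$ is the centred walk with $W_0 = 0$; in particular the law of $x - L_x$ does not depend on $x$. The same $x$-independence can be seen directly from translation invariance of the UIHPT, \cref{lem:translation_invariance}: rerooting carries $P_x$ to $P_0$ and sends $x - L_x$ to $-L_0$, so the uniformity over $x$ is built in and no additional argument is needed.

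Next I would exploit positivity of the drift. Since $\ex(\xi_i) = 1/2 > 0$, the strong law of large numbers gives $W_n \to +\infty$ almost surely, so the running minimum $M \coloneqq -\min_{n \ge 0} W_n$ is attained at some almost surely finite time and is therefore an almost surely finite nonnegative random variable. Hence $\pr(M > k) \to 0$ as $k \to \infty$, and given $\eps > 0$ I may fix $k \in \N$ with $\pr(M > k) \le \eps$. As $\pr(x - L_x > k) = \pr(M > k)$ for every $x$, this single $k$ works simultaneously for all $x \in \Z$, which is exactly the claim.

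The argument is essentially immediate once \cref{lem:peeling_moves_right} is in hand; the only point requiring care is the uniformity over $x$, and this is precisely where translation invariance of the UIHPT (equivalently, the $x$-independence of the increment distribution in the random walk representation) is used. I do not expect any genuine obstacle here: the substantive computation — that the increments have positive mean and finite first moment — was already carried out in the proof of \cref{lem:peeling_moves_right}, and the remaining input is just the standard fact that a mean-positive random walk has an almost surely finite running minimum.
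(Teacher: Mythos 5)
Your proof is correct, but it takes a somewhat different route from the paper's. The paper treats \cref{lem:peeling_moves_right} as a black box: it defines the disjoint events $E_y$ that $P_x$ covers $b_z$ exactly for $z \ge y$, notes that by the lemma these events partition an almost sure event so that $\sum_{y \le x} \pr(E_y) = 1$, extracts a finite $k$ with $\sum_{y=x-k}^{x} \pr(E_y) \ge 1 - \eps$, and then invokes translation invariance (\cref{lem:translation_invariance}) to conclude that this $k$ works for every $x$. You instead re-open the proof of \cref{lem:peeling_moves_right} and work directly with the random walk coupling established there: writing $x - L_x \stackrel{d}{=} -\min_{n \ge 0} W_n$ for the centred walk $W$, you get almost sure finiteness of the running minimum from the positive drift and the strong law, and uniformity in $x$ comes for free because the law of $W$ does not depend on $x$. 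Both arguments rest on the same underlying facts (the coupling, and shift invariance of the UIHPT, which is what makes the increment law $x$-independent in the first place), so neither is more general; the trade-off is that the paper's version is more modular, using only the \emph{statement} of the lemma, while yours reuses an internal construction of its proof but in exchange makes the uniformity completely transparent and would in principle yield quantitative bounds on $k$ via tail estimates for the minimum of a positive-drift walk. One stylistic caution: since the coupling is established inside the proof of \cref{lem:peeling_moves_right} rather than stated as a lemma, a referee could reasonably ask you to either restate it as a standalone claim or fall back on the paper's softer argument.
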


\begin{proof}
    Let $E_y$ be the event that $P_x$ eventually covers $b_z$ if and only if $z \ge y$. Note that these events are disjoint for different $y$. By \cref{lem:peeling_moves_right},
    \[
        \sum_{y \le x} \pr(E_y) = \pr\left(\bigcup_{y \le x} E_y\right) = 1.
    \]
    In particular, there exists some $k \in \N$ such that $\sum_{y = x-k}^x \pr(E_y) \ge 1 - \eps$. Note that if any of the events $E_y$ with $x-k \le y \le x$ occurs, then no boundary vertex $b_z$ with $z < x-k$ is eventually covered by $P_x$. Therefore,
    \[
        \pr(P_x \text{ eventually covers } b_z \text{ for some } z < x-k) \le 1 - \pr\left(\bigcup_{y = x-k}^x E_y\right) \le \eps.
    \]
    Finally, note that $k$ does not depend on $x$, since we know by \cref{lem:translation_invariance} that the UIHPT is shift invariant.
\end{proof}

 We now use our previous results about the behaviour of the modified Schnyder peeling process to demonstrate that short spanning paths are unlikely in the UIHPT. If $B'$ is a segment of the boundary $B$ of a UIHPT $T$, we say that a path in $T$ \defn{spans} $B'$ if it connects two vertices of $B \sm B'$ that are on opposite sides of $B'$.

\begin{lemma}\label{lem:no_short_paths}
    For all $d > 0$ and $\eps > 0$ there exists $n \in \N$ such that for all $x \in \Z$,
    \[
        \pr(\text{some path of length at most } d \text{ spans } b_{x-n}, \dots, b_{x-1}, b_{x} \subs B) \leq \eps.
    \]
\end{lemma}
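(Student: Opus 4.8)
The plan is to exploit two facts established above: that the Schnyder peeling process on a UIHPT moves only a bounded distance to the left (\cref{cor:boundedleftmvmt}), and that deleting the one-ended region it reveals leaves a fresh UIHPT (\cref{lem:remove_one_ended_strip_still_uihpt}). By shift-invariance (\cref{lem:translation_invariance}) the probability in the statement is the same for every $x$, so I may fix $x=0$ and bound the probability that some path of length at most $d$ joins a vertex $b_a$ with $a<-n$ to a vertex $b_c$ with $c>0$. I would assume $d\ge 1$ (for $d=0$ no path joins two distinct vertices). The strategy is to run a deterministic \emph{sequence} of peeling processes, deleting the revealed region after each, and to show that a short spanning path would be forced, step by step, into shorter and shorter paths across the successive corners, which is impossible after $d$ steps.

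The geometric heart of the argument is a \textbf{crossing observation} for a single peeling process $P$ on a UIHPT. If $P$ eventually covers a vertex $u$ but does \emph{not} cover a boundary vertex $b_a$ lying strictly to the left of the leftmost covered boundary vertex (the corner), then every path from $b_a$ to $u$ must meet the upper boundary of the revealed area. The key point is that an uncovered boundary vertex strictly left of the corner has no neighbour in the interior of the revealed area: such a neighbour would be an interior vertex of that area, forcing its incident edge and hence $b_a$ itself into the revealed area, a contradiction. Consequently, tracing the path from $b_a$ towards $u$, the last uncovered vertex $w$ before the path enters the interior of the revealed area is joined to $b_a$ by a \emph{strictly shorter} sub-path, and $w$ lies on the boundary of the residual triangulation at a position no further left than the corner (it is either the corner or an upper-boundary vertex).

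I would then iterate this a bounded number of times with \emph{path-independent} roots. Set $\eps_1=\eps/d$, let $k=k(\eps_1)$ be the constant of \cref{cor:boundedleftmvmt}, and take $n=(d+1)k+1$. Run $P^{(0)}=P_0$ on $T$, and for $m\ge 1$ run $P^{(m)}$ on the residual $T^{(m)}$ obtained from $T^{(m-1)}$ by deleting the interior of the region revealed by $P^{(m-1)}$, rooting $P^{(m)}$ at the edge whose tail is the leftmost boundary vertex covered by $P^{(m-1)}$. By \cref{lem:remove_one_ended_strip_still_uihpt} each $T^{(m)}$ is again a UIHPT, so \cref{cor:boundedleftmvmt} gives that the event $B_m$ that $P^{(m)}$ covers a vertex more than $k$ positions left of its root has $\pr{B_m}\le\eps_1$; a union bound yields $\pr{\bigcup_{m<d}B_m}\le d\eps_1=\eps$. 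Crucially these roots depend only on the previous processes, not on any path, so the $B_m$ are fixed before any path is considered. On the complementary event the corner $y_m$ of $P^{(m)}$ satisfies $y_m\ge -(m+1)k$, and since $P^{(m)}$ is rooted at the corner of $P^{(m-1)}$ it covers the whole upper boundary and corner of $P^{(m-1)}$, hence covers whichever crossing vertex the previous step produces.

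Finally, on this event, suppose a path $\pi$ of length at most $d$ joins $b_a$ ($a<-n$) to $b_c$ ($c>0$). Since $P_0$ covers $b_c$ (it covers every $b_z$ with $z\ge y_0$ and $y_0<0<c$) but not $b_a$ (as $a<-(d+1)k\le y_0$), the crossing observation produces a vertex $u_1$ on the boundary of $T^{(1)}$ at position $\ge y_0$, joined to $b_a$ by a path of length at most $d-1$. Applying the crossing observation to $P^{(1)},P^{(2)},\dots$ in turn—each covers the vertex handed to it and still misses $b_a$ because $a<-(d+1)k\le y_m$—I obtain after $m$ steps a vertex $u_m$ at position $\ge y_{m-1}\ge -mk$ joined to $b_a$ by a path of length at most $d-m$. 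After $d$ steps the path has length $0$, forcing $u_d=b_a$; but $u_d$ sits at position $\ge -dk$ whereas $b_a$ sits at position $a<-(d+1)k$, a contradiction. Hence with probability at least $1-\eps$ no short spanning path exists. The main obstacle I anticipate is the crossing observation together with the accompanying positional bookkeeping: one must check that the relocated right endpoint never drifts left of the current corner while the fixed left endpoint $b_a$ stays uncovered throughout, and that rooting each peeling at the previous corner keeps the entire construction independent of the (a priori unknown) path so that the union bound is legitimate.
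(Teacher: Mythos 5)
Your proposal is correct and is essentially the paper's own argument: both iterate the peeling process $d$ times, re-rooting at the previous corner and invoking \cref{lem:remove_one_ended_strip_still_uihpt} so that \cref{cor:boundedleftmvmt} with $\eps/d$ plus a union bound applies, and both then finish with the same deterministic geometric fact (the paper phrases it as the union of the $d$ revealed areas containing $\ball{d}{b_y}$ for every boundary vertex $b_y$ right of the root, while you phrase it as the equivalent path-shortening induction across successive revealed-area boundaries). Two cosmetic fixes: root $P^{(m)}$ at the boundary edge of $T^{(m)}$ whose head is the corner of $P^{(m-1)}$ (the leftmost \emph{covered} vertex is not a vertex of $T^{(m)}$), and define the corner as the leftmost vertex of the revealed area rather than the leftmost covered vertex, so that ``strictly left of the corner'' really does imply ``not in the revealed area'' in your crossing observation.
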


\begin{proof}
    Let $P_y^S$ denote the Schnyder peeling process applied to a UIHPT $S$ of the half-plane whose boundary contains $(b_x: x \leq y+1)$ and whose initial root edge is $(b_y, b_{y+1})$. Let $\sparea{y}{i}{S}$ denote the revealed area of $S$ after step $i$ of $P_y^S$, and define $\spevarea{y}{S} \coloneqq \bigcup_i \sparea{y}{i}{S}$. We apply \cref{cor:boundedleftmvmt} with $\eps/d$ to obtain $k \in \N$ such that for all $y \in \Z$ and all UIHPTs $S$ whose boundary contains $(b_x: x \leq y+1)$,
    \[
        \pr\left(P_y^S \text{ eventually covers } b_z \text{ for some } z < y-k\right) \leq \frac{\eps}{d}.
    \]
    
    Let $T_0 \coloneqq T$ and $x_0 \coloneqq x$. We construct a sequence of triangulations as follows. Suppose that $T_i$ is a UIHPT whose boundary contains $\dots, b_{x_i-1}, b_{x_i}, b_{x_i+1}$. By \cref{lem:peeling_moves_right}, there almost surely exists some $x_{i+1} \in \Z$ with $x_{i+1} + 1 < x_i$ such that $\intr \spevarea{x_i}{T_i}$ contains a boundary vertex $v$ of $T_i$ if and only if $v > b_{x_{i+1} + 1}$. If this event occurs, let $T_{i+1} \coloneqq T_i \sm \intr \spevarea{x_i}{T_i}$, and note that by \cref{lem:remove_one_ended_strip_still_uihpt} we know that $T_{i+1}$ is a UIHPT rooted at $(b_{x_{i+1}}, b_{x_{i+1}+1})$.

    Almost surely, this process constructs the triangulation $T_d$. If this happens, observe that for each $0 \leq i \leq d-1$, the revealed area $\spevarea{x_i}{T_i}$ contains $\ballt{1}{v}{T_i}$ for every boundary vertex $v$ of $T_i$ with $v \ge b_{x_i}$. This implies that $\bigcup_{i=0}^{d-1} \spevarea{x_i}{T_i}$ contains $\ball{d}{b_y}$ for all $y \ge x$. Moreover, by the choice of $k$, it holds that $\pr(x_{i+1} < x_i - k) \le \eps/d$, and so it follows that $\pr(x_d \ge x - d \cdot k) \ge 1 - \eps$. Thus, with probability at least $1 - \eps$, $\bigcup_{i=0}^{d-1} \spevarea{x_i}{T_i}$ contains no boundary vertex $b_z$ with $z < x - d \cdot k$, and so there is no path of length at most $d$ that spans $b_{x - d \cdot k}, \dots, b_{x-1}, b_x \subs B$.
\end{proof}

\subsubsection{Step 2: Roots eventually coincide}

Next, we consider two Schnyder peeling processes $P_x$ and $P_y$ with $y < x$ and we want to show that both processes eventually explore from the same root edge. First, we prove the following observation which states conditions under which $P_x$ only reveals parts of the triangulation that $P_y$ has already revealed.

\begin{observation} \label{lem:area_subset}
    If $\area{x}{j-1} \subseteq \area{y}{i}$ and $\peelv{x}{j} \in \intr \area{y}{i}$, then $\area{x}{j} \subseteq \area{y}{i}$.
\end{observation}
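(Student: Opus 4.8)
The plan is to show that the entire partial triangulation $R_j$ revealed by step $j$ of $P_x$ is already contained in $\area{y}{i}$; since $\area{x}{j} = \area{x}{j-1} \cup R_j$ and $\area{x}{j-1} \subseteq \area{y}{i}$ by hypothesis, this immediately yields the claim. Throughout, write $U \coloneqq T \sm \intr \area{y}{i}$ for the unexplored region of $P_y$ after step $i$, so that $U$ is a connected infinite triangulation of the half-plane whose boundary is $\boundary{y}{i}$, and every face of $T$ lies in exactly one of $\area{y}{i}$ or $U$.

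First I would record the local consequence of the hypothesis $\peelv{x}{j} \in \intr \area{y}{i}$. The regions $\area{y}{i}$ and $U$ are glued precisely along $\boundary{y}{i}$, so the vertices of $U$ that also lie in $\area{y}{i}$ are exactly those on $\boundary{y}{i}$. Hence a vertex in $\intr \area{y}{i} = \area{y}{i} \sm \boundary{y}{i}$ is not a vertex of $U$ at all, and so every face incident to it lies in $\area{y}{i}$. Applying this to $v_0 \coloneqq \peelv{x}{j}$ shows that all faces incident to $v_0$ — in particular the entire fan of triangles revealed at $v_0$ together with the chord $\chord{x}{j}$ — lie in $\area{y}{i}$.

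Next I would identify the boundary cycle $\partial R_j$ of the revealed region and verify that all of its edges lie in $\area{y}{i}$. In a right step $\partial R_j$ consists of the old-boundary path from $v_0$ to the head $v_c$ of the chord together with the chord $v_0 v_c$ itself; in a left step it consists of the old-boundary path together with the fan path $v_1, u_1, \dots, u_k, v_c$. In both cases the old-boundary edges lie in $\boundary{x}{j-1} \subseteq \area{x}{j-1} \subseteq \area{y}{i}$, while the remaining edges are either the chord (incident to $v_0$) or fan-path edges (each an edge of a fan triangle incident to $v_0$); by the previous paragraph all of these lie in $\area{y}{i}$. Thus $\partial R_j \subseteq \area{y}{i}$.

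Finally I would run a Jordan-curve argument to conclude $R_j \subseteq \area{y}{i}$. Suppose some face $f$ of $R_j$ lay in $U$. Since $U$ is connected and infinite while $\partial R_j$ bounds a finite region, there is a path of faces in $U$ from $f$ (inside $\partial R_j$) to a face lying outside $\partial R_j$; consecutive faces of this path share an edge, so at the transition from inside to outside the path crosses some edge $e$ of $\partial R_j$, and $e$ then has a face of $U$ on each of its two sides. But $e \in \partial R_j \subseteq \area{y}{i}$ is a revealed edge, so at least one of its incident faces lies in $\area{y}{i}$ — a contradiction. Hence no face of $R_j$ lies in $U$, so $R_j \subseteq \area{y}{i}$, as required. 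I expect the main obstacle to be conceptual rather than computational: one must remember that a revealed edge can still border the unexplored region, so it does not suffice merely to track which edges are revealed; the planarity input supplied by the enclosed-region/Jordan-curve structure is exactly what is needed to trap the finite region $T_r$ — which need not be incident to $v_0$ — inside $\area{y}{i}$.
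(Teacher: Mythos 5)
Your overall strategy is the same as the paper's: deduce from $\peelv{x}{j} \in \intr \area{y}{i}$ that every face and edge incident to the peeling vertex (in particular the chord $\chord{x}{j}$) lies in $\area{y}{i}$, and then trap the finite region enclosed by the chord and the old boundary inside $\area{y}{i}$ by planarity; your Jordan-curve argument is a fleshed-out version of what the paper compresses into ``and so''. However, your step 3 rests on a false inclusion: $\boundary{x}{j-1} \subseteq \area{x}{j-1}$ does not hold. The boundary after step $j-1$ is the boundary of the \emph{unexplored} region, and it always contains edges of the initial boundary $B$ that $P_x$ has never revealed --- most starkly, when $j=1$ we have $\area{x}{0} = \emptyset$ while $\boundary{x}{0} = B$. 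In a right step, the portion of $\partial R_j$ running along the initial boundary from the head of $\roote{x}{j-1}$ to $v_c$ consists precisely of such unrevealed edges, so your justification that $\partial R_j \subseteq \area{y}{i}$ collapses exactly on the edges that make up most of $\partial R_j$, and with it the premise ``$e \in \partial R_j \subseteq \area{y}{i}$'' on which the contradiction in step 4 depends.

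The gap is repairable without changing your architecture. Every edge of $\boundary{x}{j-1}$ is either an edge of $B$ or an edge of $\area{x}{j-1}$ (the upper boundary consists of edges of the revealed area, and $\roote{x}{j-1}$ is either the initial root edge, which lies on $B$, or a chord chosen by an earlier right step). So classify the edges of $\partial R_j$ into three types: (i) the chord and fan-path edges, which lie in $\area{y}{i}$ because they bound faces incident to $\peelv{x}{j}$; (ii) edges of $\area{x}{j-1} \subseteq \area{y}{i}$; and (iii) edges lying on $B$. For type (iii) you do not need membership in $\area{y}{i}$ at all: such an edge has the exterior face of $T$ on its other side, so a dual path of triangles of $U$ can never cross it, regardless. (Alternatively, one can show type-(iii) edges of $\partial R_j$ do lie in $\area{y}{i}$, since $\area{y}{i} \cap B$ is a contiguous segment of $B$ by the argument behind \cref{obs:finitestepscoversegmentofboundary}, anchored on one side by a vertex of $\area{x}{j-1} \cup \{\peelv{x}{j}\}$ and on the other by the chord endpoint $v_c$.) With either repair your crossing argument goes through; note it also silently uses that every edge of $\area{y}{i}$ borders at least one face of $\area{y}{i}$, which is true because each revealed partial triangulation is a closed triangulated disk, but deserves a sentence.
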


\begin{proof}
    Note that step $j$ of $P_x$ reveals some unrevealed edges incident to the peeling vertex $\peelv{x}{j}$ as well as the part of $T$ enclosed by the chord $\chord{x}{j}$ and the boundary $\boundary{x}{j-1}$. Since $\peelv{x}{j}$ is contained in $\intr \area{y}{i}$, planarity ensures that all edges incident to $\peelv{x}{j}$ are contained in $\area{y}{i}$. Since $\chord{x}{j}$ is incident to $\peelv{x}{j}$, this also implies that $\chord{x}{j}$ is contained in $\area{y}{i}$, and so the part of $T$ enclosed by $\chord{x}{j}$ and $\boundary{x}{j-1}$ is contained in $\area{y}{i}$. Hence, the entire partial triangulation of $T$ revealed during step $j$ of $P_x$ is contained in $\area{y}{i}$, and so it follows that $\area{x}{j} \subseteq \area{y}{i}$.
\end{proof}

We can now prove that the root edges of both processes will eventually coincide. 

\begin{lemma}\label{lem:area_containment}
    Suppose that $\area{y}{i}$ contains $\inroote{x}$. Then, there exists some step $j$ of $P_x$ such that $\roote{x}{j} = \roote{y}{i}$ and $\area{x}{j} \subseteq \area{y}{i}$.
\end{lemma}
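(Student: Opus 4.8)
The plan is to run $P_x$ step by step and prove, by induction on the step index $j$, that $\area{x}{j} \subseteq \area{y}{i}$ holds up to and including the first step at which the two root edges coincide. The base case is immediate: before any peeling step the only revealed edge of $P_x$ is $\inroote{x}$, which lies in $\area{y}{i}$ by hypothesis. The inductive engine is \cref{lem:area_subset}: as long as $\area{x}{j-1} \subseteq \area{y}{i}$ and the peeling vertex $\peelv{x}{j}$ lies in $\intr \area{y}{i}$, that observation yields $\area{x}{j} \subseteq \area{y}{i}$. So the whole argument reduces to two tasks: keeping each peeling vertex inside $\intr \area{y}{i}$ until the roots agree, and showing that they must agree after finitely many steps.

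For the first task, write $b_\ell, \dots, b_r$ for the segment of the initial boundary covered in $\area{y}{i}$, so that by \cref{obs:finitestepscoversegmentofboundary} the head of $\roote{y}{i}$ is $b_{r+1}$, which we denote $h$; the hypothesis $\inroote{x} \subseteq \area{y}{i}$ forces $x \le r$. Suppose $\area{x}{j-1} \subseteq \area{y}{i}$ and $\roote{x}{j-1} \neq \roote{y}{i}$. Again by \cref{obs:finitestepscoversegmentofboundary}, once $P_x$ has taken its first right step the head of $\roote{x}{j-1}$ is $b_{r'+1}$, where $b_{\ell'}, \dots, b_{r'}$ is the covered segment of $P_x$, and containment gives $r' \le r$. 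I would show that $r' = r$ already forces $\roote{x}{j-1} = \roote{y}{i}$ (by the frontier argument below), so under our assumption $r' < r$. Then $b_{r'+1}$ is strictly left of $h$ and hence a covered vertex of $\area{y}{i}$; its entire local neighbourhood lies in $\intr \area{y}{i}$, and in particular so do the tail of $\roote{x}{j-1}$ and the peeling vertex $\peelv{x}{j}$ lying just to its left along $\boundary{x}{j-1}$. Applying \cref{lem:area_subset} then propagates the containment to step $j$.

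For the second task, \cref{obs:infiniterightsteps} guarantees that $P_x$ performs infinitely many right steps, each advancing the head of its root edge strictly rightward along the current boundary, with the head always on the initial boundary. Since $\area{y}{i}$ is finite, $P_x$ cannot keep revealing new interior vertices of $\area{y}{i}$ forever, so the head cannot advance strictly inside $\area{y}{i}$ indefinitely; as it can never pass the rightmost initial-boundary vertex $h$ while containment holds, it must reach $h$ after finitely many steps. At that step the covered right-end of $P_x$ equals $b_r$, matching $P_y$, and it remains to check that the root edge is then exactly $\roote{y}{i}$.

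This last point is the crux and the step I expect to be the main obstacle. The subtlety is that a vertex of $\boundary{x}{j}$ lying in $\area{y}{i}$ need not be covered by $P_y$, so it is not a priori clear that $P_x$ leaves $\area{y}{i}$ through $\roote{y}{i}$ rather than through some other edge incident to $h$. I would resolve this by using that the chord selected at a peeling step is determined by $T$ and the current boundary alone, independently of the colouring: by the time $P_x$ is about to move its head onto $h$, containment forces its revealed area to fill $\area{y}{i}$ up to the frontier $\roote{y}{i}$, so its boundary near $h$ coincides with $\boundary{y}{i}$ near $h$ and its peeling vertex coincides with the tail $t$ of $\roote{y}{i}$; the first anticlockwise chord is then precisely $(t,h) = \roote{y}{i}$. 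Since the right-hand neighbour of $h$ along $\boundary{x}{j}$ is $b_{r+2} \notin \area{y}{i}$, the head cannot instead advance past $h$. This yields $\roote{x}{j} = \roote{y}{i}$ together with $\area{x}{j} \subseteq \area{y}{i}$, as required.
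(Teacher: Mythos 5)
Your high-level skeleton (induct with \cref{lem:area_subset} until a decisive step, then argue that step selects $\roote{y}{i}$) matches the paper's, and you correctly identified the crux; but both places where your argument does real work are invalid. In the inductive step you deduce, from the fact that the head $b_{r'+1}$ of $\roote{x}{j-1}$ is covered by $P_y$, that ``its entire local neighbourhood lies in $\intr \area{y}{i}$'', and in particular that the peeling vertex $\peelv{x}{j}$ (two steps away along $\boundary{x}{j-1}$) does. That inference is false: a vertex of $\intr \area{y}{i}$ is routinely adjacent to vertices of $\boundary{y}{i}$ --- for instance, after any left step of $P_y$ the just-covered peeling vertex is adjacent to the newly revealed vertices $u_1,\dots,u_k$, which lie on the updated boundary. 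Neighbours of a covered vertex need only lie in $\area{y}{i}$, not in its interior, and $\peelv{x}{j}$ is not even a neighbour of $b_{r'+1}$ but at distance two. So nothing you have said excludes the possibility that $\peelv{x}{j}$ lies on the upper boundary of $\area{y}{i}$; in that case \cref{lem:area_subset} does not apply, and step $j$ of $P_x$ may reveal edges outside $\area{y}{i}$. Excluding exactly this scenario is the substance of the paper's proof.

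The terminal step fails as well. Writing $\roote{y}{i} = (t_\ell, h)$, your claim that $r' = r$ already forces $\roote{x}{j-1} = \roote{y}{i}$ is false: $h$ can have neighbours inside $\intr \area{y}{i}$ (covered vertices beneath the chord of $P_y$'s last right step), and $P_x$ can first move its head onto $h$ by a right step whose chord is $(w,h)$ for some such covered vertex $w \neq t_\ell$. At that moment the heads agree but the roots do not, and your induction, whose hypothesis was $r' < r$, has no way to continue. The supporting ``frontier'' claim is also unsound: containment does not force $\area{x}{j}$ to fill $\area{y}{i}$ (the parts of $\area{y}{i}$ revealed by left steps of $P_y$ need never be revealed by $P_x$), and even at the genuine decisive step the boundary of $P_x$ near $h$ reads $\dots, t_\ell, v, \head{x}{j-1}, \dots$ with $v$ the tail of $\roote{x}{j-1}$, so it does not coincide with $\boundary{y}{i}$ there. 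What is missing is the paper's key construction: the region $W$ enclosed by the path $t_0, \dots, t_\ell$ of tails of $P_y$'s successive root edges, the edge $\roote{y}{i}$, and the initial boundary. The paper proves inductively that $\roote{x}{k}$ is trapped in $W$ while the peeling vertices stay in $\intr \area{y}{i}$, and then, using that the only neighbours of $t_0, \dots, t_{\ell-1}$ outside $\intr \area{y}{i}$ are $t_\ell$ and possibly $h$, concludes that the first peeling vertex of $P_x$ to leave $\intr \area{y}{i}$ must be $t_\ell$ itself, whereupon the first anticlockwise chord is forced to be $\roote{y}{i}$. Some geometric confinement of this kind, rather than bookkeeping of the covered segment of the initial boundary alone, is what your proposal needs and lacks.
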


\begin{proof}
    Consider the sequence $t_0, \dots, t_\ell$ of the tails of the root edges $\roote{y}{0},\dots,\roote{y}{i}$ with duplicates removed, as depicted in \cref{fig:coinciding_root_edges}. Let $W$ be the region of $T$ that is enclosed by the path $t_0, \dots, t_\ell$, the root edge $\roote{y}{i}$, and the initial boundary. Note that $W$ contains the initial root edge $\inroote{x}$ of $P_x$. We show that the root edge of $P_x$ remains in $W$ up to the first step $j$ of $P_x$ at which the peeling vertex $\peelv{x}{j}$ is no longer contained in $\intr \area{y}{i}$. This scenario is depicted in \cref{fig:area_W}.

    \begin{figure}[h]
        \centering
        \begin{tikzpicture}
    \foreach \x in {1,...,16}{
        \node[mycircle] (bn\x) at (\x,0) {};
    }
    \foreach \x in {1,...,4,7,8,...,11,13,14,15}{
        \pgfmathtruncatemacro{\next}{\x + 1};
        \draw (bn\x) -- (bn\next);
    }
    \draw (bn5) edge[->-,redge] (bn6)
          (bn6) edge[->-,yedge] (bn7)
          (bn12) edge[->-,color=orange] (bn13);

    % Tails of root edges
    \node[mycircle] (t1) at (6,1.5) {};
    \node[mycircle] (t2) at (6,3) {};
    \node[mycircle] (t3) at (6,4.5) {};

    % Right steps, inner vertices
    \node[mycircle] (t1u1) at (6.5,1) {};
    \draw (bn6) to (t1u1) to[bend left=20] (bn9)
          (t1u1) edge[->-,bedge] (t1);
    \node[mycircle] (t2u1) at (6.3,2.2) {};
    \node[mycircle] (t2u2) at (6.7,2.5) {};
    \node[mycircle] (t2u3) at (7.3,2.65) {};
    \draw (t1) to (t2u1) to (t2u2) to (t2u3) to[bend left=20] (bn12)
          (t2u1) edge[->-,bedge] (t2)
          (t2u2) edge[->-,bedge] (t2)
          (t2u3) edge[->-,bedge] (t2);
    \node[mycircle] (t3u1) at (6.3,3.8) {};
    \node[mycircle] (t3u2) at (6.9,4.1) {};
    \draw (t2) to (t3u1) to (t3u2) to[bend left=20] (bn15)
          (t3u1) edge[->-,bedge] (t3)
          (t3u2) edge[->-,bedge] (t3);

    % Right steps, edges to root edge and boundary
    \draw (t1) edge[->-,redge] (bn6)
          (t1) edge[->-,yedge,bend left=30] (bn9);
    \draw (t2) edge[->-,redge] (t1)
          (t2) edge[->-,yedge,bend left=30] (bn12);
    \draw (t3) edge[->-,redge] (t2)
          (t3) edge[->-,yedge,bend left=30,"$\roote{y}{i}$" black] (bn15);

    % Left steps, t_0
    \node[mycircle] (t0l1) at (5,1) {};
    \node[mycircle] (t0l2) at (5.3,2) {};

    \node[mycircle] (t0l0u1) at (4,1.4) {};
    \node[mycircle] (t0l0u2) at (3,1.2) {};
    \draw (bn5) edge[->-,yedge,bend right=45] (bn2);
    \draw (bn2) to (t0l0u2) to (t0l0u1) to (t0l1)
          (t0l0u2) edge[->-,bedge,bend left=20] (bn5)
          (t0l0u1) edge[->-,bedge,bend left=15] (bn5)
          (t0l1) edge[->-,bedge] (bn5);
    \node[mycircle] (t0l1u1) at (4.7,2.2) {};
    \coordinate (t0l1l) at (2,2.3);
    \draw (t0l1) edge[->-,yedge,bend right=15,dotted end] (t0l1l)
          (t0l1) edge[->-,redge] (bn6);
    \draw (t0l1u1) edge[bend right=8,dotted end] (t0l1l) (t0l1u1) to (t0l2)
          (t0l1u1) edge[->-,bedge] (t0l1)
          (t0l2) edge[->-,bedge] (t0l1);
    \node[mycircle] (t0l2u1) at (5.3,3) {};
    \coordinate (t0l2l) at (2,3.3);
    \draw (t0l2) edge[->-,yedge,bend right=15,dotted end] (t0l2l)
          (t0l2) edge[->-,redge] (bn6);
    \draw (t0l2u1) edge[bend right=12,dotted end] (t0l2l)
          (t0l2u1) edge[->-,bedge] (t0l2)
          (t2) edge[->-,bedge] (t0l2)
          (t1) edge[->-,bedge] (t0l2);

    % Left steps, t_2
    \node[mycircle] (t2l1) at (5.3,4) {};
    
    \node[mycircle] (t2l0u1) at (4.7,4.2) {};
    \node[mycircle] (t2l0u2) at (4,4.3) {};
    \coordinate (t2l0l) at (2.45,4.3);
    \draw (t0l2u1) edge[->-,yedge,bend right=10,dotted end] (t2l0l)
          (t0l2u1) edge[->-,redge] (t2);
    \draw (t2l0u2) edge[bend right=5,dotted end] (t2l0l) (t2l0u2) to (t2l0u1) to (t2l1)
          (t2l0u2) edge[->-,bedge] (t0l2u1)
          (t2l0u1) edge[->-,bedge] (t0l2u1)
          (t2l1) edge[->-,bedge] (t0l2u1);
    \node[mycircle] (t2l1u1) at (5.3,5) {};
    \coordinate (t2l1l) at (2.4,5.3);
    \draw (t2l1) edge[->-,yedge,bend right=15,dotted end] (t2l1l)
          (t2l1) edge[->-,redge] (t2);
    \draw (t2l1u1) edge[bend right=12,dotted end] (t2l1l) (t2l1u1) to (t3)
          (t2l1u1) edge[->-,bedge] (t2l1)
          (t3) edge[->-,bedge] (t2l1);

    % Labels
    \node[below left=0cm and -0.325cm of bn6] (t0l) {$\vphantom{r_y}t_0$};
    \node[above right=-0.125cm and 0cm of t1] {$t_1$};
    \node[above right=-0.125cm and 0cm of t2] {$t_2$};
    \node[above right=0cm and -0.2cm of t3] {$t_3$};
    \path let \p1=(t0l),\p2=($(bn6)!0.5!(bn7)$) in node at (\x2,\y1) {$\vphantom{t_0}r_y$};
    \path let \p1=(t0l),\p2=($(bn12)!0.5!(bn13)$) in node at (\x2,\y1) {$\vphantom{t_0r_y}r_x$};
\end{tikzpicture}
        \caption{Important features of the part of a triangulation revealed by the first $i$ steps of the Schnyder peeling process $P_y$. Note that in all figures from here on, we we will often not draw the edges in all triangulated regions, in order to better emphasise particular features of the construction.}
        \label{fig:coinciding_root_edges}
    \end{figure}
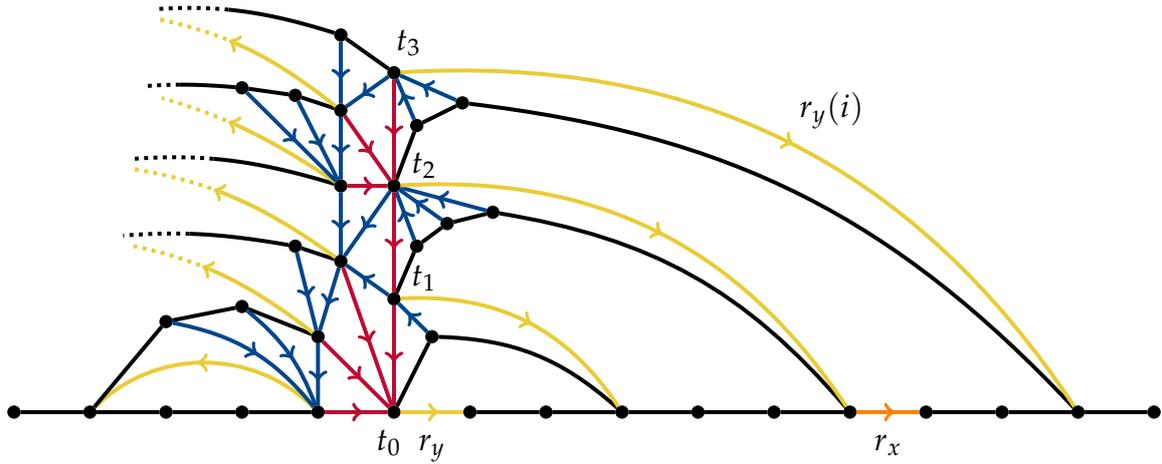
    
    \begin{figure}[h]
        \centering
        \begin{tikzpicture}
    \foreach \x in {1,...,16}{
        \node[mycircle] (bn\x) at (\x,0) {};
    }
    \foreach \x in {1,...,4,7,8,...,11,13,14,15}{
        \pgfmathtruncatemacro{\next}{\x + 1};
        \draw (bn\x) -- (bn\next);
    }

    % Tails of root edges
    \node[mycircle] (t1) at (6,1.5) {};
    \node[mycircle] (t2) at (6,3) {};
    \node[mycircle] (t3) at (6,4.5) {};

    \fill[fill=gray,opacity=0.1] (t3) to[bend left=30] (bn15) to (bn15.center) to (bn6.center) to (t3.center) to cycle;
    
    \draw (bn5) edge[->-,redge] (bn6)
          (bn6) edge[->-,yedge] (bn7)
          (bn12) edge[->-,color=orange] (bn13);

    % Right steps, inner vertices
    \node[mycircle] (t3u1) at (6.3,3.8) {};
    \node[mycircle] (t3u2) at (6.9,4.1) {};
    \draw (t2) to (t3u1) to (t3u2) to[bend left=20] (bn15)
          (t3u1) edge[->-,bedge] (t3)
          (t3u2) edge[->-,bedge] (t3);

    % Right steps, edges to root edge and boundary
    \draw (t1) edge[->-,redge] (bn6);
          % (t1) edge[->-,yedge,bend left=30] (bn9);
    \draw (t2) edge[->-,redge] (t1);
          % (t2) edge[->-,yedge,bend left=30] (bn12);
    \draw (t3) edge[->-,redge] (t2)
          (t3) edge[->-,yedge,bend left=30,"$\roote{y}{i}=\roote{x}{j}$" black] (bn15);

    % Left steps, t_0
    \node[mycircle] (t0l1) at (5,1) {};
    \node[mycircle] (t0l2) at (5.3,2) {};

    % \node[mycircle] (t0l0u1) at (4,1.4) {};
    % \node[mycircle] (t0l0u2) at (3,1.2) {};
    % \draw (bn5) edge[->-,yedge,bend right=45] (bn2);
    \draw % (bn2) to (t0l0u2) to (t0l0u1) to (t0l1)
          % (t0l0u2) edge[->-,bedge,bend left=20] (bn5)
          % (t0l0u1) edge[->-,bedge,bend left=15] (bn5)
          (t0l1) edge[->-,bedge] (bn5);
    % \node[mycircle] (t0l1u1) at (4.7,2.2) {};
    % \coordinate (t0l1l) at (2,2.3);
    \draw % (t0l1) edge[->-,yedge,bend right=15,dotted end] (t0l1l)
          (t0l1) edge[->-,redge] (bn6);
    \draw % (t0l1u1) edge[bend right=8,dotted end] (t0l1l) (t0l1u1) to (t0l2)
          % (t0l1u1) edge[->-,bedge] (t0l1)
          (t0l2) edge[->-,bedge] (t0l1);
    \node[mycircle] (t0l2u1) at (5.3,3) {};
    \node[mycircle] (t2l0l) at (2,2.3) {};
    \draw (t0l2) edge[->-,yedge,bend right=50,looseness=1.2] (bn1)
          (t0l2) edge[->-,redge] (bn6);
    \draw (t0l2u1) edge[bend right=25,dots] (t2l0l) (t2l0l) edge[bend right=20,dots] (bn1)
          (t0l2u1) edge[->-,bedge] (t0l2)
          (t2) edge[->-,bedge] (t0l2)
          (t1) edge[->-,bedge] (t0l2);

    % Left steps, t_2
    \node[mycircle] (t2l1) at (5.3,4) {};
    
    \node[mycircle] (t2l0u1) at (4.7,4.2) {};
    \node[mycircle] (t2l0u2) at (4,4.3) {};
    \draw (t0l2u1) edge[->-,yedge,bend right=50,looseness=1.2] (t2l0l)
          (t0l2u1) edge[->-,redge] (t2);
    \node[mycircle] (t2l1l) at (2.5,3.9) {};
    \draw (t2l0u2) edge[bend right=20,dots] (t2l1l) (t2l1l) edge[bend right=20,dots] (t2l0l) (t2l0u2) to (t2l0u1) to (t2l1)
          (t2l0u2) edge[->-,bedge] (t0l2u1)
          (t2l0u1) edge[->-,bedge] (t0l2u1)
          (t2l1) edge[->-,bedge] (t0l2u1);
    \node[mycircle] (t2l1u1) at (5.3,5) {};
    \draw (t2l1) edge[->-,yedge,bend right=60,looseness=1.2] (t2l1l)
          (t2l1) edge[->-,redge] (t2);
    \draw (t2l1u1) edge[bend right=45,looseness=1.2,dots] (t2l1l) (t2l1u1) to (t3)
          (t2l1u1) edge[->-,bedge] (t2l1)
          (t3) edge[->-,bedge] (t2l1);

    % Labels
    \node[below left=0cm and -0.325cm of bn6] (t0l) {$\vphantom{r_yt_0}$};
    % \node[right=0cm of t1] {$t_1$};
    % \node[right=0cm of t2] {$t_2$};
    \node[above right=0cm and -0.3cm of t3] {$\peelv{x}{j}$};
    \path let \p1=(t0l),\p2=($(bn6)!0.5!(bn7)$) in node at (\x2,\y1) {$\vphantom{t_0}r_y$};
    \path let \p1=(t0l),\p2=($(bn12)!0.5!(bn13)$) in node at (\x2,\y1) {$\vphantom{t_0r_y}r_x$};

    \node at (10,1.5) {$W$};
    \draw (t3u1) edge[->-,color=orange,bend left=20,"$\roote{x}{j-1}$"'{black,pos=0.35}] (bn14);
    \node at (3.5,1.2) {$\vdots$};
\end{tikzpicture}
        \caption{The area $\area{y}{i}$ and its subarea $W$, with an example scenario for the first step $j$ of $P_x$ at which the peeling vertex $\peelv{x}{j}$ is not contained in $\intr \area{y}{i}$.}
        \label{fig:area_W}
    \end{figure}

    \begin{claim}
        \label{claim:steprootarea}
        Let step $j$ be the first step of $P_x$ at which $\peelv{x}{j} \notin \intr \area{y}{i}$. Then for all $k < j$ we have that $\roote{x}{k}$ is contained in $W$, and that $\area{x}{k} \subseteq \area{y}{i}$.
    \end{claim}

    \begin{poc}
        We prove the claim by induction on $k$. For $k = 0$, the claim is obvious. Now suppose that it holds for some $k < j-1$, so the peeling vertex $\peelv{x}{k+1}$ is contained in $\intr \area{y}{i}$, the root edge $\roote{x}{k}$ is contained in $W$, and $\area{x}{k} \subseteq \area{y}{i}$.
        
        If step $k+1$ of $P_x$ is a left step, the root edge remains unchanged and so $\roote{x}{k+1}$ is contained in $W$. Otherwise, step $k+1$ of $P_x$ is a right step. Observe that the head of $\roote{x}{k+1}$ is some vertex $b_z$ on the initial boundary satisfying $z \geq x+1$. Moreover, since the tail of $\roote{x}{k+1}$, which is $\peelv{x}{k+1}$, is contained in $\intr \area{y}{i}$, planarity implies that $b_z$ is contained in $\area{y}{i}$. Since $z > x$ and $W$ contains the segment of the initial boundary in $\area{y}{i}$ to the right of $b_x$, it follows that $b_z$ is contained in $W-b_x$. Using again the fact that the tail of $\roote{x}{k+1}$ is in $\intr \area{y}{i}$, planarity implies that it must actually be contained in $W$ and so $\roote{x}{k+1}$ is contained in $W$.
        
        In both cases, applying \cref{lem:area_subset} shows that $\area{x}{k+1} \subseteq \area{y}{i}$. Hence, the claim also holds for $k+1$.
    \end{poc}

    Next, we argue that the peeling vertex $\peelv{x}{j}$ at step $j$ of $P_x$ must be $t_\ell$. This is due to structure of the area $\area{y}{i}$ that $P_y$ revealed in steps $1$ to $i$.

    \begin{claim}\label{claim:nbrs_interior}
        The only neighbours of $t_0, \dots, t_{\ell-1}$ which are not in $\intr \area{y}{i}$ are $t_\ell$ and potentially the head of $\roote{y}{i}$.
    \end{claim}
    
    \begin{poc}
        Consider $t_m$ for some $m < \ell$. Since $t_m$ is no longer the tail of the root edge of $P_y$ after step $i$, there must exist some right step $k$ of $P_y$ with $k \leq i$ such that $t_m$ is the tail of the root edge $\roote{y}{k-1}$. Observe that the neighbours of $t_m$ are either in $\intr \area{y}{k-1}$, in the interior of the part of $T$ enclosed by $\chord{y}{k}$ and $\boundary{y}{k-1}$, or they are the endpoints of $\chord{y}{k}$. In the first two cases, the neighbours will be in $\intr \area{y}{k} \subs \intr \area{y}{i}$. In the last case, note that the endpoints of $\chord{y}{k}$ are $t_{m+1}$ and some vertex $b_z$ on the segment of the initial boundary in $\area{y}{i}$ with $z \geq x+1$. Observe that the only such vertices which are not in $\intr \area{y}{i}$ are $t_\ell$ and the head of $\roote{y}{i}$, which proves the claim.
    \end{poc}

    Consider the tail $v$ of $\roote{x}{j-1}$. By \cref{claim:steprootarea}, $\roote{x}{j-1}$ lies in $W$ and so $v$ lies in $W$. Also, $v$ is the peeling vertex $\peelv{x}{k}$ for some $k < j$ which is in $\intr \area{y}{i}$ by the choice of $j$. In particular, $v$ cannot be $t_\ell$. Thus, the neighbours of $v$ are all either in $W$, or are neighbours of $t_0, \dots, t_{\ell-1}$. Since $\peelv{x}{j}$ is a neighbour of $v$ that is not in $\intr \area{y}{i}$, and clearly $\peelv{x}{j}$ can never be the head of a root edge $\roote{y}{i}$, it follows from \cref{claim:nbrs_interior} that $\peelv{x}{j}$ must be $t_\ell$. 

    The first chord at $t_\ell$ anticlockwise from $(t_\ell, t_{\ell-1})$ to the initial boundary must be $\roote{y}{i}$, since this was the chord chosen by step $i$ of $P_y$. It follows that $\roote{y}{i}$ is also the first chord at $t_\ell$ anticlockwise from $t_\ell v$ to the initial boundary, and hence $\roote{x}{j} = \roote{y}{i}$ as required.
\end{proof}

\subsubsection{Step 3: Segments of boundaries eventually coincide}

We now show that $P_x$ and $P_y$ will eventually agree on some segment of their boundaries left from their common root edge, and that this segment can be guaranteed not to be spanned by short paths. For all $y \leq x$, let $\fs{y}{x}$\glossarylabel{gl:fs} be the first step of $P_y$ where $\area{y}{\fs{y}{x}}$ contains $b_x$. Also, let $\head{z}{i}$\glossarylabel{gl:head} denote the head of $\roote{z}{i}$. Note that $\head{z}{i}$ always lies on both the initial boundary $\boundary{z}{0}$ and the updated boundary $\boundary{z}{i}$.

\begin{lemma}\label{lem:subsumedarea}
    If $z < y < x$, then $\area{z}{\fs{z}{x}} \sups \area{y}{\fs{y}{x}}$.
\end{lemma}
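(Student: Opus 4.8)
The plan is to derive this inclusion directly from \cref{lem:area_containment}, which already provides a coupling of two peeling processes once the leftward one has revealed the initial root edge of the rightward one. Concretely, I would run $P_z$ up to the step $\fs{z}{x}$ at which $b_x$ first enters the revealed area $\area{z}{\fs{z}{x}}$, verify that this area already contains $\inroote{y}$, feed this into \cref{lem:area_containment} to make $P_y$ ``catch up'' inside $\area{z}{\fs{z}{x}}$, and then transfer the resulting inclusion back to step $\fs{y}{x}$ by monotonicity of the revealed areas of a single process.

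First I would establish the hypothesis of \cref{lem:area_containment}. At step $\fs{z}{x}$ the lower boundary $B \cap \area{z}{\fs{z}{x}}$ is a contiguous segment of the initial boundary: its leftmost vertex has index at most $z$, since the tail $b_z$ of $\inroote{z}$ is revealed from the outset and left steps only reveal vertices further left, while its rightmost vertex is the root head $\head{z}{\fs{z}{x}}$, whose index is at least $x$ by \cref{obs:finitestepscoversegmentofboundary}. As $z < y$ and $y+1 \le x$, both $b_y$ and $b_{y+1}$ lie in this segment, so the edge $\inroote{y} = (b_y,b_{y+1})$ lies in $\area{z}{\fs{z}{x}}$. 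Applying \cref{lem:area_containment} with its roles assigned so that its ``left'' process is our $P_z$ and its ``right'' process is our $P_y$ (and with $i = \fs{z}{x}$), I obtain a step $j$ of $P_y$ with $\roote{y}{j} = \roote{z}{\fs{z}{x}}$ and $\area{y}{j} \subseteq \area{z}{\fs{z}{x}}$. The head of this shared root edge is $\head{z}{\fs{z}{x}}$, of index at least $x$, so $b_x \in \area{y}{j}$; since $\fs{y}{x}$ is by definition the \emph{first} step of $P_y$ whose revealed area contains $b_x$, this forces $j \ge \fs{y}{x}$. Nesting of the revealed areas of $P_y$ then gives $\area{y}{\fs{y}{x}} \subseteq \area{y}{j} \subseteq \area{z}{\fs{z}{x}}$, which is precisely the claim.

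The genuine content is carried entirely by \cref{lem:area_containment}; the only care needed lies in the boundary bookkeeping at the two ends, and the main obstacle is ensuring that the step $j$ produced by that lemma occurs \emph{at or after} $\fs{y}{x}$ rather than before it. This is exactly where I use that the coinciding root edge $\roote{z}{\fs{z}{x}}$ has its head at least as far right as $b_x$, combined with \cref{obs:finitestepscoversegmentofboundary} describing the revealed initial-boundary vertices as a contiguous segment terminating at the current root head. One should also note the degenerate case in which a single right step of $P_z$ carries the head strictly past $b_x$, so that $b_x$ enters $\area{z}{\cdot}$ without ever being the root head; this causes no difficulty, since only the inequality $x \le \text{index of } \head{z}{\fs{z}{x}}$ is used, and the same reasoning applies verbatim to $P_y$ at step $j$.
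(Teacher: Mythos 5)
Your proof is correct and takes essentially the same route as the paper's: both show $\inroote{y} \subseteq \area{z}{\fs{z}{x}}$, apply \cref{lem:area_containment} to obtain a step $j$ of $P_y$ with $\roote{y}{j} = \roote{z}{\fs{z}{x}}$ and $\area{y}{j} \subseteq \area{z}{\fs{z}{x}}$, deduce $\fs{y}{x} \le j$ because the shared root head lies at index at least $x$, and conclude by monotonicity of the revealed areas of $P_y$. The only difference is that you spell out the boundary bookkeeping (via \cref{obs:finitestepscoversegmentofboundary}, including the case where a right step jumps strictly past $b_x$) in more detail than the paper does.
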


\begin{proof}
    Note that $\area{z}{\fs{z}{x}}$ contains $b_z, b_{z+1}, \dots, b_x$, and so it contains $\inroote{y}$. By \cref{lem:area_containment} it follows that there exists a step $i$ of $P_y$ such that $\roote{y}{i}$ coincides with $\roote{z}{\fs{z}{x}}$ and $\area{y}{i} \subs \area{z}{\fs{z}{x}}$. Since $\roote{y}{i}$ coincides with $\roote{z}{\fs{z}{x}}$, we have that $\area{y}{i}$ contains all boundary vertices between $b_y$ and $\head{z}{\fs{z}{x}}$, and so it must contain $b_x$. In particular, $\fs{y}{x} \le i$ and so $\area{y}{\fs{y}{x}} \subs \area{y}{i} \subs \area{z}{\fs{z}{x}}$.
\end{proof}

If $x$ tends to $\infty$, then both $\fs{z}{x}$ and $\fs{y}{x}$ tend to $\infty$, and so it follows from \cref{lem:subsumedarea} that a given Schnyder wood segment is always contained within any Schnyder wood segment rooted further left on the initial boundary.

\begin{corollary}\label{cor:nested_segments}
    If $z < y$, then $\evarea{z} \sups \evarea{y}$.
\end{corollary}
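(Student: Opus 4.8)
The plan is to obtain the corollary as the $x \to \infty$ limit of \cref{lem:subsumedarea}. Fix $z < y$; the goal is to show $\evarea{y} \subs \evarea{z}$. The first ingredient is that $\fs{y}{x}$ is finite for every $x > y$: by \cref{obs:infiniterightsteps}, $P_y$ eventually covers every initial boundary vertex to the right of its root edge, and in particular covers $b_x$, so the first step $\fs{y}{x}$ at which $\area{y}{\fs{y}{x}}$ contains $b_x$ exists. The same reasoning applies to $P_z$, so $\fs{z}{x}$ is finite for all $x > z$, and in particular \cref{lem:subsumedarea} applies to every $x > y > z$.

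Next I would argue that the finite nested areas $\area{y}{\fs{y}{x}}$ exhaust $\evarea{y}$ as $x \to \infty$. This rests on local finiteness of $T$: each step of the Schnyder peeling process reveals only finitely many edges, so for each fixed $i$ the revealed area $\area{y}{i}$ is finite and hence contains only finitely many boundary vertices. Thus for any fixed $i$, all sufficiently large $x$ satisfy $b_x \notin \area{y}{i}$, which forces $\fs{y}{x} > i$ and therefore $\area{y}{i} \subs \area{y}{\fs{y}{x}}$ by monotonicity of the revealed area in the step index. Consequently $\area{y}{i} \subs \bigcup_{x > y} \area{y}{\fs{y}{x}}$ for every $i$, and taking the union over $i$ gives $\evarea{y} = \bigcup_i \area{y}{i} \subs \bigcup_{x > y} \area{y}{\fs{y}{x}}$; the reverse inclusion is immediate since each $\area{y}{\fs{y}{x}} \subs \evarea{y}$. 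Hence $\bigcup_{x > y} \area{y}{\fs{y}{x}} = \evarea{y}$.

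Finally, I would apply \cref{lem:subsumedarea} termwise: for each $x > y > z$ we have $\area{y}{\fs{y}{x}} \subs \area{z}{\fs{z}{x}} \subs \evarea{z}$. Taking the union over all such $x$ and using the identity from the previous paragraph yields $\evarea{y} = \bigcup_{x > y} \area{y}{\fs{y}{x}} \subs \evarea{z}$, which is exactly $\evarea{z} \sups \evarea{y}$.

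I do not expect any genuine obstacle here: the argument is routine once \cref{lem:subsumedarea} is available. The only point needing care is the limiting step, namely verifying that the nested finite areas $\area{y}{\fs{y}{x}}$ exhaust $\evarea{y}$; but this follows directly from finiteness of each $\fs{y}{x}$ (via \cref{obs:infiniterightsteps}), from the fact that each revealed area is finite (local finiteness of $T$), and from monotonicity of the revealed areas in the step index.
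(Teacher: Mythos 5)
Your proof is correct and takes essentially the same route as the paper: the paper obtains the corollary by letting $x \to \infty$ in \cref{lem:subsumedarea}, observing that $\fs{y}{x}$ and $\fs{z}{x}$ tend to infinity. Your argument is this same limiting argument, with the exhaustion step $\evarea{y} = \bigcup_{x > y} \area{y}{\fs{y}{x}}$ (via finiteness of each $\area{y}{i}$ and \cref{obs:infiniterightsteps}) spelled out explicitly rather than left implicit.
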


We next show that processes rooted sufficiently far left will eventually share a common boundary segment. For any $y \leq x$, let $\segment{y}{x}{k}$\glossarylabel{gl:segment} be the segment of the first $k$ edges left of $\head{y}{\fs{y}{x}}$ along $\boundary{y}{\fs{y}{x}}$.

\begin{lemma}\label{lem:common_bdry}
    For all $x \in \Z$, $k > 0$, and $\eps > 0$, there exist $z \leq y \leq x$ such that
    \[
        \pr\left(\head{w}{\fs{w}{y}} \leq b_x \text{ and } \segment{w}{y}{k} = \segment{z}{y}{k} \text{ for all } w \leq z\right) \geq 1 - \eps.
    \]
\end{lemma}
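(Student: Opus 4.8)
The plan is to choose $y$ well to the left of $x$, then $z$ well to the left of $y$, and to exhibit a single event of probability at least $1-\eps$ on which every process $P_w$ with $w \le z$ first covers $b_y$ through the same chord, so that the root edges $\roote{w}{\fs{w}{y}}$, their heads, and the segments $\segment{w}{y}{k}$ all coincide. Both conclusions of the lemma then follow at once: the segment equality is immediate, and $\head{w}{\fs{w}{y}} \le b_x$ reduces to the same statement for the single common head $\head{z}{\fs{z}{y}}$.

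I would first assemble the deterministic inputs. By \cref{lem:subsumedarea} the revealed areas nest, $\area{w}{\fs{w}{y}} \supseteq \area{w'}{\fs{w'}{y}}$ for $w \le w' \le z < y$, and the extra exploration performed by $P_w$ beyond $P_{w'}$ sits near the boundary indices between $w$ and $w'$, far to the left of $b_y$. By \cref{lem:area_containment}, since $\area{w}{\fs{w}{y}}$ contains $\inroote{z}$, the process $P_z$ reaches the root edge $\roote{w}{\fs{w}{y}}$ at some step $j$ with $\area{z}{j} \subseteq \area{w}{\fs{w}{y}}$; as the head of that root edge lies to the right of $b_y$, this already gives $\fs{z}{y} \le j$. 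The entire difficulty is to upgrade this single coincidence into the statement that $j = \fs{z}{y}$ and that $P_z$ and $P_w$ agree on every step from their first common root edge up to and including the one covering $b_y$, so that their $k$-edge left-segments match.

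Next I would build the good event from three ingredients, all uniform in the starting index by translation invariance (\cref{lem:translation_invariance}). Applying \cref{cor:boundedleftmvmt} to $P_z$ confines, with probability close to $1$, the window in which $b_y$ is covered to a bounded neighbourhood of $b_y$, and forces the merging supplied by \cref{lem:area_containment} to take place to the left of $b_y$. The light tails of the right-step chord length recorded in \cref{ssec:uihpt:segment} bound the overshoot of the head past $b_y$, so choosing $y$ with $x - y$ large makes $\head{z}{\fs{z}{y}} \le b_x$ likely. Finally, \cref{lem:no_short_paths} with $d = 3$ guarantees that no path of length at most three spans a segment of the initial boundary, of width $n$ exceeding the reach of $k$ consecutive boundary edges, placed just to the left of this window. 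Crucially, the last two are properties of $T$ (and of the single process $P_z$), so the universal quantifier over all $w \le z$ will be obtained as a deterministic consequence of the nesting and merging above, not by a union bound over infinitely many processes.

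The crux, which I expect to be the main obstacle, is the synchronisation step. Once $P_z$ and $P_w$ share a root edge lying to the left of $b_y$, their subsequent right steps depend only on the part of $T$ to the right of that edge, which both processes see identically; the real danger is a left step, which could reveal part of the far-left region that $P_w$ has already explored and thereby split the two trajectories. This is exactly what the no-short-paths event rules out: a left step reaching into that far-left region would require a chord, and hence --- since the peeling vertex lies within distance two of the boundary --- a path of length at most three spanning the protected segment, which does not exist on the good event. I would therefore argue by induction on the steps following the first shared root edge, using \cref{lem:area_subset} and planarity to propagate the area containment and the no-short-paths event to forbid any boundary chord from crossing the protected segment, concluding that all $P_w$ with $w \le z$ cover $b_y$ identically. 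Taking $z$ far enough to the left that the covering window is separated from the start of every $P_w$ by the protected segment then delivers $\segment{w}{y}{k} = \segment{z}{y}{k}$ for all $w \le z$ on an event of probability at least $1 - \eps$.
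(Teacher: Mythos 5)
Your proposal has a genuine gap, and it sits exactly at the step you flag as the crux. The synchronisation claim --- that once $P_z$ and $P_w$ share a root edge, all subsequent steps coincide --- fails at its base case. At the merge time produced by \cref{lem:area_containment} (say step $j$ of $P_z$ and step $i$ of $P_w$), the two processes do share a root edge, but their updated boundaries \emph{immediately left of its tail} are different: $P_z$ has only revealed $\area{z}{j} \subseteq \area{w}{i}$, so vertices that $P_w$ covered long ago (for instance initial-boundary vertices left of $b_z$ covered by $P_w$ but not yet by $P_z$, and more generally vertices of $\boundary{z}{j}$ lying in $\intr \area{w}{i}$) are still on $P_z$'s boundary. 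Since the next peeling vertex is the vertex one step left of the tail of the root edge \emph{along the current boundary}, the two processes generally peel at different vertices immediately after merging: $P_z$ must first re-explore, over several steps, territory interior to $\area{w}{i}$, while $P_w$ explores new territory. The disagreement between the two boundaries is therefore not confined to a ``far-left region'' of the initial boundary that a protected segment could fence off; at merge time it extends along the updated boundaries right up to the shared root edge. Consequently no short-path condition on a segment of the \emph{initial} boundary (with $d=3$, or $d=6$ after correcting the distance arithmetic) can force the peeling vertices, let alone the chords, of the two processes to agree. What stepwise synchronisation genuinely requires is that the \emph{updated} boundaries already agree on a $k$-edge stretch left of the common root edge, protected by a no-short-path condition in the unexplored region relative to that updated boundary --- this is \cref{clm:eventuallyconsistent} inside \cref{thm:consistent_strip_colouring}, which consumes \cref{lem:no_short_paths_spanning_common_bdry} --- and the existence of that common stretch is precisely the statement of \cref{lem:common_bdry}. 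Your route is circular.

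The paper's proof avoids synchronisation entirely and is much softer. First, on the single event from \cref{lem:no_short_paths} (with $d=3$) that no path of length at most $3$ spans $b_y, \dots, b_x$, the bound $\head{w}{\fs{w}{y}} \le b_x$ holds deterministically and simultaneously for \emph{all} $w < y$: by definition of $\fs{w}{y}$ we have $\head{w}{\fs{w}{y}-1} \le b_y \le \head{w}{\fs{w}{y}}$, and consecutive heads are joined by a path of length at most $3$, so the head cannot jump over the block. This replaces your overshoot estimate, which would otherwise have to be made uniform over infinitely many processes. Second, the segments are handled by pure monotonicity plus local finiteness: \cref{lem:subsumedarea} gives the nested increasing chain $\area{y-1}{\fs{y-1}{y}} \subseteq \area{y-2}{\fs{y-2}{y}} \subseteq \cdots$, and since each $\segment{w}{y}{k}$ lies in the almost surely finite ball $\ball{x-y+k}{b_x}$, the intersections $\area{w}{\fs{w}{y}} \cap \ball{x-y+k}{b_x}$ stabilise as $w \to -\infty$; hence the boundaries, and with them the segments, coincide for all $w$ beyond some random $z^\ast$. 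Continuity of measure along the monotone events $E_z$ then extracts a deterministic $z$ with $\pr(E_z) \ge 1 - \eps$. If you want to keep your synchronisation picture, it has to come \emph{after} this lemma, which is exactly the division of labour between \cref{lem:common_bdry} and \cref{thm:consistent_strip_colouring} in the paper.
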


\begin{proof}
    Applying \cref{lem:no_short_paths} with $d = 3$ and $\eps/2$, there exists $y \le x$ such that
    \[
        \pr(\text{some path of length at most } 3 \text{ spans } b_y, b_{y+1}, \dots, b_x \subs B) < \frac{\eps}{2}.
    \]
    For now, suppose that no path of length at most $3$ spans $b_y,\dots,b_x$. We claim that $\head{w}{\fs{w}{y}} \leq b_x$ for all $w < y$. Indeed, by definition of $\fs{w}{y}$ we know that $\head{w}{\fs{w}{y}-1} \leq b_y$ while $\head{w}{\fs{w}{y}} \geq b_y$. Moreover, for all $i$ we have that $\head{w}{i}$ and $\head{w}{i+1}$ are connected by a path of length at most $3$, so this also holds for $\head{w}{\fs{w}{y}-1}$ and $\head{w}{\fs{w}{y}}$. Given that no path of length at most $3$ spans $b_y, b_{y+1}, \dots, b_x$, this implies that $\head{w}{\fs{w}{y}} \leq b_x$.

    \Cref{lem:subsumedarea} implies that $\area{v}{\fs{v}{y}} \sups \area{w}{\fs{w}{y}}$ for all $v < w < y$, so we have an infinite chain
    \[
        \area{y-1}{\fs{y-1}{y}} \subseteq \area{y-2}{\fs{y-2}{y}} \subseteq \area{y-3}{\fs{y-3}{y}} \subseteq \dots
    \]
    of nested areas. Since the ball $\ball{x-y+k}{b_x}$ is almost surely finite, this implies that regions $\area{w}{\fs{w}{y}} \cap \ball{x-y+k}{b_x}$ must eventually converge to a common region as $w \to -\infty$. Formally, this means that almost surely there is some $z^\ast < y$ such that $\area{w}{\fs{w}{y}} \cap \ball{x-y+k}{b_x} = \area{z^\ast}{\fs{z^\ast}{y}} \cap \ball{x-y+k}{b_x}$ for all $w \leq z^\ast$, and therefore also $\boundary{w}{\fs{w}{y}} \cap \ball{x-y+k}{b_x} = \boundary{z^\ast}{\fs{z^\ast}{y}} \cap \ball{x-y+k}{b_x}$ for all $w \leq z^\ast$. Since the segment $\segment{w}{y}{k}$ is contained in the ball $\ball{x-y+k}{b_x}$ and consists of the first $k$ edges clockwise along the boundary $\boundary{w}{\fs{w}{y}}$ that are not on the initial boundary, this implies that $\segment{w}{y}{k} = \segment{z^\ast}{y}{k}$ for all $w \leq z^\ast$.

    For all $z < y$, let $E_z$ be the event that $\head{w}{\fs{w}{y}} \leq b_x$ and $\segment{w}{y}{k} = \segment{z}{y}{k}$ for all $w \leq z$. As shown above, if no path of length $3$ spans $b_y, b_{y+1}, \dots, b_x \subs B$, then one of these events $E_z$ occurs, and so $\pr(\bigcup_{z < y} E_z) \geq 1 - \eps/2$. Since $E_z \subs E_{z-1}$, it follows that
    \[
        \lim_{z \to -\infty} \pr(E_z) = \pr\left(\bigcup_{z < y} E_z\right) \geq 1 - \frac{\eps}{2},
    \]
    and so there exists some $z < y$ such that $\pr(E_z) \geq 1 - \eps$. In particular,
    \[
        \pr\left(\head{z}{\fs{z}{y}} \leq b_x \text{ and } \segment{w}{y}{k} = \segment{z}{y}{k} \text{ for all } w \leq z\right) \geq \pr(E_z) \ge 1 - \eps. \qedhere
    \]
\end{proof}

As a consequence of the preceding lemma and the fact that no short paths span the boundary of the UIHPT, we show that there will be no short paths spanning the common boundary segment of the Schnyder peeling processes.

\begin{lemma}\label{lem:no_short_paths_spanning_common_bdry}
    For all $x \in \Z$, $d > 0$, and $\eps > 0$, there exist $k \in \N$ and $z \leq y \leq x$ satisfying the following. Let $E_w$ be the event that $\head{w}{\fs{w}{y}} \leq b_x$, $\segment{w}{y}{k} = \segment{z}{y}{k}$, and there is no path of length at most $d$ spanning $\segment{w}{y}{k} \subs \boundary{w}{\fs{w}{y}}$ in $T \sm \intr \area{w}{\fs{w}{y}}$. Then,
    \[
        \pr\left(\bigcap_{w \leq z} E_w\right) \geq 1 - \eps.
    \]
\end{lemma}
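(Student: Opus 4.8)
The plan is to realise the event $\bigcap_{w \le z} E_w$ as (essentially) the intersection of two high-probability events: a \emph{common-boundary} event coming from \cref{lem:common_bdry}, and a \emph{no-spanning-path} event for the single residual triangulation left behind by $P_z$, coming from \cref{lem:no_short_paths}. The nesting of revealed areas will then let me reduce the ``for all $w \le z$'' requirement to a statement about $w = z$ alone.

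First I would fix the parameters in the right order. Applying \cref{lem:no_short_paths} with length budget $d+3$ and error $\eps/2$ produces an integer $k$ such that, in any UIHPT, a fixed segment of $k$ consecutive boundary edges is spanned by a path of length at most $d+3$ with probability at most $\eps/2$; this fixes $k$. Feeding this $k$, together with the given $x$ and error $\eps/2$, into \cref{lem:common_bdry} then produces $z \le y \le x$ for which the event $F \coloneqq \{\,\head{w}{\fs{w}{y}} \le b_x \text{ and } \segment{w}{y}{k} = \segment{z}{y}{k} \text{ for all } w \le z\,\}$ has probability at least $1 - \eps/2$. Writing $B' \coloneqq \segment{z}{y}{k}$, I next consider the residual triangulation $R_z \coloneqq T \sm \intr \area{z}{\fs{z}{y}}$. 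Since $\fs{z}{y}$ is an almost surely finite stopping time for the peeling exploration and each peeling step leaves a UIHPT (\cref{lem:hole_uihpt} and the induction following it), $R_z$ is distributed as a UIHPT rooted at $\roote{z}{\fs{z}{y}}$, and $B'$ is exactly the segment of $k$ boundary edges immediately left of its root head. Hence \cref{lem:no_short_paths} applies to $R_z$ and gives $\pr(G) \ge 1 - \eps/2$, where $G$ is the event that no path of length at most $d+3$ spans $B'$ in $R_z$.

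The heart of the argument is the inclusion $F \cap G \subseteq \bigcap_{w \le z} E_w$, and establishing it is the step I expect to be the main obstacle. On $F$ the first two clauses of each $E_w$ hold immediately, since all the segments coincide with $B'$ and all the root heads lie at most $b_x$. For the third clause, fix $w \le z$ and suppose, towards a contradiction, that some path $P$ of length at most $d$ spans $B'$ in the residual $R_w \coloneqq T \sm \intr \area{w}{\fs{w}{y}}$. By \cref{lem:subsumedarea} we have $\area{w}{\fs{w}{y}} \sups \area{z}{\fs{z}{y}}$, so $R_w$ is a subgraph of $R_z$ and $P$ is also a path in $R_z$. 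Its right endpoint lies on the right ray of $B'$, which is common to $R_w$ and $R_z$ (both equal the initial boundary to the right of the shared root head), so it already lies on $\partial R_z$. Its left endpoint $u$ lies on the left ray of $\partial R_w$; using that every boundary vertex produced by the peeling process lies within distance three of the initial boundary, $u$ can be joined inside $R_z$ by a path of length at most three to a boundary vertex $\bar u$ of $R_z$ lying, by planarity, on the left side of $B'$. Concatenating, I obtain a path of length at most $d+3$ spanning $B'$ in $R_z$, contradicting $G$. Thus the third clause of $E_w$ holds for every $w \le z$, proving the inclusion.

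Finally, a union bound gives $\pr\bigl(\bigcap_{w \le z} E_w\bigr) \ge \pr(F \cap G) \ge 1 - \pr(F^c) - \pr(G^c) \ge 1 - \eps$, as required. The delicate points, which I would treat carefully, are the stopping-time justification that $R_z$ is again a UIHPT (so that \cref{lem:no_short_paths} may legitimately be invoked on it), and the transfer step above: the left endpoint of a spanning path of $R_w$ need not lie on $\partial R_z$, since beyond the region where the two revealed areas agree the left ray of $R_w$ consists of peeling vertices that are interior to $R_z$, and it is precisely to absorb the short detour back to $\partial R_z$ that I budget $d+3$ rather than $d$ both in the choice of $k$ and in the definition of $G$.
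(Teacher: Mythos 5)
Your proposal is correct, and it takes a genuinely different route from the paper, despite using the same two ingredients (\cref{lem:no_short_paths} and \cref{lem:common_bdry}) in the same order. The paper handles the intersection over the infinitely many $w \le z$ probabilistically: it applies \cref{lem:no_short_paths} separately to \emph{every} residual $T \sm \intr \area{w}{\fs{w}{y}}$ (each a UIHPT by \cref{lem:hole_uihpt}), giving the uniform bound $\pr(E \cap E_w) \ge 1 - \eps$ for each fixed $w$, and then shows the events are nested, $E \cap E_{w-1} \subs E \cap E_w$, by cutting a spanning path in the residual of $P_w$ at the point where it first enters the extra region revealed by $P_{w-1}$; continuity from above then yields $\pr\left(\bigcap_{w \le z} E_w\right) = \lim_{w \to -\infty}\pr(E \cap E_w) \ge 1 - \eps$, with no union bound and no inflation of $d$. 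You instead invoke \cref{lem:no_short_paths} once, at $w = z$, with the inflated length $d + 3$, and replace the monotone-limit step by a deterministic transfer in the opposite direction: by \cref{lem:subsumedarea}, a length-$d$ spanning path in $T \sm \intr \area{w}{\fs{w}{y}}$ already lies in the \emph{larger} residual $T \sm \intr \area{z}{\fs{z}{y}}$, and only its left endpoint must be re-attached to $\boundary{z}{\fs{z}{y}}$ by a detour of length at most $3$, after which a two-event union bound finishes. The two transfers are dual: the paper's goes into the smaller residual, where containment of the path is the issue but the endpoint comes for free, while yours goes into the larger residual, where containment is free but the endpoint is the issue, and yours pays with the $d \mapsto d+3$ inflation, which is harmless because $k$ is chosen after $d$. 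Two points in your re-attachment need the care you promise at the end: the distance-$3$ path from $u$ to the initial boundary cannot be followed blindly, as it may enter $\intr \area{z}{\fs{z}{y}}$, so it must be truncated at its first vertex on $\boundary{z}{\fs{z}{y}}$ (the truncated walk stays in the residual because every neighbour of a vertex of $\intr \area{z}{\fs{z}{y}}$ lies in $\area{z}{\fs{z}{y}}$); and planarity only forces the landing vertex off the interior of $\segment{z}{y}{k}$ and off the right ray, leaving the corner case where it lands on the left endpoint of the segment itself, which does not literally span --- but the paper's own ``must also span'' assertion glosses over exactly the same corner case, and either argument absorbs it by one extra unit of $d$ or $k$.
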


\begin{proof}
    First, apply \cref{lem:no_short_paths} with $d$ and $\eps/2$ to obtain $n$, and let $k = n$. Then, apply \cref{lem:common_bdry} with $x$, $k$, and $\eps/2$ to obtain $z \leq y \leq x$ such that
    \[
        \pr\left(\head{w}{\fs{w}{y}} \leq b_x \text{ and } \segment{w}{y}{k} = \segment{z}{y}{k} \text{ for all } w \leq z\right) \geq 1 - \frac{\eps}{2},
    \]
    and denote the event in the above expression by $E$.
    
    It follows from \cref{lem:hole_uihpt} that $T \sm \intr \area{w}{\fs{w}{y}}$ is a UIHPT. In particular, by \cref{lem:no_short_paths} we get that, for all $w \leq z$,
    \[
        \pr\left(\text{some path of length at most } d \text{ spans } \segment{w}{y}{k} \subs \boundary{w}{\fs{w}{y}}\right) \leq \frac{\eps}{2}.
    \]
    It follows that $\pr(E \cap E_w) \geq 1 - \eps$. By definition $\bigcap_{w \leq z} E_w \subs E$. Moreover, by \cref{lem:area_containment} we have that $\area{w-1}{\fs{w-1}{y}} \sups \area{w}{\fs{w}{y}}$ and so when $E$ occurs, every path that spans $\segment{z}{y}{k} = \segment{w}{y}{k}$ as a segment of the boundary $\boundary{w}{\fs{w}{y}}$ must also span $\segment{z}{y}{k} = \segment{w-1}{y}{k}$ as a segment of the boundary $\boundary{w-1}{\fs{w-1}{y}}$. Hence, $E \cap E_{w-1} \subs E \cap E_w$. This implies that
    \[
        \pr\left(\bigcap_{w \leq z} E_w\right) = \pr\left(E \cap \bigcap_{w \le z} E_w\right) = \lim_{w \to -\infty} \pr(E \cap E_w) \geq 1-\eps. \qedhere
    \]
\end{proof}

\subsubsection{Step 4: Existence of the Schnyder wood strip}

We are now prepared to show that we can define a consistent colouring on a two-ended infinite strip along the initial boundary. We say $P_x$ produces a \defn{consistent} colouring on the ball $\ball{\rho}{v}$ if for all $y < x$, the two Schnyder peeling processes $P_x$ and $P_y$ produce the same colouring and orientation on $\ball{\rho}{v}$. Consider the vertex $b_0$ on the initial boundary, and let $C_{x,\rho}$ be the event that $P_x$ produces a consistent colouring on $\ball{\rho}{b_0}$. Note that $C_{x,\rho+1} \subs C_{x,\rho} \subs C_{x-1,\rho}$. Using the results of steps 1 to 3 in this section, we show that some Schnyder peeling process will produce a consistent colouring. An illustration of the overall proof strategy is provided in \cref{fig:proof_components}.

\begin{theorem}\label{thm:consistent_strip_colouring}
    For all $\rho \in \N$,
    \[
        \lim_{z \to -\infty} \pr(C_{z,\rho}) = 1. 
    \]
\end{theorem}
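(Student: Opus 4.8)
Since $C_{z,\rho+1}\subs C_{z,\rho}\subs C_{z-1,\rho}$, the events $C_{z,\rho}$ increase as $z\to-\infty$, so the stated limit exists and equals $\pr\!\left(\bigcup_{z}C_{z,\rho}\right)$; it therefore suffices to show that for every $\eps>0$ there is some $z$ with $\pr(C_{z,\rho})\ge 1-\eps$. The plan is to exhibit an event of probability at least $1-\eps$ on which \emph{all} processes $P_w$ with $w\le z$ assign the same colouring and orientation to $\ball{\rho}{b_0}$. Such an event lies inside $C_{z,\rho}$, since $C_{z,\rho}$ only requires that $P_z$ agree with each $P_w$ for $w<z$.

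First I would apply \cref{lem:no_short_paths_spanning_common_bdry} with the given $\eps$, with $d$ taken to be the (bounded) reach of a single peeling step, and with $x$ chosen far enough to the left that the part of $\ball{\rho}{b_0}$ meeting the Schnyder wood strip lies strictly to the right of $b_x$. This returns $k\in\N$ and $z\le y\le x$ such that $\pr\!\left(\bigcap_{w\le z}E_w\right)\ge 1-\eps$. On this event every $P_w$ reaches, at its step $\fs{w}{y}$, the same boundary segment $\segment{w}{y}{k}=\segment{z}{y}{k}$ of $k$ edges immediately to the left of its root head, with no path of length at most $d$ spanning this segment inside the unexplored region $T\sm\intr\area{w}{\fs{w}{y}}$. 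Because equal segments force equal heads $\head{w}{\fs{w}{y}}$ and equal tails (the tail of the root edge is the segment vertex adjacent to the head), all of these processes in fact share a common root edge $r^\ast$ at their synchronisation steps.

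The crux of the argument is to promote this shared configuration to the statement that the processes $P_w$ with $w\le z$ colour the whole region to the right of the common segment identically. I would use that, given the ambient triangulation, the Schnyder peeling process is a deterministic function of the local structure near its current root: after step $\fs{w}{y}$ each $P_w$ continues as the Schnyder peeling process run from $r^\ast$ on the unexplored region $T\sm\intr\area{w}{\fs{w}{y}}$, which is a UIHPT by \cref{lem:hole_uihpt}. Since the $P_w$ agree on $r^\ast$ and on $\segment{z}{y}{k}$, and their revealed areas coincide to the right of this segment (\cref{lem:area_containment} ensures the root edges genuinely coincide and that the areas nest consistently), these continuations perform identical peeling steps and reveal identical coloured partial triangulations, \emph{provided no step ever explores past the left end of the common segment}. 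This non-crossing is where both earlier ingredients enter: a single chord reaching across the segment would, together with its bounded-length connection back to the boundary, form a path of length at most $d$ spanning the segment, which $E_w$ forbids; and gradual leftward encroachment is controlled by \cref{cor:boundedleftmvmt} applied to each UIHPT $T\sm\intr\area{w}{\fs{w}{y}}$ rooted at $r^\ast$, so that taking $k$ larger than the resulting bound — exactly how $k$ is produced inside \cref{lem:no_short_paths_spanning_common_bdry} — keeps every $P_w$ inside the common region. The common segment thus acts as an impenetrable barrier, and all $P_w$ with $w\le z$ agree on the colouring to its right.

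I expect this non-crossing claim to be the main obstacle, since it is where the merely probabilistic control of the leftward displacement (\cref{lem:peeling_moves_right}) must be converted into a deterministic barrier that no continuation can breach. Granting it, the conclusion follows: by the choice of $x$, the coloured part of $\ball{\rho}{b_0}$ lies to the right of the barrier, and after synchronisation the processes explore rightward (\cref{lem:peeling_moves_right}) and identically colour the strip near $b_0$. Hence on $\bigcap_{w\le z}E_w$ all $P_w$ with $w\le z$ produce the same colouring and orientation on $\ball{\rho}{b_0}$, so $\bigcap_{w\le z}E_w\subs C_{z,\rho}$ and $\pr(C_{z,\rho})\ge 1-\eps$. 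As $\eps>0$ was arbitrary, $\lim_{z\to-\infty}\pr(C_{z,\rho})=1$.
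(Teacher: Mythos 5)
Your overall strategy is the same as the paper's: use \cref{lem:no_short_paths_spanning_common_bdry} with $d=3$ to obtain, with probability at least $1-\eps$, a common boundary segment shared by all processes $P_w$ with $w\le z$ at their steps $\fs{w}{y}$; argue that no later step can explore past the leftmost vertex of that segment, because the head of any newly chosen chord lies within distance $3$ of the current root head, so a chord crossing the segment would produce a forbidden length-$3$ spanning path; and conclude that after synchronisation all the processes perform identical steps. This matches \cref{clm:noleftcover} and \cref{clm:eventuallyconsistent} in the paper and is sound. (One secondary point: your additional appeal to \cref{cor:boundedleftmvmt} to control ``gradual leftward encroachment'' of the continuations is both unnecessary and insufficient --- that corollary is probabilistic, whereas the barrier must hold deterministically on the event $\bigcap_{w\le z}E_w$, and it does, by the chord argument you already give; the probability budget has already been spent inside \cref{lem:no_short_paths_spanning_common_bdry}.)

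The genuine gap is in how you dispose of what happens \emph{before} synchronisation. The areas $\area{w}{\fs{w}{y}}$ are coloured genuinely differently by different $P_w$, so your inclusion $\bigcap_{w\le z}E_w\subs C_{z,\rho}$ holds only if these pre-synchronisation areas are disjoint from $\ball{\rho}{b_0}$. You try to arrange this by ``choosing $x$ far enough to the left that the part of $\ball{\rho}{b_0}$ meeting the Schnyder wood strip lies strictly to the right of $b_x$'', but no such deterministic choice exists: $\ball{\rho}{b_0}$ is random, and for every fixed $x$ there is positive probability that it reaches boundary vertices left of $b_x$ via short paths --- ruling this out is precisely what \cref{lem:no_short_paths} is for. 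The paper closes this with a separate, first application of \cref{lem:no_short_paths} with $d=\rho+3$: choose $n$ so that with probability at least $1-\eps/2$ no path of length at most $\rho+3$ spans $b_{-n},\dots,b_0$, set $x=-n$, and then (\cref{clm:startirrelevant}) if some $\area{w}{\fs{w}{y}}$ met $\ball{\rho}{b_0}$, a path of length at most $\rho$ from $b_0$ to that area's boundary, extended by a path of length at most $3$ down to the initial boundary left of $b_x$, would span $b_x,\dots,b_0$, a contradiction. The consistency event is then the intersection $F$ of this no-short-path event with $\bigcap_{w\le z}E_w$ (each taken with parameter $\eps/2$), not $\bigcap_{w\le z}E_w$ alone. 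Without this ingredient your final step --- ``the coloured part of $\ball{\rho}{b_0}$ lies to the right of the barrier'' --- is unjustified, and the proof does not go through; with it, your argument becomes the paper's proof.
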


\begin{proof}
    Let $\rho \in \mathbb{N}$ and $\eps > 0$. First, choose $n \in \N$ according to \cref{lem:no_short_paths} such that
    \begin{equation}\label{eq:prob_no_short_path}
        \pr(\text{no path of length at most } \rho + 3 \text{ spans } b_{-n}, b_{-n+1}, \dots, b_0 \subs B) \ge 1 - \frac{\eps}{2}.
    \end{equation}
    Then, apply \cref{lem:no_short_paths_spanning_common_bdry} with $x = -n$, $d = 3$, and $\eps/2$ to obtain $k \in \Z$ and $z \leq y \leq x$ satisfying the following. Let $E_w$ be the event that $\head{w}{\fs{w}{y}} \leq b_x$, $\segment{w}{y}{k} = \segment{z}{y}{k}$, and there is no path of length at most $3$ spanning $\segment{w}{y}{k} \subs \boundary{w}{\fs{w}{y}}$. Then,
    \begin{equation}\label{eq:prob_no_short_path_plus}
        \pr\left(\bigcap_{w \leq z} E_w\right) \geq 1 - \frac{\eps}{2}.
    \end{equation}
    Let $F$ be the intersection of the events in \eqref{eq:prob_no_short_path} and \eqref{eq:prob_no_short_path_plus}. Thus, $\pr(F) \geq 1 - \eps$.
    \begin{claim}\label{clm:startirrelevant}
        If $F$ occurs then $\area{w}{\fs{w}{y}} \cap \ball{\rho}{b_0} = \emptyset$ for all $w \leq z$.
    \end{claim}

    \begin{poc}
        Suppose for a contradiction that $\area{w}{\fs{w}{y}} \cap \ball{\rho}{b_0} \neq \emptyset$, so there is a path of length at most $\rho$ from $b_0$ to the boundary of $\area{w}{\fs{w}{y}}$. Note that all vertices on that boundary have a path of length at most $3$ to a vertex on the segment $\area{w}{\fs{w}{y}} \cap B$ of the initial boundary. Since that segment is left of $\head{w}{\fs{w}{y}} \leq b_x$, it follows that there is a path of length at most $\rho + 3$ spanning $b_x, b_{x+1}, \dots, b_0$, giving a contradiction.
    \end{poc}

    Let $v$ be the leftmost vertex on $\segment{z}{y}{k} \subs \boundary{z}{\fs{z}{y}}$.

    \begin{claim}\label{clm:noleftcover}
        If $F$ occurs, then for all $w \leq z$ and for all $i \geq \fs{w}{y}$, we have that $\boundary{w}{i}$ contains $v$ and is identical to $\boundary{w}{\fs{w}{y}}$ left of $v$.
    \end{claim}

    \begin{poc}
        We prove the claim by induction on $i$. Since $F$ occurs, $E_w$ occurs, and so the claim holds for $i = \fs{w}{y}$. Now suppose that it holds for some $i \geq \fs{w}{y}$.
        
        Let $u$ be the head of $\chord{w}{i+1}$. Note that by the definition of the Schnyder peeling process, there is a path of length at most $3$ from $u$ to the head $\head{w}{i}$ of the root edge $\roote{w}{i}$. We know that $\head{w}{i}$ lies on the right of $\segment{z}{y}{k}$ on $\boundary{w}{\fs{w}{y}}$. Since there is no path of length $3$ spanning $\segment{z}{y}{k} \subs \boundary{w}{\fs{w}{y}}$, it follows that $u$ cannot lie left of $v$ on $\boundary{w}{\fs{w}{y}}$. Since $\boundary{w}{i}$ coincides with $\boundary{w}{\fs{w}{y}}$ left of $v$, it follows that $u$ lies right of $v$ on $\boundary{w}{i}$. In particular, step $i+1$ of $P_w$ will not explore anything left of $v$ on $\boundary{w}{i}$, and so the claim holds for $i+1$.
    \end{poc}

    \begin{claim}\label{clm:eventuallyconsistent}
        If $F$ occurs, then for all $w \le z$, and for all $\ell \ge 0$, the boundaries $\boundary{w}{\fs{w}{y}+\ell}$ and $\boundary{z}{\fs{z}{y}+\ell}$ only differ left of $v$. As a result, for all $\ell > 0$, steps $\fs{w}{y} + \ell$ of $P_w$ and $\fs{z}{y} + \ell$ of $P_z$ are the same.
    \end{claim}

    \begin{poc}
        Since $F$ occurs, we have that $\segment{w}{y}{k} = \segment{z}{y}{k}$, so the boundaries $\boundary{w}{\fs{w}{y}}$ and $\boundary{z}{\fs{z}{y}}$ only differ left of $v$ which shows that the claim holds for $\ell = 0$. Now suppose that it holds for some $\ell \geq 0$.
        
        Observe that the root edge $\roote{z}{\fs{z}{y}+\ell}$ is the rightmost edge of $\boundary{z}{\fs{z}{y}+\ell}$ not on the initial boundary, and similarly for $w$. Since $\boundary{z}{\fs{z}{y}+\ell}$ and $\boundary{w}{\fs{w}{y}+\ell}$ only differ left of $v$, it follows that $\roote{z}{\fs{z}{y}+\ell} = \roote{w}{\fs{w}{y}+\ell}$, and $\peelv{z}{\fs{z}{y}+\ell} = \peelv{w}{\fs{w}{y}+\ell}$.

        We claim that $\chord{z}{\fs{z}{y}+\ell} = \chord{w}{\fs{w}{y}+\ell}$. Indeed, by \cref{clm:noleftcover}, step $\fs{z}{y} + \ell$ does not explore any part of $\boundary{z}{\fs{z}{y}+\ell}$ left of $v$. Thus, $\chord{z}{\fs{z}{y}+\ell}$ is a chord of $\boundary{z}{\fs{z}{y}+\ell}$ with both endpoints right of $v$. Since $\boundary{w}{\fs{w}{y}+\ell}$ coincides with $\boundary{z}{\fs{z}{y}+\ell}$ right of $v$, it follows that $\chord{z}{\fs{z}{y}+\ell}$ is also a chord of $\boundary{w}{\fs{w}{y}+\ell}$. Similarly, $\chord{w}{\fs{w}{y}+\ell}$ is a chord of $\boundary{z}{\fs{z}{y}+\ell}$. Thus, since the Schnyder peeling process always picks the first chord anticlockwise from the boundary, it follows that $\chord{z}{\fs{z}{y}+\ell} = \chord{w}{\fs{w}{y}+\ell}$, as claimed.

        As a result, step $\fs{z}{y} + \ell$ of $P_z$ and step $\fs{w}{y} + \ell$ of $P_w$ are the same. In particular, the boundaries $\boundary{z}{\fs{z}{y}+\ell+1}$ and $\boundary{w}{\fs{w}{y}+\ell+1}$ will be the identical right of $v$, and so the claim holds for $\ell+1$.
    \end{poc}

    If $F$ occurs, then by \cref{clm:eventuallyconsistent}, for all $w \leq z$, the Schnyder peeling processes $P_z$ and $P_w$ produce the same colouring starting from step $\fs{z}{y}$ and from step $\fs{w}{y}$ respectively. Moreover, by \cref{clm:startirrelevant}, they only colour edges outside of $\ball{\rho}{b_0}$ before these steps. Thus, $P_z$ and $P_w$ produce the same colouring on $\ball{\rho}{b_0}$. Since $w$ was arbitrary, it follows that when $F$ occurs, $P_z$ produces a consistent colouring on $\ball{\rho}{b_0}$, and therefore
    \[
        \pr(C_{z,\rho}) \geq 1 - \eps. \qedhere
    \]
\end{proof}

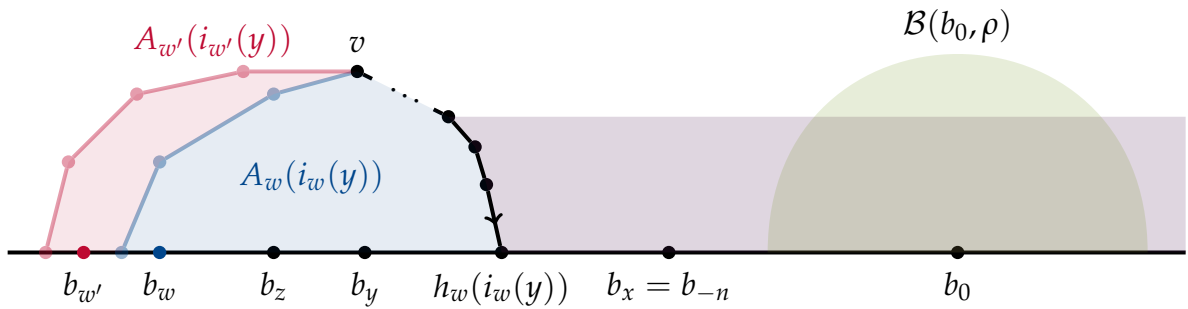
\begin{figure}[h]
    \centering
    \begin{tikzpicture}
    \coordinate (left) at (-7.5,0){};
    \coordinate (right) at (8,0);
    \node[] (rightup) at (8,1.8){};
    \path[every node/.style={font=\sffamily\small}]
        (left) edge (right);

    \node[mycircle, color=blue, opacity=0.4] (w) at (-6,0){}; 
    \node[mycircle, label=below:$b_w$, color=blue] (wlbl) at (-5.5,0){};
    \node[mycircle, color=red, opacity=0.4] (wp) at (-7,0){};
    \node[mycircle, label=below:$b_{w'}$, color=red] (wplbl) at (-6.5,0){};
    \node[mycircle, opacity=0.4, color=blue] (w1) at (-5.5,1.2){};
    \node[mycircle, opacity=0.4, color=blue] (w2) at (-4,2.1){};
    \node[mycircle, opacity=0.4, color=red] (wp1) at (-6.7,1.2){};
    \node[mycircle, opacity=0.4, color=red] (wp2) at (-5.8,2.1){};
    \node[mycircle, opacity=0.4, color=red] (wp3) at (-4.4,2.4){};
    \node[mycircle, label=below:$b_z$] (z) at (-4,0){};
    \node[mycircle, label=below:$b_y$] (y) at (-2.8,0){};
    \node[mycircle, label=below:$h_w(i_w(y))$] (h) at (-1,0){};
    \node[mycircle] (v1) at (-1.2,0.9){};
    \node[mycircle] (v2) at (-1.35,1.4){};
    \node[mycircle] (v3) at (-1.7,1.8){};
    \node[mycircle, label=above:$v$] (v4) at (-2.9,2.4){};
    \node[mycircle, label={below:$b_x=b_{-n}$}] (x) at (1.2,0){};
    \node (nr) at (2.5,0){};
    \node[mycircle, label=below:$b_0$] (0) at (5,0){};
    \node (pr) at (7.5,0){};
    \draw[dots] (v3) to (v4);
    
    \fill[fill=blue, opacity=0.1]  (w.center) to (w1.center) to (w2.center) to (v4.center) to (v3.center) to (v2.center) to (v1.center) to (h.center) to cycle;
    
    \fill[fill=red, opacity=0.1]  (wp.center) to (wp1.center) to (wp2.center) to (wp3.center) to (v4.center) to (w2.center) to (w1.center) to (w.center) to cycle;

    \fill[fill=red, opacity=0.1]  (v3.center) to (rightup.center) to (right.center) to (h.center) to (v1.center) to (v2.center) to cycle;
    \fill[fill=blue, opacity=0.1]  (v3.center) to (rightup.center) to (right.center) to (h.center) to (v1.center) to (v2.center) to cycle;

    \fill[fill=green, opacity=0.15]  (nr.center) to[bend left=90, looseness=1.8] (pr.center) to cycle;

    \node (lbl) at (5,3) {$\ball{\rho}{b_0}$};

    \node[blue] (Aw) at (-3.5,1) {$\area{w}{\fs{w}{y}}$};
    \node[red] (Aw) at (-4.8,2.8) {$\area{w'}{\fs{w'}{y}}$};
    
    \path[every node/.style={font=\sffamily\small}]
        (v1) edge[->-] (h)
        (v1) edge (v2)
        (v2) edge (v3)
        (w) edge[opacity=0.4, color=blue] (w1)
        (w1) edge[opacity=0.4, color=blue] (w2)
        (w2) edge[opacity=0.4, color=blue] (v4)
        (wp) edge[opacity=0.4, color=red] (wp1)
        (wp1) edge[opacity=0.4, color=red] (wp2)
        (wp2) edge[opacity=0.4, color=red] (wp3)
        (wp3) edge[opacity=0.4, color=red] (v4);

\end{tikzpicture}
    \caption{Processes $P_w$ and $P_{w'}$ started from $b_w,b_{w'} < b_z$ may colour the UIHPT differently prior to the steps at which a $k$ edge long segment of their boundaries agree between $\head{w}{\fs{w}{y}}$ and $v$. After this step, all further edges are coloured consistently, and in particular the processes agree on every explored edge within $\ball{\rho}{b_0}$.}
    \label{fig:proof_components}
\end{figure}

Consider now the event $C_\rho = \bigcup_{x \in \Z} C_{x,\rho}$ that some Schnyder process $P_x$ produces a consistent colouring on $\ball{\rho}{b_0}$, and let $C = \bigcap_{\rho \in \N} C_\rho$ be the event that this happens for all $\rho$. Since $C_{x,\rho} \subs C_{x-1,\rho}$, it follows from \cref{thm:consistent_strip_colouring} that
\[
    \pr(C_\rho) = \lim_{x \to -\infty} \pr(C_{x,\rho}) = 1.
\]
Because $C_{x,\rho+1} \subs C_{x,\rho}$, we have $C_{\rho+1} \subs C_\rho$, and so
\[
    \pr(C) = \lim_{\rho \to \infty} \pr(C_\rho) = 1.
\]
Thus, almost surely the event $C$ occurs. If that is the case, then for any $\rho \in \N$ there exists some $x(\rho) \in \Z$ such that $P_{x(\rho)}$ produces a consistent colouring on $\ball{\rho}{b_0}$.

We now formally define the Schnyder wood strip. Let $P_x(e)$\glossarylabel{gl:pxe} denote the colour and orientation assigned to edge $e$ by $P_x$. If $\rho$ is larger than the distance from $e$ to $b_0$, then by definition of $x(\rho)$ we know that $P_x(e)$ is the same for all $x \le x(\rho)$, and so $\lim_{x\to-\infty} P_x(e)$ exists. We define the \defn{Schnyder wood strip} $S$ of $T$ to be the coloured and oriented region of $T$ where each edge $e$ is assigned the colouring and orientation given by $\lim_{x\to-\infty} P_x(e)$. Note that it follows from the nested structure of the Schnyder wood segments described in \cref{cor:nested_segments} that $S = \bigcup_x \evarea{x}$. 

\subsubsection{Removing the Schnyder wood strip}\label{sssec:uihpt:schnyderstripremoved}

We now show that after removing the Schnyder wood strip from the UIHPT, the remaining region is again distributed like a UIHPT. Let $S$ denote the Schnyder wood strip, and define its interior as $\intr S \coloneqq \bigcup_x \intr \evarea{x}$. For an edge $e = (a,b)$, let the distance from $e$ to some vertex $v$ be defined by $d(e, v) \coloneqq \min (d(a,v), d(b,v))$.  

\begin{lemma}\label{lem:plane_minus_stripUIHPT}
    Let $e_u$ denote the leftmost edge on the boundary of $T \sm \intr \evarea{u}$ of minimum distance from $b_0$. Then almost surely $\lim_{u \to -\infty} e_u = e$ for some edge $e$ in $T$, and $T \sm \intr S$ has the same distribution as a UIHPT rooted at $e$.
\end{lemma}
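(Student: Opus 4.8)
The plan is to realise $T \sm \intr S$ as a limit of the triangulations $T \sm \intr\evarea{u}$ obtained by removing a single Schnyder wood segment, each of which is already known to be a UIHPT, and then to transfer the UIHPT property to the limit by a ball-stabilisation argument in the same spirit as the proof of \cref{lem:remove_one_ended_strip_still_uihpt}. The starting point is \cref{cor:nested_segments}, which gives $\evarea{z} \sups \evarea{y}$ whenever $z < y$; hence the interiors $\intr\evarea{u}$ increase to $\intr S = \bigcup_x \intr\evarea{x}$ as $u \to -\infty$, and correspondingly $T^{(u)} \coloneqq T \sm \intr\evarea{u}$ decreases to $T \sm \intr S$. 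Since a triangulation of the half-plane is determined by its balls $\ballt{\rho}{\cdot}{\cdot}$ of every radius around the root, it suffices to show that for every $\rho$ the ball $\ballt{\rho}{e}{T \sm \intr S}$ has the law of a radius-$\rho$ ball in a UIHPT.

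First I would establish the existence of the root edge $e = \lim_{u \to -\infty} e_u$. Because the Schnyder peeling process only ever explores within distance $3$ of the initial boundary, the upper boundary of $\evarea{u}$ always passes within bounded distance of $b_0$, so $e_u$ lies in a fixed finite ball $\ballt{R}{b_0}{T}$ uniformly in $u$. As $\intr\evarea{u}$ increases to $\intr S$ and $\ballt{R}{b_0}{T}$ is almost surely finite, the increasing union stabilises on this finite set: there is a (random) $u_0$ with $\intr\evarea{u} \cap \ballt{R}{b_0}{T} = \intr S \cap \ballt{R}{b_0}{T}$ for all $u \le u_0$. Consequently the boundary of $T^{(u)}$ agrees with that of $T \sm \intr S$ inside $\ballt{R}{b_0}{T}$ for $u \le u_0$, so $e_u$ is eventually constant and equal to some edge $e$, proving the first claim.

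For the distributional claim I would fix $u$ and argue in two moves. By \cref{lem:remove_one_ended_strip_still_uihpt}, $T^{(u)}$ is a UIHPT rooted at some boundary edge $f_u$; since $e_u$ is also a boundary edge of $T^{(u)}$, re-rooting invariance (\cref{lem:translation_invariance}) shows that $T^{(u)}$ rooted at $e_u$ is again a UIHPT, so $\ballt{\rho}{e_u}{T^{(u)}}$ has the law of a radius-$\rho$ ball in a UIHPT, the same law for every $u$. It then remains to identify the almost sure limit of these balls. Exactly as in the stabilisation step above, for fixed $\rho$ the finite ball $\ballt{\rho}{e}{T}$ eventually contains none of the receding set $\intr S \sm \intr\evarea{u}$; since any path of length at most $\rho$ from $e$ in a subgraph of $T$ stays inside $\ballt{\rho}{e}{T}$, the graphs $T^{(u)}$ and $T \sm \intr S$ agree there, and hence $\ballt{\rho}{e_u}{T^{(u)}} = \ballt{\rho}{e}{T \sm \intr S}$ for all sufficiently negative $u$. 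Passing to the limit, the common value $\ballt{\rho}{e}{T \sm \intr S}$ inherits the UIHPT-ball law, and as $\rho$ was arbitrary we conclude that $T \sm \intr S$ is a UIHPT rooted at $e$.

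The main obstacle is the ball-stabilisation argument: one must be certain that continuing to strip away the far-left remainder of $S$ (as $u \to -\infty$) never disturbs the finite neighbourhood of $e$. This is what makes the uniform ``width $\le 3$'' control on the strip, the nestedness from \cref{cor:nested_segments}, and the almost sure finiteness of balls all essential, since together they guarantee that the relevant increasing unions stabilise on finite sets. A secondary point to handle with care is the re-rooting from $f_u$ to $e_u$, whose boundary offset grows without bound as $u \to -\infty$; this is legitimate because re-rooting invariance holds for re-rooting at an arbitrary boundary edge of the UIHPT, and it is the step at which the distributional input is genuinely used.
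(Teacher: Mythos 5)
Your overall skeleton matches the paper's: realise $T \sm \intr S$ as the decreasing limit of the triangulations $T_u \coloneqq T \sm \intr \evarea{u}$, stabilise finite balls around the limiting root, and transfer the UIHPT law from \cref{lem:remove_one_ended_strip_still_uihpt} through almost sure convergence. Your second and third paragraphs are essentially the paper's proof. The genuine gap is in your first step, where you claim that $e_u$ stays in a fixed finite ball around $b_0$ ``because the Schnyder peeling process only ever explores within distance $3$ of the initial boundary.'' That width-$3$ property controls the graph distance from upper-boundary vertices to the initial boundary $B$, not to $b_0$: a vertex of the upper boundary lying planarly ``above'' $b_0$ may be within distance $3$ of some $b_k$ that is itself graph-far from $b_0$, and in addition the explored region may swallow arbitrarily deep finite components enclosed by the $3$-neighbourhood of $B$. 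So nothing in that observation prevents the nested regions $\intr \evarea{u}$ from engulfing arbitrarily large balls around $b_0$ as $u \to -\infty$. Indeed, by nestedness the alternatives are exactly: either $d(b_0, \partial T_u)$ is bounded uniformly in $u$ (what you need), or every ball $\ball{R}{b_0}$ is eventually contained in $\intr\evarea{u}$, in which case $T \sm \intr S$ would be empty and $e$ would not exist. Ruling out the second alternative is the hard probabilistic content of this lemma, and it cannot be extracted from the local width-$3$ observation alone.

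The paper obtains precisely this input from \cref{clm:noleftcover} (inside the proof of \cref{thm:consistent_strip_colouring}): with probability at least $1-\eps$ there is a single vertex $v$ that remains on the boundary $\boundary{w}{i}$ of every process $P_w$ with $w \le z$ for all sufficiently large $i$, whence $d(b_0, e_w) \le d(b_0, v)$ uniformly in $w \le z$; stabilisation then follows from \cref{cor:nested_segments} and a.s. finiteness of balls, and $\eps \to 0$ at the end. That claim in turn rests on the no-short-spanning-paths machinery of \cref{ssec:uihpt:strip}, so your proof is missing a real ingredient, not a routine verification. (One could alternatively repair your argument by exhibiting a vertex that no process ever explores, e.g.\ a vertex in an infinite component of $T$ minus the $3$-neighbourhood of $B$, but the a.s. existence of such a vertex is itself a nontrivial claim that you would have to prove.) A secondary point: your re-rooting of $T_u$ from $f_u$ to $e_u$ via \cref{lem:translation_invariance} is delicate, since $e_u$ sits at a random, triangulation-dependent offset from $f_u$ and translation invariance as stated covers fixed shifts; the paper's own wording glosses over the same point, so I flag it only as something to treat carefully rather than as a divergence from the paper.
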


\begin{proof}
    \cref{thm:consistent_strip_colouring} established that for all $\eps > 0$ and $\rho \in \mathbb{N}$, there exists $z \in \mathbb{Z}$ such that $P_z$ produces a consistent colouring on $\ball{\rho}{b_0}$ with probability at least $1- \eps$. Specifically, in \cref{clm:noleftcover} we showed that with probability at least $1 - \eps$, there exists a vertex $v$ so that for all $w \le z$ we have that $v$ lies on the boundary $\boundary{w}{i}$ of $P_w$ for all sufficiently large steps $i$, and so $v$ is contained in $T \sm \intr \evarea{w}$. In this case, the distance $d(b_0, e_w)$ is at most $d \coloneqq d(b_0, v)$. Since the ball $\ball{d}{b_0}$ is almost surely finite and \cref{cor:nested_segments} implies that $\evarea{w} \subs \evarea{w-1}$, it follows that there is some $u < z$ such that $e_w = e_u$ for all $w \le u$. As $z \to -\infty$ (and hence $u \to -\infty$), we have that $\eps \to 0$ by \cref{thm:consistent_strip_colouring}, and so $\lim_{u \to -\infty} e_u \eqqcolon e$ almost surely exists.
    
    Let $T_u \coloneqq T \sm \intr \evarea{u}$ be the unexplored area of $T$ after running the Schnyder peeling process $P_u$, and consider the ball $\ballt{\rho}{e}{T_u}$ of radius $\rho$ around $e$ in $T_u$. Since $\ballt{\rho}{e}{T}$ is almost surely finite and $\evarea{u} \subs \evarea{u-1}$ for all $u$, there almost surely exists some $y$ such that $\ballt{\rho}{e}{T_u} = \ballt{\rho}{e}{T \sm \intr S}$ for all $u \le y$. By choosing $y$ sufficiently small we may also assume that $e_u = e$ for all $u \le y$. In particular, as $u$ decreases, $\ballt{\rho}{e_u}{T_u}$ almost surely converges to $\ballt{\rho}{e}{T \sm \intr S}$.

    For every fixed $u$, \cref{lem:remove_one_ended_strip_still_uihpt} implies that $\ballt{\rho}{e_u}{T_u}$ is distributed like a ball of radius $\rho$ in a UIHPT rooted at $e_u$. Since $\ballt{\rho}{e_u}{T_u}$ almost surely converges to $\ballt{\rho}{e}{T \sm \intr S}$, it follows that $\ballt{\rho}{e}{T \sm \intr S}$ is almost surely distributed like a ball of radius $\rho$ in a UIHPT rooted at $e$. Since this holds for all $\rho$, it follows that $T \sm \intr S$ has the same distribution as a UIHPT rooted at $e$.
\end{proof}

\section{The maximal Schnyder wood of the UIHPT}\label{sec:structure}
 
We are now finally able to show that a unique maximal Schnyder wood exists on the UIHPT. To achieve this, we define an algorithm, called the Schnyder wood chiselling algorithm, that constructs a maximal Schnyder wood of the UIHPT using the Schnyder wood strips and segments we have just defined in the previous section. 

In \cref{ssec:structure:existence} we show that the Schnyder wood chiselling algorithm constructs a Schnyder wood,  which establishes the existence of a Schnyder wood of the UIHPT. While the properties of a Schnyder wood only prescribe a certain local structure, in \cref{ssec:structure:forests} we describe some of the global structure of the red, yellow, and blue subgraphs in the Schnyder wood constructed by the chiselling algorithm. 

Finally, in \cref{ssec:structure:maximal} we use the previous results to prove that the chiselling algorithm produces the unique maximal Schnyder wood of the UIHPT. These results for the UIHPT provide an analogue of the results for the peeling algorithm on finite triangulations given in \cref{thm:unique_maximal_wood}, and on infinite triangulations with finite boundary given in \cref{thm:unique_maximal_schnyder_wood_finite_bdry}.

\begin{theorem}\label{thm:peeling_process_uihpt_maximal_wood}
    Almost surely, the maximal Schnyder wood of the UIHPT is unique and is constructed by the Schnyder wood chiselling algorithm.
\end{theorem}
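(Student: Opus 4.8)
The plan is to prove the three assertions of the theorem in turn: that the Schnyder wood chiselling algorithm colours and orients every edge of the UIHPT (existence), that the resulting wood is maximal, and finally that maximality forces uniqueness. The last step is immediate from \cref{thm:max_schnyder_unique_infinite}, which guarantees that any infinite triangulation carrying a maximal Schnyder wood carries exactly one. So the real work lies in existence and maximality.

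For existence, recall that the chiselling algorithm colours a Schnyder wood strip $S$ of the UIHPT and then removes its interior; by \cref{lem:plane_minus_stripUIHPT} the remainder $T \sm \intr S$ is again a UIHPT rooted at a new edge, so the procedure may be iterated, producing successive strips $S_1, S_2, \dots$ with boundaries $B_0, B_1, B_2, \dots$. Two things must then be checked. First, that the strips exhaust $T$: I would argue by a layered distance estimate that no vertex of $B_i$ is adjacent to $B_{i-1}$ (each strip contains a full unit neighbourhood of its boundary), so that $d_T(B_i, B_0) \ge i$ and every vertex, being at finite distance from the initial boundary, lies in the interior of some $S_i$. Second, that the colouring satisfies the Schnyder conditions at every vertex. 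Vertices strictly inside a single strip inherit the conditions directly from the finite maximal Schnyder woods assigned during peeling. The technical point is the \emph{seam} vertices on each $B_i$: such a vertex receives its ``downward'' edges (one outgoing blue edge and some incoming yellow edges, by \cref{obs:finitestepsupperboundary}) from $S_i$, and its ``upward'' edges (the Schnyder boundary pattern) from $S_{i+1}$; one must verify that these two partial pictures glue, in the correct cyclic order, into the full Schnyder condition, namely exactly one outgoing edge of each colour with the incoming edges correctly interleaved. This seam verification is the main bookkeeping needed for existence.

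Maximality has two halves. To rule out finite anticlockwise cycles it suffices, by \cref{lem:oriented_triangles}, to exclude anticlockwise triangles. By planarity a triangle cannot have one vertex strictly below a boundary $B_i$ and another strictly above it, so every triangle lies in the closure of a single strip; since a strip is the nested union $\bigcup_x \evarea{x}$ of Schnyder wood segments (\cref{cor:nested_segments}) and each segment agrees locally with a finite maximal Schnyder wood, no triangle can be oriented anticlockwise. The genuinely harder half, which I expect to be the main obstacle, is ruling out \emph{right-directed} two-ended paths, the infinite analogue of an anticlockwise cycle that our notion of maximality forbids and precisely the feature distinguishing the two woods in \cref{fig:distinct_max_schnyder_halfplane}. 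The finite argument of \cref{lem:maximal_wood_finite_bdry} drives a right-directed path to a contradiction by following a leftmost walk $L(e)$ to the unique sink, the head of the root edge, lying off the path; in the UIHPT there is no global sink, so this argument must be supplemented. I would use the description of the monochromatic subgraphs obtained in \cref{ssec:structure:forests} (all of whose infinite directed paths are left-directed) to show that every leftmost walk in the chiselling wood either terminates or escapes to infinity as a left-directed path, and in particular always moves into the region on the left of any putative right-directed path $R$. Fixing $e$ on $R$ and following $L(e)$ then produces a vertex of $R$ at which $L(e)$ leaves $R$ on its left side, so that the outgoing edge of $R$ there fails to be the leftmost available, contradicting the definition of $L(e)$. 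Hence no right-directed path exists.

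With both finite anticlockwise cycles and right-directed paths excluded, the Schnyder wood produced by the chiselling algorithm is maximal, and \cref{thm:max_schnyder_unique_infinite} then yields that it is the unique maximal Schnyder wood of the UIHPT, completing the proof.
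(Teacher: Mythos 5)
Your proposal follows the same skeleton as the paper's proof: existence by iterating strips and gluing the Schnyder upper boundary condition of one layer to the Schnyder boundary condition of the next, maximality by excluding anticlockwise triangles and right-directed paths, and uniqueness from \cref{thm:max_schnyder_unique_infinite}. (One small slip in the existence part: the chiselling algorithm first colours a Schnyder wood \emph{segment} rooted at $(b_x,b_{x+1})$ and only then iterates strips; with strips alone no vertex would satisfy the Schnyder root condition.) The real problems are in the maximality half. Your anticlockwise-triangle argument has a genuine gap at the seams. You claim every triangle lies in the closure of a single strip and therefore inherits its orientation from the finite maximal Schnyder woods approximating that strip. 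But the edges on a seam $B_i$ are coloured by the \emph{next} strip's peeling process, not by the strip below: they lie on the boundary of the unexplored region, so the lower strip's process never assigns them a colour. Consequently a triangle with two vertices $u,v$ on $B_i$, third vertex $w$ below $B_i$, and the edge $uv$ lying on $B_i$ has its colouring split between two independent peeling processes; no single finite maximal Schnyder wood realises all three of its edges, and the finite-approximation argument says nothing about it. This is exactly the case where the paper does real work in \cref{lem:no_anticlockwise_cycles}: by the Schnyder upper boundary condition such a triangle must consist of the outgoing blue edge of $u$, an incoming yellow edge at $v$, and the seam edge, and then the outgoing blue edge of $v$ is forced by planarity to point into the triangle's interior, contradicting \cref{lem:directed_edges_from_cycle}. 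Your proposal is missing this argument entirely.

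Your right-directed-path step also has an error, though the skeleton is correct: proving that every leftmost walk either terminates at the head of the root edge or ends up following a distinguished yellow path along the initial boundary (via the forest structure) is precisely the paper's \cref{lem:leftmost_path}. But your stated contradiction is inverted. A leftmost walk takes the first outgoing edge \emph{clockwise} from its incoming edge, so at a vertex of $R$ it prefers edges on the left side of $R$ over the forward edge of $R$, and the forward edge over edges on the right side. Leaving $R$ on its left side is therefore perfectly consistent with the definition of $L(e)$ and yields no contradiction --- indeed, were your reasoning valid, it would equally ``refute'' left-directed paths, which genuinely exist (the distinguished paths). The correct contradiction, as in \cref{lem:no_right_directed_paths}, lies on the other side: both possible fates of $L(e)$ place it eventually at the initial boundary, which lies to the \emph{right} of $R$, so $L(e)$ must at some point leave a vertex $v$ of $R$ by an edge $e'$ pointing into the right side of $R$; at the first such vertex the incoming edge of $L(e)$ lies on $R$ or on the left side, so the forward edge of $R$ at $v$ occurs strictly earlier in the clockwise sweep than $e'$, contradicting the definition of a leftmost walk.
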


\subsection{Existence of the Schnyder wood of the UIHPT}\label{ssec:structure:existence}

In \cref{sec:infiniteschnyderwoods} we defined Schnyder woods of infinite triangulations.
To show that such an object exists when the underlying triangulation is the UIHPT, we show that it is possible to construct one using the Schnyder wood segments and Schnyder wood strips defined in \cref{ssec:uihpt:segment,ssec:uihpt:strip} respectively.

To motivate our construction, consider for now just the Schnyder root condition. This root condition will always be satisfied by the root edge of a Schnyder segment coloured by the Schnyder peeling process initiated at that root edge. In contrast, the Schnyder strip does not have a distinguished edge on its boundary satisfying the root condition as, informally, it is defined by letting the root edge tend to $-\infty$. This suggests the following \defn{Schnyder wood chiselling algorithm} on the UIHPT rooted at $(b_x, b_{x+1})$. 

First, run the Schnyder peeling process $P_x$ on the UIHPT to colour a Schnyder segment with a distinguished root edge $(b_x, b_{x+1})$. We know that almost surely, this colours only a one-ended infinite region to the right along the initial boundary, and by \cref{lem:remove_one_ended_strip_still_uihpt}, the uncoloured region is still distributed like a UIHPT.  Now, the remaining uncoloured region of the UIHPT should no longer have a distinguished edge satisfying the root condition. Hence, we colour the remainder of the plane by iteratively colouring and removing Schnyder strips, a process that we can repeat by \cref{lem:plane_minus_stripUIHPT}.

We now work towards establishing that this algorithm constructs a Schnyder wood.

\begin{theorem}\label{thm:uihpt_schnyder_exists}
    The Schnyder wood chiselling algorithm almost surely constructs a Schnyder wood of the UIHPT.
\end{theorem}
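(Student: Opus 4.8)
The plan is to verify the two properties required of a Schnyder wood: that the chiselling algorithm eventually colours every edge of the UIHPT, and that the resulting colouring satisfies the Schnyder conditions at every vertex. Write $T = T_0$ for the UIHPT rooted at $(b_x,b_{x+1})$, let $T_1 \coloneqq T \sm \intr \evarea{x}$ be the triangulation remaining after removing the first Schnyder wood segment, and inductively let $T_{j+1} \coloneqq T_j \sm \intr S_j$, where $S_j$ is the Schnyder wood strip coloured on $T_j$. By \cref{lem:remove_one_ended_strip_still_uihpt,lem:plane_minus_stripUIHPT}, each $T_j$ is almost surely a UIHPT, so the construction is well defined and all the structural results proved for segments and strips apply at every stage. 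Since the removed interiors $\intr \evarea{x}, \intr S_1, \intr S_2, \dots$ are pairwise disjoint, it suffices to analyse coverage and local validity separately.

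First I would establish coverage. The key inputs, from the analysis of \cref{ssec:uihpt:segment,ssec:uihpt:strip}, are that each strip $S_j$ covers the whole boundary of $T_j$ and contains $\ballt{1}{u}{T_j}$ for every boundary vertex $u$ of $T_j$; in particular $\partial T_{j+1}$ is disjoint from $\partial T_j$, and $T_{j+1}$ is an induced subgraph of $T_j$. Given a vertex $v \in T_{j+1}$, take a geodesic $v = w_m,\dots,w_1,w_0$ in $T_j$ to $\partial T_j$, and let $w_i$ be the vertex of largest index lying in $\intr S_j$ (which exists since $w_0 \in \partial T_j$ is covered, and $i < m$ since $w_m = v \in T_{j+1}$). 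Then $w_{i+1},\dots,w_m$ all lie in $T_{j+1}$, and $w_{i+1}$ is adjacent to the removed vertex $w_i$, so $w_{i+1} \in \partial T_{j+1}$; hence $d_{T_{j+1}}(v,\partial T_{j+1}) \le m - i - 1 < m = d_{T_j}(v,\partial T_j)$. Thus the distance of any uncovered vertex to the current boundary strictly decreases with each strip removal, so every vertex of $T_1$ is covered after finitely many strips; together with the fact that the first segment covers $\intr \evarea{x} = T \sm T_1$, this shows every vertex, and hence every edge, is eventually coloured.

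Next I would verify the Schnyder conditions, distinguishing the role each vertex plays. The root vertices $b_x,b_{x+1}$ lie at the root of the first segment and inherit the Schnyder root condition from the finite peeling process, exactly as in \cref{thm:peeling_process_maximal_wood}. Any original boundary vertex $v \neq b_x,b_{x+1}$ is covered by a region in which it is a genuine boundary vertex (its boundary in $T_j$ agreeing locally with $\partial T$), so it receives the Schnyder boundary condition. An interior vertex lying strictly inside a single $\intr S_j$ (or $\intr \evarea{x}$) receives the full Schnyder condition directly from the finite Schnyder wood structure of that region, via \cref{thm:schnyder_subgraphs_on_finite_triangulation}. The remaining case is an interior vertex $v$ on a seam, that is, on the upper boundary of some region $S_j$; since all of $\partial T_{j+1}$ is covered by $S_{j+1}$, such a $v$ lies in $\intr S_{j+1}$ and is coloured from both sides. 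Below the seam, \cref{obs:finitestepsupperboundary} gives that $v$ has exactly one outgoing blue edge followed, in anticlockwise order from the seam, by incoming yellow edges; above the seam, the Schnyder boundary condition of $T_{j+1}$ gives, in anticlockwise order, incoming yellow edges, the outgoing red edge, incoming blue edges, the outgoing yellow edge, and incoming red edges. Splicing these two cyclic orders places the three outgoing edges in anticlockwise order blue, red, yellow, with every incoming edge entering between the two outgoing edges of the other colours, which is precisely the Schnyder condition.

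The main obstacle is this last seam computation: one must check that the coloured structure below the upper boundary of a region glues consistently with the Schnyder boundary condition of the next region to yield the interior Schnyder condition, taking care that the two ``anticlockwise from the boundary'' sweeps meet with the correct orientation. The verification is elementary, but it is where the specific colour and direction conventions of the peeling process are essential; by contrast, all the genuinely probabilistic content — that the strips are well defined, cover the boundary, and leave a UIHPT behind — has already been established in \cref{sec:uihpt}. Throughout, every statement holds only almost surely, inherited from \cref{lem:remove_one_ended_strip_still_uihpt,lem:plane_minus_stripUIHPT} and the almost sure existence of the Schnyder wood strip.
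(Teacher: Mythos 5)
Your overall route is the same as the paper's: colour the first segment with $P_x$, iterate strips using \cref{lem:remove_one_ended_strip_still_uihpt,lem:plane_minus_stripUIHPT} so that the unexplored region remains a UIHPT, and verify the Schnyder conditions region by region, gluing at the seams by combining the upper-boundary structure of one region with the boundary condition of the next. Your explicit coverage argument (the distance from any uncovered vertex to the current boundary strictly decreases with each strip removal) is a welcome addition --- the paper leaves this essentially implicit --- and your seam splice is exactly the computation the paper encapsulates in its Schnyder upper boundary condition.

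There is, however, one genuine gap: the corner vertex of the Schnyder wood segment. By \cref{lem:peeling_moves_right}, $P_x$ covers exactly the initial boundary vertices $b_z$ with $z \ge y$ for some $y < x$; the vertex $b_{y-1}$ is reached by the segment (it is the head of the leftmost chord chosen) but is never covered, and it remains on the boundary of $T_1 = T \sm \intr \evarea{x}$. Your claim that every non-root vertex of $\partial T$ ``is covered by a region in which it is a genuine boundary vertex (its boundary in $T_j$ agreeing locally with $\partial T$)'' fails precisely here: at $b_{y-1}$ the boundary of $T_1$ turns upward along the segment's upper boundary instead of continuing along $B$ to $b_y$, so the Schnyder boundary condition that the first strip confers on $b_{y-1}$ is relative to the wrong boundary. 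To conclude that $b_{y-1}$ satisfies the Schnyder boundary condition in $T$ itself, you additionally need that every edge of the segment incident with $b_{y-1}$ is an incoming yellow edge --- the ``Moreover'' clause of \cref{obs:finitestepsupperboundary}, which is the paper's Schnyder corner condition --- so that, sweeping anticlockwise from $B$, the segment contributes only incoming yellows before the strip's boundary pattern takes over. This is a single-vertex case and the required fact is available, so the gap is easily repaired, but as written your case analysis misses it; it is exactly why the paper introduces the corner condition. Two smaller points: your citation of \cref{thm:schnyder_subgraphs_on_finite_triangulation} for interior vertices should be \cref{thm:peeling_process_maximal_wood}, which is what gives the local Schnyder conditions; and applying \cref{obs:finitestepsupperboundary} and the finite-triangulation results to a vertex of a \emph{strip} requires the limiting argument reducing the strip to a segment rooted sufficiently far left, which you gesture at but should invoke explicitly, as the paper does.
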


 It suffices to show that the colouring on the UIHPT defined by the Schnyder wood chiselling algorithm satisfies the Schnyder condition, Schnyder root condition, and Schnyder boundary condition. We consider first the colouring assigned to an individual Schnyder wood strip or Schnyder wood segment, and, second, how pairs of these coloured regions join together along their shared boundaries.

Consider a Schnyder wood segment (resp. strip) obtained from a UIHPT by exploring a one-ended (resp. two-ended) infinite region along its initial boundary. Define the \defn{upper boundary} of the segment (resp. strip) as the set of vertices and edges on its boundary that are not on the initial boundary, and let the leftmost explored vertex on the initial boundary of the segment be the \defn{corner vertex}. Finally, let the \defn{lower boundary} of the Schnyder wood segment (resp. strip) be the set of vertices and edges bordering the infinite explored region that are also on the initial boundary of the UIHPT. Note that when the Schnyder wood chiselling algorithm iteratively explores Schnyder wood strips of a UIHPT $T$, the vertices on the upper boundary of a Schnyder wood strip or segment are interior vertices of $T$ and should therefore satisfy the Schnyder condition in the final colouring. Since a subset of the edges incident to these vertices will be coloured according to the condition on the lower boundary of the Schnyder wood strip that is explored next (that is, according to the Schnyder boundary condition), it is straightforward to check that satisfying the Schnyder condition at these vertices is implied by the following condition.

\textbf{The Schnyder upper boundary condition:}
For any upper boundary vertex $v$,
\begin{itemize}
    \item There is exactly one outgoing edge at $v$ which is coloured blue (which is directed to the interior of the Schnyder wood strip or segment), and
    \item In anticlockwise order starting from the upper boundary, edges incident with $v$ consist of incoming red edges, the outgoing blue edge, and incoming yellow edges.
\end{itemize}

Furthermore, the corner vertex of the Schnyder wood segment is a boundary vertex of both the Schnyder wood and the Schnyder wood strip that the Schnyder wood chiselling algorithm explores immediately after exploring the segment. It is straightforward to check that we therefore require the corner vertex to satisfy the following condition within the Schnyder wood segment.

\textbf{The Schnyder corner condition:}
\begin{itemize}
    \item Every edge that is incident with the corner vertex is an incoming yellow edge.
\end{itemize}

We now show that the Schnyder wood segment satisfies all necessary conditions.

\begin{lemma}\label{lem:schnyder_segment_satisfies_conditions}
    The Schnyder wood segment almost surely satisfies the Schnyder, Schnyder root, Schnyder boundary, Schnyder upper boundary, and Schnyder corner conditions.
\end{lemma}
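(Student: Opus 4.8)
The plan is to work on the almost sure event, furnished by \cref{lem:peeling_moves_right}, that $P_x$ covers an initial boundary vertex $b_z$ if and only if $z \ge y$ for some $y < x$. On this event $b_x$ and $b_{x+1}$ are both covered (since $x,x+1 \ge y$), the leftmost explored initial boundary vertex is well defined and stabilises (giving the corner vertex of $\evarea{x}$), and the segment has a genuinely one-ended upper boundary. I would then partition the vertices of $\evarea{x}$ according to whether they are eventually covered or remain on the boundary of $\evarea{x}$ forever, and verify the five conditions separately on each part. The workhorse throughout is the already-noted fact that the colouring assigned by any finite number of peeling steps agrees with the colouring the finite peeling process would assign inside a large finite triangulation obtained by capping off the unexplored region with an arbitrary finite triangulation.

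First I would treat the eventually covered vertices. If $v$ is covered at some step $i$ then $v$ lies in $\intr \area{x}{i}$, so every edge incident with $v$ has been explored and coloured; by the finite-reduction remark the colours and orientations around $v$ coincide with those in the unique maximal Schnyder wood of a capped finite triangulation $\hat{T}$, which by \cref{thm:peeling_process_maximal_wood} satisfies all of the Schnyder conditions. Hence each covered interior vertex satisfies the Schnyder condition, each covered non-root vertex of the lower boundary satisfies the Schnyder boundary condition, and the covered root vertices $b_x$ and $b_{x+1}$ satisfy the Schnyder root condition, as they play the roles of $v_r$ and $v_y$ in $\hat{T}$.

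Next I would treat the vertices that are never covered. By \cref{obs:infiniterightsteps} every initial boundary vertex to the right is eventually covered, and by \cref{lem:peeling_moves_right} only finitely many to the left are ever explored, so the never-covered explored vertices are exactly the upper boundary vertices of $\evarea{x}$ together with the single corner vertex. Each such $v$ lies on the upper boundary (respectively, is the corner vertex) of $\area{x}{i}$ for all sufficiently large $i$, at which point its set of explored incident edges has stabilised. \cref{obs:finitestepsupperboundary} then yields precisely the Schnyder upper boundary condition at upper boundary vertices, noting that this condition permits but does not require incoming red edges and the segment simply has none there, and the Schnyder corner condition at the corner vertex. Since every vertex of $\evarea{x}$ falls into exactly one of these cases, all five conditions hold simultaneously, which proves the lemma on the relevant almost sure event.

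The main obstacle is organisational rather than deep: one must ensure that every vertex is correctly classified as eventually covered or permanently on the boundary, so that the correct finite-stage result transfers to the limiting segment. This is exactly where the almost sure control from \cref{lem:peeling_moves_right} (finitely many left-covered boundary vertices, hence a stable corner and a one-ended upper boundary) and \cref{obs:infiniterightsteps} (all right-covered boundary vertices) is indispensable. Without them the upper boundary could fail to be one-ended, or the corner vertex could fail to stabilise, and the limiting local configurations would no longer be governed by \cref{obs:finitestepsupperboundary}; it is the failure of exactly this control for infinite boundary that distinguishes the segment from the finite-boundary case.
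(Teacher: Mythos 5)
Your proposal is correct and follows essentially the same route as the paper's proof: the covered vertices are handled by the finite-stage reduction to a capped finite triangulation together with \cref{thm:peeling_process_maximal_wood}, and the upper boundary and corner vertices are handled by \cref{obs:finitestepsupperboundary} plus stabilisation of their incident explored edges. The only difference is organisational—you partition by vertex type (covered versus never covered, made explicit via \cref{lem:peeling_moves_right} and \cref{obs:infiniterightsteps}) where the paper partitions by condition—but the two key ingredients and their use are identical.
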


\begin{proof}
    The proof that the colouring produced by the Schnyder peeling process satisfies the Schnyder, Schnyder root, and Schnyder boundary conditions is almost identical to the proof of \cref{lem:infinite_triangulation_noanticlockwise}: any finite collection of edges of $T$ explored by the Schnyder peeling process is explored after a finite number of steps, and since the same colouring can be produced by the finite peeling process on some finite triangulation, the Schnyder, Schnyder root, and Schnyder boundary conditions follow from \cref{thm:peeling_process_maximal_wood}.

    It remains to check the Schnyder upper boundary and Schnyder corner conditions. Recall that we have already shown in \cref{obs:finitestepsupperboundary} that these two conditions are satisfied by all upper boundary vertices and the corner vertex of the revealed area after any finite number of steps of the Schnyder peeling process. Since an upper boundary or corner vertex of the Schnyder wood segment is, respectively, an upper boundary or corner vertex of the revealed area after a sufficiently large finite number of steps, this implies that the Schnyder wood segment also satisfies the Schnyder upper boundary and Schnyder corner conditions.
\end{proof}

We now use the fact that Schnyder wood segments satisfy all necessary conditions to show that Schnyder wood strips also satisfy all necessary conditions. It will then follow from the definition of the Schnyder upper boundary condition that the Schnyder wood chiselling algorithm constructs a Schnyder wood of the UIHPT.

\begin{proof}[Proof of \cref{thm:uihpt_schnyder_exists}]
     First, recall that $P_x(e)$ denotes the colour and orientation assigned to edge $e$ by $P_x$, and that the Schnyder wood strip of some UIHPT $T$ is defined to be the coloured and oriented region of $T$ whose edges are assigned the colouring and orientation given by $\lim_{x \to -\infty}P_x(e)$. In \cref{ssec:uihpt:strip}, we proved that this limit almost surely exists. Since $T$ is almost surely locally finite, it follows that for every vertex $v$ in the Schnyder wood strip, there almost surely exists some $x$ such that every $P_y$ with $y < x$ assigns the same colouring and orientation to every edge incident with $v$. Hence, checking the relevant Schnyder conditions at any given vertex $v$ in the Schnyder wood strip amounts to confirming the Schnyder conditions at $v$ in a Schnyder wood segment rooted sufficiently far left on the initial boundary. It follows by \cref{lem:schnyder_segment_satisfies_conditions} that the Schnyder wood strip almost surely satisfies the Schnyder, Schnyder root, Schnyder boundary, and Schnyder upper boundary conditions. (Note that the Schnyder corner condition is not relevant to Schnyder wood strips, as by definition, these regions do not have a corner.) 
    
    Applying the Schnyder wood chiselling algorithm, the Schnyder wood of the UIHPT rooted at $(b_x, b_{x+1})$ is now constructed by first taking the Schnyder wood segment explored by $P_x$, and then repeatedly taking Schnyder wood strips on the remaining unexplored region. Observe that the colouring and orientation assigned to the UIHPT satisfies the Schnyder root condition at $(b_x, b_{x+1})$ because this condition is satisfied by the Schnyder wood segment. The colouring produced by the chiselling algorithm also satisfies the Schnyder boundary condition because the Schnyder wood segment and each successive Schnyder wood strip all satisfy the boundary condition on their lower boundaries, and the corner vertex of the Schnyder wood segment satisfies the Schnyder corner condition. Finally, the Schnyder condition is satisfied at every interior vertex of the UIHPT, because each interior vertex is either interior in a segment or strip that itself satisfies the Schnyder condition, or lies on the upper boundary of one segment or strip and the lower boundary of another Schnyder wood strip, in which case the Schnyder condition is satisfied by definition of the Schnyder boundary and upper boundary conditions.
\end{proof}

\subsection{Red, yellow and blue forests} \label{ssec:structure:forests}

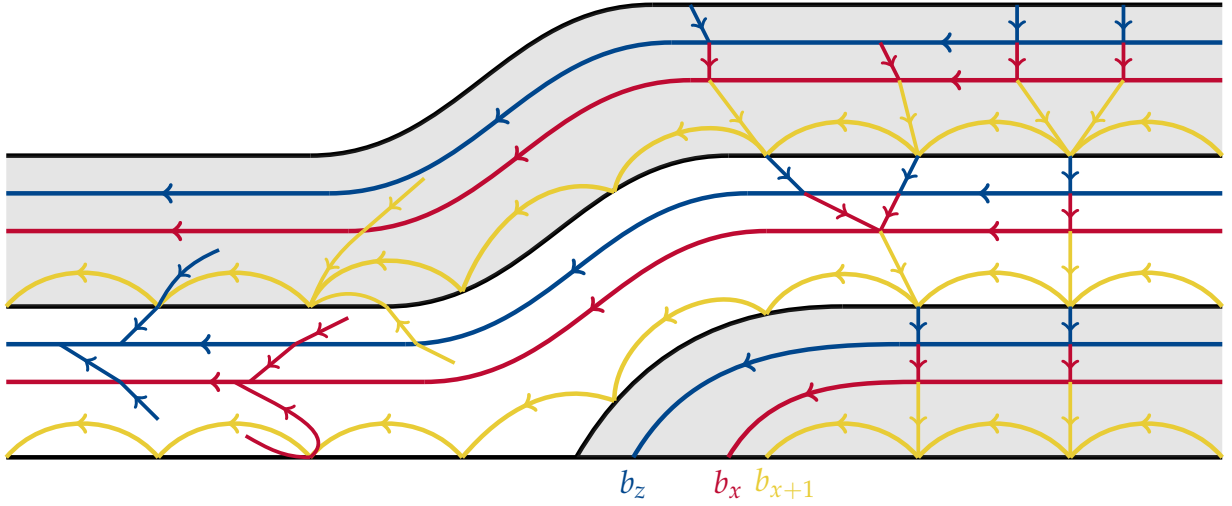
\begin{figure}[h]
    \centering
    \begin{tikzpicture}[line width=1.2]
    % initial boundary
    \draw (0,0) edge[line width=1.7] (16,0);

    \node[text=myyellow] at (10.25,-0.35) {$b_{x+1}$};
    \node[text=red] at (9.5,-0.35) {$b_{x}$};
    \node[text=blue] at (8.25,-0.35) {$b_{z}$};

    % segment upper boundary
    \path[every node/.style={font=\sffamily\small}]
        (16,2) edge[line width=1.7] (11,2)
        (11,2) edge[line width=1.7, out=180, in=60] (7.5,0);

    \fill[fill=gray,opacity=0.2] (7.5,0) to (16,0) to (16,2) to (11,2) to[out=180, in=60] cycle;

    % strip 1 upper boundary
    \path[every node/.style={font=\sffamily\small}]
        (16,4) edge[line width=1.7] (9.5,4)
        (9.5,4) edge[line width=1.7, in=0, out=180] (5,2)
        (5,2) edge[line width=1.7] (0,2);

    % strip 2 upper boundary
    \path[every node/.style={font=\sffamily\small}]
        (16,6) edge[line width=1.7] (8.5,6)
        (8.5,6) edge[line width=1.7, in=0, out=180] (4,4)
        (4,4) edge[line width=1.7] (0,4);

    \fill[fill=gray,opacity=0.2] (0,2) to (5,2) to[out=0,in=180] (9.5,4) to (16,4) to (16,6) to (8.5,6) to[in=0, out=180] (4,4) to (0,4) to cycle;

    % segment
    % blue path
    \path[every node/.style={font=\sffamily\small}]
        (16,1.5) edge[bedge, line width=1.7] (11.75,1.5)
        (11.75,1.5) edge[bedge, ->-, line width=1.7, out=180, in=60] (8.25,0);
        
    % red path
    \path[every node/.style={font=\sffamily\small}]
        (16,1) edge[redge, line width=1.7] (12,1)
        (12,1) edge[redge, ->-, line width=1.7, out=180, in=60] (9.5,0);

    % yellow path
    \path[every node/.style={font=\sffamily\small}]
        (16,0) edge[yedge, ->-, line width=1.7, out=130, in=50] (14,0)
        (14,0) edge[yedge, ->-, line width=1.7, out=130, in=50] (12,0)
        (12,0) edge[yedge, ->-, line width=1.7, out=130, in=50] (10,0);

    % strip 1
    % blue path
    \path[every node/.style={font=\sffamily\small}]
        (16,3.5) edge[bedge, ->-, line width=1.7] (9.75,3.5)
        (9.75,3.5) edge[bedge, ->-, line width=1.7, in=0, out=180] (5.25,1.5)
        (5.25,1.5) edge[bedge, ->-, line width=1.7] (0,1.5);
    % red path
    \path[every node/.style={font=\sffamily\small}]
        (16,3) edge[redge, ->-, line width=1.7] (10,3)
        (10,3) edge[redge, ->-, line width=1.7, in=0, out=180] (5.5,1)
        (5.5,1) edge[redge, ->-, line width=1.7] (0,1);
    % yellow path
    \path[every node/.style={font=\sffamily\small}]
        (16,2) edge[yedge, ->-, line width=1.7, out=130, in=50] (14,2)
        (14,2) edge[yedge, ->-, line width=1.7, out=130, in=50] (12,2)
        (12,2) edge[yedge, ->-, line width=1.7, out=130, in=60] (10,1.9)
        (10,1.9) edge[yedge, ->-, line width=1.7, out=140, in=90] (8,0.75)
        (8,0.75) edge[yedge, ->-, line width=1.7, out=160, in=50] (6,0)
        (6,0) edge[yedge, ->-, line width=1.7, out=130, in=50] (4,0)
        (4,0) edge[yedge, ->-, line width=1.7, out=130, in=50] (2,0)
        (2,0) edge[yedge, ->-, line width=1.7, out=130, in=50] (0,0);

    % strip 2
    % blue path
    \path[every node/.style={font=\sffamily\small}]
        (16,5.5) edge[bedge, ->-, line width=1.7] (8.75,5.5)
        (8.75,5.5) edge[bedge, ->-, line width=1.7, in=0, out=180] (4.25,3.5)
        (4.25,3.5) edge[bedge, ->-, line width=1.7] (0,3.5);
    % red path
    \path[every node/.style={font=\sffamily\small}]
        (16,5) edge[redge, ->-, line width=1.7] (9,5)
        (9,5) edge[redge, ->-, line width=1.7, in=0, out=180] (4.5,3)
        (4.5,3) edge[redge, ->-, line width=1.7] (0,3);
    % yellow path
    \path[every node/.style={font=\sffamily\small}]
        (16,4) edge[yedge, ->-, line width=1.7, out=130, in=50] (14,4)
        (14,4) edge[yedge, ->-, line width=1.7, out=130, in=50] (12,4)
        (12,4) edge[yedge, ->-, line width=1.7, out=130, in=50] (10,4)
        (10,4) edge[yedge, ->-, line width=1.7, out=120, in=80] (8,3.52)
        (8,3.52) edge[yedge, ->-, line width=1.7, out=160, in=70] (6,2.2)
        (6,2.2) edge[yedge, ->-, line width=1.7, out=130, in=70] (4,2)
        (4,2) edge[yedge, ->-, line width=1.7, out=130, in=50] (2,2)
        (2,2) edge[yedge, ->-, line width=1.7, out=130, in=50] (0,2);

    % geometric 1
    \path[every node/.style={font=\sffamily\small}]
        (14.7,6) edge[bedge, ->-, line width=1.4] (14.7,5.5)
        (14.7,5.5) edge[redge, ->-, line width=1.4] (14.7,5)
        (14.7,5) edge[yedge, ->-, line width=1.4] (14,4)
        (14,4) edge[bedge, ->-, line width=1.4] (14,3.5)
        (14,3.5) edge[redge, ->-, line width=1.4] (14,3)
        (14,3) edge[yedge, ->-, line width=1.4] (14,2)
        (14,2) edge[bedge, ->-, line width=1.4] (14,1.5)
        (14,1.5) edge[redge, ->-, line width=1.4] (14,1)
        (14,1) edge[yedge, ->-, line width=1.4] (14,0);

    % geometric 2
    \path[every node/.style={font=\sffamily\small}]
        (13.3,6) edge[bedge, ->-, line width=1.4] (13.3,5.5)
        (13.3,5.5) edge[redge, ->-, line width=1.4] (13.3,5)
        (13.3,5) edge[yedge, ->-, line width=1.4] (14,4);

    % geometric 3
    \path[every node/.style={font=\sffamily\small}]
        (9,6) edge[bedge, ->-, line width=1.4] (9.25,5.5)
        (9.25,5.5) edge[redge, ->-, line width=1.4] (9.25,5)
        (9.25,5) edge[yedge, ->-, line width=1.4] (10,4)
        (10,4) edge[bedge, ->-, line width=1.4] (10.5,3.5)
        (10.5,3.5) edge[redge, ->-, line width=1.4] (11.5,3);

    % geometric 4
    \path[every node/.style={font=\sffamily\small}]
        (11.5,5.5) edge[redge, ->-, line width=1.4] (11.75,5)
        (11.75,5) edge[yedge, ->-, line width=1.4] (12,4)
        (12,4) edge[bedge, ->-, line width=1.4] (11.75,3.5)
        (11.75,3.5) edge[redge, ->-, line width=1.4] (11.5,3)
        (11.5,3) edge[yedge, ->-, line width=1.4] (12,2)
        (12,2) edge[bedge, ->-, line width=1.4] (12,1.5)
        (12,1.5) edge[redge, ->-, line width=1.4] (12,1)
        (12,1) edge[yedge, ->-, line width=1.4] (12,0);

    % yellow twigs
    \path[every node/.style={font=\sffamily\small}]
        (5.5,3.7) edge[yedge, ->-, line width=1.4] (4.7,3)
        (4.7,3) edge[yedge, ->-, line width=1.4, bend right=15] (4,2);
    \path[every node/.style={font=\sffamily\small}]
        (5.9,1.25) edge[yedge, line width=1.4] (5.4,1.5)
        (5.4,1.5) edge[yedge, ->-, line width=1.4] (5,2)
        (5,2) edge[yedge, , bend right=35, line width=1.4] (4,2);

    % blue twigs
    \path[every node/.style={font=\sffamily\small}]
        (2.8,2.75) edge[bedge, ->-, bend right=20, line width=1.4] (2,2)
        (2,2) edge[bedge, ->-, line width=1.4] (1.5,1.5);
    \path[every node/.style={font=\sffamily\small}]
        (2,0.5) edge[bedge, ->-, line width=1.4] (1.5,1)
        (1.5,1) edge[bedge, ->-, line width=1.4] (0.7,1.5);

    % red twigs
    \path[every node/.style={font=\sffamily\small}]
        (4.5,1.85) edge[redge, ->-, line width=1.4] (3.8,1.5)
        (3.8,1.5) edge[redge, ->-, line width=1.4] (3.2,1);
    \path[every node/.style={font=\sffamily\small}]
        (3.15,0.285) edge[redge, out=-30, in=180, line width=1.4] (4,0)
        (4,0) edge[redge, ->-, out=40, in=-30, line width=1.4] (3,1);
\end{tikzpicture}
    \caption{The structure of the Schnyder wood on the UIHPT constructed by the chiselling algorithm. The Schnyder wood segment and second Schnyder wood strip are shaded, and the infinite monochromatic paths in each segment/strip are shown. On the left the additional coloured edges indicate which monochromatic trees can intersect which monochromatic paths. On the right, the additional edges indicate examples of the paths obtained by following the unique outgoing edge from each path to the preceding infinite path, down to the initial boundary.}
    \label{fig:uihpt_structure}
\end{figure}

We have now shown that the Schnyder wood chiselling algorithm constructs a Schnyder wood on the UIHPT. We showed in \cref{lem:infiniteschnyderwood_monochromatic} that the monochromatic subgraphs of this Schnyder wood must be forests. In this section we describe the structure of the monochromatic forests, proving \cref{thm:monochromatic_schnyder_wood_structure}. This structure is depicted in \cref{fig:uihpt_structure}. We begin with a description of the monochromatic structures produced on the Schnyder wood segment. 

\begin{lemma}\label{lem:monochromatic_segment_subgraphs}
    Consider the Schnyder wood segment explored by the process $P_x$. The monochromatic subgraphs of the segment consist of
    \begin{itemize}
        \item An infinite yellow tree rooted at $b_{x+1}$, and a forest of finite yellow trees rooted at vertices on the upper boundary or at the corner vertex of the segment;
        \item An infinite red tree rooted at $b_x$; 
        \item An infinite blue tree rooted at some vertex $b_z$ with $z < x$, and a forest of finite blue trees rooted at non-corner vertices on the lower boundary. 
    \end{itemize}
    Furthermore, each monochromatic infinite tree contains exactly one infinite path. Every vertex on the upper boundary of the Schnyder wood segment has an outgoing blue edge to the infinite blue path, every vertex on the infinite blue path has an outgoing red edge to the infinite red path, and every vertex on the infinite red path has an outgoing yellow edge to the infinite yellow path. The infinite yellow path consists entirely of vertices on the lower boundary of the Schnyder wood segment.
\end{lemma}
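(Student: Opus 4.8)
The plan is to follow the proof of \cref{thm:monochromatic_subgraph_finite_bdry} as closely as possible, feeding the finite-step descriptions of \cref{lem:schnyderareayellowforest}, \cref{lem:schnyderarearedforest}, and \cref{lem:schnyderareablueforest} into a limit as the step count $i \to \infty$. The two features that distinguish the segment from a finite-boundary triangulation, and which I must account for throughout, are that by \cref{obs:infiniterightsteps} the process $P_x$ takes infinitely many right steps, while by \cref{lem:peeling_moves_right} it covers only finitely many initial boundary vertices left of $b_x$, so the upper boundary and a single corner vertex $b_y$ persist forever. I would treat the three colours in turn, using that each monochromatic subgraph is a forest by \cref{lem:infiniteschnyderwood_monochromatic}.

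For the red subgraph I would apply \cref{lem:schnyderarearedforest} directly: at every finite step it is one tree rooted at $b_x$ carrying the path through the right-step tails $t_j$, and infinitely many right steps make this tree infinite. Its infinite path is unique exactly as in \cref{thm:monochromatic_subgraph_finite_bdry}, since the red tree meets $\boundary{x}{i}$ only at $t_\ell$ and $h_\ell$ and the Schnyder root condition forbids red edges at $h_\ell$, forcing every red path from a vertex revealed after step $i$ through $t_\ell$. For the yellow subgraph I would pass to the limit in \cref{lem:schnyderareayellowforest}: the tree rooted at $b_{x+1}$ becomes infinite with unique infinite path through the heads $h_\ell$ (same intersection argument). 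The genuinely new point is that the finite yellow trees rooted at upper-boundary and corner vertices are \emph{not} absorbed, because those vertices are never covered; I would show each is finite using that, by \cref{obs:finitestepsupperboundary}, such a vertex $v$ has only its single outgoing blue edge together with incoming yellow edges, so each occurrence of $v$ as the head of a chord attaches a new (finite) explored region along a fresh incident edge, and local finiteness bounds the number of such regions (equivalently, an infinite such tree would, by König's lemma, furnish a second infinite yellow path, contradicting uniqueness). That the infinite yellow path lies on the lower boundary then follows because the heads $h_\ell$ lie on the initial boundary and, by \cref{lem:schnyderareayellowforest}, so does the yellow path joining consecutive heads.

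The blue subgraph requires the most care. Working from \cref{lem:schnyderareablueforest}, I must first show the inductively defined steps $i_\ell$ (the last step whose peeling vertex lies on $\boundary{x}{i_{\ell-1}}$) are all well defined in the limit. Unlike the finite-boundary case I cannot invoke full coverage of $\boundary{x}{i_{\ell-1}}$; instead I would argue that the tails $t_0 = b_x, t_1, t_2, \dots$ march monotonically left, never revisiting a vertex, so after the finitely many covered left-boundary vertices are exhausted every tail is interior. Since each peeling vertex is the left-neighbour of a tail, only finitely many peeling vertices lie on the initial boundary, making $i_1$ finite; applying the same reasoning to the process restarted from $\boundary{x}{i_\ell}$, which is again a UIHPT by \cref{lem:hole_uihpt}, makes every $i_\ell$ finite. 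There are infinitely many $i_\ell$ because the segment's upper boundary is infinite and, by \cref{lem:schnyderareablueforest}, every upper-boundary vertex sends a blue edge to some $\peelv{x}{i_\ell}$, so local finiteness forces infinitely many distinct peeling vertices on the path. This yields the infinite blue tree rooted at $\peelv{x}{i_1} = b_z$, with $z \le x-1$ since every peeling vertex lies weakly left of $b_{x-1}$, while the corner $b_y$ carries only incoming yellow edges and roots no blue tree. Finiteness of the remaining blue trees, rooted at non-corner lower-boundary vertices, and uniqueness of the infinite blue path follow as in \cref{thm:monochromatic_subgraph_finite_bdry} by applying \cref{lem:schnyderareablueforest} to the restarted process to see that every blue subtree rooted in $\boundary{x}{i_\ell} \sm \{\peelv{x}{i_{\ell+1}}\}$ is finite.

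Finally, the winding relations among the three infinite paths are immediate from the peeling dynamics, exactly as in \cref{thm:monochromatic_subgraph_finite_bdry}: the tails $t_j$ and heads $h_j$ are joined by the yellow right-step chords, giving the outgoing yellow edge from each red-path vertex; each peeling vertex emits its outgoing red edge to the tail of its current root edge, giving the blue-to-red relation; and the assertion that every upper-boundary vertex has an outgoing blue edge to the infinite blue path is precisely the limiting form of \cref{lem:schnyderareablueforest}. I expect the main obstacle to be this blue analysis, specifically justifying that the steps $i_\ell$ stay well defined and infinite in number once the boundary is no longer eventually covered; this is exactly the place where the finite-boundary argument breaks and must be replaced by the monotone-tail argument above.
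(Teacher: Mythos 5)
Your proposal follows the paper's proof in all essentials: you feed the finite-step descriptions of \cref{lem:schnyderareayellowforest,lem:schnyderarearedforest,lem:schnyderareablueforest} into the $i \to \infty$ limit, use \cref{obs:infiniterightsteps} for the infinite trees, \cref{lem:peeling_moves_right} for what is never absorbed, and define the steps $i_\ell$ recursively, restarting via \cref{lem:hole_uihpt}. The red and yellow analyses, the corner-vertex observation, the uniqueness arguments, and the outgoing-edge relations between the three infinite paths all match the paper (for finiteness of the yellow trees rooted on the upper boundary, the paper tracks a given tree through successive re-rootings until its root stabilises, whereas you bound the number of chord events at each never-covered root by local finiteness; both are sound, though your parenthetical K\H{o}nig's-lemma alternative is circular, since a second infinite yellow tree would not contradict uniqueness of the infinite path \emph{inside} the tree rooted at $b_{x+1}$, which is all that has been established at that point).

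There is, however, a genuine flaw in the step you yourself flag as the crux: the existence of $i_1$. You argue that all but finitely many tails $t_j$ are interior, and then deduce that ``since each peeling vertex is the left-neighbour of a tail, only finitely many peeling vertices lie on the initial boundary.'' This inference is invalid: the left-neighbour on the current boundary of an \emph{interior} tail can perfectly well be an initial-boundary vertex. Concretely, suppose the current boundary reads $\dots, b_{\ell-1}, u_m, \dots, u_1, t, h, \dots$ with the $u_i$ upper-boundary vertices and $t$ an interior tail, and the next step is a left step at peeling vertex $u_1$ whose chord is $u_1 b_{\ell-1}$ with $k = 0$ newly revealed vertices. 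This step erases $u_1, \dots, u_m$ from the boundary, which then reads $\dots, b_{\ell-1}, t, h, \dots$; the next peeling vertex is $b_{\ell-1}$, on the initial boundary, even though the tail $t$ is interior. So interiority of the tails alone does not bound the number of initial-boundary peeling vertices. The conclusion is nevertheless true, and the repair is short --- it is essentially the paper's argument: initial-boundary peeling vertices are pairwise distinct and move monotonically left, and each one is eventually covered (immediately, if its step is a left step; otherwise it becomes the root-edge tail and is covered at the next right step, which exists by \cref{obs:infiniterightsteps}). Hence \cref{lem:peeling_moves_right} caps their number, $i_1$ exists, and your restart via \cref{lem:hole_uihpt} then yields every $i_\ell$ exactly as in the paper.
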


\begin{proof}
    Our proof proceeds very similarly to the proof of \cref{thm:monochromatic_subgraph_finite_bdry}. In particular, we make frequent use of \cref{lem:schnyderareayellowforest,lem:schnyderarearedforest,lem:schnyderareablueforest}, as well as other observations about the structure produced by the peeling process at each step, as already discussed in \cref{sec:infiniteschnyderwoods} and \cref{sec:infinite_triang_finite_bdry}. For convenience, we again include \cref{fig:example_left_right_steps}, demonstrating the important components added to the explored region in left or right steps of the Schnyder peeling process.

    \begin{figure}[h]
        \centering
        \vspace{-1cm}
        \begin{tikzpicture}[line width=1.2]

    % CORE : COMMON TO RIGHT AND LEFT STEPS ---------------------
    \foreach \x in {2,...,8,11,12}{
        \node[mycircle, opacity=0.5] (bn\x) at (\x,0) {};
    }
    \foreach \x in {15,16,17}{
        \node[mycircle] (bn\x) at (\x,0) {};
    }
    \node[mycircle, label={[red]below:{$t_1$}}, color=red, opacity=0.5] (bn9) at (9,0) {};
    \node[mycircle, label={[xshift=-0.2cm, red]below:{$t_0$}}, color=red, opacity=0.5] (bn10) at (10,0) {};
    \node[mycircle, opacity=0, label={[xshift=0.2cm]below:{$h_0$}}] (bn11) at (11,0) {};
    \node[mycircle, opacity=0.5, label=below:{$h_1$}] (bn13) at (13,0) {};
    \node[mycircle, label=below:{$h_2$}] (bn14) at (14,0) {};
    \node[mycircle, label=below:{$h_3$}] (bn18) at (18,0) {};
    \node[color=myyellow] (lbl0) at (10.5, -0.3) {\small{$c_x(0)$}};
    
    \foreach \x in {2,...,7,11,12,13}{
        \pgfmathtruncatemacro{\next}{\x + 1};
        \draw (bn\x) edge[opacity=0.5] (bn\next);
    }
    \foreach \x in {14,...,17}{
        \pgfmathtruncatemacro{\next}{\x + 1};
        \draw (bn\x) -- (bn\next);
    }
    
    \draw (bn8) edge[->-,redge, opacity=0.5] (bn9)
          (bn9) edge[->-,redge, opacity=0.5] (bn10)
          (bn10) edge[->-,yedge, opacity=0.5] (bn11);

    % step 1, right
    \draw (bn9) edge[->-, yedge, opacity=0.5, bend left=55, looseness=1.55] (bn13);
    \node[color=myyellow] (lbl1) at (11.1, 1.8) {\small{$c_x(1)$}};
    \node[mycircle, opacity=0.5] (u1r1) at (10.5,0.6) {};
    \node[mycircle, opacity=0.5] (u2r1) at (11,1.2) {};
    \draw (bn10) edge[opacity=0.5] (u1r1)
    (u1r1) edge[opacity=0.5] (u2r1)
    (u2r1) edge[opacity=0.5] (bn13);
    
    \draw (u1r1) edge[->-, bedge, opacity=0.5] (bn9);
    \draw (u2r1) edge[->-, bedge, opacity=0.5] (bn9);
    \draw (bn13) edge[->-, yedge, bend right=25, opacity=0.5] (bn11);

    % step 2, left
    \draw (bn8) edge[->-, yedge, opacity=0.5, bend right=30] (bn2);
    \node[color=myyellow] (lbl2) at (5, 0.6) {\small{$c_x(2)$}};
    \node[mycircle,color=red] (u1l1) at (8,1.3) {};
    \node[mycircle,color=red] (u2l1) at (7.2,1.5) {};
    \node[mycircle, opacity=0.5] (u3l1) at (6.4,1.7) {};
    \node[mycircle, opacity=0.5] (u4l1) at (5.4,1.8) {};
    \node[mycircle, opacity=0.5] (u5l1) at (4.3,1.6) {};
    
    \draw (u2l1) edge[opacity=0.5] (u3l1)
    (u3l1) edge[opacity=0.5] (u4l1)
    (u4l1) edge[opacity=0.5] (u5l1)
    (u5l1) edge[opacity=0.5, bend right=10] (bn2);
    
    \draw (u1l1) edge[->-, bedge, opacity=0.5, bend left=10] (bn8);
    \draw (u2l1) edge[->-, bedge, opacity=0.5, bend left=10] (bn8);
    \draw (u3l1) edge[->-, bedge, opacity=0.5, bend left=10] (bn8);
    \draw (u4l1) edge[->-, bedge, opacity=0.5, bend left=10] (bn8);
    \draw (u5l1) edge[->-, bedge, opacity=0.5, bend left=10] (bn8);
    
    % step 3, right
    \draw (u1l1) edge[->-, redge, opacity=0.5] (bn9);
    \draw (u1l1) edge[->-, yedge, opacity=0.5, bend left=60, looseness=1.2] (bn14);
    \node[color=myyellow] (lbl3) at (13, 2.2) {\small{$c_x(3)$}};
    \node[mycircle, opacity=0.5] (u1r2) at (8.8,0.8) {};
    \node[mycircle, opacity=0.5] (u2r2) at (9.5,1.8) {};
    \draw (bn9) edge[opacity=0.5] (u1r2)
    (u1r2) edge[opacity=0.5] (u2r2)
    (u2r2) edge[opacity=0.5, bend left=50, looseness=1] (bn14);
    
    \draw (u1r2) edge[->-, bedge, opacity=0.5] (u1l1);
    \draw (u2r2) edge[->-, bedge, opacity=0.5] (u1l1);
    \draw (bn14) edge[->-, yedge, bend right=60, opacity=0.5] (bn13);

    % ----------------------------------------------

    % actual right step
    \draw (u2l1) edge[->-, redge, line width=1.7] (u1l1);
    \draw (u2l1) edge[->-, yedge, bend left=80, looseness=1.5, line width=1.7] (bn18);
    \node[mycircle] (u1r3) at (8,2.1) {};
    \node[mycircle] (u2r3) at (10,4.3) {};
    \draw (u1l1) to (u1r3) to (u2r3) edge[bend left=55] (bn18);
    \draw (u1r3) edge[->-, bedge,line width=1.7] (u2l1);
    \draw (u2r3) edge[->-, bedge, line width=1.7, bend right=10, bend right=13] (u2l1);
    \draw (bn18) edge[->-, yedge, bend right=75, line width=1.7] (bn16);
    \draw (bn16) edge[->-, yedge, bend right=75, line width=1.7] (bn14);

    \node[color=red] (lbl) at (7, 1.8) {$t_3$};
    \node[color=red] (lbl3) at (7.8, 1.1) {$t_2$};
    \node[color=myyellow] (lbl2) at (13.5, 5.85) {\small{$c_x(4)$}};
    
    \draw (10,3) edge[redge,dotted,line width=1.7,bend left=10] (11.5,3.2)
                 (11.5,3.2) edge[redge,dotted,line width=1.7,bend left=5] (12.5,3.3)
                 (11.5,3.2) edge[redge,dotted,line width=1.7,bend right=5] (12.5,2.9)
          (10,3) edge[redge,dotted,line width=1.7,bend left=10] (12,2.5)
          (10,3) edge[->-, redge, dotted, bend right=20, line width=1.7] (u1l1);

    \draw (11, 3.8) edge[->-, bedge, dotted, bend right=15, line width=1.7] (u1r3);
    
    \draw (12, 4.2) edge[->-, bedge, dotted, bend right=15, line width=1.7] (u2r3);
    \draw (14.5, 3.5) edge[-, bedge, dotted, bend right=10, line width=1.7] (12, 4.2);
    \draw (13.5, 3.2) edge[-, bedge, dotted, bend right=10, line width=1.7] (12, 4.2);
    
    \draw (15.5, 2) edge[->-, bedge, dotted, bend left=25, line width=1.7] (bn18);

    \draw (14.5, 1.8) edge[->-, yedge, dotted, bend right=10, line width=1.7] (bn14);
    \draw (14, 2.5) edge[-, yedge, dotted, bend right=10, line width=1.7] (14.5, 1.8);
    \draw (15, 2.7) edge[-, yedge, dotted, bend right=10, line width=1.7] (14.5, 1.8);

    \coordinate (bluetree3) at (15, 0.5); 
    \coordinate (bluetree4) at (15.5, 1.5); 
    \draw (bluetree3) edge[->-, bedge, dotted, line width=1.7] (bn15);
    \draw (bluetree4) edge[->-, bedge, bend left=15, dotted, line width=1.7] (bn16);
\end{tikzpicture}
        \begin{tikzpicture}[line width=1.2]

    % CORE : COMMON TO RIGHT AND LEFT STEPS ---------------------
    \node[mycircle] (bn2) at (2,0) {};
    \foreach \x in {3,...,8,12,17,18}{
        \node[mycircle, opacity=0.5] (bn\x) at (\x,0) {};
    }
    \foreach \x in {15,16}{
        \node[mycircle, opacity=0.5] (bn\x) at (\x,0) {};
    }
    \node[mycircle, label={[red]below:{$t_1$}}, color=red, opacity=0.5] (bn9) at (9,0) {};
    \node[mycircle, label={[xshift=-0.2cm, red]below:{$t_0$}}, color=red, opacity=0.5] (bn10) at (10,0) {};
    \node[mycircle, opacity=0.5, label={[xshift=0.2cm]below:{$h_0$}}] (bn11) at (11,0) {};
    \node[mycircle, opacity=0.5, label=below:{$h_1$}] (bn13) at (13,0) {};
    \node[mycircle, opacity=0.5, label=below:{$h_2$}] (bn14) at (14,0) {};
    \node[color=myyellow] (lbl0) at (10.5, -0.3) {\small{$c_x(0)$}};
    
    \foreach \x in {2,...,7,11,12,13}{
        \pgfmathtruncatemacro{\next}{\x + 1};
        \draw (bn\x) edge[opacity=0.5] (bn\next);
    }
    \foreach \x in {14,...,17}{
        \pgfmathtruncatemacro{\next}{\x + 1};
        \draw (bn\x) -- (bn\next);
    }
    
    \draw (bn8) edge[->-,redge, opacity=0.5] (bn9)
          (bn9) edge[->-,redge, opacity=0.5] (bn10)
          (bn10) edge[->-,yedge, opacity=0.5] (bn11);

    % step 1, right
    \draw (bn9) edge[->-, yedge, opacity=0.5, bend left=55, looseness=1.55] (bn13);
    \node[color=myyellow] (lbl1) at (11.1, 1.8) {\small{$c_x(1)$}};
    \node[mycircle, opacity=0.5] (u1r1) at (10.5,0.6) {};
    \node[mycircle, opacity=0.5] (u2r1) at (11,1.2) {};
    \draw (bn10) edge[opacity=0.5] (u1r1)
    (u1r1) edge[opacity=0.5] (u2r1)
    (u2r1) edge[opacity=0.5] (bn13);
    
    \draw (u1r1) edge[->-, bedge, opacity=0.5] (bn9);
    \draw (u2r1) edge[->-, bedge, opacity=0.5] (bn9);
    \draw (bn13) edge[->-, yedge, bend right=25, opacity=0.5] (bn11);

    % step 2, left
    \draw (bn8) edge[->-, yedge, opacity=0.5, bend right=30] (bn2);
    \node[color=myyellow] (lbl2) at (5, 0.6) {\small{$c_x(2)$}};
    \node[mycircle,color=red] (u1l1) at (8,1.3) {};
    \node[mycircle,color=red] (u2l1) at (7.2,1.5) {};
    \node[mycircle] (u3l1) at (6.4,1.7) {};
    \node[mycircle] (u4l1) at (5.4,1.8) {};
    \node[mycircle] (u5l1) at (4.3,1.6) {};

    \draw (u2l1) edge[->-, redge, line width=1.7] (u1l1);
    \draw (u2l1) edge[] (u3l1)
    (u3l1) edge[] (u4l1)
    (u4l1) edge[] (u5l1)
    (u5l1) edge[bend right=10] (bn2);
    
    \draw (u1l1) edge[->-, bedge, opacity=0.5, bend left=10] (bn8);
    \draw (u2l1) edge[->-, bedge, opacity=0.5, bend left=10] (bn8);
    \draw (u3l1) edge[->-, bedge, opacity=0.5, bend left=10] (bn8);
    \draw (u4l1) edge[->-, bedge, opacity=0.5, bend left=10] (bn8);
    \draw (u5l1) edge[->-, bedge, opacity=0.5, bend left=10] (bn8);
    
    % step 3, right
    \draw (u1l1) edge[->-, redge, opacity=0.5] (bn9);
    \draw (u1l1) edge[->-, yedge, opacity=0.5, bend left=60, looseness=1.2] (bn14);
    \node[color=myyellow] (lbl3) at (13, 2.2) {\small{$c_x(3)$}};
    \node[mycircle, opacity=0.5] (u1r2) at (8.8,0.8) {};
    \node[mycircle, opacity=0.5] (u2r2) at (9.5,1.8) {};
    \draw (bn9) edge[opacity=0.5] (u1r2)
    (u1r2) edge[opacity=0.5] (u2r2)
    (u2r2) edge[opacity=0.5, bend left=50, looseness=1] (bn14);
    
    \draw (u1r2) edge[->-, bedge, opacity=0.5] (u1l1);
    \draw (u2r2) edge[->-, bedge, opacity=0.5] (u1l1);
    \draw (bn14) edge[->-, yedge, bend right=60, opacity=0.5] (bn13);

    % ----------------------------------------------

    % actual left step
    \draw (u2l1) edge[->-, yedge, bend right=70, looseness=1.7, line width=1.6] (bn2);
    \node[color=myyellow] (lbl4) at (5.2, 3.5) {\small{$c_x(4)$}};
    \node[mycircle] (u1l2) at (7.5,2.8) {};
    \node[mycircle] (u2l2) at (6,3.9) {};

    \draw (u1l1) to (u1l2) to (u2l2) edge[bend right=50, looseness=1.2] (bn2);
    \draw (u1l2) edge[->-, bedge,line width=1.7] (u2l1);
    \draw (u2l2) edge[->-, bedge,line width=1.7] (u2l1);

    \node[color=red] (lbl3) at (7.8, 1.1) {$t_2$};

    \draw (3, 1.8) edge[->-, yedge, dotted, bend right=10, line width=1.7] (bn2);
    \draw (3.7, 2.8) edge[-, yedge, dotted, bend right=10, line width=1.7] (3, 1.8);
    \draw (3.5, 2) edge[-, yedge, dotted, bend right=10, line width=1.7] (3, 1.8);
    
    \draw (5.6, 2.6) edge[->-, redge, dotted, bend left=10, line width=1.7] (u2l1);
    \draw (4.2, 2.9) edge[->-, redge, dotted, bend left=10, line width=1.7] (5.6, 2.6);
    \draw (4.6, 2.4) edge[->-, redge, dotted, bend left=10, line width=1.7] (5.6, 2.6);
    
    \draw (3.8, 2.2) edge[->-, bedge, dotted, line width=1.7] (u5l1);
    \draw (4.5, 2.2) edge[->-, bedge, dotted, line width=1.7] (u5l1);
    \draw (5, 2.2) edge[->-, bedge, dotted, line width=1.7] (u4l1);
    \draw (6, 2.1) edge[->-, bedge, dotted, line width=1.7] (u3l1);
    
\end{tikzpicture}
        \caption{The various monochromatic components added to the revealed Schnyder wood after $P_x$ takes either a right (top) or left (bottom) step. These components are indicated in bold.}
        \label{fig:example_left_right_steps}
    \end{figure}
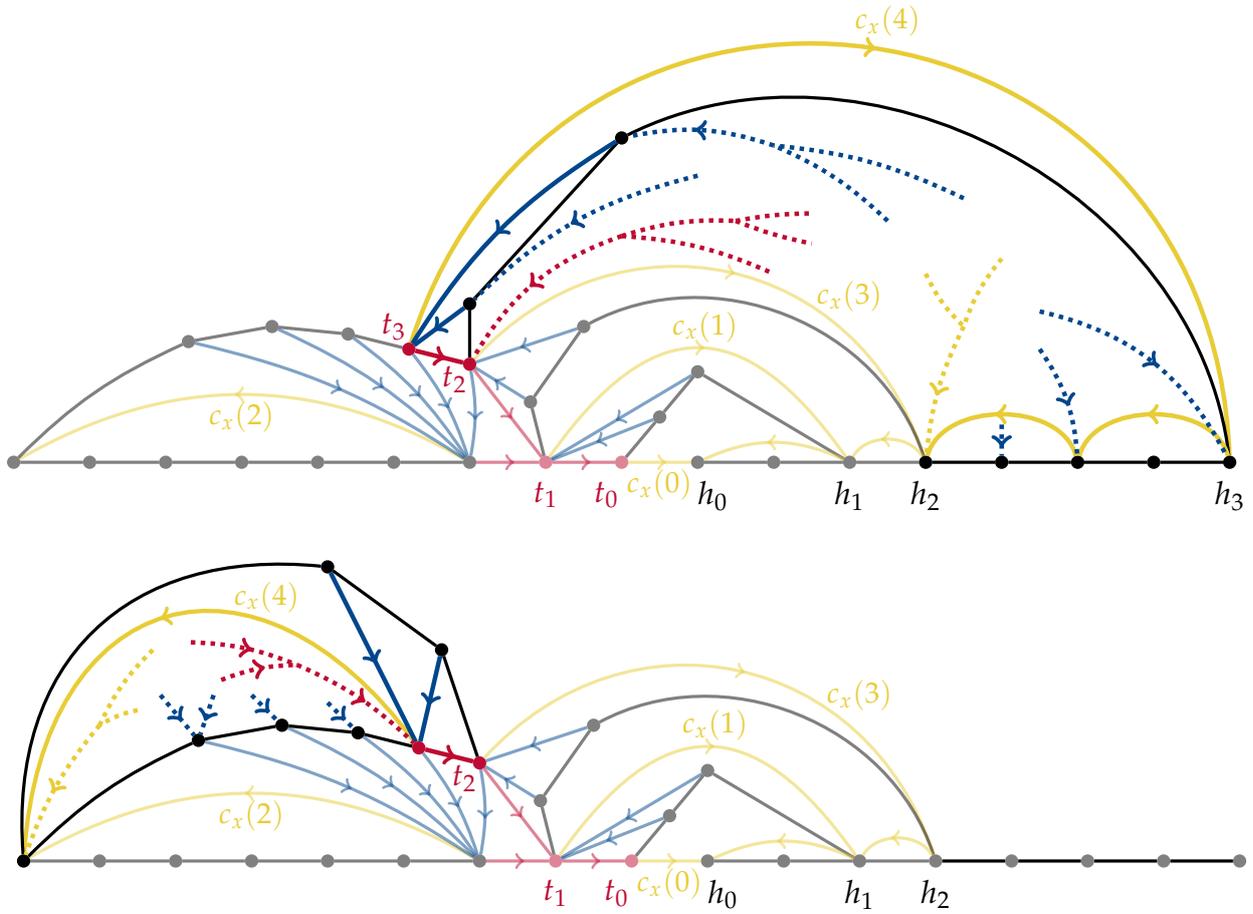

    We now confirm the structure of each monochromatic subgraph in turn. 
    \begin{claim}\label{clm:schnydersegmentyellowforest}
        The yellow subgraph of the Schnyder wood segment explored by $P_x$ consists of
        \begin{itemize}
            \item An infinite tree rooted at $b_{x+1}$ that contains a unique infinite path whose vertices are on the lower boundary of the segment right of $b_{x+1}$ and include the heads of every chord chosen by a right step of $P_x$; and
            \item A forest of finite trees rooted at vertices on the upper boundary or at the corner vertex of the segment.
        \end{itemize}
    \end{claim}
    
    \begin{poc}
        We showed in \cref{lem:schnyderareayellowforest} that after $i$ steps of $P_x$, the yellow subgraph of the revealed area $\area{x}{i}$ consists of a tree rooted at $b_{x+1}$ that is disjoint from the current upper boundary and corner vertex, and that contains a path whose vertices include the heads $h_\ell$ of every chord chosen by a right step of $P_x$ up to step $i$, along with a forest of trees rooted at vertices on the upper boundary or at the corner vertex of $\area{x}{i}$. By \cref{obs:infiniterightsteps}, $P_x$ will take an infinite number of right steps while constructing the Schnyder wood segment, so the yellow subgraph of this segment contains an infinite yellow tree rooted at $b_{x+1}$ that contains an infinite path whose vertices include $(h_\ell,\ell \ge 0)$. Note that $h_0 = b_{x+1}$. Moreover, after every step $i$, all vertices on $\boundary{x}{i}$ to the right of the head $h_\ell$ of the current root edge are on the initial boundary right of $b_{x+1}$. By \cref{lem:schnyderareayellowforest}, this implies that the yellow path between $h_{\ell+1}$ and $h_\ell$ consists entirely of vertices on the lower boundary of the Schnyder wood segment right of $b_{x+1}$.

        Consider now a finite yellow tree rooted at a vertex $v$ on the upper boundary or at the corner vertex of $\area{x}{i}$. If $v$ is later covered by a right step of $P_x$, the structure of the right step implies that $v$ will be added to the tree rooted at $b_{x+1}$. Alternatively, if $v$ is later covered by a left step, it will afterwards be contained in a finite tree rooted further left on $\boundary{x}{i}$. Since we know by \cref{lem:peeling_moves_right} that only a finite number of vertices left of the current root edge on $\boundary{x}{i}$ will eventually be covered by the Schnyder wood segment, this process must eventually terminate with $v$ being contained in a finite yellow tree rooted at a vertex on the upper boundary or at the corner vertex of the Schnyder wood segment.
        
        Finally, recall that if $P_x$ has performed $\ell$ right steps after step $i$, then the tree rooted at $b_{x+1}$ contains a directed path from $h_\ell$ to $h_0 = b_{x+1}$, and this tree intersects the boundary $\boundary{x}{i}$ only in the root edge $\roote{x}{i}$, the head of which is $h_\ell$. In particular, a path from any vertex revealed after step $i$ to $b_{x+1}$ must include $h_\ell$. This shows that the infinite path whose vertices include $(h_\ell,\ell \ge 0)$ is the unique infinite path in the yellow tree rooted at $b_{x+1}$.
    \end{poc}
    \begin{claim}\label{clm:schnydersegmentredforest}
        The red subgraph of the Schnyder wood segment explored by $P_x$ consists of an infinite tree rooted at $b_x$ that contains a unique infinite path whose vertices are the tails of every chord chosen by a right step of $P_x$. In particular, every vertex on the infinite red path has an outgoing yellow edge to the infinite yellow path.
    \end{claim}
    
    \begin{poc}
        It follows immediately from \cref{lem:schnyderarearedforest} that the red subgraph in the Schnyder wood segment consists of a single infinite red tree rooted at $b_x$, and that this tree contains the infinite path whose vertices are the tails $t_\ell$ of every chord chosen by a right step of $P_x$.

        In particular, if $P_x$ has performed $\ell$ right steps after step $i$, then the tree rooted at $b_x$ contains the directed path $t_\ell, t_{\ell-1}, \dots, t_0$. Moreover, by \cref{obs:finitestepsupperboundary}, no upper boundary or corner vertex of $\area{x}{i}$ can be contained in this tree, and so the red tree can only intersect the boundary $\boundary{x}{i}$ at $t_\ell$ and $h_\ell$. The Schnyder root condition implies that no red edges can be added to $h_\ell$ during subsequent steps, so a path from any vertex revealed after step $i$ to $b_x$ must include $t_\ell$. This shows that the infinite path $\dots, t_2, t_1, t_0$ is the unique infinite path in the red tree rooted at $b_x$. Finally, every vertex $t_\ell$ on the infinite red path has an outgoing yellow edge to $h_\ell$ which is on the infinite yellow path.
    \end{poc}

    To determine the structure of the blue subgraph of the Schnyder wood segment, let $i_0 \coloneqq 0$, and inductively define $i_\ell$ as the last step of $P_x$ such that $\peelv{x}{i_\ell}$ is on the boundary $\boundary{x}{i_{\ell - 1}}$. We first need to show that each $i_\ell$ exists, but note that \emph{if} $i_\ell$ exists, then with $i = i_\ell$, the steps $i_0, \dots, i_\ell$ coincide with the steps defined in \cref{lem:schnyderareablueforest}.

    To show that each $i_\ell$ exists, note that a peeling vertex which is on the initial boundary is never strictly right of a previous peeling vertex. By \cref{lem:peeling_moves_right}, only a finite number of vertices left of the first peeling vertex on the initial boundary will eventually be covered by $P_x$. This shows that there exists a step $i$ of $P_x$ such that $\peelv{x}{i}$ is on the initial boundary, but all subsequent steps use a peeling vertex that is not on the initial boundary, and so $i_1 = i$. After step $i_1$, it follows from \cref{lem:hole_uihpt} that $T \setminus \area{x}{i_1}$ is distributed like a UIHPT, so we may repeat the argument. \cref{lem:peeling_moves_right} once again implies the existence of a step $i' > i_1$ such that $\peelv{x}{i'}$ is on the boundary $\boundary{x}{i_1}$, but all subsequent steps use a peeling vertex that is not on $\boundary{x}{i'}$. Hence, $i_2 = i'$. By repeating this argument, it follows that each $i_\ell$ exists. Also note that each step $i_\ell$ is a left step, as otherwise $\peelv{x}{i_\ell+1}$ would still be on $\boundary{x}{i_{\ell - 1}}$.

    \begin{claim}\label{clm:schnydersegmentblueforest}
        The blue subgraph of the Schnyder wood segment explored by $P_x$ consists of
        \begin{itemize}
            \item An infinite tree rooted at some vertex $b_z$ with $z < x$ that contains a unique infinite path whose vertices are the peeling vertices $\peelv{x}{i_\ell}$ with $\ell \ge 1$, and
            \item A forest of finite trees rooted at vertices on the lower boundary of the segment.
        \end{itemize}
        Moreover, every vertex on the infinite blue path has an outgoing red edge to the infinite red path, and every vertex on the upper boundary of the Schnyder wood segment has an outgoing blue edge to the infinite blue path.
    \end{claim}
    
    \begin{poc}
        It follows immediately from \cref{lem:schnyderareablueforest} that the blue subgraph of the Schnyder wood segment consists of blue trees rooted at vertices on the initial boundary, and that the blue tree rooted at $\peelv{x}{i_1}$ contains an infinite path whose vertices are the peeling vertices $\peelv{x}{i_\ell}$ with $\ell \geq 1$. Note that $\peelv{x}{i_1} = b_z$ for some $z < x$.

        Furthermore, \cref{lem:schnyderareablueforest} shows that after step $i_1$, every vertex on the upper boundary $U$ of $\area{x}{i_1}$ has an outgoing blue edge to the peeling vertex $\peelv{x}{i_1}$. In particular, every vertex on $U$ is contained in the tree rooted at $\peelv{x}{i_1}$. Since any blue tree added to the initial boundary in subsequent steps is contained in a finite region under a chord chosen by $P_x$, such a tree must be finite, and so it follows that all blue trees of the Schnyder wood segment rooted on the initial boundary at vertices other than $\peelv{x}{i_1}$ will be finite.

        Note that running $P_x$ after step $i_\ell$ for some $\ell \ge 1$ corresponds to running the Schnyder peeling process on the unexplored region of $T$ with initial boundary $\boundary{x}{i_\ell}$ and root edge $\roote{x}{i_\ell}$. So, by the same arguments as above, every blue (sub-)tree that is rooted at a vertex of $\boundary{x}{i_\ell} \sm \set{\peelv{x}{i_{\ell+1}}}$ will be finite. That is, for only a finite number of vertices $v$ in the blue tree of the Schnyder wood segment rooted at $\peelv{x}{i_1}$, the blue path from $v$ to $\peelv{x}{i_1}$ will not include $\peelv{x}{i_{\ell+1}}$. This shows that the infinite path $\dots, \peelv{x}{i_3}, \peelv{x}{i_2}, \peelv{x}{i_1}$ is the unique infinite path in the blue tree rooted at $\peelv{x}{i_1}$.

        Finally, recall that in any peeling step $i$, an outgoing red edge is added from $\peelv{x}{i}$ to the tail of $\roote{x}{i-1}$. We have already shown in \cref{clm:schnydersegmentredforest} that the infinite red path consists of the tails of $\roote{x}{i}$ for each right step $i$. It follows that every vertex on the infinite blue path has an outgoing red edge to the infinite red path. Also, every vertex on the upper boundary of the Schnyder wood segment is on the upper boundary $U$ of $\area{x}{i_k}$ for some sufficiently large $k$. By \cref{lem:schnyderareablueforest}, every vertex on $U$ has an outgoing blue edge to one of the peeling vertices $\peelv{x}{i_\ell}$ for $\ell \le k$, which is on the infinite blue path. Therefore, every vertex on the upper boundary of the Schnyder wood segment has an outgoing blue edge to the infinite blue path.
    \end{poc}
    The fact that the Schnyder wood segment has the structure asserted in the lemma now follows directly from the claims, so this completes the proof.
\end{proof}

Let $\infpath{y}$\glossarylabel{gl:infpath}, $\infpath{r}$, and $\infpath{b}$ denote, respectively, the unique infinite yellow, red, and blue paths in the Schnyder wood segment, as given in \cref{lem:monochromatic_segment_subgraphs}.

\begin{lemma} \label{lem:disjoint_paths}
    The paths $\infpath{y}$, $\infpath{r}$, and $\infpath{b}$ are vertex-disjoint.
\end{lemma}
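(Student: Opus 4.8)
The plan is to treat the three pairs separately, since the structural description in \cref{lem:monochromatic_segment_subgraphs} already pins down where each path lives. Recall from that lemma that $\infpath{y}$ consists entirely of lower-boundary vertices lying weakly right of $b_{x+1}$ (its vertices include the heads $h_\ell$ of the right-step chords, with $h_0=b_{x+1}$ and the heads moving strictly right), that $\infpath{r}$ consists of $b_x$ together with the tails $t_\ell$ of the right-step chords, and that $\infpath{b}$ consists of the peeling vertices $\peelv{x}{i_\ell}$, $\ell\ge 1$, with $\peelv{x}{i_1}=b_z$ for some $z<x$.

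For $\infpath{y}\cap\infpath{b}=\emptyset$ I would argue purely by position along the initial boundary. Every vertex of $\infpath{y}$ lies on the initial boundary at an index $\ge x+1$. On the other hand $b_z$ (index $z<x$) is the only vertex of $\infpath{b}$ on the initial boundary, since each later peeling vertex $\peelv{x}{i_\ell}$, $\ell\ge 2$, lies on $\boundary{x}{i_{\ell-1}}$ but not on the initial boundary and is therefore interior. As the indices $z<x$ and $\ge x+1$ never coincide and no interior vertex lies on $\infpath{y}$, the two paths share no vertex.

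The case $\infpath{y}\cap\infpath{r}=\emptyset$ is again positional, but needs a short argument that the red spine never reaches the boundary to the right of the root. Each tail $t_\ell$ ($\ell\ge 1$) is the peeling vertex of the $\ell$-th right step and is the left neighbour, on the current boundary, of the previous tail $t_{\ell-1}$; since $t_0=b_x$ and right steps only move the tail of the root edge leftwards (or into the interior), an induction using \cref{obs:finitestepscoversegmentofboundary} shows that whenever $t_\ell$ lies on the initial boundary its index is $\le x$, and otherwise $t_\ell$ is interior. Because the vertices of $\infpath{y}$ are initial-boundary vertices of index $\ge x+1$, the paths are disjoint. The point requiring care is that left steps insert new (interior) vertices immediately left of the current tail, so I must confirm that these insertions never let a subsequent right-step peeling vertex land on the initial boundary at an index exceeding $x$; this holds because the inserted vertices sit above the boundary and the root tail's boundary-index is nonincreasing.

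Finally, $\infpath{r}\cap\infpath{b}=\emptyset$ is the one genuinely combinatorial case, and I regard it as the main obstacle, since here position alone does not separate the paths (both may contain interior vertices). Suppose $w\in\infpath{r}\cap\infpath{b}$. As a vertex of $\infpath{b}$ we have $w=\peelv{x}{i_m}$ for some $m$, and by \cref{lem:monochromatic_segment_subgraphs} its unique outgoing red edge runs to the tail $t_j$ of $\roote{x}{i_m-1}$, which is a vertex of $\infpath{r}$. As a vertex of $\infpath{r}$ we have $w=t_a$, whose unique outgoing red edge is the successor edge $t_a\to t_{a-1}$; uniqueness of the outgoing red edge forces $t_{a-1}=t_j$, so $w=t_{j+1}$ is the peeling vertex of the $(j+1)$-th right step. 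But $\peelv{x}{i_m}$ is the peeling vertex of the left step $i_m$ (each $i_m$ is a left step, as noted in the proof of \cref{lem:monochromatic_segment_subgraphs}), so $w$ would be the peeling vertex of two distinct steps, one left and one right. The last ingredient, which I would record as a short observation, is that a vertex is the peeling vertex of at most one step: a left-step peeling vertex is covered and leaves the boundary, while a right-step peeling vertex becomes the root tail and is thereafter always strictly right of every later root tail, hence never again one vertex left of a root tail. This contradiction closes the case, and the three cases together give the claim.
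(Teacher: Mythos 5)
Your proof is correct and takes essentially the same approach as the paper: both arguments recall from \cref{lem:monochromatic_segment_subgraphs} that $\infpath{b}$ consists of left-step peeling vertices, $\infpath{r}$ of the tails of right-step chords, and $\infpath{y}$ of lower-boundary vertices right of $b_{x+1}$, and then deduce pairwise disjointness from the positions of these vertices along the initial boundary together with the fact that no vertex can serve as the peeling vertex of both a left step and a right step. The paper compresses this into two sentences (your red-edge-uniqueness detour in the $\infpath{r}\cap\infpath{b}$ case reduces to the same key fact), so the extra detail you supply is justification the paper treats as immediate rather than a different route.
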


\begin{proof}
    We showed in the proof of \cref{lem:monochromatic_segment_subgraphs} that $\infpath{b}$ consists of the peeling vertices of some subset of the left steps of $P_x$, $\infpath{r}$ consists of the tails of every chord chosen by a right step of $P_x$, and $\infpath{y}$ consists of vertices on the lower boundary of the segment right of $b_{x+1}$. Since a peeling vertex of a left step can never be the tail of a chord chosen by a right step of $P_x$, it follows immediately that $\infpath{r}$ and $\infpath{b}$ are vertex-disjoint. Also, since neither peeling vertices nor tails of chords are on the lower boundary of the segment right of $b_{x+1}$, $\infpath{b}$ and $\infpath{r}$ are both disjoint from $\infpath{y}$.
\end{proof}

Recall that a two-ended infinite directed path in a triangulation $T$ of the half-plane is \defn{right-directed} (resp.~\defn{left-directed}) if it divides $T$ into two regions so that the infinite region containing the boundary lies to the right (resp.~left) of the path. The structure of the Schnyder wood segment described in \cref{lem:monochromatic_segment_subgraphs} implies the following result about the monochromatic substructures in the Schnyder wood strip.

\begin{lemma}\label{lem:monochromatic_strip_subgraphs}
    The monochromatic subgraphs of the Schnyder wood strip are as follows.
    \begin{itemize}
        \item An infinite yellow tree, and a forest of finite yellow trees rooted at vertices on the upper boundary of the strip,
        \item An infinite red tree, and 
        \item An infinite blue tree, and a forest of finite blue trees rooted at vertices on the lower boundary of the strip. 
    \end{itemize}
    Furthermore, the infinite yellow, red, and blue trees each contain a unique left-directed path $\infpath{y}$\glossarylabel{gl:infpathstrip}, $\infpath{r}$, and $\infpath{b}$ respectively, and these paths are vertex-disjoint. Every vertex on the upper boundary of the Schnyder wood strip has an outgoing blue edge to $\infpath{b}$, every vertex on $\infpath{b}$ has an outgoing red edge to $\infpath{r}$, and every vertex on $\infpath{r}$ has an outgoing yellow edge to $\infpath{y}$. $\infpath{y}$ consists of vertices on the lower boundary of the Schnyder wood strip.
\end{lemma}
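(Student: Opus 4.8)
The plan is to derive the strip's structure from the already-established segment structure of \cref{lem:monochromatic_segment_subgraphs} by a limiting argument, using that the strip is the increasing union $S = \bigcup_x \evarea{x}$ of Schnyder wood segments (\cref{cor:nested_segments}) carrying the consistent colouring $\lim_{x\to-\infty}P_x(e)$. First I would record that every monochromatic subgraph of $S$ is a forest: a finite monochromatic cycle would lie in some single segment $\evarea{x}$ for $x$ sufficiently negative, contradicting the forest structure of that segment given by \cref{lem:monochromatic_segment_subgraphs} (or directly by \cref{lem:infiniteschnyderwood_monochromatic}). The key reduction is local: for any fixed $\rho$, \cref{thm:consistent_strip_colouring} lets me pick $x$ negative enough that $P_x$ and the strip assign the same colouring and orientation on $\ball{\rho}{b_0}$, and negative enough that the segment's three roots $b_{x+1}$, $b_x$, and $b_z$ all lie to the left of $\ball{\rho}{b_0}$. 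On this ball the strip therefore inherits exactly the segment structure, and I can read off the local picture; letting $\rho\to\infty$ assembles the global structure of $S$.

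Next I would transfer each colour class. In the segment the yellow subgraph is one infinite tree rooted at $b_{x+1}$ together with finite trees hanging off the upper boundary and the corner; as $x\to-\infty$ the root $b_{x+1}$ and the corner recede to $-\infty$, so in $S$ the infinite yellow tree loses its root while the finite yellow trees remain rooted at upper boundary vertices. The unique one-ended infinite yellow path of $\evarea{x}$, which runs along the lower boundary and is directed towards $b_{x+1}$, passes through every fixed ball entering from the right and leaving to the left once $x$ is small enough; gluing these pieces over $\rho\to\infty$ yields a single two-ended path $\infpath{y}$ on the lower boundary, still directed leftwards. The same argument applied to the red tree (rooted at $b_x$, infinite path the tails of right-step chords) and the blue tree (rooted at $b_z$, infinite path the relevant peeling vertices, plus finite trees on the lower boundary) gives the infinite red and blue trees of $S$ and their two-ended paths $\infpath{r}$ and $\infpath{b}$. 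Each of these paths is directed leftwards; since the whole of $S$ lies within distance three of the initial boundary, walking along any of them keeps the boundary of $T$ on the left, so each is left-directed, and uniqueness of the infinite path in each infinite tree follows from the corresponding uniqueness in the segment.

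Finally, the incidence statements are all local and hence transfer verbatim through the same ball-by-ball identification: every upper boundary vertex, being an upper boundary vertex of $\evarea{x}$ for small $x$, has an outgoing blue edge to $\infpath{b}$; every vertex of $\infpath{b}$ has an outgoing red edge to $\infpath{r}$; and every vertex of $\infpath{r}$ has an outgoing yellow edge to $\infpath{y}$, each by \cref{lem:monochromatic_segment_subgraphs}. Vertex-disjointness of $\infpath{y}$, $\infpath{r}$, and $\infpath{b}$ follows from \cref{lem:disjoint_paths} together with the observation that their vertices are of three disjoint types (lower-boundary vertices, tails of right-step chords, and peeling vertices of left steps).

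The main obstacle I expect is making the limiting argument genuinely rigorous rather than a hand-wave: the segment colouring produced by $P_x$ near its receding root $b_x$ does \emph{not} agree with the strip colouring there, so I cannot simply take a union of segment structures. I must everywhere restrict to balls on which \cref{thm:consistent_strip_colouring} guarantees agreement and on which the segment's roots have already receded, and then check that the one-ended paths and rooted trees of the segments stabilise on each ball and genuinely assemble into two-ended paths and rootless trees as the ball grows. Verifying the left-directedness cleanly---via the bounded distance of $S$ from the boundary and the leftward orientation inherited from the segments---is the one geometric point that needs care.
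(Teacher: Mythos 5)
Your overall strategy—deriving the strip's structure from \cref{lem:monochromatic_segment_subgraphs} and \cref{lem:disjoint_paths} via the consistent-colouring limit of \cref{thm:consistent_strip_colouring}—is the same as the paper's, and your transfer of the local facts (the incidence structure between the distinguished paths, left-directedness, vertex-disjointness) is essentially what the paper does. However, there is a genuine gap at exactly the point you flag but do not resolve: the claims that are \emph{global} rather than local. Specifically, (i) that the yellow trees rooted on the upper boundary and the blue trees rooted on the lower boundary are finite, and (ii) that each colour has exactly one infinite tree containing a \emph{unique} infinite path. Ball-by-ball agreement with segments cannot deliver these. The tree containing a fixed vertex in the strip is the limit of its trees in the segments $\evarea{x}$, and a nested sequence of finite trees can perfectly well have an infinite limit; likewise, a new two-ended infinite monochromatic path could in principle assemble in the limit out of pieces that, in every single segment, belong only to finite trees. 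Your sentence ``uniqueness of the infinite path in each infinite tree follows from the corresponding uniqueness in the segment'' is precisely the non-local assertion that does not follow, since uniqueness of a one-ended path in each $\evarea{x}$ says nothing about infinite paths visible only in the union.

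The paper closes this gap with an argument your proposal would need (or an equivalent). Suppose some additional infinite monochromatic path $P$ of colour $c$ existed in the strip. If $P$ meets $\infpath{c}$, the two coincide from that point on (each vertex has at most one outgoing edge per colour), so some one-ended infinite subpath of $P$ avoids $\infpath{c}$; by the Schnyder condition, $P$ can cross a distinguished path of a different colour at most once, and by the Schnyder upper boundary condition it can meet the upper boundary at most once. Hence some one-ended infinite subpath $P'$ avoids the upper boundary and all of $\infpath{b}$, $\infpath{r}$, and $\infpath{y}$. But the incidence structure you did transfer—every upper boundary vertex has an edge to $\infpath{b}$, every vertex of $\infpath{b}$ to $\infpath{r}$, every vertex of $\infpath{r}$ to $\infpath{y}$, and $\infpath{y}$ lies on the lower boundary—implies that deleting these vertices and the edges between them cuts the strip into finite components, so $P'$ would be trapped in a finite component, a contradiction. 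This single argument simultaneously gives the uniqueness of the infinite path in each infinite tree and the finiteness of all the boundary-rooted trees; without it, the proposal does not establish the lemma.
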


\begin{proof}
    Once again, we infer the structure of the Schnyder wood strip from the structure of the Schnyder wood segment, using our results from \cref{ssec:uihpt:strip} that defined the Schnyder wood strip as the colouring assigned to the explored region by the process $P_x$ as $x$ tends to $-\infty$. We know from the proof of \cref{thm:consistent_strip_colouring} that any two processes rooted sufficiently far left will eventually have a step at which both processes share a common boundary segment and every edge coloured after this step is coloured identically by both processes. By \cref{lem:monochromatic_segment_subgraphs,lem:disjoint_paths}, it follows that the Schnyder wood strip right of the common boundary has the following structure. It contains one infinite tree of each colour, each with a unique infinite path that is vertex-disjoint from the infinite paths in the two other colours. It also contains a forest of finite yellow trees rooted at the upper boundary of the strip, and a forest of finite blue trees rooted at the lower boundary of the strip and at the common boundary segment of the two processes.

    Note that the initial boundary vertices right of $b_{x+1}$ are on the left of the paths $\infpath{r}$, $\infpath{b}$, and $\infpath{y}$. Hence, for every boundary vertex $b_y$, there exists a sufficiently small $x$ so that $b_y$ is on the left of $\infpath{r}$, $\infpath{b}$, and $\infpath{y}$. It follows that, as $x \to -\infty$, these paths are left-directed. The vertex-disjointness and the existence of the edges between these paths follow directly from \cref{lem:monochromatic_segment_subgraphs} and \cref{lem:disjoint_paths}.

    For any finite monochromatic tree of the Schnyder wood strip, we can choose $x$ sufficiently small so that for all $y \leq x$, the finite tree is strictly to the right of the common boundary segment of $P_x$ and $P_y$. Therefore, by the above arguments, the finite monochromatic tree is either a yellow tree rooted at the upper boundary of the strip or a blue tree rooted at the lower boundary of the strip. It cannot be a blue tree rooted at a vertex on the common boundary segment since it is strictly to the right of that segment.
    
    It remains to check that in the $x \to -\infty$ limit, no additional infinite monochromatic paths can appear. Suppose for a contradiction that there exists another infinite monochromatic path $P$ in some colour $c \in \set{r, b, y}$. If $P$ intersects $\infpath{c}$, both paths will coincide from then on. Since $P$ is different from $\infpath{c}$, it follows that there is a one-ended infinite subpath of $P$ that never intersects $\infpath{c}$. If $P$ intersects an infinite path $\infpath{d}$ for $d \neq c$, then by the Schnyder condition $P$ must cross $\infpath{d}$ from one side to the other and cannot intersect $\infpath{d}$ anywhere else. Also note that the Schnyder upper boundary condition implies that $P$ cannot intersect the upper boundary more than once. Thus, there exists a one-ended infinite subpath $P' \subs P$ that does not intersect the upper boundary, $\infpath{b}$, $\infpath{r}$, or $\infpath{y}$.

    On the other hand, recall that every vertex on the upper boundary has an edge to $\infpath{b}$, every vertex on $\infpath{b}$ has an edge to $\infpath{r}$, every vertex on $\infpath{r}$ has an edge to $\infpath{y}$, and $\infpath{y}$ consists of vertices on the lower boundary. So, if we remove all vertices of the upper boundary, $\infpath{b}$, $\infpath{r}$, and $\infpath{y}$, as well as all edges between these vertices, this partitions the Schnyder wood strip into finite components. In particular, $P'$ must be contained in one of these components, contradicting that $P'$ is infinite.
\end{proof}

We remark that in both the Schnyder wood segment and the Schnyder wood strip, the following conditions are consequences of the properties of embeddings of the infinite trees and of the Schnyder condition.
\begin{itemize}
    \item The finite blue trees may intersect $\infpath{y}$, but not $\infpath{r}$,
    \item The infinite yellow tree may intersect $\infpath{r}$ but not $\infpath{b}$,
    \item The infinite red tree may intersect both $\infpath{y}$ and $\infpath{b}$,
    \item The infinite blue tree may intersect $\infpath{r}$ but not $\infpath{b}$, and
    \item The finite yellow trees may intersect $\infpath{b}$ but not $\infpath{r}$.
    \item The edges from every vertex on the upper boundary to $\infpath{b}$ are blue.
    \item The edges from every vertex on $\infpath{b}$ to $\infpath{r}$ are red.
    \item The edges from every vertex on $\infpath{r}$ to $\infpath{y}$ are yellow.
\end{itemize}
In particular, there is a unique blue edge from any vertex on the upper boundary of any Schnyder wood strip or segment to the infinite blue path, and a unique yellow path from any vertex on the lower boundary of any Schnyder wood strip or segment to the infinite yellow path. This implies that as the Schnyder wood chiselling algorithm repeatedly takes Schnyder wood strips, all of the previously finite trees other than the finite blue trees rooted on the initial boundary are connected to one of the infinite trees in the overall Schnyder wood. Thus, the following theorem now follows directly from our results for the Schnyder wood strip and Schnyder wood segment. 

\begin{theorem}\label{thm:monochromatic_schnyder_wood_structure}
    The monochromatic subgraphs of the Schnyder wood defined on the UIHPT by the Schnyder wood chiselling algorithm are as follows.
    \begin{itemize}
        \item A set of finite blue trees rooted at vertices on the initial boundary,
        \item Three one-ended yellow, red, and blue trees rooted at $b_{x+1}$, $b_x$, and $b_z$ respectively, for some $z < x$, each containing a unique infinite path, and
        \item An infinite sequence of two-ended yellow, red, and blue trees, each containing a unique left-directed path, so that the order of the paths moving away from the initial boundary cycles yellow, red, blue, yellow, red, blue, and so on.
    \end{itemize}
    Furthermore, every vertex on the one-ended infinite yellow path is on the initial boundary, and every vertex on the first two-ended infinite yellow path is either on the initial boundary or has an outgoing blue edge to the one-ended infinite blue path. Every vertex on the remaining infinite paths has an outgoing edge to the path immediately preceding it (and of the same colour as the path immediately preceding it).
\end{theorem}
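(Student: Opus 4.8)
The plan is to deduce the global structure by analysing how the monochromatic trees of the individual regions produced by the chiselling algorithm join together along their shared boundaries. Recall that the algorithm first colours the Schnyder wood segment explored by $P_x$, and then repeatedly colours Schnyder wood strips, each applied to an unexplored region that is again distributed as a UIHPT by \cref{lem:remove_one_ended_strip_still_uihpt} and \cref{lem:plane_minus_stripUIHPT}. These regions stack along the initial boundary, so that the upper boundary of each region forms (part of) the lower boundary of the region coloured immediately afterwards. First I would record the per-region structure supplied by \cref{lem:monochromatic_segment_subgraphs} for the segment and by \cref{lem:monochromatic_strip_subgraphs} for each strip: in every region there is one infinite tree of each colour, each carrying a unique infinite path $\infpath{y}$, $\infpath{r}$, $\infpath{b}$ (one-ended for the segment, left-directed for each strip), together with a forest of finite yellow trees rooted on the upper boundary (or at the corner of the segment) and a forest of finite blue trees rooted on the lower boundary.

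Next I would track how these trees merge across a shared boundary between consecutive regions $R$ (below) and $R'$ (above). A vertex $v$ on the upper boundary of $R$ becomes a lower boundary vertex of $R'$. By the Schnyder upper boundary condition, $v$ has a single outgoing edge in $R$, which is blue and leads into the infinite blue path of $R$; consequently every finite blue tree of $R'$ rooted at such a $v$ is attached, through this edge, to the infinite blue tree of $R$. Dually, $v$ acquires an outgoing yellow edge only once it lies on the lower boundary of $R'$, and by \cref{lem:monochromatic_strip_subgraphs} this edge leads to the infinite yellow path of $R'$; hence every finite yellow tree of $R$ rooted at $v$ (or at the corner) is attached to the infinite yellow tree of $R'$. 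The red trees neither gain nor lose edges across the boundary, since no region contains finite red trees. Putting this together, the segment's infinite blue, red and yellow trees give the three one-ended trees rooted at $b_z$, $b_x$ and $b_{x+1}$, each strip's infinite blue, red and yellow trees give the two-ended trees, and the finite blue trees rooted on the initial boundary—those from the segment together with those from the first strip sitting on the uncovered part of the boundary left of the segment—remain finite, because nothing lies below the initial boundary. That \cref{lem:monochromatic_strip_subgraphs} already rules out any additional infinite paths within each strip guarantees that no spurious infinite trees appear.

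For the ordering and the edge relations I would use the vertical placement of the paths within a single region: by \cref{lem:monochromatic_segment_subgraphs,lem:monochromatic_strip_subgraphs} the path $\infpath{y}$ lies on the lower boundary, and the relations ``vertices of $\infpath{b}$ send red edges to $\infpath{r}$'', ``vertices of $\infpath{r}$ send yellow edges to $\infpath{y}$'', and ``upper-boundary vertices send blue edges to $\infpath{b}$'' force the order, moving away from the boundary, to be yellow, then red, then blue within each region. Stacking the regions therefore yields the cyclic order yellow, red, blue, yellow, red, blue, $\dots$. The within-region relations give directly that each red path sends yellow edges to the yellow path preceding it and each blue path sends red edges to the red path preceding it, while the cross-region relation established above is exactly the blue edge from a lower-boundary (hence yellow-path) vertex of $R'$ down to the infinite blue path of $R$, i.e.\ each yellow path sends blue edges to the blue path immediately preceding it.

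Finally I would isolate the two special cases flagged in the statement. The one-ended yellow path is the segment's infinite yellow path, which \cref{lem:monochromatic_segment_subgraphs} places entirely on the initial (lower) boundary. The first two-ended yellow path lies on the lower boundary of the first strip, which consists of the segment's upper boundary together with the initial boundary vertices left of the segment that were never covered; the former vertices send a blue edge to the one-ended blue path, while the latter lie on the initial boundary, giving precisely the dichotomy in the theorem. For every later yellow path the lower boundary of the corresponding strip coincides entirely with the upper boundary of the strip below, so all of its vertices send a blue edge to the preceding blue path, with no initial-boundary exception. I expect the only delicate point to be this bookkeeping of which boundary vertices of the first strip are genuinely new initial boundary vertices versus inherited upper-boundary vertices of the segment; everything else is a direct transcription of \cref{lem:monochromatic_segment_subgraphs} and \cref{lem:monochromatic_strip_subgraphs}.
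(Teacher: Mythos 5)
Your proposal is correct and takes essentially the same route as the paper: the paper likewise derives the theorem directly from \cref{lem:monochromatic_segment_subgraphs} and \cref{lem:monochromatic_strip_subgraphs}, together with a remark that every upper-boundary vertex has a unique blue edge to $\infpath{b}$ of its region and every lower-boundary vertex has a yellow path to $\infpath{y}$ of its region, so that upon stacking the strips all previously finite trees except the blue trees rooted on the initial boundary merge into the infinite trees. The one small imprecision is that you attribute the fact that a lower-boundary vertex of $R'$ lies in the infinite yellow tree of $R'$ directly to \cref{lem:monochromatic_strip_subgraphs}, whereas this needs the additional observation (made in the paper's pre-theorem remark, via planarity of the embedded trees) that finite yellow trees rooted on the upper boundary cannot reach the lower boundary; this is exactly the bookkeeping point you flagged yourself.
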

We henceforth refer to the one-ended monochromatic paths in the Schnyder wood segment, and the two-ended monochromatic paths in the Schnyder wood strip as \defn{distinguished paths}.

We conclude this section by determining the structure of the monochromatic paths from any fixed vertex in a UIHPT $T$, proving a result analogous to \cref{thm:finite_paths_from_v_wind,thm:finite_boundary_paths_from_v_wind}. Consider the unique directed path $Q$ of colour $c$ that starts at a vertex $v$ in the Schnyder wood of $T$ constructed by the chiselling algorithm. If $Q$ is finite, let $\vpath{c}{v} \coloneqq Q$\glossarylabel{gl:vpathuihpt}. Otherwise, after some finite distance $Q$ meets one of the distinguished paths $P$ of colour $c$ contained in a Schnyder wood strip and follows $P$ from that point onwards. In this case, we define $\vpath{c}{v}$ to be the union of $Q$ and all of the distinguished paths of colour $c$ contained in a Schnyder wood strip or segment below $P$. Note that if $T$ has root edge $(b_x, b_{x+1})$, then $\vpath{y}{v}$ terminates at $b_{x+1}$, $\vpath{r}{v}$ terminates at $b_x$, and $\vpath{b}{v}$ terminates at an initial boundary vertex of $T$ other than $b_x$ or $b_{x+1}$. The following theorem is the analogue of \cref{thm:finite_paths_from_v_wind} and \cref{thm:finite_boundary_paths_from_v_wind} for the UIHPT.

\begin{theorem}\label{thm:uihpt_paths_from_v_wind}
     Let $v$ be a vertex in the Schnyder wood of a UIHPT $T$ produced by the Schnyder wood chiselling algorithm. Then every vertex on $\vpath{b}{v}$ has an outgoing red edge to $\vpath{r}{v}$, every vertex on $\vpath{r}{v}$ has an outgoing yellow edge to $\vpath{y}{v}$, and every vertex on $\vpath{y}{v}$ is either a boundary vertex of $T$ or has an outgoing blue edge to $\vpath{b}{v}$.
\end{theorem}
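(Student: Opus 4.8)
The plan is to follow the strategy of \cref{thm:finite_paths_from_v_wind,thm:finite_boundary_paths_from_v_wind}, but to account for the fact that the three monochromatic paths from $v$ may now be infinite by combining a local (finite) reduction with the global structure of the distinguished paths recorded in \cref{lem:monochromatic_segment_subgraphs,lem:monochromatic_strip_subgraphs} and \cref{thm:monochromatic_schnyder_wood_structure}. First I would make the description of $\vpath{c}{v}$ explicit: it consists of the finite directed path $Q_c$ run from $v$ until it first meets a distinguished path $P_c$ of colour $c$, followed by $P_c$ and all distinguished paths of colour $c$ below it. Viewing the Schnyder wood as a stack consisting of the Schnyder wood segment (level $1$) and the successive Schnyder wood strips (levels $2,3,\dots$), I would record for each colour the level $k_c$ of the strip or segment in which $P_c$ lies, with the convention that a vertex lying on the common boundary of two consecutive strips is assigned to the upper one (so that it sits on that strip's lower-boundary yellow path).

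The two ingredients are then as follows. For a vertex $w$ lying on a distinguished path in $\vpath{c}{v}$, the required outgoing edge is supplied directly by \cref{thm:monochromatic_schnyder_wood_structure}: every vertex of a distinguished $c$-path has an outgoing edge to the distinguished path immediately below it, coloured like that path (or, for the one-ended yellow path, lies on the initial boundary). For a vertex $w$ on the finite initial segment $Q_c$, I would use a local reduction: $Q_c$ lies in a finite region $R$, and since the chiselling colouring agrees on $R$ with the maximal Schnyder wood of a finite triangulation explored by the finite peeling process, \cref{thm:finite_paths_from_v_wind} supplies the winding edge at $w$, whose head lies on $Q_{c'}$, and hence on $\vpath{c'}{v}$, once $R$ is taken large enough to contain it. The distinguished-path structure is what confines these heads to $R$, so the reduction is legitimate despite the full paths being infinite. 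The case in which $v$ lies in the segment (where all three paths are finite and the statement is the one-ended analogue of \cref{thm:finite_boundary_paths_from_v_wind}) serves as the base case and is handled by the same finite reduction.

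The step along which these ingredients must be glued, and which I expect to be the main obstacle, is an \emph{alignment} of levels: whenever $\vpath{b}{v}$ contains a distinguished blue path at some level $j$, the path $\vpath{r}{v}$ must contain the distinguished red path at level $j$, and similarly for red/yellow and for yellow/blue. Writing $m$ for the level of $v$, the tree structure within a strip (\cref{lem:monochromatic_strip_subgraphs}) gives $k_r=m$, while $k_b\in\{m-1,m\}$ (blue either joins the strip's infinite blue path or descends one level through the lower boundary) and $k_y\in\{m,m+1\}$ (yellow either joins the strip's infinite yellow path on the lower boundary or ascends one level through the upper boundary). The three claims reduce exactly to the chain $k_b\le k_r\le k_y\le k_b+1$. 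The inequalities $k_b\le k_r\le k_y$ are immediate, so the crux is $k_y\le k_b+1$, i.e.\ that the yellow path cannot ascend a level while the blue path descends one. I would obtain this from the within-strip winding at the vertex $u'$ where $Q_y$ first meets the upper boundary: $u'$ is interior in the UIHPT and lies on $\vpath{y}{v}$, so by the finite reduction its outgoing blue edge must land on $\vpath{b}{v}$; but that edge goes to the strip's own infinite blue path, which forces $v$ into the infinite blue tree and hence $k_b=m$, giving $k_y=m+1\Rightarrow k_b=m$ as required. Establishing this correlation cleanly, together with verifying that the finite reduction confines the relevant winding edges, is where the real work lies.
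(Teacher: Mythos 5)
Your overall strategy is the same as the paper's: truncate the monochromatic paths at strip boundaries, prove the winding structure for the truncated paths by a finite reduction to \cref{thm:finite_paths_from_v_wind}, handle vertices on distinguished paths via \cref{thm:monochromatic_schnyder_wood_structure}, and glue the two by a level-alignment argument. Your proof of the crux $k_y=m+1\Rightarrow k_b=m$ (the exit vertex on the upper boundary has its outgoing blue edge both on the strip's distinguished blue path and, by the within-strip claim, on $\vpath{b}{v}$) is precisely the argument the paper uses. However, your assertion that the three claims ``reduce exactly to the chain $k_b\le k_r\le k_y\le k_b+1$'' is not correct, and it conceals a genuinely missing case. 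Suppose $k_b=m-1$ (and hence, by your crux, $k_y=m$): the blue path descends through the lower boundary of $v$'s strip at a vertex $u$, so $\vpath{b}{v}$ contains the level-$(m-1)$ distinguished blue path only from the landing point of $u$'s blue edge leftward. Meanwhile $\vpath{y}{v}$ contains vertices $w$ on the lower boundary of $v$'s strip (both on $Q_y$ before it merges and on the level-$m$ distinguished yellow path afterwards); these are \emph{interior} vertices of $T$, and their outgoing blue edges go to the level-$(m-1)$ distinguished blue path. The level chain is perfectly consistent with this configuration but says nothing about whether those landing points lie left of $u$'s, which is what the theorem requires. The paper closes exactly this case with a separate geometric argument: if such a $w$ were to the right of $u$, then $\vpath{y}{v}^*$ (which from $w$ continues leftward along the lower boundary) and $\vpath{b}{v}^*$ would intersect and bound a cycle all of whose non-terminal vertices have an outgoing edge into its interior, contradicting \cref{lem:directed_edges_from_cycle}; hence $u$ lies right of every such $w$, and planarity of the parallel blue edges into the strip below then orders the landing points correctly. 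Some such argument is indispensable and is absent from your proposal.

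A secondary concern is the justification of your ingredient (b). You assert that ``the chiselling colouring agrees on $R$ with the maximal Schnyder wood of a finite triangulation explored by the finite peeling process,'' but this is established in the paper only when $R$ lies inside a single strip or segment, where the colouring is a limit of single peeling processes; it is not established for regions spanning a strip boundary (proving such agreement is closely tied to the difficulty behind \cref{conj:uihpt_as_limit}). The cross-boundary winding edges — from lower-boundary vertices of the yellow path, and from the exit vertex when yellow ascends — are exactly the ones that live outside the reach of this reduction, which is why the paper treats them through the structure lemmas and planarity rather than through \cref{thm:finite_paths_from_v_wind}; even the within-segment reduction needs extra care there (see the re-entry argument at the head of the current root edge in the paper's proof of its segment claim, showing the stray yellow winding edges can only go to the distinguished yellow path). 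Finally, a small slip: when $v$ lies in the Schnyder wood segment, it is not true that all three paths are finite — the yellow path may exit through the segment's upper boundary and is then infinite — so your base case needs the same truncation treatment as the strip case.
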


\begin{proof}
    Suppose first that $v$ is contained in the Schnyder wood segment. Observe that both $\vpath{r}{v}$ and $\vpath{b}{v}$ are contained entirely in the Schnyder wood segment, while $\vpath{y}{v}$ is either contained entirely in the Schnyder wood segment, or it exits the segment at some upper boundary vertex. Let $\vpath{y}{v}^*$ denote $\vpath{y}{v}$ truncated after the first vertex at which it leaves the segment, if it does. Observe that $\vpath{y}{v}^*$, $\vpath{r}{v}$, and $\vpath{b}{v}$ are finite.
    
    \begin{claim}\label{clm:segment_paths_from_v_wind}
        Suppose $v$ is contained in the Schnyder wood segment. Then every vertex on $\vpath{b}{v}$ has an outgoing red edge to $\vpath{r}{v}$ and every vertex on $\vpath{y}{v}^*$ is either a boundary vertex of $T$ or has an outgoing blue edge to $\vpath{b}{v}$. If $\vpath{y}{v}^*$ terminates at $b_{x+1}$, then every vertex on $\vpath{r}{v}$ has an outgoing yellow edge to $\vpath{y}{v}^*$. Otherwise, if $\vpath{y}{v}^*$ terminates on the upper boundary, then every vertex on $\vpath{r}{v}$ has an outgoing yellow edge to either a vertex on $\vpath{y}{v}^*$, or a vertex on the infinite yellow path contained in the Schnyder wood segment.
    \end{claim}
    
    \begin{poc}
        Since each of $\vpath{y}{v}^*$, $\vpath{r}{v}$, and $\vpath{b}{v}$ is finite, it follows that after a finite number of steps, the peeling process $P_x$ has explored every vertex on $\vpath{y}{v}^*$, $\vpath{r}{v}$, and $\vpath{b}{v}$, and all of their incident edges that are contained in the Schnyder wood segment. Consider the explored region after this step. By \cref{obs:finitestepscoversegmentofboundary}, the explored region intersects the initial boundary of $T$ in a segment from some vertex $b_\ell$ to $b_r$, where $b_r$ is the head of the current root edge. By replacing the remainder of the initial boundary with a finite boundary joining $b_\ell$ and $b_r$, and adding an arbitrary finite triangulation to the region between this boundary and the explored region, we may consider the explored region to be contained in some finite triangulation $T'$. Note that the finite explored region is assigned the same colouring by $P_x$ as if it were explored by the finite peeling process in $T'$. For $c \in \{y, r, b\}$, let $Q_c$ be the unique directed path of colour $c$ in $T'$ that starts at $v$. Note that $\vpath{b}{v} = \qpath{b}$, $\vpath{r}{b} = \qpath{r}$, and $\vpath{y}{v}^* \subseteq \qpath{y}$.
        
        By \cref{thm:finite_paths_from_v_wind}, it follows that every vertex on $\vpath{y}{v}^* \subseteq \qpath{y}$ that is not a boundary vertex of $T$ has an outgoing blue edge to $\qpath{b} = \vpath{b}{v}$, every vertex on $\vpath{b}{v} = \qpath{b}$ has an outgoing red edge to $\qpath{r} = \vpath{r}{v}$, and every vertex on $\vpath{r}{v} = \qpath{r}$ has an outgoing yellow edge to $\qpath{y}$. If $\vpath{y}{v}^*$ terminates at $b_{x+1}$, then $\qpath{y} = \vpath{y}{v}^*$, and so every vertex on $\vpath{r}{v}$ has an outgoing yellow edge to $\vpath{v}{y}^*$. Otherwise, $\vpath{y}{v}^*$ terminates at a vertex $u$ on the upper boundary. However, $\qpath{y}$ terminates at $b_{x+1}$, and so after leaving at $u$, $\qpath{y}$ needs to enter the finite explored region again later. By \cref{obs:finitestepsupperboundary} and the Schnyder root condition applied to the unexplored region of $T'$, $\qpath{y}$ can only enter the finite explored region at $b_r$. Since $b_r$ is the head of a root edge of $P_x$, we know by \cref{clm:schnydersegmentyellowforest} that the directed yellow path from $b_r$ to $b_{x+1}$ is contained in the infinite yellow path of the Schnyder wood segment. Therefore, $\qpath{y}$ intersects the Schnyder wood segment only on $\vpath{y}{v}^*$ and on the infinite yellow path contained in the Schnyder wood segment. Since all neighbours of $\qpath{r}$ are contained in the Schnyder wood segment, it follows that every vertex on $\vpath{r}{v} = \qpath{r}$ has an outgoing yellow edge to either a vertex on $\vpath{y}{v}^*$, or a vertex on the infinite yellow path contained in the Schnyder wood segment.
    \end{poc}
    
    Now suppose instead that $v$ is contained in a Schnyder wood strip of $T$. In this case, for $c \in \{r,y,b\}$ denote by $\vpath{c}{v}^*$ the unique (finite or infinite) directed path of colour $c$ that starts at $v$, truncated after the first vertex at which it leaves the Schnyder wood strip, if it does. Note that $\vpath{r}{v}^*$ is necessarily an infinite red path contained entirely in the strip, while both $\vpath{y}{v}^*$ and $\vpath{b}{v}^*$ may be either finite (if they terminate on the upper or lower boundary, respectively) or infinite (if they do not leave the strip).
    
    \begin{claim}\label{clm:strip_paths_from_v_wind}
        Suppose $v$ is contained in a Schnyder wood strip. Every vertex on $\vpath{b}{v}^*$ has an outgoing red edge to $\vpath{r}{v}^*$ and every vertex on $\vpath{y}{v}^*$ is either a lower boundary vertex of the Schnyder wood strip, or has an outgoing blue edge to $\vpath{b}{v}^*$. If $\vpath{y}{v}^*$ is infinite, then every vertex on $\vpath{r}{v}^*$ has an outgoing yellow edge to a vertex on $\vpath{y}{v}^*$. Otherwise, if $\vpath{y}{v}^*$ terminates on the upper boundary of the strip, then every vertex on $\vpath{r}{v}^*$ has an outgoing yellow edge to either a vertex on $\vpath{y}{v}^*$, or a vertex on the infinite yellow path on the lower boundary of the Schnyder wood strip.
    \end{claim}
    
    \begin{poc}
        This follows directly from \cref{clm:segment_paths_from_v_wind} and the fact that the Schnyder wood strip is the colouring assigned to the explored region by the process $P_x$ as $x \to -\infty$.
    \end{poc}
    
    We now show that the claimed structure holds for $\vpath{b}{v}$, $\vpath{r}{v}$, and $\vpath{y}{v}$ on the entirety of $T$. We will repeatedly use \cref{clm:segment_paths_from_v_wind,clm:strip_paths_from_v_wind,lem:monochromatic_segment_subgraphs,lem:monochromatic_strip_subgraphs}. For simplicity, we will assume that $v$ is in a Schnyder wood strip since the case where $v$ is in the Schnyder wood segment is very similar.

    First, consider $\vpath{b}{v}$. By \cref{clm:strip_paths_from_v_wind}, every vertex on $\vpath{b}{v}^*$ has an outgoing red edge to $\vpath{r}{v}^* \subseteq \vpath{r}{v}$. If $\vpath{b}{v}^*$ is infinite, then by definition $\vpath{b}{v} \setminus \vpath{b}{v}^*$ consists of all of the distinguished blue paths in a strip or segment below $v$. By \cref{lem:monochromatic_segment_subgraphs,lem:monochromatic_strip_subgraphs}, every vertex on these paths has an outgoing red edge to the distinguished red path in the same strip or segment, and this red path is part of $\vpath{r}{v}$ by definition. Otherwise, if $\vpath{b}{v}^*$ is finite and intersects the lower boundary of the strip at some vertex $u$, then either $u$ is on the boundary of $T$ and $\vpath{b}{v} = \vpath{b}{v}^*$, or the outgoing blue edge from $u$ goes directly to the distinguished blue path in the strip or segment immediately below $v$. In the latter case, the vertices of $\vpath{b}{v} \setminus \vpath{b}{v}^*$ are all on a distinguished blue path in a strip or segment below $v$ and so, as above, they have an outgoing red edge to $\vpath{r}{v}$. In all cases, every vertex on $\vpath{b}{v}$ has an outgoing red edge to $\vpath{r}{v}$, as required.

    Next, consider $\vpath{r}{v}$. If $\vpath{y}{v}^*$ is infinite, then by \cref{clm:strip_paths_from_v_wind} every vertex on $\vpath{r}{v}^*$ has an outgoing yellow edge to $\vpath{y}{v}^* \subseteq \vpath{y}{v}$. Otherwise, if $\vpath{y}{v}^*$ is finite and intersects the upper boundary of the strip containing $v$, then by \cref{clm:strip_paths_from_v_wind} every vertex on $\vpath{r}{v}^*$ has an outgoing yellow edge to $\vpath{y}{v}^* \subseteq \vpath{y}{v}$ or to the distinguished yellow path on the lower boundary of the strip. As $\vpath{y}{v}^*$ intersects the upper boundary, this yellow path is also part of $\vpath{y}{v}$. Lastly, $\vpath{r}{v} \setminus \vpath{r}{v}^*$ consists of all of the distinguished red paths in a strip or segment below $v$, and so, similarly to above, the vertices on these paths have an outgoing yellow edge to $\vpath{y}{v}$. In all cases, every vertex on $\vpath{r}{v}$ has an outgoing yellow edge to $\vpath{y}{v}$, as required.

    Finally, consider $\vpath{y}{v}$. By \cref{clm:strip_paths_from_v_wind}, every vertex on $\vpath{y}{v}^*$ that is not on the lower boundary of the strip has an outgoing blue edge to $\vpath{b}{v}^* \subseteq \vpath{b}{v}$. Consider a vertex $w \in \vpath{y}{v}^*$ that is on the lower boundary of the strip but is not on the boundary of $T$. Then, by  \cref{lem:monochromatic_segment_subgraphs} or \cref{lem:monochromatic_strip_subgraphs}, $w$ has an outgoing blue edge to the distinguished blue path in the strip or segment immediately below $v$. If $\vpath{b}{v}^*$ does not intersect the lower boundary of the strip, this blue path is part of $\vpath{b}{v}$. Otherwise, $\vpath{b}{v}^*$ intersects the lower boundary of the strip at some vertex $u$. Suppose that $w$ is to the right of $u$. Then $\vpath{y}{v}^*$ first goes from $v$ to $w$ and then follows a yellow path along the lower boundary of the strip until it reaches $u$. So, $\vpath{y}{v}^*$ and $\vpath{b}{v}^*$ intersect, which implies that they form a cycle $C$. Since the Schnyder conditions imply that all vertices other than $v$ and $u$ on $C$ have an outgoing edge to the interior of $C$, this contradicts \cref{lem:directed_edges_from_cycle}. Therefore, $u$ must be to the right of $w$, and so the outgoing blue edge from $u$ to the distinguished blue path in the strip or segment immediately below $v$ will intersect that blue path to the right of the outgoing blue edge from $w$. In particular, the outgoing blue edge from $w$ goes to a vertex on $\vpath{b}{v}$. So, every vertex on $\vpath{y}{v}^*$ that is not on the boundary of $T$ has an outgoing blue edge to $\vpath{b}{v}$.

    It remains to consider the vertices of $\vpath{y}{v} \setminus \vpath{y}{v}^*$. If $\vpath{y}{v}^*$ is infinite, then $\vpath{y}{v} \setminus \vpath{y}{v}^*$ consists of all of the distinguished yellow paths in a strip or segment below $v$. By \cref{lem:monochromatic_segment_subgraphs} or \cref{lem:monochromatic_strip_subgraphs}, these vertices are either on the boundary of $T$ or have an outgoing blue edge to the distinguished blue path in the strip or segment immediately below, which is part of $\vpath{b}{v}$. Otherwise, $\vpath{y}{v}^*$ is finite and intersects the upper boundary of the strip at some vertex $w$. Then, $\vpath{y}{v} \setminus \vpath{y}{v}^*$ consists of a one-ended infinite yellow path $Q$ starting at $w$ along the lower boundary of the strip immediately above $v$, as well as the distinguished yellow path in the strip containing $v$ and all of the distinguished yellow paths in a strip or segment below $v$. Similarly to before, every vertex on the distinguished yellow paths is either on the boundary of $T$ or has an outgoing blue edge to $\vpath{b}{v}$, so it only remains to check the vertices of $Q$. Note that $w$ has an outgoing blue edge to the distinguished blue path in the strip containing $v$, and by \cref{clm:strip_paths_from_v_wind} we know that this neighbour $u$ must also be on $\vpath{b}{v}^* \subseteq \vpath{b}{v}$. In particular, $\vpath{b}{v}$ contains every vertex left of $u$ on the distinguished blue path in the strip of $v$. Since any vertex on $Q$ is to the left of $w$, its outgoing blue edge to the distinguished blue path in the strip containing $v$ will intersect that distinguished blue path at a vertex left of $u$, and so the outgoing blue edge goes to $\vpath{b}{v}$. In all cases, every vertex of $\vpath{y}{v}$ is either on the boundary of $T$ or has an outgoing blue edge to $\vpath{b}{v}$.
\end{proof}

\subsection{Maximality and uniqueness} \label{ssec:structure:maximal}

In this section we prove \cref{thm:peeling_process_uihpt_maximal_wood}. Recall that a Schnyder wood of an infinite triangulation $T$ is called maximal if every directed cycle in the Schnyder wood is oriented clockwise, and $T$ contains no right-directed paths.  Recall also that in \cref{lem:oriented_triangles}, we showed that if a 3-orientation of a triangulation contains a directed cycle, then it also contains a directed triangle of the same orientation. As we noted after the proof, the proof relies only on the interior of this cycle being finite. Since the interior of a (finite) cycle in a UIHPT is almost surely finite, the following result suffices to show the Schnyder wood chiselling algorithm constructs no anticlockwise cycles.

\begin{lemma} \label{lem:no_anticlockwise_cycles}
    The Schnyder wood chiselling algorithm almost surely constructs no anticlockwise triangles on the UIHPT.
\end{lemma}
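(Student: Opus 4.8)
The plan is to show that any anticlockwise triangle must have at least one edge coloured by a Schnyder wood strip or segment during the chiselling algorithm, and then use the already-established structure of those regions to derive a contradiction. The key tool is \cref{lem:monochromatic_segment_subgraphs} and \cref{lem:monochromatic_strip_subgraphs}, which pin down the orientations of the monochromatic infinite paths and the edges between them, together with the fact established in \cref{lem:schnyder_segment_satisfies_conditions} (and its strip analogue in the proof of \cref{thm:uihpt_schnyder_exists}) that each individual segment and strip satisfies the Schnyder conditions and is itself produced by the finite peeling process on a finite region, which by \cref{clm:maximal_peeling_process} contains no anticlockwise triangles.

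First I would observe that a triangle $\Delta$ in the UIHPT is a finite object, so its three edges are coloured within finitely many segments and strips of the chiselling algorithm. If all three edges of $\Delta$ lie entirely within a single Schnyder wood segment or strip, then since that region is coloured exactly as the finite peeling process would colour a finite triangulation, \cref{clm:maximal_peeling_process} immediately rules out $\Delta$ being anticlockwise. Hence the only case to handle is when $\Delta$ straddles the shared boundary between two adjacent regions — that is, some vertices of $\Delta$ lie on the upper boundary of one region and the lower boundary of the next region explored by the algorithm. In this case at least one vertex $v$ of $\Delta$ lies on such a shared boundary, and by the Schnyder upper boundary condition and Schnyder boundary condition the edges at $v$ have a rigidly prescribed cyclic colour pattern (incoming red, outgoing blue to the region below, incoming yellow, then the red/yellow outgoing boundary edges).

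The main obstacle, and the heart of the argument, will be ruling out an anticlockwise triangle crossing a shared boundary. I would argue as follows. Since every interior vertex satisfies the full Schnyder condition once the colouring is complete, I can apply \cref{lem:oriented_triangles} locally, but the cleaner route is to use the outgoing-edge structure directly: in an anticlockwise triangle, following the boundary-facing (clockwise-most) outgoing edge at each vertex traces the triangle itself, so by \cref{cor:most_clockwise_path} reasoning, such a triangle would have exactly $b-3 = 0$ interior edges, forcing it to be a face. I would then check, using the explicit edge orientations between the distinguished paths in \cref{lem:monochromatic_strip_subgraphs}—every upper-boundary vertex has an outgoing blue edge \emph{downward} to the blue path below, every blue-path vertex an outgoing red edge, every red-path vertex an outgoing yellow edge—that the unique triangular face incident to a shared-boundary vertex is oriented \emph{clockwise}, not anticlockwise. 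Concretely, the blue edge directed from the upper boundary of the lower strip into that strip, together with the red and yellow edges prescribed by the Schnyder condition on the two faces adjacent to it, forms a clockwise triangle, matching exactly the clockwise orientation seen in the finite maximal wood.

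To complete the proof I would note that since the chiselling algorithm colours each segment and strip as a finite maximal Schnyder wood, the only triangular faces of the UIHPT whose edges are not all coloured within one region are those sandwiched across a shared boundary; by the preceding paragraph these are clockwise. Combined with the within-region case, every triangular face is clockwise, and since by \cref{lem:oriented_triangles} any anticlockwise cycle (whose interior is almost surely finite, as the UIHPT is one-ended) would contain an anticlockwise triangle, we conclude that the chiselling algorithm almost surely constructs no anticlockwise cycles, as desired.
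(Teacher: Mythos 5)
Your overall decomposition matches the paper's: the within-region case (all three edges of the triangle coloured by a single segment or strip) is handled exactly as the paper handles it, by noting that each region is coloured as the finite peeling process would colour it inside a finite triangulation and invoking \cref{thm:peeling_process_maximal_wood}. The problem is the straddling case, which is the real content of the lemma, and there your argument has two genuine gaps. First, the reduction to faces is a non sequitur: \cref{lem:directed_edges_from_cycle} gives that a directed triangle has $3-3=0$ edges directed from the cycle into its interior, but zero inward edges does not force the interior to be empty --- a $3$-cycle can enclose vertices all of whose edges to the cycle are directed outward (a single enclosed vertex with its three outgoing edges landing on the cycle is perfectly consistent with a $3$-orientation). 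So an anticlockwise triangle need not be a face, and an argument that only inspects faces does not rule it out. Second, even granting the reduction, the decisive orientation check is asserted rather than proven: ``the unique triangular face incident to a shared-boundary vertex'' is not well defined (such a vertex is incident to many faces), and the claim that the faces adjacent to a downward blue edge are clockwise is precisely the kind of statement the lemma asks you to establish, not something you may quote.

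What is actually needed, and what the paper does, is to pin down the combinatorial type of any straddling triangle using the Schnyder upper boundary condition (\cref{obs:finitestepsupperboundary}): every edge of the lower region incident with a shared-boundary vertex is either that vertex's unique outgoing blue edge or an incoming yellow edge, so a triangle with edges below the boundary must consist of a blue edge directed from a boundary vertex $u$ down to a vertex $w$, a yellow edge directed from $w$ up to a boundary vertex $v$, and the boundary edge joining $u$ and $v$. Anticlockwise orientation then forces $u$ to lie left of $v$ along the shared boundary, and the outgoing blue edge at $v$ (again guaranteed by the upper boundary condition) must, by planarity, be directed into the interior of this triangle, contradicting \cref{lem:directed_edges_from_cycle}. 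Your proposal never identifies this configuration nor produces a contradiction of this kind, so the straddling case remains unproven. (A smaller point: your closing claim that ``every triangular face is clockwise'' cannot be right as stated, since most faces are not directed cycles at all; the statement you need is that every \emph{directed} triangle is oriented clockwise.)
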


\begin{proof}
    The proof that no individual Schnyder wood segment contains an anticlockwise triangle is almost identical to the proof of \cref{lem:infinite_triangulation_noanticlockwise}: Any finite collection of edges of $T$ explored by the Schnyder peeling process is explored after a finite number of steps, and since the same colouring can be produced by the finite peeling process on some finite triangulation, \cref{thm:peeling_process_maximal_wood} shows that the segment contains no anticlockwise triangles. It follows from the construction of the Schnyder wood strip in \cref{ssec:uihpt:strip} that this result also applies to each Schnyder wood strip. The only remaining case to check is when one or more edges of the triangle lies on the lower boundary of one Schnyder wood strip, and the remaining edge(s) lie in the interior of the Schnyder wood strip or segment below it.

    Since each Schnyder wood strip and Schnyder wood segment satisfies the Schnyder upper boundary condition, we immediately see that this case could only occur when two edges of the triangle are in the strip or segment below the boundary, with one of them being an outgoing blue edge from some vertex $u$ on the upper boundary, and the other being an incoming yellow edge to some vertex $v$ on the upper boundary. For this triangle to be oriented anticlockwise, $u$ must be left of $v$ on the upper boundary. Again by the Schnyder upper boundary condition, we know that $v$ has an outgoing blue edge, which by planarity, must be directed into the interior of this triangle, contradicting \cref{lem:directed_edges_from_cycle}. Thus, the Schnyder wood defined by the chiselling algorithm contains no anticlockwise triangles.
\end{proof}

Having established \cref{thm:monochromatic_schnyder_wood_structure}, we are now ready to show that the Schnyder chiselling algorithm produces the unique maximal Schnyder wood of the UIHPT. Our proof will be very similar to the proof given for infinite triangulations with finite boundary in \cref{sec:infinite_triang_finite_bdry}. First, we establish that the leftmost walk from a fixed edge $e$ reaches the initial boundary of the UIHPT in a finite number of steps. This takes slightly more work than for triangulations with finite boundary because $e$ might lie within a Schnyder wood strip, in which case $e$ is not explored by the Schnyder peeling process after a finite number of steps. We instead make use of the structure of the layers established in \cref{thm:monochromatic_schnyder_wood_structure}. 

Recall from \cref{sec:infinite_triang_finite_bdry} that a \defn{leftmost walk} is a maximal directed walk that at each vertex always takes the first outgoing edge clockwise from the edge at which it entered, if an outgoing edge exists, and recall that $L(e)$ is the leftmost walk starting from edge $e$. In the following theorem, we show that once a leftmost walk intersects a distinguished path, it then repeatedly follows the unique outgoing edge described in \cref{thm:monochromatic_schnyder_wood_structure} to the preceding distinguished path, until it reaches the initial boundary. Examples of this are indicated on the right in \cref{fig:uihpt_structure}.

\begin{lemma}\label{lem:leftmost_path}
    Fix an edge $e$ in the Schnyder wood constructed by the chiselling algorithm on the UIHPT. Then either 
    \begin{enumerate}
        \item $L(e)$ terminates at the head of the initial root edge within a finite number of steps, or
        \item $L(e)$ intersects the initial boundary within a finite number of steps, and then follows the distinguished yellow path left along the initial boundary.
    \end{enumerate}
    In particular, at some point $L(e)$ intersects a distinguished path above or below $e$, and afterwards it repeatedly follows the unique outgoing edge to the preceding distinguished path until it reaches the initial boundary. 
\end{lemma}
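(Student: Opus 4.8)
The plan is to analyse where a leftmost walk $L(e)$ can go, using the structural description of the monochromatic subgraphs in \cref{thm:monochromatic_schnyder_wood_structure} together with the upper-boundary and root conditions established for segments and strips. The key observation is that whenever $L(e)$ enters a vertex $v$ via an incoming edge, the "first outgoing edge clockwise from the entering edge" is forced by the local Schnyder picture; and the global layered structure then constrains which distinguished path the walk must follow next.

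First I would show that $L(e)$ is a directed path (not just a walk), i.e. it has no repeated vertices. As in the proof of \cref{lem:leftmostwalk_finite_bdry}, the walk cannot contain an anticlockwise cycle by \cref{lem:no_anticlockwise_cycles}, and it cannot contain a clockwise cycle $C$: following the leftmost outgoing edge at each vertex of $C$ forces at least one edge directed into the interior of $C$ at all but at most two vertices of $C$, and since the interior of a finite cycle in the UIHPT is almost surely finite, this contradicts \cref{lem:directed_edges_from_cycle}. Hence $L(e)$ is acyclic. Since $L(e)$ is a maximal directed walk, either it terminates (at a vertex with no outgoing edge — which must be the head of the initial root edge, the unique such vertex by the Schnyder root condition) or it is infinite.

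Next I would analyse the geometry of $L(e)$ relative to the layered structure. The walk starts at $e$, which lies in some segment or strip, and between consecutive distinguished paths the triangulation decomposes into finite components once we delete the distinguished paths and upper boundaries (as established in the proof of \cref{lem:monochromatic_strip_subgraphs}). Thus after finitely many steps $L(e)$ must reach a distinguished path or an upper boundary of a strip. I would then argue that once $L(e)$ meets a distinguished path $P$ of colour $c$, the leftmost rule combined with \cref{thm:monochromatic_schnyder_wood_structure} forces it to leave $P$ via the unique outgoing edge to the \emph{preceding} distinguished path (of the colour immediately preceding $P$ in the cyclic order yellow, red, blue), because that outgoing edge is exactly the first outgoing edge clockwise from the distinguished-path edge along which the walk travels leftward. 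Iterating, $L(e)$ steps down through the layers — blue to red, red to yellow, yellow to the next blue below, and so on — each step moving strictly closer to the initial boundary. Since there are only finitely many layers between $e$ and the initial boundary, after finitely many steps $L(e)$ reaches either the one-ended distinguished paths near the root (terminating at the head of the root edge, giving case (1)) or the initial boundary itself, where it must follow the distinguished yellow path left along the boundary (case (2)), since the yellow path consists of initial boundary vertices and is left-directed.

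The main obstacle I expect is the careful verification of the "forced step" at a distinguished path: I must check that the leftmost outgoing edge at each vertex of $P$, relative to the direction in which $L(e)$ traverses $P$, is precisely the colour-$c'$ edge to the preceding path $P'$ (where $c'$ precedes $c$), rather than some edge to a finite tree or back into the current layer. This requires invoking the precise clockwise/anticlockwise ordering of incident edges from the Schnyder and Schnyder upper-boundary conditions, and confirming that the walk travels along $P$ in the \emph{leftward} (head-toward-root) direction so that the preceding-path edge is indeed the first clockwise outgoing option. A secondary subtlety is handling the vertices on $P$ that are \emph{not} on $L(e)$: I need only establish the behaviour at the vertex where $L(e)$ first meets $P$ and then track the walk along $P$, using that $P$ is a directed path so the walk simply follows it until it takes the forced outgoing edge. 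Once this local forcing is pinned down, the global finiteness follows immediately from the finite layer count, and both cases of the dichotomy fall out.
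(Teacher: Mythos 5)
Your skeleton matches the paper's proof: show $L(e)$ is acyclic (same cycle-counting argument via \cref{lem:directed_edges_from_cycle}), use finiteness of the regions between consecutive distinguished paths, descend layer by layer via the outgoing edges guaranteed by \cref{thm:monochromatic_schnyder_wood_structure}, and conclude with the dichotomy at the initial boundary. However, your central step 3 contains a genuine gap, and it is exactly at the point you flagged as the ``main obstacle'': the claim that once $L(e)$ meets a distinguished path $P$ it is forced to leave via the unique outgoing edge to the preceding path is false at the first meeting. Concretely, take $P$ to be a distinguished red path, and suppose $L(e)$ arrives at a vertex $w \in P$ along an \emph{incoming yellow} edge; by the Schnyder condition such edges enter $w$ in the sector between the outgoing blue and outgoing red edges, i.e.\ from above the path, and this is perfectly possible (e.g.\ $w$ can have yellow children lying above $P$ in the infinite yellow tree). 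The leftmost rule then selects the outgoing \emph{blue} edge at $w$, which points back up, away from the preceding yellow path, and the walk leaves $P$ without descending and without traversing a single edge of $P$. The analogous ``bounce'' occurs at blue paths (enter via red, leave via yellow, upward) and at yellow paths (enter via blue, leave via red, upward). So your proposed resolution --- analyse the vertex where $L(e)$ first meets $P$ and then track the walk along $P$ --- cannot work: at the first meeting the walk need not travel along $P$ at all, and the descent is not monotone step by step as you assert.

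The repair, which is the device the paper uses, is a ``last exit'' argument rather than a ``first meeting'' one. The current position of the walk lies in a \emph{finite} cell bounded by an edge of the upper path $Q_a$, the two outgoing edges $f_1,f_2$ from its endpoints down to the lower path $Q_b$, and a portion of $Q_b$. By planarity the bounce edge described above cannot cross $f_1$ or $f_2$, so a bounce returns the walk to the \emph{same} cell; combined with acyclicity and finiteness of the cell, the walk has a last edge $e'$ inside the cell, and the edge following $e'$ must leave the cell. The local Schnyder analysis then shows this exiting edge is forced: if the head of $e'$ is on $Q_b$ it must be the outgoing edge to the path below $Q_b$ (the bounce option is excluded precisely because it stays in the cell), and if the head of $e'$ is on $Q_a$ it is either $f_1$, $f_2$, or one edge along $Q_a$ followed by the descent. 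Only after this point does your clean staircase picture (one edge per layer, locked in by the colour cycle blue $\to$ red $\to$ yellow $\to$ blue) become valid, and the rest of your argument --- finitely many layers below $e$, then termination at the root head or pursuit of the boundary yellow path leftward --- goes through as you describe.
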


\begin{proof}
    Let $T$ be a UIHPT and let $e = (u,v)$ be an edge of $T$. We showed in the proof of \cref{lem:leftmostwalk_finite_bdry} that in the Schnyder wood constructed by the peeling process on any infinite triangulation with finite boundary, no leftmost walk contains a directed cycle. An identical argument applies to the UIHPT, and it follows that $L(e)$ is a directed path in $T$. Furthermore, the Schnyder condition implies that whenever $L(e)$ is not incident with the initial boundary, the edge colours on $L(e)$ cycle, in order, red, blue, yellow, red, and so on, with the initial colour determined by the choice of $e$.

    We begin by showing that within a finite number of steps, $L(e)$ intersects a distinguished yellow path on the initial boundary of $T$ and enters it via a yellow edge, at which point orientation rules imply that it will continue to follow the distinguished yellow path along the boundary. (Note that we may intersect this path at some point via a blue edge, but then orientation rules imply that $L(e)$ will immediately leave the yellow path via a red edge.) First, suppose that $e$ lies on one of the distinguished paths in the Schnyder wood of $T$, say a distinguished blue path. Then, the next edge of $L(e)$ is the unique outgoing red edge from $v$, and we know that this edge is directed to the preceding distinguished red path by \cref{thm:monochromatic_schnyder_wood_structure}. By the same argument, $L(e)$ afterwards follows the unique outgoing yellow edge to the preceding distinguished yellow path, then the unique outgoing blue edge to the preceding distinguished blue path, and so on, until $L(e)$ eventually reaches an distinguished yellow path on the initial boundary via an outgoing yellow edge. Similarly, if $e$ is initially on an outgoing edge from one distinguished path to the preceding distinguished path, then $L(e)$ follows the finite sequence of outgoing edges with the same structure to the initial boundary. Finally, suppose that $e$ is not on a distinguished path and is not an outgoing edge from one of these paths to the preceding path. Then, $e$ is contained in some region bounded by two distinguished paths, say $Q_a$ above and $Q_b$ below. Furthermore, $e$ is contained in some finite region bounded by $Q_a$, $Q_b$, and two outgoing edges $f_1$ and $f_2$ from $Q_a$ to $Q_b$ with tails at consecutive vertices on $Q_a$. Since this region is finite and $L(e)$ is acyclic, it follows that eventually $L(e)$ must exit this region. Let $e'$ be the last edge of $L(e)$ contained in the finite region. If the head of $e'$ is on $Q_a$, it follows from the definition of $L(e)$ that the next edge of $L(e)$ is either on $Q_a$, or is $f_1$ or $f_2$. Otherwise, the head of $e'$ is on $Q_b$, and it follows from the definition of $L(e)$ and the fact that $L(e)$ leaves the finite region that the next edge is the unique outgoing edge directed from the head of $e'$ on $Q_b$ to the preceding distinguished path below $Q_b$. (If there is no preceding distinguished path below $Q_b$ then it follows that $Q_b$ is already a distinguished yellow path along the initial boundary, and $e'$ is a yellow edge.) In either case, we again follow the finite sequence of outgoing edges with the same structure as earlier to a distinguished yellow path on the initial boundary of $T$.

    Note that there is an edge case where $e$ is initially contained in the finite region underneath a yellow chord on the initial boundary of $T$. As above, $L(e)$ must eventually exit this region. By the Schnyder boundary condition, it can only do so by entering the distinguished yellow path that contains this chord via a yellow or red edge.

    Hence, we may assume that $L(e)$ intersects a distinguished yellow path on the initial boundary within a finite number of steps and enters it via a yellow or red edge. By the definition of $L(e)$, this implies that afterwards $L(e)$ simply follows this distinguished yellow path along the initial boundary. If $L(e)$ has intersected the distinguished yellow path of the Schnyder wood segment, it then terminates at the head of the initial root edge. Otherwise, it follows the distinguished yellow path of the first Schnyder wood strip along the initial boundary without terminating.
\end{proof}

We can now use this result as in the proof of \cref{lem:maximal_wood_finite_bdry} to show that the Schnyder wood chiselling algorithm constructs no right-directed paths.

\begin{lemma} \label{lem:no_right_directed_paths}
    The Schnyder wood chiselling algorithm almost surely constructs no right-directed paths on the UIHPT.
\end{lemma}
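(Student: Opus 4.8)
The plan is to mirror the structure of the proof of \cref{lem:maximal_wood_finite_bdry}, adapting it to the half-plane using the leftmost walk result just established in \cref{lem:leftmost_path}. The key observation is that a right-directed path, by definition, divides the UIHPT into two regions with the initial boundary lying on the \emph{right} of the path, whereas every leftmost walk ultimately reaches the initial boundary. This incompatibility will produce the desired contradiction.

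First I would suppose for a contradiction that the Schnyder wood constructed by the chiselling algorithm contains a right-directed path $R$, and fix any edge $e$ on $R$. By \cref{lem:leftmost_path}, the leftmost walk $L(e)$ either terminates at the head of the initial root edge within finitely many steps, or intersects the initial boundary within finitely many steps and thereafter follows a distinguished yellow path left along the initial boundary. In either case, $L(e)$ reaches the initial boundary of $T$. Since $R$ is a two-ended infinite path all of whose vertices are interior vertices (or at least, whose vertices do not include the initial-boundary destination of $L(e)$), there must be a first edge $e'$ on $L(e)$ that is directed from a vertex $w$ on $R$ to the side of $R$ containing the initial boundary, that is, to the right of $R$.

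Next I would derive the contradiction from the definition of the leftmost walk, exactly as in \cref{lem:maximal_wood_finite_bdry}. Consider the edge of $L(e)$ immediately preceding $e'$: it lies either on $R$ itself or on the left side of $R$ (the side not containing the initial boundary). In either case, the leftmost walk, upon arriving at $w$, should have selected the first outgoing edge clockwise from its incoming edge; but the outgoing edge of $w$ that continues along $R$ lies strictly further left (more clockwise) than $e'$, since $e'$ points across $R$ to its right. This contradicts the defining property of $L(e)$ as always taking the leftmost available outgoing edge. Hence no such edge $e'$ exists, and so no right-directed path $R$ can exist.

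The main subtlety to handle carefully is the claim that $L(e)$ cannot stay forever on the ``wrong'' (left) side of $R$ without crossing it to reach the boundary — this is precisely what \cref{lem:leftmost_path} guarantees, since that lemma asserts $L(e)$ reaches the initial boundary in finitely many steps, and the initial boundary lies on the right of $R$ by the definition of right-directedness. The one case requiring a moment's thought is when $L(e)$ enters $R$ and the distinguished yellow path it eventually follows coincides with part of $R$; but since $R$ is right-directed and distinguished yellow paths are left-directed (by \cref{lem:monochromatic_strip_subgraphs}), $R$ cannot itself be a distinguished yellow path, so $L(e)$ must genuinely cross from the left side to reach the boundary on the right. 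I therefore expect the bulk of the work to be simply citing \cref{lem:leftmost_path} correctly and phrasing the ``first crossing edge'' argument, with the geometric fact that right-directed and distinguished (left-directed) paths have opposite orientations being the only point that needs explicit mention. The conclusion that the Schnyder wood is therefore maximal then follows by combining this with \cref{lem:no_anticlockwise_cycles}.
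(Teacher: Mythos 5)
Your proposal is correct and follows essentially the same route as the paper's proof: invoke \cref{lem:leftmost_path} to get that $L(e)$ reaches the head of the initial root edge or a distinguished yellow path on the initial boundary, take the first edge of $L(e)$ crossing from a vertex of $R$ to the boundary side, and derive the contradiction with leftmost-ness exactly as in \cref{lem:maximal_wood_finite_bdry}. The one point where the paper is more careful than you are is your hedged disjointness claim: rather than only ruling out that $R$ \emph{coincides} with a distinguished yellow path (your orientation argument), the paper uses the Schnyder boundary condition to show $R$ cannot contain \emph{any} edge on or below a distinguished yellow path nor the head of the root edge, which is what is actually needed, since a priori $R$ could share some boundary vertices or edges with $L(e)$'s destination without being equal to it, and then the ``first crossing edge'' would require extra justification.
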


\begin{proof}
    Suppose for a contradiction that the Schnyder wood chiselling algorithm constructs a right-directed path $R$ on a UIHPT $T$, and let $e$ be any edge on $R$. Observe first that the Schnyder boundary condition implies that $R$ cannot contain any edge on or below one of the distinguished yellow paths along the initial boundary of $T$. $R$ also cannot contain the head of the initial root edge since that vertex has no outgoing edge.
    
    By \cref{lem:leftmost_path}, the leftmost walk $L(e)$ will eventually contain either the head of the initial root edge or an edge on one of the distinguished yellow paths along the initial boundary. So, there must be an edge $e'$ on $L(e)$ that is directed from a vertex $v$ on $R$ towards the side of $R$ containing the initial boundary. As in \cref{lem:maximal_wood_finite_bdry}, consider the first such edge. The previous edge on $L(e)$ lies either on $R$, or on the side of $R$ not containing the initial boundary. In either case, the outgoing edge from $v$ that is on the path $R$ is further left than $e'$, contradicting the definition of $L(e)$. It follows that no right-directed path exists.
\end{proof}

It follows that the Schnyder wood constructed on the UIHPT by the Schnyder wood chiselling algorithm is maximal and so, by \cref{thm:max_schnyder_unique_infinite}, is the unique maximal Schnyder wood. This concludes the proof of \cref{thm:peeling_process_uihpt_maximal_wood}.

\subsection{The UIHPT as a limit of finite triangulations}\label{ssec:structure:uihptlimitoffinite}

While we have shown that the UIHPT has a unique maximal Schnyder wood, further work is required to prove \cref{conj:uihpt_as_limit}, that is, to show that the unique maximal Schnyder wood of the UIHPT is the weak limit of the maximal Schnyder wood of a uniformly random triangulation from $\cT_n^m$ as $n \to \infty$ and then $m \to \infty$. Verifying this is considerably more involved than the analogous result for the UIPT, \cref{thm:uipt_schnyder_wood_convergence}, due to the increased complexity of the exploration process required on the UIHPT. Because the boundary of the UIPT is finite, we were able to demonstrate in \cref{sec:infinite_triang_finite_bdry} that the Schnyder peeling process almost surely explores any ball of radius $r$ around the root edge of the UIPT by circling around the unexplored boundary. By contrast, for the UIHPT, the boundary is infinite and so we have to repeatedly reinitiate the peeling process to explore the entire ball of radius $r$ around the root edge. In order to verify \cref{conj:uihpt_as_limit}, one would need to check that this process colours the ball of radius $r$ in a manner consistent with the single peeling process circling around a finite boundary as that boundary length tends to $\infty$.

Computational results suggest that this should hold. They show that in maximal Schnyder woods of large finite triangulations, the unique directed monochromatic paths from a uniformly random vertex to the boundary `spiral' around each other before terminating at the boundary. This is illustrated in \cref{fig:some_zoom} and in \cref{fig:tutte_embedding_big}, below, for a Tutte embedding of a large random triangulation\footnote{This triangulation was obtained by first generating a random triangulation of a triangle on $500,000$ vertices using Benedikt Stufler's simtria program \cite{simtria}, followed by performing a breadth-first search within a radius of $20$ of the root vertex of this triangulation and only keeping the largest connected component of the unexplored region. The root edge of this component was then chosen uniformly at random from its boundary, and the figures depict the unique maximal Schnyder wood of this rooted triangulation.}. The observed behaviour is consistent with the structure of the UIHPT demonstrated in \cref{thm:monochromatic_schnyder_wood_structure} if we once again imagine that the infinite red trees in the maximal Schnyder wood are part of a single infinite red tree rooted at $b_x$ that spirals through successive Schnyder wood strips before terminating in the Schnyder wood segment, and similarly for the infinite yellow and blue trees.

We now attempt to describe this spiralling behaviour more precisely. In a triangulation $T$, fix a vertex $v$ and consider two paths $P$ and $Q$ from $v$ to the boundary of $T$. If $T$ is a triangulation of the half-plane, we also allow each of $P$ and $Q$ to be a \defn{generalised path}, which we define to be an ordered collection of paths $P_1,\dots,P_\ell$ such that $P_1$ is a one-ended path that starts at $v$, $P_\ell$ is a one-ended path that ends at the boundary of $T$, and each $P_i$ for $i \in [2,\ell-1]$ is a two-ended path ordered from one end to the other. The \defn{winding number} $w(P,Q)$ is the signed number of occurrences of $Q$ strictly crossing $P$ from left to right.\footnote{We have not used the standard definition of a winding number, in order to apply winding numbers to paths on the UIHPT; however, we note that if $T$ is a triangulation of the half-plane and we consider the one-point compactification of the half-plane, then this definition is equivalent to the more standard, topological one, up to an additive error of at most $1$.} Note that $w(P,Q)$ is independent of the choice of embedding for $T$. One natural way to quantify the extent to which the monochromatic trees `spiral' in a given Schnyder wood is to consider $w(G(v),\vpath{c}{v})$ for some uniformly random vertex $v$ of $T$, where $G(v)$ denotes a geodesic path from $v$ to the boundary of $T$, and $\vpath{c}{v}$ denotes the unique directed path of colour $c$ starting from $v$, where we again define $\vpath{c}{v}$ to by the generalised path consisting of the colour-$c$ path starting from $v$, and all the infinite paths of colour $c$ in the strips and segment below $v$ when $T$ is a UIHPT. 

For now, let us only consider $w(L(e), \vpath{c}{v})$, where $L(e)$ denotes the leftmost walk starting from a chosen outgoing edge $e$ of $v$. Recall that the Schnyder conditions imply that any leftmost walk starting from $v$ must follow outgoing edges whose colours cycle through red, yellow, blue, red, yellow, blue, and so on (where the first edge $e$ could be any of the three colours). It therefore follows from \cref{thm:finite_paths_from_v_wind,thm:finite_boundary_paths_from_v_wind,thm:uihpt_paths_from_v_wind} that on the maximal Schnyder wood of any finite triangulation, any infinite triangulation with finite boundary, or the UIHPT, the vertices of $L(e)$ after $v$ cycle through being on the paths $\vpath{r}{v}, \vpath{y}{v}, \vpath{b}{v}, \vpath{r}{v}$, and so on, where the first colour is again determined by the colour of $e$. It immediately follows that $\abs{w(L(e), \vpath{c}{v}) - \abs{L(e)}/3} \leq 1$, where $\abs{L(e)}$ denotes the length of the subwalk of $L(e)$ between $v$ and the first vertex of $L(e)$ on the initial boundary. (Such a vertex exists trivially on finite triangulations, by \cref{lem:leftmostwalk_finite_bdry} on infinite triangulations with finite boundary, and by \cref{lem:leftmost_path} on the UIHPT). The precise value of $w(L(e), \vpath{c}{v})$ depends on the colour of the chosen edge $e$. 

On a UIHPT $T$, the structure of the geodesic paths is closely linked to the structure of leftmost walks. Suppose for now that $v$ is a vertex on one of the distinguished paths of $T$. If $e$ is the unique outgoing edge from $v$ to the infinite path below $v$, then the leftmost walk $L(e)$ passes through each of the infinite paths between $v$ and the boundary by passing through exactly one distinguished path per edge. Since any path from $v$ to the boundary must pass through each of the distinguished paths contained in the Schnyder wood strips below $v$, and can pass through at most one of these per edge, it follows that the geodesics in $T$ must be very similar to the leftmost walks. The only possible difference is that for some vertices $v$, $L(e)$ may differ from $G(v)$ because a slightly different path to the initial boundary could allow $G(v)$ to avoid passing through the Schnyder wood segment, reducing the overall length of the path by at most $3$. In particular, $\abs{L(e)} - 3 \leq \abs{G(v)} \leq \abs{L(e)}$ whenever $v$ is on an infinite monochromatic path. By a similar analysis for the winding number, we obtain that for each colour $c$, the infinite monochromatic path $\vpath{c}{v}$ satisfies $w(L(e),\vpath{c}{v}) - 1 \leq w(G(v), \vpath{c}{v}) \leq w(L(e),\vpath{c}{v})$. In particular, $w(G(v), \vpath{c}{v}) = \abs{G(v)}/3 + \cO(1)$. We note that exploiting the similarity in structure between geodesics and leftmost walks has already previously been of use in the study of large random maps. In particular, in \cite{addario2017scaling}, this relationship was used as an essential component of the proof of convergence of large random simple triangulations to the Brownian map, after rescaling.

Finally, if $v$ is not on one of the distinguished paths of $T$, then it is in one of the finite regions contained between two consecutive such paths. From the structure of the UIHPT, we expect that for a typical vertex $v$ in such a region, the number of edges on $L(e)$ before $L(e)$ intersects a distinguished path is $\cO(1)$. Any given vertex on this path can contribute at most $1$ to the winding number of $\vpath{c}{v}$ for any $c$, so it follows that for a uniformly random vertex $v$ of $T$, we also have $w(G(v), \vpath{c}{v}) = \abs{G(v)}/3 + \cO(1)$. If \cref{conj:uihpt_as_limit} holds, we expect to see approximately this behaviour also for winding numbers of uniformly random vertices on large finite triangulations. Computational results (with $n$ close to $500,000$, and $300 \leq m \leq 600$) seem to suggest that the winding numbers of a uniformly random vertex $v$ are indeed close to $\abs{G(v)}/3$. For instance, in the example given in \cref{fig:tutte_embedding_big}, a geodesic path from $v$ has length $36$, and the blue, red, and yellow paths from the chosen vertex have winding numbers $11$, $12$, and $13$ respectively. \cref{fig:most_zoom} provides a `zoomed-in' view of the paths close to $v$, confirming that the paths immediately establish the structure described in \cref{thm:finite_paths_from_v_wind}.

\begin{figure}[h!]
\begin{center}
\includegraphics[width=\textwidth]{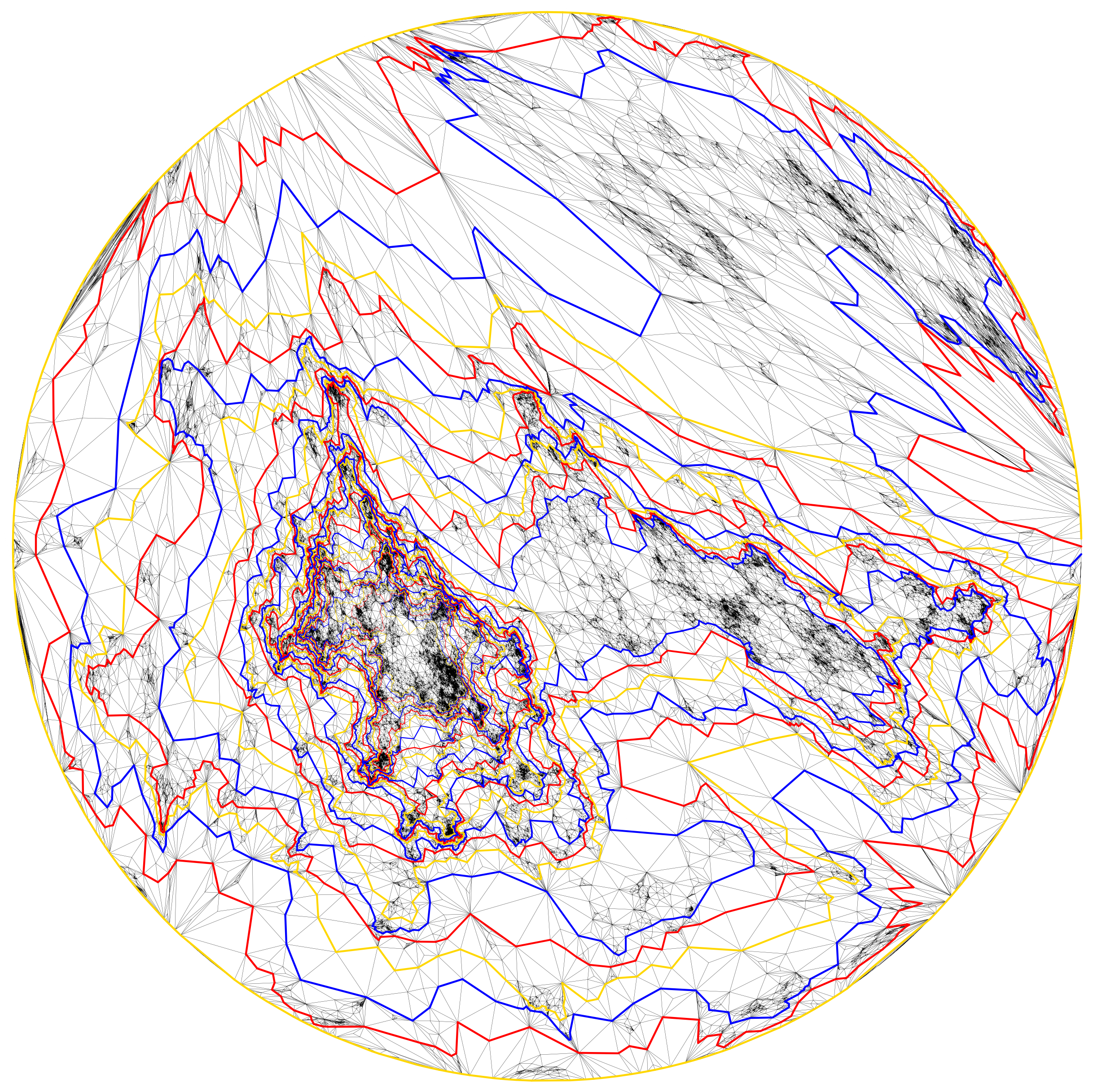}
\end{center}
\caption{A Tutte embedding of a random triangulation of a $432$-gon on $484,848$ vertices, and the monochromatic paths from a uniform random vertex to its boundary in the unique maximal Schnyder wood of the triangulation.}
\label{fig:tutte_embedding_big}
\end{figure}

\begin{figure}[h!]
\begin{center}
\includegraphics[width=\textwidth]{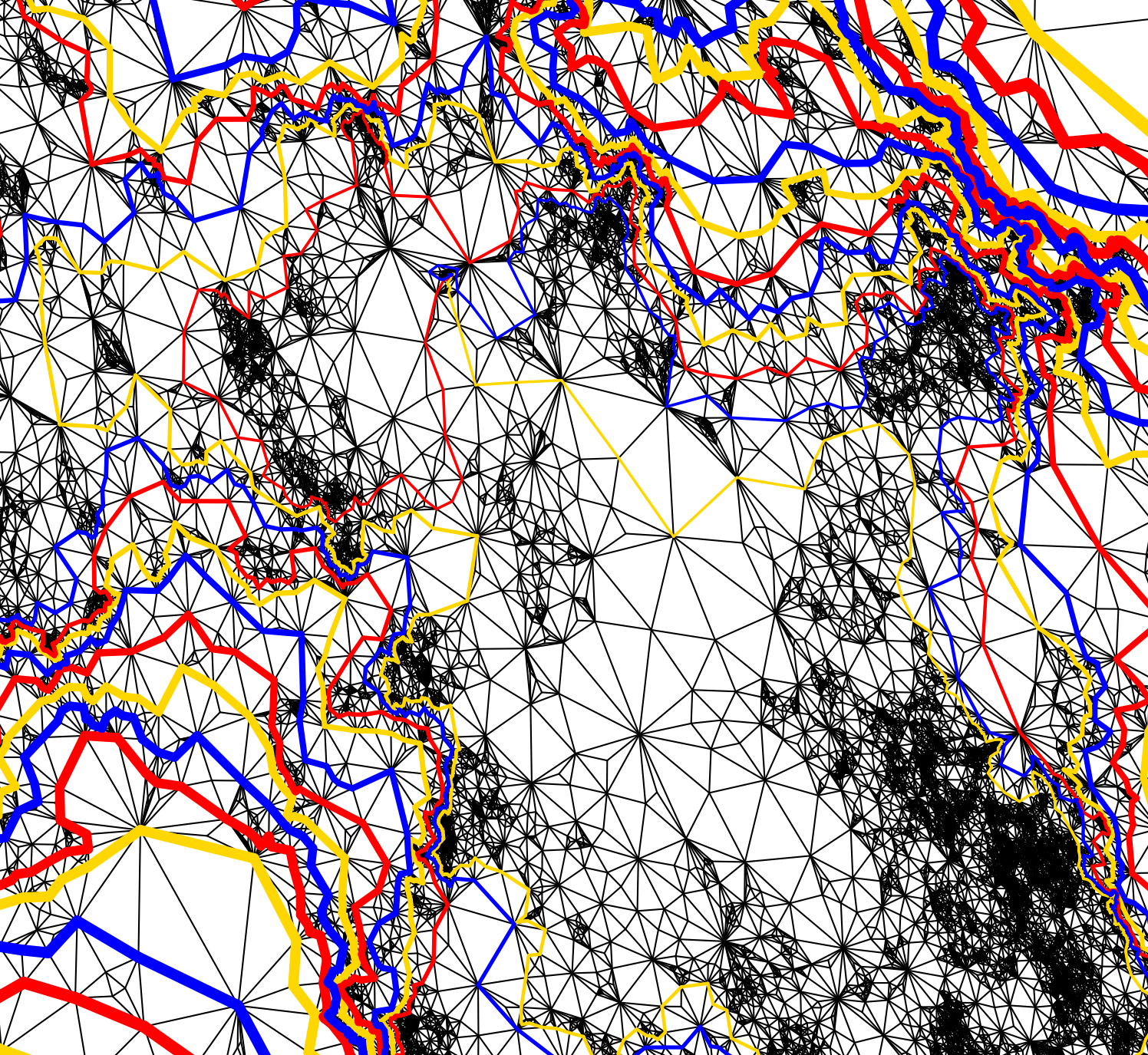}
\end{center}
\caption{A view of the paths in \cref{fig:tutte_embedding_big}, shown close to the uniform vertex $v$. The monochromatic paths from $v$ immediately establish the path structure described in \cref{thm:finite_paths_from_v_wind}, in which each vertex on a path is at distance one from the path immediately below it.}
\label{fig:most_zoom}
\end{figure}

\clearpage
\section{Notation}

\renewcommand\arraystretch{1.4}
\begin{tabularx}{\textwidth}{cXc}
    $\cT_n^m$ & The set of simple, $2$-connected, rooted triangulations of an $(m+2)$-gon with $n$ interior vertices. & \pageref{gl:ctnm} \\
    $\nt_{m,n}$ & The number of triangulations in $\cT_n^m$. & \pageref{gl:ntnm} \\
    $\ball{\rho}{v}$ & The ball of radius $\rho$ centred at vertex $v$. & \pageref{gl:ball} \\
    $\ballt{\rho}{v}{T}$ & The ball of radius $\rho$ centred at vertex $v$ in a specified triangulation $T$. & \pageref{gl:ballt} \\
    $\vpath{c}{v}$ & The directed path of colour $c$ from a vertex $v$ in a maximal Schnyder wood. & \pageref{gl:vpath}, \pageref{gl:vpathuipt}, \pageref{gl:vpathuihpt}\\ 
    $\infpath{c}$ & The distinguished path of colour $c$ in a given Schnyder wood segment or Schnyder wood strip. & \pageref{gl:infpath}, \pageref{gl:infpathstrip}\\
    $\intr T$ & The interior of a triangulation $T$, which includes its lower boundary. & \pageref{gl:intr}\\
    $B = \boundary{x}{0}$ & The initial boundary $(b_k, k\in\Z)$ of some triangulation rooted at $(b_x, b_{x+1})$. & \pageref{gl:initialboundary} \\
    $P_x$ & The peeling process initiated with root edge $(b_x, b_{x+1})$. & \pageref{gl:px} \\
    $\boundary{x}{i}$ & The updated boundary after step $i$ of $P_x$. & \pageref{gl:boundary} \\
    $\roote{x}{i}$ & The updated root edge after step $i$ of $P_x$. & \pageref{gl:roote} \\
    $\peelv{x}{i}$ & The peeling vertex used during step $i$ of $P_x$. & \pageref{gl:peelv}\\
    $\chord{x}{i}$ & The chord used during step $i$ of $P_x$. & \pageref{gl:chord}\\
    $\area{x}{i}$ & The revealed area up to and including step $i$ of $P_x$. & \pageref{gl:area}\\
    $\evarea{x}$ & The area consisting of all edges that are revealed by some step of $P_x$. & \pageref{gl:evarea} \\
    $L(e)$ & The leftmost walk starting from directed edge $e$. & \pageref{gl:le} \\
    $\fs{y}{x}$ & The first step of $P_y$ such that $\area{y}{\fs{y}{x}}$ contains $b_x$. & \pageref{gl:fs} \\
    $\head{x}{i}$ & The head of the root $\roote{x}{i}$ after step $i$ of $P_x$. & \pageref{gl:head} \\
    $\segment{y}{x}{k}$ & The segment of the first $k$ edges left of $\head{y}{\fs{y}{x}}$ along the boundary $\boundary{y}{\fs{y}{x}}$. & \pageref{gl:segment} \\
    $P_x(e)$ & The colour and orientation assigned to edge $e$ by process $P_x$. & \pageref{gl:pxe} \\
\end{tabularx}
\clearpage
\bibliography{references}
\end{document}